\documentclass[a4paper]{amsart}
\usepackage{hyperref}
\usepackage{txfonts, amsmath,amstext,amsthm,amscd,amsopn,verbatim,amssymb, amsfonts}
\usepackage{fullpage}
\usepackage{todonotes}
\usepackage{mathtools}

\usepackage[bbgreekl]{mathbbol}
\usepackage{enumerate}

\usepackage{float}
\restylefloat{table}

\usepackage{tikz}
\usepackage{tikz-cd}
\usetikzlibrary{matrix}
\usetikzlibrary{shapes}
\usetikzlibrary{arrows}
\usetikzlibrary{calc,3d}
\usetikzlibrary{decorations,decorations.pathmorphing,decorations.pathreplacing,decorations.markings}
\usetikzlibrary{through}

\tikzset{snakeit/.style={decorate, decoration={snake, amplitude=.2mm,segment length=1mm}}}

\tikzset{ext/.style={circle, draw,inner sep=1pt}, int/.style={circle,draw,fill,inner sep=2pt},nil/.style={inner sep=1pt}}
\tikzset{cy/.style={circle,draw,fill,inner sep=2pt},scy/.style={circle,draw,inner sep=2pt},scyx/.style={draw,cross out,inner sep=2pt},scyt/.style={draw,regular polygon,regular polygon sides=3,inner sep=0.95pt}}
\tikzset{exte/.style={circle, draw,inner sep=3pt},inte/.style={circle,draw,fill,inner sep=3pt}}
\tikzset{diagram/.style={matrix of math nodes, row sep=3em, column sep=2.5em, text height=1.5ex, text depth=0.25ex}}
\tikzset{diagram2/.style={matrix of math nodes, row sep=0.5em, column sep=0.5em, text height=1.5ex, text depth=0.25ex}}
\tikzset{rowcolsep/.style={column sep=.2cm, row sep=.1cm}}

\tikzset{
  crossed/.style={
    decoration={markings,mark=at position .5 with {\arrow{|}}},
    postaction={decorate},
    shorten >=0.4pt}}

\tikzset{every picture/.style={baseline=-.65ex} }
\tikzset{every loop/.style={draw} }

\theoremstyle{plain}
  \newtheorem{thm}{Theorem}
  \newtheorem{defi}[thm]{Definition}
  \newtheorem{prop}[thm]{Proposition}
  
  \newtheorem{cor}[thm]{Corollary}
  
  \newtheorem{lemma}[thm]{Lemma}
\theoremstyle{definition}
  \newtheorem{ex}{Example}
  \newtheorem{rem}{Remark}

\newcommand{\Hom}{\mathrm{Hom}}
\newcommand{\HHom}{\mathbb{H}\mathrm{om}}

\newcommand{\R}{{\mathbb{R}}}

\newcommand{\N}{{\mathbb{N}}}

\newcommand{\SSeq}{S\Mod}
\newcommand{\Free}{\mathit{Free}}
\DeclareMathOperator{\colim}{colim}
\newcommand{\mF}{\mathcal{F}}

\newcommand{\Def}{\mathrm{Def}}

\newcommand{\Lie}{\mathsf{Lie}}

\newcommand{\pop}{\mathsf{p}}

\newcommand{\Com}{\mathsf{Com}}

\newcommand{\FM}{\mathsf{FM}}

\newcommand{\bpm}{\begin{pmatrix}}
\newcommand{\epm}{\end{pmatrix}}

\newcommand{\hotimes}{\mathbin{\hat\otimes}}

\DeclareMathOperator{\End}{End}
\DeclareMathOperator{\sgn}{sgn}
\DeclareMathOperator{\gr}{gr}

\newcommand{\MOp}{\mathsf{M}}
\newcommand{\UOp}{\mathsf{U}}
\newcommand{\QOp}{\mathsf{Q}}
\newcommand{\AOp}{\mathsf{A}}
\newcommand{\BOp}{\mathsf{B}}
\newcommand{\NOp}{\mathsf{N}}
\newcommand{\ROp}{\mathsf{R}}
\newcommand{\mop}{\mathsf{m}}
\newcommand{\qop}{\mathsf{q}}
\newcommand{\FreeP}{\mathcal F} 

\newcommand{\Q}{\mathbb{Q}}

\newcommand{\cF}{\mathcal{F}}
\newcommand{\Map}{\mathrm{Map}}

\DeclareMathOperator{\Mor}{Mor}

\newcommand{\fg}{\mathfrak{g}}
\newcommand{\fh}{\mathfrak{h}}

\newcommand{\GX}{\mathsf{GX}}
\newcommand{\MC}{\mathrm{MC}}

\usepackage{tikz-cd}
\tikzset{%
    symbol/.style={%
        draw=none,
        every to/.append style={%
            edge node={node [sloped, allow upside down, auto=false]{$#1$}}}
    }
}

\newcommand{\Mod}{{\text{-}\mathcal{M}od}}

\newcommand{\Env}{\mathrm{Env}}

\newcommand{\Fun}{\mathrm{Fun}}

\newcommand{\cC}{\mathcal{C}}
\newcommand{\cD}{\mathcal{D}}

\newcommand{\pois}{\mathsf{pois}}
\newcommand{\Pois}{\mathsf{Pois}}
\newcommand{\uifrob}{\mathsf{uifrob}}
\newcommand{\uIFrob}{\mathsf{uIFrob}}
\newcommand{\ucIFrob}{\mathsf{ucIFrob}}
\newcommand{\ucom}{\mathsf{ucom}}
\newcommand{\com}{\mathsf{com}}
\newcommand{\rcom}{\overleftarrow{\com}}
\newcommand{\rlie}{\overleftarrow{\lie}}
\newcommand{\lie}{\mathsf{lie}}
\newcommand{\uCom}{\mathsf{uCom}}

\newcommand{\LLL}{LLS}
\newcommand{\RRR}{RLS}

\newcommand{\Conv}{\mathrm{Conv}}

\newcommand{\CAlg}{\mathcal{CA}\mathit{lg}}
\newcommand{\HLVCat}{\uIFrob_{n,\infty}\text{-}\mathcal{A}\mathit{lg}_\bullet}
\newcommand{\cHLVCat}{\ucIFrob_{n,\infty}\text{-}\mathcal{A}\mathit{lg}_\bullet}
\newcommand{\GXCat}{\Bar^c\GX_n\text{-}\mathcal{A}\mathit{lg}_\bullet}
\newcommand{\dgVect}{\mathit{dg}\mathcal{V}\mathit{ect}}
\newcommand{\sSet}{\mathit{s}\mathcal{S}\mathit{et}}
\newcommand{\dgca}{\mathcal{D}\mathit{gca}}
\newcommand{\coInd}{\mathrm{coInd}}
\newcommand{\Ind}{\mathrm{Ind}}

\renewcommand{\Bar}{\mathtt{B}}
\newcommand{\POp}{\mathsf{P}}
\newcommand{\GCA}{\mathsf{GCA}}

\newcommand{\pBimod}{\mathbf{pBimod}}
\newcommand{\bimod}{\mathbf{bimod}}
\newcommand{\from}{\leftarrow}

\newcommand{\cQ}{\mathcal{Q}}
\newcommand{\cM}{\mathcal{M}}

\newcommand{\cO}{\mathcal{O}}

\newcommand{\Ppd}{\mathbf{Ppd}}
\newcommand{\dash}{\text{-}}

\begin{document}
\title{Homotopy Frobenius structures on the cohomology of a manifold}

\author{Florian Naef}
\address{School of Mathematics\\
Trinity College\\
Dublin 2, Ireland, D02 PN40}
\email{naeff@tcd.ie}

\author{Thomas Willwacher}
\address{Department of Mathematics\\ ETH Zurich\\ R\"amistrasse 101 \\ 8092 Zurich, Switzerland}
\email{thomas.willwacher@math.ethz.ch}

\thanks{
T.W. has been partially supported by the NCCR SwissMAP funded by the Swiss National Science foundation.
}

\subjclass[2010]{81Q30, 18G35,57Q45}
\keywords{Graph complexes, Frobenius algebras, Rational Homotopy Theory}

\begin{abstract}
We show that the category of lax involutive $n$-Frobenius algebras is Quillen equivalent to the category of right comodules of the $n$-Poisson cooperad.
It follows in particular, that the cohomology of a parallelized $n$-manifold is naturally endowed with a homotopy involutive $n$-Frobenius structure extending the rational homotopy type of $M$, solving a long-standing question.
\end{abstract}

\maketitle


\section{Introduction}
Let $A$ be finite dimensional graded vector space. Then a unital $n$-Frobenius algebra structure on $A$ is a unital graded commutative algebra structure on $A$ together with a distinguished element of degree $n$ (the \emph{diagonal}),
\[
\Delta = \sum a' \otimes a'' \in A\otimes A,
\]
such that $\Delta$ is $(-1)^n$-symmetric under switching the factors, and such that for all $a\in A$ we have 
\[
(a\otimes 1)\Delta = (1\otimes a)\Delta,
\]
using the natural graded commutative product on $A\otimes A$.
We define the \emph{Euler class} of $A$ to be the element 
\begin{equation}\label{equ:Euler}
E=\sum a'a''\in A.
\end{equation}
If the Euler class vanishes, then we call the unital $n$-Frobenius strucuture \emph{involutive}. This is always the case if $n$ is odd due to the opposite symmetries of $\Delta$ and the product.
If the diagonal element $\Delta$ happens to be non-degenerate, i.e., of full rank, then the unital $n$-Frobenius structure is also counital, with the counit being the element dual to $1$ under the pairing induced by $\Delta$. In general, however, our Frobenius algebras might have degenerate diagonal elements, even $\Delta=0$.

Now let $M$ be an oriented, but not necessarily closed manifold of dimension $n\geq 2$ with finite dimensional rational cohomology.
Then the cohomology $H^\bullet(M)$ is naturally a unital $n$-Frobenius algebra in the sense above. Here the diagonal $\Delta$ can be taken to be the image of the canonical element of $H^\bullet(M)\otimes H_c^\bullet(M)$ encoding the Poincar\'e duality pairing under the map 
\[
H^\bullet(M)\otimes H_c^\bullet(M)\to H^\bullet(M)\otimes H^\bullet(M).
\]
The Euler class \eqref{equ:Euler} of this unital $n$-Frobenius algebra is the Euler class of $M$, and hence $H^\bullet(M)$ is involutive if the Euler class of $M$ vanishes. 
If $M$ is furthermore closed, then the diagonal element is non-degenerate and the $n$-Frobenius structure is also counital.

There have been attempts and the hope to meaningfully upgrade this (involutive) unital $n$-Frobenius structure on $H^\bullet(M)$ to the cochain level in a natural way, say to some homotopy (involutive) $n$-Frobenius structure on a version of the differential forms $\Omega(M)$, or dually to chains. Homotopy-transferring this structure back to cohomology, we would then get a homotopy Frobenius structure on $H^\bullet(M)$. In contrast to the trivial structure, this structure would in particular encode the rational homotopy type of $M$, in the form of a homotopy commutative algebra structure on $H^\bullet(M)$, that is a subset of the $n$-Frobenius structure.
The homotopy type of the homotopy $n$-Frobenius structure on $H^\bullet(M)$ would then be an invariant of the manifold $M$, upgrading the rational homotopy type. A first negative result as to the existence of a natural homotopy Frobenius structure on $H^\bullet(M)$ has been obtained in \cite{JohnsonFreyd}, albeit in the dimension $n=1$ not considered in this paper. 
If $M$ is closed and simply connected, then a positive answer has been obtained by Lambrechts and Stanley \cite{LambrechtsStanley}, but with no naturality properties. In this paper, we will provide a positive and natural solution in the case that $M$ is parallelized and $n\geq 2$. 

On the other hand, assume now that $M$ is parallelized, and sill not necessarily closed. Then the totality of the configuration spaces of points on $M$ can be made an operadic right module over a version of the framed little disks operad. For example, let $\FM_M(r)$ be the Fulton-MacPherson-Axelrod-Singer version of the configuration space of $r$ distinguishable points on $M$. Then the collection $\FM_M=\{\FM(r)\}_{r}$ is naturally an operadic right module over the Fulton-MacPherson operad $\FM_n$.
Taking (a version of) differential forms of these models we obtain a right comodule $\Omega_\sharp(\FM_M)$ over the Hopf cooperad $\Omega_\sharp(\FM_n)$. The homotopy type of $\Omega_\sharp(\FM_M)$ is then a (framed) diffeomorphism invariant of the manifold $M$, that contains the rational homotopy type of $M$.

The main result of this paper is that the two types of algebraic data, namely a homotopy involutive Frobenius algebra structure on $H^\bullet(M)$ and the rational homotopy type of the configuration spaces of points on $M$ as right $\FM_n$-module are in fact equivalent. This in particular implies the existence of a meaningful homotopy unital $n$-Frobenius structure on $H^\bullet(M)$, for parallelizable manifolds $M$, settling a long standing question.
Note that in the parallelized case the Euler class is automatically zero, so we are looking for involutive $n$-Frobenius structures on $H^\bullet(M)$. 

\subsection{Results}
Let us state our results more precisely. To this end, we need to introduce in particular our encoding of a homotopy (or weak) $n$-Frobenius algebra. Let $\uIFrob_n$ be the properad governing unital, but possibly not counital involutive $n$-Frobenius algebras, i.e., 
\begin{equation}\label{equ:uIFrob}
\uIFrob_n(r, s) = 
\begin{cases}
    \Q[n(r-1)] &\text{if $r\geq 1$, $s\geq 0$} \\
    0 & \text{otherwise},
\end{cases}
\end{equation}
with the action of the symmetric group $S_s$ on the input side being trivial, and on the action of $S_r$ on the output side being with sign if $n$ is odd and trivial otherwise. A non-counital involutive $n$-Frobenius algebra is then a dg vector space $V$ together with morphisms of dg $S_r$-modules 
\[
\uIFrob_n(r, s) \otimes_{S_s} V^{\otimes s} \to V^{\otimes r}
\]
satisfying natural coherence relations.
To relax this notion, consider the PROP envelop $\uifrob_n$ of the properad $\uIFrob_n$.
The PROP $\uifrob_n$ is a category enriched in graded vector spaces, with object set $\mathbb N_0=\{0,1,2,\dots\}$ and graded vector spaces of morphisms (from $s$ to $r$) $\uifrob(r,s)$, which are obtained (essentially) by formal horizontal concatenation of operations of $\uIFrob_n$.
Let 
$$
\uifrob_n\Mod=\Fun(\uifrob_n, \dgVect)
$$ 
be the category of left $\uifrob_n$-modules, i.e., the category of covariant $\dgVect$-enriched functors $\uifrob_n\to \dgVect$. For example, any $\uIFrob_n$-algebra $V$ gives rise to such a functor sending the object $s$ to $V^{\otimes s}$. Moreover, $\uifrob_n$ and $\dgVect$ are symmetric monoidal categories, and $\uIFrob_n$-algebras are equivalent data to strongly monoidal enriched functors $\uifrob_n\to \dgVect$.
We relax the notion of $\uIFrob_n$-algebra by relaxing the strong monoidality assumption. 
First define 
\[
\CAlg \left( \uifrob_n\Mod\right)
= \Fun_{\text{lax-}\otimes}(\uifrob, \dgVect)
\]
to be the category of symmetric lax monoidal functors. These are the same as commutative monoid objects in the category $\uifrob_n\Mod$, equipped with the Day convolution monoidal product. Here we denote the commutative monoid objects in a category $\cD$ by $\CAlg(\cD)$. Now any $F\in \CAlg \left(\uifrob_n\Mod\right)$ comes with natural comparison morphisms 
\begin{equation}
\label{equ:segal}
F(1)^{\otimes r} \to F(r)
\end{equation}
from the monoidal structure. These are isomorphisms if $F$ is a strict $\uIFrob_n$-algebra. We say that $F$ satisfies the \emph{Segal condition} if the morphisms \eqref{equ:segal} are weak equivalences (quasi-isomorphisms) for all $r$, and we call such $F$ \emph{homotopy $\uIFrob_n$-algebras}.

Next, let us turn to the other (configuration space) side. We desire to understand right Hopf comodules over a cochain model of the little $n$-disks operad. By formality of the little disks operad, we may simply take the (unital) $n$-Poisson cooperad $\Pois_n^c=H^\bullet(\FM_n)$ for this model. The attribute ``Hopf'' refers to the fact that we consider $\Pois_n^c$ as a cooperad in the category of dg commutative algebras $\dgca$, snd similarly for the comodules. 
Let $\pois_n^c$ be the co-PROP envelope of $\Pois_n^c$, and similarly let $\pois_n$ be the PROP envelope of the operad $\Pois_n$. Then a right (Hopf) $\pois_n^c$-comodule is in fact the same data as a left (Hopf) $\pois_n$-module since $\pois_n$ is arity-wise finite dimensional. Hence we can equivalently study the latter. 
More precisely, let 
\[
\pois_n\Mod = \Fun(\pois_n, \dgVect)
\]
be the category of plain (non-Hopf) left $\pois_n$-modules, i.e., $\dgVect$-enriched functors $\pois_n\to \dgVect$.
Then the category $\pois_n\Mod$ is equipped with a natural symmetric monoidal structure such that for $F,G\in \pois_n\Mod$
\[
(F\otimes G)(r) = F(r)\otimes G(r).
\]
Now the right Hopf $\Pois_n^c$-comodules we want to study are the same as commutative monoid objects in this monoidal category, i.e., objects in $\CAlg\left(\pois_n\Mod\right)$.
There is furthermore a Segal-type condition on those objects. Namely, for $X$ a right Hopf $\Pois_n^c$-comodule there are natural maps 
\begin{equation}\label{equ:config space type}
X(1)^{\otimes r} \to (\coInd^{\Com^c}_{\Pois_n^c} X) (r)
\end{equation}
where $\coInd^{\Com^c}_{\Pois_n^c} X$ is the homotopy coinduced $\Com^c$-comodule. Then following \cite{WillwacherObstruction} we say that $X$ is of \emph{configuration space type} if the morphisms \eqref{equ:config space type} are weak equivalences (quasi-isomorphisms) for all $r$. As the name suggests, $\Omega_\sharp(\FM_M)$ is of configuration space type, at least in the case where $H^\bullet(M)$ is finite-dimensional.

Now our main result is essentially a derived Morita equivalence for the categories $\uifrob_n\Mod$ and $\pois_n\Mod$, that aligns well with all structures discussed above.
\begin{thm}\label{thm:main 1}
There are a $\pois_n$-$\uifrob_n$-bimodule $\LLL$ and a $\uifrob_n$-$\pois_n$-bimodule $\RRR$ such that the associated functors 
\[
\begin{tikzcd}[column sep=2cm]
     \pois_n\Mod
     \ar[shift left]{r}{\LLL\circ_{\pois_n}(-)} 
     &
     \uifrob_n\Mod 
     \ar[shift left]{l}{\RRR\circ_{\uifrob_n}(-)} 
\end{tikzcd}
\]
form a Quillen equivalence. 
Moreover, the left-adjoint is strong symmetric monoidal and we obtain
an induced Quillen equivalence on the categories of commutative algebra objects 
\[
\CAlg(\pois_n\Mod) \leftrightarrows \CAlg(\uifrob_n\Mod).
\]
Furthermore, under this equivalence the homotopy $\uIFrob_n$-algebras are sent to the right Hopf $\Pois_n^c$-comodules of configuration space type and vice versa.
\end{thm}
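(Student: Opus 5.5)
The strategy is to realize both module categories as modules over two dg categories (PROPs) and to show these are related by a chain of Morita-type equivalences. The bridge is a Koszul-duality-type intermediate object. Concretely, I would look for a dg category $\cC$ with object set $\N_0$ together with dg functors
\[
\pois_n \xrightarrow{\ \alpha\ } \cC \xleftarrow{\ \beta\ } \uifrob_n .
\]
Here $\beta$ should be a quasi-equivalence, or at least induce quasi-isomorphisms on morphism complexes. The functor $\alpha$ should be a ``derived Morita equivalence'': the representable $\cC$-modules, restricted along $\alpha$, should be generated by representable $\pois_n$-modules under cones and retracts.

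Natural candidates are the hairy graph complexes. Operations of $\uifrob_n$ from $s$ to $r$ are forests of $r$ ``corollas'' carrying $s$ inputs. Modules over $\pois_n$, dually, are right $\Pois_n^c$-comodules, i.e.\ configuration-space-like objects whose Koszul dual picture involves graphs with external legs. I would then define
\[
\LLL(r,s) = \cC(r,s) \text{ viewed as a } \pois_n\text{-}\uifrob_n\text{-bimodule},
\]
and take $\RRR$ to be its derived dual $\Hom_{\pois_n}(\LLL,\pois_n)$, or equivalently $\cC$ regarded as a bimodule in the other direction. The Quillen adjunction is then the standard tensor--hom adjunction with respect to the projective model structures, in which fibrations and weak equivalences are arity-wise.

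The key steps, in order, are as follows.

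\begin{enumerate}[(i)]
\item Construct $\cC$, e.g.\ as graphs whose vertices are decorated by $\Pois_n$-type cooperations and whose external legs are $\uifrob$-type outputs. Check the compatibility of the composition with Day convolution, so that the bimodule $\LLL$ is compatible with the monoidal structures. Since $\LLL(r,-)$ is built from $\LLL(1,-)^{\otimes r}$ under Day convolution, this gives strong symmetric monoidality of $\LLL\circ_{\pois_n}(-)$.
\item Show that the unit and counit are quasi-isomorphisms on cofibrant objects. It suffices to check this on the generators, namely the representable modules $\pois_n(r,-)$ and $\uifrob_n(r,-)$. This reduces to computing $\RRR\circ_{\uifrob_n}\LLL \simeq \pois_n$ and $\LLL\circ_{\pois_n}\RRR\simeq\uifrob_n$ as bimodules.
\item Pass to commutative algebra objects. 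Because the left adjoint is strong symmetric monoidal and both categories are in characteristic zero, the transferred model structures on $\CAlg$ exist. The left adjoint commutes with free commutative algebras, so the adjunction lifts. Equivalence on $\CAlg$ then follows because weak equivalences and the forgetful functors are detected underlying.
\item Identify the Segal conditions. The derived left adjoint applied to a Segal $\uifrob_n$-algebra should compute $\coInd^{\Com^c}_{\Pois_n^c}$. This amounts to comparing the maps $F(1)^{\otimes r}\to F(r)$ with the maps \eqref{equ:config space type}, using that restriction along $\Com\to\Pois_n$ corresponds under the equivalence to restriction along the $\uifrob_n$-subcategory generated by the products and the unit.
\end{enumerate}

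The main obstacle will be step (ii), and specifically the homology computation $\RRR\circ_{\uifrob_n}\LLL\simeq\pois_n$. This is a statement about a graph complex: it asserts that graphs with hairs glued along Frobenius operations have cohomology concentrated in the Poisson part. My first attempt would be a spectral sequence filtering by loop order or by number of internal vertices. The goal is to reduce to a Koszulness statement, namely that $\Pois_n$ and the Frobenius-type properad $\uIFrob_n$ are Koszul dual up to unit and involutivity corrections. The involutivity (vanishing Euler class) should be exactly what kills tadpole contributions. The unit would be handled by a separate contracting homotopy that adds or removes unit legs.
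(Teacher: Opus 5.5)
Your proposal does not yet contain a proof. The intermediate category $\cC$ and the bimodules $\LLL,\RRR$ are never constructed, and all the content is deferred to step (ii).

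The framework you set up also cannot work as stated. A derived Morita equivalence $\alpha$ is quasi-fully faithful, since induction along $\alpha$ sends representables to representables. So if $\beta$ were a quasi-equivalence, every $\pois_n(r,r)$ would have to be quasi-isomorphic to some $\uifrob_n(k,k)$. But $\pois_n(2,2)$ is $6$-dimensional, whereas $\dim\uifrob_n(k,k)$ is $1,1,5,52$ for $k=0,1,2,3$ and keeps growing. Concretely, the equivalence sends the representable $\pois_n(-,2)$ not to a representable but to a two-term twisted complex. This complex is built from $\uifrob_n(-,2)$ and $\uifrob_n(-,1)$, glued by the coproduct. Its value at object $1$ is $\pois_n(1,2)$, which is $2$-dimensional, while every representable $\uifrob_n$-module is $1$-dimensional at object $1$. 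So at best $\beta$ is a Morita equivalence, and identifying the objects $\alpha(r)$ is exactly the missing content.

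The mechanism you anticipate also points the wrong way. No graphs with loops, and hence no tadpoles, ever occur; everything is forests. The Koszul duality actually needed is not between $\Pois_n$ and $\uIFrob_n$, but between $\lie_n$ and $\com_n$.

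The missing idea is the pair of decompositions $\uifrob_n\cong\rcom_n\circ\ucom$ and $\pois_n\cong\ucom\circ\lie_n$. With these, the paper writes down $\RRR=(\rlie_n^c\circ_\tau\rcom_n)\circ_{\rcom_n}\uifrob_n$. This is the Lambrechts--Stanley construction (defined for involutive Frobenius algebras) applied to the universal algebra $\uifrob_n(1,-)$. Its dual is $\LLL=(\rcom_n\circ\pois_n,d_\sigma)$, where $d_\sigma$ fuses two trees by a coproduct on one side and a bracket on the other. Then $R(X)=(\rlie_n^c\circ X,d_\tau+d_X)$ and $L(X)=(\rcom_n\circ X,d_\sigma+d_X)$.

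The functor $R$ reflects quasi-isomorphisms, by filtering by the number of $\lie_n^c$-cogenerators and looking at the lowest arity. So only the unit needs checking, and no computation of $\LLL\circ_{\pois_n}\RRR$ is required. The unit $X\to RL(X)=(\rlie_n^c\circ\rcom_n\circ X,d_\tau+d_\sigma+d_X)$ is a quasi-isomorphism because the Koszul complex $(\rlie_n^c\circ\rcom_n,d_\tau)$ is acyclic.

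Your step (i) also conflates monoidal structures. $\pois_n\Mod$ carries the object-wise (Hopf) product, not Day convolution, so strong monoidality means $L(Y\otimes Z)\cong L(Y)\otimes_{\mathrm{Day}}L(Z)$, with different products on the two sides. The paper obtains this by factoring $L$ (ignoring differentials) as corestriction to $\ucom\Mod$ followed by induction along $\iota$. The key input is Lemma~\ref{lem:com day pointwise}: Day convolution and the object-wise product agree on $\ucom\Mod$.

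For the passage to $\CAlg$, strong monoidality of $L$ is not what you need. You need symmetric monoidal model structures. The paper uses the injective structure on $\pois_n\Mod$, where the pushout-product axiom is immediate, and the structure on $\uifrob_n\Mod$ right-transferred from injective $\ucom\Mod$.

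For the Segal part, the relevant functor on the Poisson side is coinduction $j_*$ along $\pois_n\to\ucom$, not a restriction. What is needed is $j_*R\cong\iota^*$. This follows from $Lj^*\cong\iota_!$, which holds because $d_\sigma$ involves the bracket, and the bracket acts by zero on modules restricted from $\ucom$.
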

The last statement says that the Segal type conditions we can impose on objects of either side match. We note that the subcategories of Segal type objects on both sides represent $\infty$-categories (for instance given by simplicial categories obtained by taking full subcategories of their simplicial localization) that can not be represented by model categories. We thus obtain
\begin{cor}\label{cor:Segal equiv}
    There is an equivalence of $\infty$-categories
    \begin{equation}\label{equ:Segal equiv}
    \CAlg(\uifrob_n\Mod)^\text{Segal} \simeq \CAlg(\pois_n\Mod)^\text{config},
    \end{equation}
    where the categories on either side are the full subcategories of objects satisfying the Segal condition, resp, the configuration space type condition.
\end{cor}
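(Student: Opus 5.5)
The plan is to deduce the corollary from Theorem \ref{thm:main 1} by passing to underlying $\infty$-categories and then restricting to full subcategories. The steps are as follows.

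First, a Quillen equivalence between model categories induces an equivalence of their underlying $\infty$-categories. Concretely, one can take the simplicial (Dwyer--Kan) localizations at the weak equivalences, or equivalently the full simplicial subcategories on bifibrant objects. Theorem \ref{thm:main 1} gives a Quillen equivalence $\CAlg(\pois_n\Mod) \leftrightarrows \CAlg(\uifrob_n\Mod)$. Applying this to it yields an equivalence of $\infty$-categories
\[
L\colon \CAlg(\pois_n\Mod)[W^{-1}] \rightleftarrows \CAlg(\uifrob_n\Mod)[W^{-1}] \colon R.
\]
Here $L$ is the left derived functor of $\LLL\circ_{\pois_n}(-)$, computed on cofibrant replacements, and $R$ is the right derived functor of $\RRR\circ_{\uifrob_n}(-)$, computed on fibrant replacements.

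Second, I check that the Segal condition and the configuration space type condition are homotopy invariant. Both are stated as certain natural maps, \eqref{equ:segal} and \eqref{equ:config space type}, being quasi-isomorphisms.
\begin{itemize}
\item For the Segal condition: the maps $F(1)^{\otimes r}\to F(r)$ are natural in $F$. Over a field, the tensor power of a quasi-isomorphism is a quasi-isomorphism, so by two-out-of-three the condition is invariant under weak equivalences $F\to F'$.
\item For the configuration space type condition: the target uses the \emph{homotopy} coinduction $\coInd^{\Com^c}_{\Pois_n^c}$, which is a derived functor and so preserves weak equivalences. The same two-out-of-three argument then applies.
\end{itemize}
Consequently, both conditions define full sub-$\infty$-categories closed under equivalence.

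Third, I use the last assertion of Theorem \ref{thm:main 1}: the derived functors send homotopy $\uIFrob_n$-algebras to comodules of configuration space type, and vice versa. Interpreted homotopically, this says $L$ maps $\CAlg(\pois_n\Mod)^\text{config}$ into $\CAlg(\uifrob_n\Mod)^\text{Segal}$, and $R$ maps the Segal objects into the config objects. An equivalence of $\infty$-categories restricts to an equivalence between replete full subcategories that correspond to each other under it. This gives \eqref{equ:Segal equiv}.

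The main obstacle is making sure that "sent to" in Theorem \ref{thm:main 1} refers to the derived functors rather than the underived ones. If it is only known on suitably (co)fibrant objects, I would combine it with the homotopy invariance from the second step. That requires the cofibrant replacement of a config-type object to still be config-type, which follows because replacements are weak equivalences. In that form the theorem statement transfers to the derived functors, and this is what the plan relies on.
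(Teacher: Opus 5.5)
Your proposal is correct and follows the paper's route: the corollary is read off from the Quillen equivalence $\CAlg(\pois_n\Mod)\leftrightarrows\CAlg(\uifrob_n\Mod)$ of Theorem~\ref{thm:main 1}, using that the Segal and configuration-space-type conditions correspond under it (in the paper via $j_*\circ R\cong\iota^*$ and $R(L(B))\simeq B$), and then restricting the induced equivalence of simplicial localizations to the corresponding full subcategories. Your concern about derived versus underived functors is harmless but unnecessary here: Proposition~\ref{prop:quillen equivalence} shows that $L$ preserves and $R$ creates all weak equivalences, with the unit $X\to R(L(X))$ a quasi-isomorphism for every $X$, and the homotopy invariance of the Segal condition that you verify is also recorded in the remark at the end of Section~\ref{sec:proof main 1}.
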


For a parallelized $n$-manifold $M$ of finite type (with $n\geq 2$) the rational model $\Omega_\sharp(\FM_M)$ of the configuration space of points on $M$ combined with the formality of the little disks operad yields a right Hopf $\Pois_n^c$-comodule of configuration space type. Then Theorem \ref{thm:main 1} implies:
\begin{cor}\label{cor:uIFrob alg on H}
Let $M$ be a parallelized finite-type $n$-manifold with $n\geq 2$ and rational cohomology $H^\bullet(M)$. Then there is a natural homotopy $\uIFrob_n$-algebra $H_M$ such that $H_M(r)\simeq (H^\bullet(M))^{\otimes r}$ whose homotopy type encodes the rational homotopy type of $\FM_M$ via \eqref{equ:Segal equiv}.
\end{cor}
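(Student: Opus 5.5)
We deduce Corollary \ref{cor:uIFrob alg on H} from Theorem \ref{thm:main 1} and Corollary \ref{cor:Segal equiv} by transporting the configuration space model of $M$ across the equivalence. The work is in setting up the input object correctly and in identifying the homotopy type of the output.

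\emph{Step 1: the input object.} Let $M$ be parallelized, of finite type, with $n\geq 2$. Choose a zigzag of quasi-isomorphisms of Hopf cooperads $\Omega_\sharp(\FM_n)\simeq \Pois_n^c$, as provided by formality of the little disks operad. Pushing $\Omega_\sharp(\FM_M)$ along this zigzag, by homotopy (co)restriction of comodules, gives a right Hopf $\Pois_n^c$-comodule $X_M$, well defined up to weak equivalence. Since $\pois_n$ is arity-wise finite dimensional, $X_M$ is an object of $\CAlg(\pois_n\Mod)$. As recalled before Theorem \ref{thm:main 1}, $\Omega_\sharp(\FM_M)$ is of configuration space type when $H^\bullet(M)$ is finite dimensional. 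This condition is phrased via homotopy coinduction to $\Com^c$ and arity-wise quasi-isomorphisms, so it is invariant under the zigzag, and $X_M$ is of configuration space type as well.

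\emph{Step 2: transport.} Take a cofibrant replacement of $X_M$ in $\CAlg(\pois_n\Mod)$ and define
\[
H_M := \LLL\circ_{\pois_n} X_M^{\mathrm{cof}} \in \CAlg(\uifrob_n\Mod).
\]
By the last assertion of Theorem \ref{thm:main 1}, $H_M$ satisfies the Segal condition, so it is a homotopy $\uIFrob_n$-algebra. Because the derived functors form an equivalence, Corollary \ref{cor:Segal equiv} shows that the homotopy type of $H_M$ determines that of $X_M$, and hence the rational homotopy type of $\FM_M$ as a right module. Naturality, in the sense of functoriality in framed embeddings up to homotopy, follows because every step is functorial: $\Omega_\sharp$, the fixed formality zigzag, and the derived left adjoint.

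\emph{Step 3: identifying $H_M(1)$.} By the Segal condition, $H_M(r)\simeq H_M(1)^{\otimes r}$, so it remains to show $H_M(1)\simeq H^\bullet(M)$. This is the step we expect to be the main obstacle, since it requires knowing the bimodule $\LLL$ in arity one. We would compute $\LLL(1)$ directly, or equivalently check that $\RRR$ in arity one is $\RRR\circ_{\uifrob_n}F\,(1)\simeq F(1)$ on Segal objects, and then read off $(\LLL\circ_{\pois_n} X)(1)\simeq X(1)$. Our expectation is that the construction of $\LLL$ is compatible with restriction to the unary operations $\uifrob_n(1,1)=\Q=\pois_n(1,1)$ on the relevant part.

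Granting this, $H_M(1)\simeq X_M(1)\simeq \Omega_\sharp(\FM_M(1))\simeq \Omega(M)$, so $H_M(1)$ has cohomology $H^\bullet(M)$. Over $\Q$ we may pass to cohomology, or homotopy-transfer along the Segal maps, and obtain $H_M(r)\simeq (H^\bullet(M))^{\otimes r}$ as claimed.
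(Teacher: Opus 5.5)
Your argument follows the paper's. The paper obtains the corollary directly from Theorem~\ref{thm:main 1}: transport $\Omega_\sharp(\FM_M)$ along formality to a $\Pois_n^c$-comodule of configuration space type, then apply $L=\LLL\circ_{\pois_n}(-)$, whose output satisfies the Segal condition by the last clause of the theorem. The one piece you leave unproven, Step~3, is needed for $H_M(r)\simeq H^\bullet(M)^{\otimes r}$. It is not an obstacle, though, and the reason it holds is not a compatibility on unary operations.

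Use the explicit form $L(X)=(\rcom_n\circ X,\,d_X+d_\sigma)$ with $L(X)(r)=\bigoplus_{s\le r}\rcom_n(r,s)\otimes_{S_s}X(s)$. We have $\rcom_n(1,0)=\com_n(0,1)=0$, because $\Com_n$ has no nullary operations. We also have $\rcom_n(1,1)=\Q$. Moreover, $d_\sigma$ sums over pairs $i<j$ of trees, and there are none in arity one. Hence $L(X)(1)=(X(1),d_X)$ on the nose.

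Equivalently, $\LLL(1,-)\cong\pois_n(1,-)$ is the representable right $\pois_n$-module, so co-Yoneda gives $(\LLL\circ_{\pois_n}X)(1)\cong X(1)$.

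A second route is through the commuting right adjoints $\iota^*=j_*R$. These give $H_M(1)=(j_*RLX_M)(1)\simeq(j_*X_M)(1)\simeq X_M(1)$, where the last step is the configuration space type condition at $r=1$.

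Either way, $H_M(1)\simeq X_M(1)\simeq\Omega_\sharp(M)$, and the Segal condition completes the claim. The cofibrant replacement in Step~2 is harmless but unnecessary, since $L$ preserves all quasi-isomorphisms.
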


The structure is natural with respect to framed embeddings of manifolds. Also, note that by the equivalence \eqref{equ:Segal equiv}, the rational homotopy type of $\FM_M$ can be reconstructed from the homotopy $\uIFrob_n$-algebra $H_M$.

Let us remark on two connections to existing constructions in the literature.
The first is that our definition of homotopy Frobenius algebras differs from the one more prevalent in the literature, namely as an algebra over a cofibrant resolution $\uIFrob_{n,\infty}$ of the properad $\uIFrob_{n}$.
Furthermore, there is a natural simplicial category constructed by Hoffbeck-Leray-Vallette \cite{HoffbeckLerayVallette} which we denote by $\HLVCat$, and which we may take as defining the $\infty$-category of $\uIFrob_{n,\infty}$-modules. The underlying ordinary category of $\HLVCat$ is the category of $\uIFrob_{n,\infty}$-algebras with $\infty$-morphisms between objects. We show the following:

\begin{prop}\label{prop:HLV equiv intro}
    There is an equivalence of $\infty$-categories
    \[
    \HLVCat \simeq \CAlg(\uifrob_n\Mod)^\text{Segal},
    \]
\end{prop}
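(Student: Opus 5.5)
The plan is to pass through an intermediate model that sits naturally between the two sides. By Hoffbeck–Leray–Vallette \cite{HoffbeckLerayVallette}, the simplicial category $\HLVCat$ is equivalent to the $\infty$-category of algebras over the bar–cobar resolution $\Omega\Bar^c\uIFrob_n$. I would use the version whose objects are strict algebras over a cofibrant properad $\uIFrob_{n,\infty}$ and whose simplicial mapping spaces are built from $\infty$-morphisms with polynomial de Rham forms. On the other side, I would rectify objects of $\CAlg(\uifrob_n\Mod)^\text{Segal}$ using the standard Segal-type rectification for lax monoidal functors out of a PROP. Concretely, the envelope $\uifrob_{n,\infty}$ of the cofibrant properad yields a PROP weakly equivalent to $\uifrob_n$, since envelopes preserve quasi-isomorphisms in characteristic zero. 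Base change along $\uifrob_{n,\infty}\to\uifrob_n$ then gives a Quillen equivalence $\CAlg(\uifrob_{n,\infty}\Mod)\simeq \CAlg(\uifrob_n\Mod)$ that preserves the Segal condition. It therefore suffices to identify Segal objects over $\uifrob_{n,\infty}$ with strict $\uIFrob_{n,\infty}$-algebras up to equivalence.

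The key steps, in order, are as follows.

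\textbf{Step 1.} Construct the functor $\HLVCat\to\CAlg(\uifrob_n\Mod)^\text{Segal}$. A strict $\uIFrob_{n,\infty}$-algebra $V$ gives a strong monoidal functor $s\mapsto V^{\otimes s}$ out of $\uifrob_{n,\infty}$, and we push it forward to $\uifrob_n$ by derived base change. An $\infty$-morphism $V\rightsquigarrow W$ is a morphism of algebras over the two-coloured ``morphism'' properad, which is cofibrant and resolves the arrow category. We therefore get a zig-zag of strict lax monoidal functors, and this assembles into a simplicial functor on mapping spaces.

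\textbf{Step 2.} Show essential surjectivity. Given a Segal object $F$, I would transfer structure along the quasi-isomorphisms $F(1)^{\otimes r}\to F(r)$. Concretely, I would form the cobar-type replacement and use that cofibrancy of $\uIFrob_{n,\infty}$ lets one lift the strong monoidal structure on $r\mapsto F(1)^{\otimes r}$ up to homotopy. This produces a $\uIFrob_{n,\infty}$-algebra on $F(1)$, or on a quasi-isomorphic replacement, whose image is weakly equivalent to $F$.

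\textbf{Step 3.} Show full faithfulness. Both mapping spaces are derived: on the HLV side they are identified with mapping spaces in the model category of two-coloured algebras, and on our side with derived mapping spaces of lax monoidal functors. Using the Segal condition, mapping spaces of lax monoidal functors reduce to derived mapping spaces of properadic algebras on the generating object $1$. For this I would use the free–forgetful adjunction between properads and PROPs and the fact that commutative monoid objects under Day convolution are lax monoidal functors.

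I expect the main obstacle to be Step 2 and the compatibility of mapping spaces in Step 3. The subtlety is that properads, unlike operads, contain genus: compositions along several edges. The rectification of a lax monoidal functor to a strong monoidal one then needs the Day-convolution model structure on $\CAlg$ to behave like an $\infty$-categorical strictification of Segal objects. I would first attempt this via the $\infty$-operadic statement that Segal objects over the PROP envelope coincide with algebras over the properad, a Lurie-style envelope statement. I would then verify the homotopical hypotheses (cofibrancy and $\Sigma$-flatness in characteristic zero) that let us replace $\uifrob_n$ by $\uifrob_{n,\infty}$ throughout.
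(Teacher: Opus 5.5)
There is a genuine gap. Your global reduction matches the paper's: pass from $\uifrob_n$ to $\uifrob_{n,\infty}$ by the base-change Quillen equivalence on $\CAlg(-)$, which preserves the Segal condition, and then compare Segal objects with strict $\uIFrob_{n,\infty}$-algebras. On objects your functor, $\mathbb{L}f_*(V^{\otimes\bullet})$, is also the paper's. But the two steps that carry the content are not supplied.

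For essential surjectivity, ``use cofibrancy of $\uIFrob_{n,\infty}$ to lift the strong monoidal structure on $r\mapsto F(1)^{\otimes r}$'' does not specify a lifting problem, and none is obvious. A lax monoidal functor $M$ has no endomorphism properad, since horizontal composition would need maps $M(s+s')\to M(s)\otimes M(s')$. So there is nothing for $\uIFrob_{n,\infty}$ to map into. The missing idea is the one in Theorem~\ref{thm:htpy transfer}:
\begin{itemize}
\item Choose $H\to M(1)$ inducing the identity on cohomology, with $H=H(M(1))$.
\item Form the pointed properadic bimodule $\Hom(H,M)(i,j)=\Hom(H^{\otimes i},M(j))$. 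The left $\POp$-action goes through the $\uifrob_n$-action and the lax structure of $M$; the right $\End(H)$-action is precomposition.
\item The Segal condition is precisely what makes $\End(H)\to\Hom(H,M)$ a quasi-isomorphism, i.e.\ a quasi right torsor.
\item Proposition~\ref{prop:quasi torsor} rectifies quasi right torsors to ones of the form $(\AOp,\ROp,\ROp)$. Only then does cofibrancy of $\POp$ give a properad map $\POp\to\End(H)$ and a zig-zag $H^{\otimes\bullet}\simeq M$ of lax algebras.
\end{itemize}
Your fallback, a ``Lurie-style envelope statement'' identifying Segal lax monoidal functors on the envelope with algebras over the properad, is not an available result in this dg/properadic setting. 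Proving it is essentially the content of the proposition.

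Steps 1 and 3 have related problems. A zig-zag of strict maps attached to each $\infty$-morphism does not compose strictly, so it does not assemble into a simplicial functor. Hoffbeck--Leray--Vallette also do not identify their mapping spaces with mapping spaces in a model category of two-coloured algebras. Algebras over a properad carry no such model structure in general, since there are no free algebras; they use Maurer--Cartan spaces, equivalently fibres of properadic mapping spaces into $\End_{V,W}$. Your claim that Segal lax monoidal mapping spaces ``reduce to'' properadic ones is asserted, not argued.

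The paper handles both issues with one explicit cofibrant model, the bar resolution $F(V,\alpha)=(\uifrob_n\circ\Bar\uifrob_n\circ V,\ d_{Bar}+d_\alpha)$:
\begin{itemize}
\item $\infty$-morphisms are by definition morphisms of monoid $\Bar\uifrob_n$-comodules, so they act strictly on it, also after extending scalars to $\Omega(\Delta^\bullet)$. This gives an honest simplicial functor.
\item $F(V,\alpha)$ is quasi-free on $\Bar\uIFrob_n\circ V$, with a filtration compatible with the differential, hence cofibrant.
\item Therefore $\Map(F(V,\alpha),F(W,\beta))$ is the Maurer--Cartan space of maps from these generators into $\uifrob_n\circ\Bar\uifrob_n\circ W$. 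This dg Lie algebra is filtered quasi-isomorphic to $\fh_{(V,\alpha),(W,\beta)}$, and Goldman--Millson gives full faithfulness.
\item Essential surjectivity then follows from Theorem~\ref{thm:htpy transfer}, the weak equivalence $\tilde F(H,\alpha)\to H^{\otimes\bullet}$, and the base-change Quillen equivalence.
\end{itemize}
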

In particular, note that the homotopy $\uIFrob_n$-algebra structure of Corollary \ref{cor:uIFrob alg on H} can equivalently be encoded as a $\uIFrob_{n,\infty}$-algebra structure on $H^\bullet(M)$.

The second point is to clarify how our present results relate to the work of the second author
\cite{WillwacherObstruction}, who introduced graphical models for general Hopf $\Pois_n^c$-comodules of configuration space type.
In particular, the homotopy type of a Hopf $\Pois_n^c$-comodule of configuration space type is nicely encoded combinatorially by a Maurer-Cartan element in a certain graph complex.
Furthermore, morphisms between such comodules can also be encoded through graph complexes, and there is a simplicial category $\GCA$ packaging both objects and morphisms introduced by Abramyan \cite{Abramyan}.
Notably, the graph complexes involved in these constructions are built from undirected graphs.
On the other hand, $\uIFrob_{n,\infty}$-algebra structures are also naturally encoded as Maurer-Cartan elements in a graph complex, as are the mapping spaces in the Hoffbeck-Leray-Vallette simplicial category $\HLVCat$. However, the latter graph complexes are built from different, in particular directed acyclic graphs.
We show that both theories can be neatly related as follows. 

First, we construct a properad $\Bar^c\GX_n$ built from undirected 
graphs, that comes with a quasi-isomorphism of properads $\Bar^c\GX_n \to \uIFrob_{n,\infty}$, see Corollary \ref{cor:Theta qiso properad} below. 
We denote the Hoffbeck-Leray-Vallette simplicial category of $\Bar^c\GX_n$-algebras by $\GXCat$. This then comes with a natural equivalence of $\infty$-categories $\GXCat \simeq \HLVCat$. 

Second, both of these simplicial categories have subcategories of strongly unital objects $\HLVCat^{su}$ and $\GXCat^\text{su}$, see Section \ref{sec:strongly unital} for the definition. 
In particular, the category $\GXCat^\text{su}$ is almost identical to Abramyan's category $\GCA$, except that we do not impose finiteness conditions on the objects. In summary we have the following result.



\begin{thm}\label{thm:main 2}
    There are simplicial functors 
    \[
    \GCA \to \GXCat^{su}
    \xrightarrow{\sim} \GXCat 
    \xrightarrow{\sim} \HLVCat
    \]
    such that the two right-hand functors are equivalences of $\infty$-categories, and the left-hand functor is fully faithful.
\end{thm}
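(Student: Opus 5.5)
The plan is to treat the three functors separately and to compare mapping spaces through their graph complex descriptions. In the Hoffbeck--Leray--Vallette framework, a simplicial category of algebras over a cofibrant properad $\POp$ has as objects the Maurer--Cartan elements of the convolution dg Lie algebra $\Conv(\POp^{\text{\textexclamdown}},\End_V)$. Its simplicial mapping spaces are given by Maurer--Cartan sets of twisted convolution algebras tensored with $\Omega(\Delta^\bullet)$. A morphism of cofibrant properads therefore induces a simplicial functor in the covariant direction (pullback of algebra structures), and it acts on mapping spaces by the dual map on convolution algebras.

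\emph{Right-hand functor.} I would take it to be pullback along the quasi-isomorphism $\Bar^c\GX_n\to\uIFrob_{n,\infty}$ of Corollary \ref{cor:Theta qiso properad}. Both properads are cofibrant (cobar constructions, or at least quasi-free with a suitable filtration), so the general invariance theorem of Hoffbeck--Leray--Vallette applies and gives the following. On mapping spaces, a filtered quasi-isomorphism of complete dg Lie algebras yields a weak equivalence of Maurer--Cartan spaces; this is Goldman--Millson/Dolgushev--Rogers, applied after filtering by arity/genus. On objects, essential surjectivity follows from the existence of homotopy inverses for $\infty$-morphisms combined with homotopy transfer. So the main work is checking completeness and compatibility with the filtrations.

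\emph{Middle functor.} Here I would show that every $\Bar^c\GX_n$-algebra is $\infty$-isomorphic to a strongly unital one, and that the mapping spaces in the strongly unital subcategory agree with the full ones. For this I would split the graph complex into graphs with and without special unit vertices and build a contracting homotopy that removes unit decorations. The inclusion of the strongly unital convolution algebra should then be a filtered quasi-isomorphism, giving full faithfulness. Essential surjectivity would come from a gauge transformation that kills the non-unital components of an MC element, constructed inductively degree by degree using the acyclicity of the quotient. I expect this to be the \textbf{main obstacle}: the acyclicity of the complex of non-strongly-unital parts. I would attack it with a spectral sequence on the number of unit legs, reducing to a Koszul-type contractibility of the complex of unit insertions.

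\emph{Left-hand functor.} Abramyan's $\GCA$ is defined by MC elements and mapping spaces in undirected graph complexes that impose finiteness conditions and carry the strongly unital convention. I would identify these graph complexes literally with the strongly unital convolution complexes for $\Bar^c\GX_n$ built on $H=V$ finite-dimensional. The functor then sends an object to the same MC element, and on mapping spaces it is an isomorphism, which gives full faithfulness. It is not essentially surjective because of the finiteness conditions. The check here is purely combinatorial and consists of matching signs, degrees and the decorations of the graph vertices.
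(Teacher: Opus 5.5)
Your proposal follows essentially the same route as the paper: the comparison between $\HLVCat$ and $\GXCat$ goes through the quasi-isomorphism induced by $\Theta$ plus Goldman--Millson; the strongly unital comparison uses the contracting homotopy that trades univalent vertices for $1^*$-decorated input legs (the paper's Propositions \ref{prop:su equivalence} and \ref{prop:fh inclusion}, transported to $\GX_n$); and Abramyan's complexes are identified literally with the finite-dimensional strongly unital ones. The only adjustments are in the right-hand step. Because the coproperads are curved, the filtration to use is the one by the number of univalent vertices, where Proposition \ref{prop:Theta qiso} supplies the associated-graded quasi-isomorphism, rather than an arity/genus filtration. Essential surjectivity there comes from Goldman--Millson applied to the quasi-isomorphism \eqref{equ:def map 2} of twisted deformation complexes, not from homotopy transfer, which never changes the properad.
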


\begin{rem}
Let us remark on the role of parallelizability of the manifold $M$.
Note that our construction of a natural homotopy $n$-Frobenius structure on the cohomology of $M$ through the configuration spaces $\FM_M$ requires that $\FM_M$ has a natural right $\FM_n$-module structure. This in turn we can a priori only guarantee if $M$ is parallelized, and the right $\FM_n$-module structure depends on the parallelization.
However, note that in fact we only need weaker data than a parallelization. The configuration spaces $\FM_M$ are always a right module over the fiberwise Fulton-MacPherson operad $\FM_n^M$. Thus we only need a trivialization of the latter, and in fact only a rational such trivialization 
\[
\tau: \FM_n^\Q\times M^\Q  \xrightarrow{\sim}  (\FM_n^M)^\Q.
\]
Our construction of a homotopy unital involutive $n$-Frobenius structure is then natural in the pair $(M, \tau)$ consisting of the manifold $M$ and a trivialization as above.
The obstruction for the existence of such a parallelization is the triviality of the classifying map 
\[
M^\Q \to B\mathrm{SO}(n)^\Q \xrightarrow{f} B\mathrm{Aut}^h(\FM_n^\Q).
\]
The arrow $f$ has been studied over the reals in \cite{KhoroshkinWillwacher} and found to be nontrivial only on the Euler class (for even $n$), respectively the top Pontryagin class (for odd $n$). The top Pontryagin class of $M$ always vanishes (in up to $n$-dimensional families) by degree reasons, and thus the only obstruction to the existence of a trivialization $\tau$ is in fact the vanishing of the Euler class. We also note that explicit models for $(\FM_n^M)^\R$ have been constructed in \cite{CDIW}, which indeed trivialize if the Euler class vanishes. We expect that the same results also hold rationally.
\end{rem}

The above result constructs a homotopy $\uIFrob_n$-algebra structure on $H^\bullet(M)$ that is in particular natural in framed codimension zero embeddings. Moreover, if $M$ is closed, then the corresponding diagonal class $\Delta \in H^\bullet(M) \otimes H^\bullet(M)$ defines a non-degenerate pairing and we obtain a unique counit (or fundamental class) defined to be dual to the unit. Moreover, any map that preserves the counit is automatically an equivalence. We show the following homotopical version of that statement. Let us call a homotopy $\uIFrob_n$-algebra $A$ \emph{non-degenerate} if the diagonal class $\Delta \in A \otimes A$ defines a non-degenerate inner pairing on $H^\bullet(A)$. Also, let $\ucIFrob$ denote the properad governing unital, counital involutive Frobenius algebras, i.e. $\ucIFrob_n(r,s) = \Q[n(r-1)]$ for all $r$ and $s$. We then show the following: 
\begin{thm}[{Corollary \ref{cor:uc equiv} below}]
The forgetful functor $\cHLVCat \to \HLVCat$ identifies $\cHLVCat$ with the groupoid core of the full subcategory of non-degenerate unital involutive Frobenius algebras, that is, we obtain an equivalence of $\infty$-categories ($\infty$-groupoids)
\[
\cHLVCat \to \HLVCat^{\text{non-deg}, \simeq}.
\]
\end{thm}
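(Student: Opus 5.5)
We identify $\ucIFrob_n$ as a one-generator extension of $\uIFrob_n$: the only new operations are those with $r\ge 1$, $s$ arbitrary and at least one extra "output-free" generator, namely the counit $\epsilon\in\ucIFrob_n(0,1)$ of degree $-n$. Accordingly a $\ucIFrob_{n,\infty}$-structure on a $\uIFrob_{n,\infty}$-algebra $A$ is a Maurer--Cartan-type extension, and the fibre of the forgetful functor $\cHLVCat\to\HLVCat$ over $A$ is the simplicial set of such extensions. The plan is to show that this fibre is contractible when $A$ is non-degenerate and empty otherwise, and separately that every morphism of $\ucIFrob_{n,\infty}$-algebras becomes invertible in $\HLVCat$.

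First I would filter the deformation complex of extensions by arity (or the number of counit legs) and compute the associated graded. At the strict level, on $H=H^\bullet(A)$ with $\Delta$ non-degenerate, the counit is forced to be the functional dual to $1$ under the pairing induced by $\Delta$, since $(\epsilon\otimes\mathrm{id})\Delta=1$. The linearised obstruction/deformation complex for the higher counit components should then be the cone of the map $\Hom(H^{\otimes s},\Q)\to\Hom(H^{\otimes s},H)$ induced by composition with $\Delta$ (pairing with $\Delta$), which is an isomorphism precisely by non-degeneracy. Hence the graded pieces are acyclic, and a standard complete-filtration argument (Goldman--Millson type, or obstruction theory tower by tower) shows that the space of extensions is contractible. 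Conversely, $(\epsilon\otimes\mathrm{id})\Delta=1$ on cohomology forces $\Delta$ to be non-degenerate: it makes the induced map $H^\vee\to H$ surjective, and by graded finite-dimensionality it is then bijective. Thus the forgetful functor lands in the non-degenerate objects and is essentially surjective onto them.

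For morphisms, given an $\infty$-morphism $f\colon A\to B$ of $\ucIFrob_{n,\infty}$-algebras, on cohomology $f$ is unital and preserves $\Delta$ and $\epsilon$. Then $\epsilon_B f(a\cdot x)$ shows that $H(f)$ is injective, because the pairing on $A$ is non-degenerate. Moreover, preservation of $\Delta$ gives $(f\otimes f)\Delta_A=\Delta_B$, so the image of $f\otimes f$ contains a full-rank tensor, and hence $H(f)$ is surjective. So $f$ is a quasi-isomorphism, and the image lies in the groupoid core. Full faithfulness onto the core then reduces, via the fibration of mapping spaces $\Map_{uc}(A,B)\to\Map_u(A,B)$, to showing that the fibres are contractible. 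Over an equivalence $f$, the fibre is the space of homotopies between $\epsilon_B\circ f$ and $\epsilon_A$ together with higher coherences, and this is again governed by the same acyclic linearised complex. The main obstacle I expect is precisely this step: setting up the mapping-space fibre sequence in the Hoffbeck--Leray--Vallette simplicial model. There the mapping spaces are MC spaces of convolution complexes of the bar constructions, so one must check that the relevant convolution dg Lie algebra splits as an extension with a complete filtration whose graded pieces are the cones above. I would handle this by writing $\Bar\,\ucIFrob_n$ relative to $\Bar\,\uIFrob_n$ and filtering by the number of counit vertices.
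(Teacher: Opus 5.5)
Your overall architecture is the paper's: fibres over objects are empty or contractible and nonempty iff the algebra is non-degenerate (Proposition \ref{prop:ucIF uIF fiber}), and fibres of the mapping-space maps are empty or contractible and nonempty iff $\phi_1$ is a quasi-isomorphism (Proposition \ref{prop:cmap}). Your cohomological arguments are also correct; they are Lemmas \ref{lem:non-degenerate} and \ref{lem:ifrob morph}. The gap is the step that actually computes the fibres.

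First, the map $\Hom(H^{\otimes s},\Q)\to\Hom(H^{\otimes s},H)$ you write down cannot be an isomorphism unless $\dim H\le 1$, since the dimensions are $d^s$ and $d^{s+1}$. Composing with $\Delta$ lands in $\Hom(H^{\otimes(s-1)},H)$. More seriously, this is not the relative deformation complex. In $\ucIFrob_n$ the new components are just $\ucIFrob_n(0,s)$. But relative to $\uIFrob_{n,\infty}$, the generators of $\ucIFrob_{n,\infty}=\Bar^c\Bar\ucIFrob_n$ are all bar graphs containing a vertex without out-edges, and these can have any number $r\ge 0$ of outputs. So the kernel of $\Conv(\Bar\ucIFrob_n,\End_V)\to\Conv(\Bar\uIFrob_n,\End_V)$ has components in $\Hom(V^{\otimes s},V^{\otimes r})$ for all $r$. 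The fibre over $\alpha$ is then the MC space of a \emph{curved} $L_\infty$-algebra, whose curvature is the part of $d\alpha$ (and of the curvature of $\Bar\ucIFrob_n$) supported on these graphs. Existence and contractibility both require acyclicity of some associated graded of this whole complex. That is a genuine combinatorial computation, and you neither carry it out nor specify a filtration whose graded pieces you can identify. The same objection applies to the morphism step: ``homotopies between $\epsilon_B\circ f$ and $\epsilon_A$ with higher coherences'' is a heuristic, not an identification of the fibre of the Hoffbeck--Leray--Vallette MC spaces.

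The missing idea is a small relative cell model. The paper adjoins to $\uIFrob_{n,\infty}$ only two generators: a counit $c$ of arity $(0,1)$ and degree $-n$, and $h$ of arity $(1,0)$ and degree $-1$, with $dh=(\mathrm{id}\otimes c)\Delta-u$. The resulting pushout $X_{n,\infty}$ maps quasi-isomorphically to $\ucIFrob_{n,\infty}$ (Lemmas \ref{lem:Xn qiso} and \ref{lem:Xni qiso}). The fibre over $\alpha$ is then literally the space of pairs $(c,h)$ with $dc=0$ and $dh=(\mathrm{id}\otimes c)\Delta_V-u_V$. This is the rigorous form of your cone heuristic, with only $c\in\Hom(V,\Q)$ and $h\in V$, and it is nonempty iff the cohomological counit equation holds. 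Running the same computation with $\ucIFrob_n$ in place of $\uIFrob_n$ shows that $\uIFrob_n\to\ucIFrob_n$ is a homotopy epimorphism (Corollary \ref{cor:htpy epi}). That gives ``empty or contractible'' with no acyclicity computation at all. For morphisms, the paper uses the Hoffbeck--Leray--Vallette description of mapping spaces as fibres of $\Map(\uIFrob^{\bullet\rightsquigarrow\bullet}_{n,\infty},\End_{V,W})\to\Map(\uIFrob^{\bullet\,\bullet}_{n,\infty},\End_{V,W})$, together with the two-coloured analogue of the model, where $dh=(\mathrm{id}\otimes c'f)\Delta-u$. Nonemptiness of the fibre then reduces to Lemma \ref{lem:ifrob morph} applied to $\phi_1$.
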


\subsection*{Structure of the paper}
We establish basic notation and the homotopy theoretic and model categorical background in Section \ref{sec:background}.
The proof of our main Theorem \ref{thm:main 1} is given in Section \ref{sec:proof main 1}.
Section \ref{sec:htpy transfer} contains a rigidification result for homotopy $\uIFrob_n$-algebras. This is in particular used in the proof of Proposition \ref{prop:HLV equiv intro}.
In Section \ref{sec:GX} we construct our "undirected graph" model $\Bar^c\GX_n$ for $\uIFrob_{n,\infty}$.
In Section \ref{sec:HLV} the construction of the (equivalent) Hoffbeck-Leray-Vallette simplicial categories $\HLVCat$ and $\GXCat$ is recalled.
Proposition \ref{prop:HLV equiv intro} comparing the Hoffbeck-Leray-Vallette  category to ours is then shown in Section \ref{sec:HLV comparison}.
The notion of strong unitality is introduced in Section \ref{sec:strongly unital}, and Theorem \ref{thm:main 2} is shown there.
The Frobenius algebras considered until this point are unital but not counital.
The final Section \ref{sec:counital} contains a discussion of the Frobenius algebras that are both unital and counital and their mapping spaces.

\subsection*{Acknowledgements}
We drew inspiration from works of Marko \v Zivkovi\'c, whom we thank for helpful discussions during the early stages of this project.
We also thank Ben Knudsen, Manuel Krannich, Alexander Kupers and Jan Steinebrunner for helpful discussions.

\section{Background, notation and model categories}
\label{sec:background}
\subsection{Categorial notation}
Let $\cC$ be a category and let $X$ and $Y$ be objects of $\cC$. Then we write $\Mor_{\cC}(X,Y)$ for the set of morphisms in $\cC$, or also just $\Mor(X,Y)$ if the category is clear from the context.
If the category $\cC$ is a dg category, as is often the case in this paper, we write $\Hom_{\cC}(A,B)$ or just $\Hom(A,B)$ for the dg vector space of morphisms. Hence $\Mor(A,B)$ are the degree zero cocycles in $\Hom(A,B)$.
If the category $\cC$ is enriched over simplicial sets we write $\Map_{\cC}(A,B)$ or $\Map(A,B)$ to denote the simplicial mapping spaces.

\subsection{Several (pr)operads and PROPS}
Let $\Com$ be the operad governing non-unital commutative algebras and let $\uCom$ be the operad governing unital commutative algebras. We denote by $\Com_n:=\Com\{n\}$ its degree-shifted version defined such that 
\[
\Com_n(r) = 
\begin{cases}
    \Q[(n-1)(r-1)]\otimes \sgn_r^{\otimes n} & \text{for $r\geq 1$} \\
    0 & \text{for $r= 0$}
\end{cases},
\]
where $\sgn_r$ is the sign representation of $S_r$ and $\sgn_r^{\otimes n}$ is the sign representation for odd $n$ and the trivial representation for even $n$.
We denote by $\Lie$ the Lie operad and by $\Lie_n:=\Lie\{n-1\}$ its degree-shifted version.
The notation $\Pois_n$ shall refer to the operad governing unital $n$-Poisson algebras. $\Pois_n$ can be identified with the homology operad of the little $n$-disks operad. We furthermore have the isomorphism of symmetric sequences $\Pois_n \cong \uCom \circ \Lie_n$, where $\circ$ is the plethysm product on symmetric sequences.
The dual cooperads are denoted by $\Com^c$, $\Lie^c$ etc.



For $\mathsf{P}$ an operad or properad, we denote by lowercase letters (e.g., $\mathsf{p}$) its PROP envelope.
It is defined such that
\[
\mathsf{p}(r,s)
=
\bigoplus_k\left( 
\bigotimes_{r_1+\cdots+r_k=r\atop s_1+\cdots+s_k=s } \Ind_{S_{r_1}\times \cdots \times S_{r_k}\times S_{s_1}\times\cdots\times S_{s_k}}^{S_r\times S_s}
\mathsf P(r_1,s_1) \otimes \cdots \otimes\mathsf P(r_k,s_k)
\right)_{S_k}.
\]
Note that we follow standard properadic conventions in writing the number of inputs ($s$) as the second argument above, whereas $r$ is the number of outputs.
Note that given $\mathsf{P}$ and $\mathsf{Q}$ with envelopes $\mathsf{p}$ and $\mathsf{q}$, respectively, the envelope of $\mathsf{P} \circ \mathsf{Q}$ is given by the bisymmetric sequence
\[
(r,s) \mapsto \bigoplus_{k} \mathsf p(r,k) \otimes_{S_k} \mathsf q(k,s).
\]
We will abusively denote this bisymmetric sequence by $\mathsf{p} \circ \mathsf{q}$ as well. In particular, envelopes of properads become algebras in bisymmetric sequences under this (disconnected) composition product. Since this is now linear in both arguments, we can consider left modules, right modules and bimodules. With respect to this composition product bisymmetric sequences have inner Homs, namely, given bisymmetric sequences $x$, $y$ and $z$ there exists
\[
\Hom_{A-B}(x \circ_C y, z) = \Hom_{A-C}(x, \HHom_{-B}(y,z)) = \Hom_{C-}(y, \HHom_{A-}(x,z)),
\]
where for instance
\[
\HHom_{-s}(y,z)(r,s) = \prod_{k} \Hom_{S_k}(y(s,k), z(r,k))
\]
for $s$ the envelope of the trivial operad and
\[
\HHom_{-A}(y,z) = \mathrm{eq}(\HHom_{-s}(y,z) \rightrightarrows \HHom_{-s}(y \circ A, z) ).
\]

We consider $\mathsf{p}$ as a $\dgVect$-enriched symmetric monoidal category with object set $\mathbb N_0$.

For $\mathsf{P}$ a properad or PROP we denote by $\overleftarrow{\mathsf{P}}$ its opposite, defined such that $\overleftarrow{\mathsf{P}}(r,s) = \mathsf P(s,r)$.

\subsection{Monoidal structures and Day convolution}
For $\cC$ a small (symmetric) monoidal $\dgVect$-enriched category the category of enriched functors 
\[
\Fun(\cC, \dgVect)
\]
carries a (symmetric) monoidal structure by Day convolution. Concretely, the monoidal product of functors $F,G:\cC\to \dgVect$ is defined by left Kan extension 
\[
\begin{tikzcd}
\cC \times \cC \ar{r} \ar{dr}[swap]{\otimes}& \dgVect\times \dgVect \ar{r} & \dgVect \\
& \cC \ar[dashed]{ur}[swap]{F\otimes G}& 
\end{tikzcd}.
\]
The (commutative) monoid objects in $\Fun(\cC, \dgVect)$ can be identified with lax monoidal enriched functors $\cC\to \dgVect$.
As for the left Kan extension, there are fairly explicit formulas for the Day convolution. 

Next suppose that $\cC$ is not just $\dgVect$-enriched, but enriched over dg (cocommutative) coalgebras. 
This is true in particular if $\cC$ is the PROP envelope of a Hopf operad like $\ucom$, $\com$ or $\pois_n$.
Then there is a second monoidal structure on $\Fun(\cC, \dgVect)$ that we call the object-wise monoidal structure, defined such that for $F,G\in \Fun(\cC, \dgVect)$
\[
(F\otimes G)(r) \cong F(r) \otimes G(r).
\]

Below we shall need the following statement.
\begin{lemma}\label{lem:com day pointwise}
    For any $F,G\in \ucom\Mod$ the monoidal product via Day convolution $F\otimes_{\text{Day}} G$ agrees with the object-wise monoidal product  $F\otimes_{\text{obj.-wise}} G$, i.e.,
    \[
    (F\otimes_{Day} G)(r) \cong F(r) \otimes G(r).
    \]
\end{lemma}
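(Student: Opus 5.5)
We compute the Day convolution through the coend formula for the left Kan extension and then reduce it using the structure of the PROP envelope $\ucom$. Explicitly,
\[
(F\otimes_{Day} G)(r) \;=\; \int^{p,q\in \ucom} \ucom(r, p+q)\otimes F(p)\otimes G(q).
\]
Here the first argument of $\ucom(r,s)$ is the number of outputs, so this is the space of morphisms $p+q\to r$. The object-wise product is built from the coalgebra structure on $\ucom(r,s)=\Q[\mathrm{Maps}(s,r)]$, which is a linear span of set maps. The diagonal $f\mapsto f\otimes f$ lets $F(r)\otimes G(r)$ become a functor. The task is therefore to identify the coend with $F(r)\otimes G(r)$.

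\textbf{Step 1: the key input.} The category $\ucom$ is the $\Q$-linearization of the category $\mathrm{Fin}$ of finite sets, and the monoidal structure is disjoint union, which is the categorical coproduct in $\mathrm{Fin}$. Hence
\[
\ucom(r,p+q)=\Q[\mathrm{Maps}(p\sqcup q, r)] \cong \Q[\mathrm{Maps}(p,r)]\otimes \Q[\mathrm{Maps}(q,r)] = \ucom(r,p)\otimes\ucom(r,q),
\]
naturally in $p$ and $q$. This is exactly the statement that $\sqcup$ is the coproduct.

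\textbf{Step 2: separate the coend.} Substituting Step 1, the coend over $\ucom\otimes\ucom$ factors:
\[
\int^{p,q}\ucom(r,p)\otimes\ucom(r,q)\otimes F(p)\otimes G(q)\cong\Big(\int^{p}\ucom(r,p)\otimes F(p)\Big)\otimes\Big(\int^{q}\ucom(r,q)\otimes G(q)\Big).
\]
This factorization is valid because $\otimes$ over $\Q$ preserves colimits in each variable. By the co-Yoneda lemma for enriched functors, each factor reduces to $F(r)$ and $G(r)$ respectively.

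\textbf{Step 3: match the functoriality.} We must check that the resulting isomorphism is natural in $r$ and that the induced $\ucom$-action is the object-wise one. Naturality follows from naturality of Step 1 in $r$. A morphism $f\colon r\to r'$ acts on $\Q[\mathrm{Maps}(p\sqcup q,r)]$ by postcomposition. Under the splitting of Step 1 this becomes $f\otimes f$ acting on the two factors, which is precisely the diagonal used to define the object-wise product. We also verify that the symmetry and associativity constraints correspond, which is immediate from the set-level description.

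\textbf{Main obstacle.} The only real subtlety is Step 1 together with its compatibility in Step 3. One has to be careful with conventions: the envelope $\ucom$ has outputs in the first slot, and we need disjoint union to be the coproduct in the appropriate variance. If the variance were reversed, disjoint union would be the product rather than the coproduct, and the claim would fail. So the first thing to do is to confirm, from the definition of the envelope with $\uCom(1,s)=\Q$ for all $s\geq0$, that $\ucom(r,s)$ really is spanned by maps from $s$ to $r$. Once that is settled, the remaining steps are formal.
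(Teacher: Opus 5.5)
Your proof is correct, but it takes a different route from the paper's. The paper never writes down the coend. Instead it checks the universal property of the left Kan extension by hand: it constructs a bijection $\Hom_{\ucom\Mod}(A\otimes B,C)\cong \Hom_{\ucom\times\ucom\Mod}(A\boxtimes B,R(C))$, where $R$ is restriction along the monoidal product. One direction applies the morphism $\theta_r\in\ucom(r,2r)$ that multiplies inputs $i$ and $i+r$; the other pads with units. It then verifies four identities: functoriality of both assignments (using $x\circ\theta_s=\theta_r\circ(x'\boxtimes x'')$ for the Hopf coproduct $x\mapsto x'\otimes x''$), and that the two assignments are mutually inverse.

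In your language, $\theta_r$ is the codiagonal $r\sqcup r\to r$, and the unit padding is given by the coproduct inclusions $r\to r\sqcup s\leftarrow s$. So both arguments rest on the same fact: disjoint union is the coproduct in finite sets, and set maps are grouplike for the Hopf structure. You use this fact once, structurally, in the isomorphism $\ucom(r,p+q)\cong\ucom(r,p)\otimes\ucom(r,q)$, and let Fubini and co-Yoneda do the rest.

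Your version is shorter. It makes naturality in $r$, and the identification of the induced action with the diagonal one, transparent. It also exhibits the lemma as an instance of a general principle: Day convolution over the linearization of a cocartesian monoidal category is the pointwise tensor product. The paper's version stays elementwise and avoids coend calculus, at the cost of several separate verifications.

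Your caution about variance is resolved exactly as you anticipate: the envelope gives $\ucom(r,s)=\Q[\mathrm{Maps}(s,r)]$, i.e.\ morphisms from $s$ to $r$.
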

\begin{proof}
To check the statement we first unpack the definition of Day convolution as a left Kan extension. 
For $A,B\in\ucom\Mod$ we obtain a functor $A\boxtimes B: \ucom\times \ucom \to \dgVect$ as the composition 
\[
\ucom\times \ucom \xrightarrow{A\times B}  \dgVect\times \dgVect 
\xrightarrow{\otimes} \dgVect.
\]
The Day convolution $A\otimes_{Day}B$ is the left Kan extension in the diagram
\[
\begin{tikzcd}
    \ucom \ar{r}{A\otimes_{Day}B} & \dgVect \\
    \ucom\times \ucom \ar{ur}{A\boxtimes B} \ar{u} & 
\end{tikzcd}.
\]
This in turn means that we have an adjunction 
\[
\Hom_{\ucom\Mod}(A\otimes_{Day}B,C) \cong 
\Hom_{\ucom\times\ucom\Mod}(A\boxtimes B, R(C) ),
\]
where the right-adjoint $R(-)$ is restriction along the monoidal product $\ucom\times\ucom\to \ucom$. Explicitly, for $C\in \ucom\Mod$ we have on objects $R(C)(p,q)=C(p+q)$ and on morphisms $x\in \ucom(r,s)$, $y\in \ucom(r',s')$
\[
R(C)(x\times y) = C(x\boxtimes y),
\]
using the "horizontal" concatenation $\boxtimes$ we have on the PROP $\ucom$.
Now, having made explicit the definition of Day convolution we have to check that $A\otimes_{Day}B=A\otimes B:=A\otimes_{obj.-wise}B$ agrees with the object-wise monoidal product. We do this by checking explicitly the adjunction relation 
\[
\Hom_{\ucom\Mod}(A\otimes B,C) \stackrel{?}{\cong} 
\Hom_{\ucom\times\ucom\Mod}(A\boxtimes B, R(C) )
\]
by constructing a natural bijection between both sides.
Given any $\phi: A\boxtimes B \to R(C)$ we define the corresponding morphism $\phi^\sharp: A\otimes B\to C$ such that for $a\in A(r), b\in B(r)$
\[
\phi^\sharp(a\otimes b) := \theta_r (\phi(a\times b)),
\]
where $\theta_r\in \ucom(r,2r)$ is the morphism taking the product of input $i$ and $i+r$ for $i=1,\dots,r$.

Conversely, for any $\psi: A\otimes B\to C$ we define the corresponding morphism $\psi^\sharp: A\boxtimes B \to R(C)$ such that for $a\in A(r), b\in B(s)$
\[
\psi^\sharp(a\times b) = \psi(a 1^s\otimes 1^r b),
\]
where $a 1^s$ (respectively $1^r b$) is obtained from $a$ (resp. $b$) by applying the operation in $\ucom(r+s,r)$ (resp. $\ucom(r+s,s)$) that creates $s$ units after (resp. $r$ units before) the input.

We then have to verify four statements:

1) Functoriality of $\phi^\sharp$: For any $x\in \ucom(r,s)$ denote the coproduct in sum-free Sweedler notation by $x'\otimes x''$.
Then note that we have
\[
x \circ \theta_s = \theta_r \circ (x'\boxtimes x'').
\]
Now we can verify:
\[
x\phi^\sharp(a\otimes b) = x \theta_s(\phi(a\times b))
=
\theta_r (x'\boxtimes x'')\phi(a\times b)
=
\theta_r\phi(x'(a)\times x''(b)b)
\phi^\sharp(x'(a)\otimes x''(b)b)
=
\phi^\sharp(x(a\otimes b)).
\]

2) Functoriality of $\psi^\sharp$:
For $x\in \ucom(r,s)$ and $y\in \ucom(p,q)$ we compute 
\[
(x\boxtimes y)\psi^\sharp(a\times b)
=
(x\boxtimes y)\psi(a1^q\otimes 1^sb)
=
\psi((x\boxtimes y)(a1^q\otimes 1^sb))
=
\psi((x(a)1^q\otimes 1^sy(b)))
=
\psi^\sharp(x(a)\times y(b)).
\]

3) $(\phi^\sharp)^\sharp=\phi$: We compute 
\[
(\phi^\sharp)^\sharp(a\times b)
=
\phi^\sharp(a1^s\otimes 1^rb)
=
\theta_{r+s} \phi(a1^s\times 1^rb)
=
\theta_{r+s} u_{s,r}\phi(a\times b)
=
\phi(a\times b),
\]
where the morphism $u_{s,r}\in \ucom(2s+2r,r+s)$ pads with $r+s$ units and satisfies $\theta_{r+s} u_{s,r}=\mathit{id}$.

4) $(\psi^\sharp)^\sharp=\psi$:
We compute 
\[
(\psi^\sharp)^\sharp(a\otimes b)
=
\theta_r(\psi^\sharp(a\times b))
=
\theta_r(\psi(a1^r \otimes 1^rb))
=
\psi( \theta_r(a1^r \otimes 1^rb))
=
\psi(a\otimes b),
\]
using that $\theta_r(a1^r \otimes 1^rb)=a\otimes b$.
\end{proof}

\subsection{Model category structures}
Recall that the PROPS $\ucom$, $\pois_n$ and $\uifrob_n$ are (small) dg-categories with object set $\mathbb{N}_0$. By a result of Keller \cite[Theorem 3.2]{Keller} we can endow $\pois_n\Mod$ and $\uifrob_n\Mod$ with either the projective or injective model structure. We will endow $\ucom\Mod$ and $\pois_n\Mod$ with the injective model structures.
Both categories are symmetric monoidal. We use that $\pois_n$ and $\ucom$ are enriched in coalgebras, so we endow it with the object-wise monoidal structure. For $\ucom\Mod$ this object-wise monoidal structure is the same as the the Day-convolution product by Lemma \ref{lem:com day pointwise}.

\begin{prop}
    $\ucom\Mod$ and $\pois_n\Mod$ are symmetric monoidal model categories.
\end{prop}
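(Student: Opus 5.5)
We will verify the axioms of a symmetric monoidal model category directly for the injective model structures, using the object-wise monoidal product. In the injective model structure on $\Fun(\cC,\dgVect)$ for $\cC=\ucom$ or $\pois_n$ (Keller), weak equivalences and cofibrations are both detected object-wise: a morphism $f:F\to G$ is a weak equivalence iff each $f(r)$ is a quasi-isomorphism, and a cofibration iff each $f(r)$ is injective. The monoidal product is also computed object-wise, $(F\otimes G)(r)=F(r)\otimes G(r)$, with the $\cC$-action through the coalgebra structure on the morphism spaces. The unit is the functor $\mathbb 1$ with $\mathbb 1(r)=\Q$, on which $\cC$ acts through the counit of the morphism coalgebras. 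So all the conditions reduce to statements about dg vector spaces over $\Q$.

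The first thing to check is that the object-wise product is a well-defined closed symmetric monoidal structure on $\Fun(\cC,\dgVect)$. Since composition in $\cC$ is a map of coalgebras, the formula $x\cdot(a\otimes b)=\pm x'(a)\otimes x''(b)$ is associative and unital. Because the coalgebras are cocommutative, the symmetry of $\dgVect$ lifts to these modules. We need closedness so that the pushout-product axiom can be stated in its usual adjoint form, and so that $-\otimes F$ preserves colimits. For $\ucom$ it follows because the object-wise product agrees with the Day convolution product (Lemma \ref{lem:com day pointwise}), which is closed. For $\pois_n$ we can either repeat that argument, or note directly that $-\otimes F$ preserves colimits, since colimits are computed object-wise. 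A right adjoint then exists by the adjoint functor theorem, as $\Fun(\pois_n,\dgVect)$ is locally presentable.

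The main step is the pushout-product axiom. Let $i:A\to B$ and $j:C\to D$ be cofibrations. The pushout-product $i\,\square\, j: A\otimes D\sqcup_{A\otimes C} B\otimes C\to B\otimes D$ is computed object-wise, since both pushouts and $\otimes$ are. In each arity $r$ it is therefore the pushout-product of the injections $i(r),j(r)$ of dg vector spaces. Over a field such a map is again injective: after choosing linear complements $B(r)=A(r)\oplus B'$ and $D(r)=C(r)\oplus D'$ as graded spaces, its cokernel is $B'\otimes D'$. If in addition $i$ is acyclic, then $B(r)/A(r)$ is acyclic, so the cokernel $(B(r)/A(r))\otimes (D(r)/C(r))$ is acyclic by Künneth. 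Hence $i\,\square\, j$ is an acyclic cofibration, and the symmetric case is identical. The unit axiom is automatic, since every object is cofibrant in the injective structure: the initial object is $0$, and every map $0\to F$ is injective in each arity.

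The expected obstacle is not the homotopical part but the formal one. We need that the injective model structure of Keller really has object-wise injections as cofibrations in the enriched setting. We also need that the symmetry and associativity constraints of the object-wise product are $\cC$-equivariant, which requires the Hopf (coalgebra-enriched) structure to be strictly cocommutative and compatible with composition and horizontal concatenation. For $\pois_n$ this is the statement that $\Pois_n$ is a Hopf operad, which is inherited by its PROP envelope. We would verify this by a direct check on generators of the envelope.
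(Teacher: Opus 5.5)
Your proposal is correct and follows the paper's proof. In the injective structure, cofibrations and weak equivalences are object-wise, and the product is object-wise, so the pushout-product axiom reduces to the corresponding statement in $\dgVect$; the unit axiom is automatic because every object is cofibrant. Your extra checks are sound details the paper leaves implicit: closedness, well-definedness of the object-wise product via the cocommutative coalgebra enrichment, and the explicit cokernel $(B/A)\otimes(D/C)$.
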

\begin{proof}
We have to verify the pushout-product axiom and the unit axiom. Here the latter is automatic since all objects are cofibrant.
For the former, let $f,g$ be two cofibrations in $\ucom\Mod$ (respectively $\pois_n\Mod$). These are objectwise injective morphisms.
Then since the tensor product is computed objectwise and $\dgVect$ is a monoidal model category, we have that the pushout-product $f\square g$ is again a cofibration, and furthermore acyclic if $f$ or $g$ is.
\end{proof}

Let $\iota: \ucom\to \uifrob_n$ be the natural inclusion of PROPs. We have a free/forgetful adjunction ("base change")
\begin{equation}\label{equ:iotaiota}
\iota_!:=\rcom_n\circ- \colon \ucom\Mod \leftrightarrows \uifrob_n\Mod \colon \iota^*.
\end{equation}

We define a cofibrantly generated model category structure on $\uifrob_n\Mod$ by right transfer along the above adjunction, where $\ucom\Mod$ is equipped with the injective model structure. That is, the weak equivalences are the quasi-isomorphisms, the fibrations are those morphisms that are fibrations of $\ucom$-modules, and the generating (acyclic) cofibrations are obtained by applying $\ucom\circ-$ to the generating (acyclic) cofibrations of $\ucom\Mod$.
\begin{lemma}
\label{lem:uifrob model}
    The model category structure on $\uifrob_n\Mod$ obtained from the injective model category structure on $\ucom\Mod$ by right transfer along the forgetful functor
    $\iota^*:\uifrob_n\Mod\to \ucom\Mod$ is well-defined.
\end{lemma}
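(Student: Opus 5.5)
We apply the standard transfer criterion for cofibrantly generated model structures (Kan's transfer theorem, e.g.\ Hirschhorn 11.3.2, or Crans' version). The injective model structure on $\ucom\Mod$ is combinatorial by Keller's result, hence cofibrantly generated. Let $I$ and $J$ be sets of generating cofibrations and generating acyclic cofibrations. The category $\uifrob_n\Mod$ is locally presentable, being a category of enriched functors from a small dg category to $\dgVect$. The forgetful functor $\iota^*$ preserves filtered colimits, since colimits in functor categories are computed objectwise. Hence the smallness hypotheses on $\iota_! I$ and $\iota_! J$ hold automatically.

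The remaining condition is the acyclicity condition: every relative $\iota_!J$-cell complex is sent by $\iota^*$ to a weak equivalence. It suffices to verify the more convenient Quillen path-object criterion. First, every object of $\uifrob_n\Mod$ is fibrant. Second, every object has a functorial path object, namely a factorization of the diagonal into a weak equivalence followed by a fibration.

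For the path object we would take $F \mapsto F\otimes \Omega^\bullet(\Delta^1)$, i.e.\ objectwise $F(r)\otimes \Q[t,dt]$. This is clearly a $\uifrob_n$-module, since the $\uifrob_n$-action is on the first factor. The map $F\to F\otimes\Q[t,dt]$ is an objectwise quasi-isomorphism. The evaluation map $F\otimes\Q[t,dt]\to F\times F$ at $t=0,1$ must be a fibration, i.e.\ an injective fibration of $\ucom$-modules after forgetting.

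This is the main obstacle, because injective fibrations are not just objectwise surjections. We would handle it by observing that $\iota^*$ commutes with $-\otimes\Q[t,dt]$, and that the injective model structure on $\ucom\Mod$ is simplicial/tensored over dg commutative algebras in a compatible way. So the question reduces to showing that for a fibrant $X$ in $\ucom\Mod$, the map $X^{\Delta^1}\to X\times X$ is a fibration. That holds because, by the pushout-product axiom just proven, cotensoring with the cofibration $\partial\Delta^1\to \Delta^1$ preserves fibrations between fibrant objects.

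So one must also know that every $\iota^*F$ is injectively fibrant. We expect this to follow from a direct argument: $\iota^*F$ is a module over the larger PROP $\uifrob_n$, and the counit/coinduction structure exhibits it as a retract of a coinduced object $\coInd$ built from $\uifrob_n$. Coinduced objects are injectively fibrant because the right adjoint of the restriction (coinduction) preserves fibrant objects when restriction preserves cofibrations (monomorphisms).

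If this fibrancy fails in general, the fallback is the direct route. We would show that $\iota^*\iota_!$ applied to a generating acyclic cofibration, i.e.\ $\rcom_n\circ j$, is an objectwise injective quasi-isomorphism. This holds since $\rcom_n\circ_{\ucom}-$ is exact over a field of characteristic zero: it is built from tensoring with $\Q[S]$-modules and taking coinvariants. Objectwise injective quasi-isomorphisms are stable under pushouts and transfinite composition in $\ucom\Mod$, since those are computed objectwise. This gives the acyclicity condition directly.
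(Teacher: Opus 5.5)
Your fallback paragraph is a complete proof, and it is essentially the paper's argument. The route you lead with, however, fails at its first step.

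\textbf{The path-object route.} It is not true that every object of $\uifrob_n\Mod$ is fibrant in the transferred structure, i.e.\ that $\iota^*F$ is injectively fibrant for every $F$.

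Your justification is circular. The triangle identity $\varepsilon_{\iota^*F}\circ\iota^*(\eta_F)=\mathrm{id}$ exhibits $\iota^*F$ as a retract of $\iota^*\iota_*(\iota^*F)$. That object is coinduced from $\iota^*F$ itself, and $\iota^*\iota_*$ is only known to preserve objects that are already fibrant.

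There are in fact counterexamples.
\begin{enumerate}
\item Since $\ucom(r,s)=\Q[\mathrm{Map}(\underline s,\underline r)]$, the category $\ucom\Mod$ is the category of functors from finite sets to $\dgVect$.
\item The abelian category of functors $\mathrm{Fin}\to\mathrm{Vect}_\Q$ is not semisimple. Take the constant functor $\Q$, its subfunctor $\Q_{\geq 1}$ vanishing on the empty set, and the quotient $S_0$ concentrated on the empty set. The sequence $0\to\Q_{\geq1}\to\Q\to S_0\to 0$ does not split, because naturality along $\emptyset\to\{1\}$ kills any section.
\item Hence $\Q_{\geq 1}$, placed in degree $0$, is not injectively fibrant.
\item For $n\geq 2$, every operation of $\uifrob_n$ not lying in $\ucom$ has nonzero degree. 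These operations therefore span an ideal, and the quotient gives a PROP map $\pi\colon\uifrob_n\to\ucom$ with $\pi\iota=\mathrm{id}$.
\item Then $\pi^*\Q_{\geq1}$ is a $\uifrob_n$-module whose restriction is not fibrant.
\end{enumerate}
Without all objects fibrant, or an independently constructed fibrant replacement, Quillen's path-object criterion is not available. This part of the proposal should be discarded. Your cotensoring argument for the path object also presupposed fibrancy of $\iota^*F$, so it does not rescue the route.

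\textbf{The fallback route.} This is what the paper does, and it suffices on its own.
\begin{enumerate}
\item Colimits are computed objectwise, so $\iota^*$ preserves pushouts and transfinite compositions.
\item $\iota^*\iota_!X\cong\rcom_n\circ X$. Write this as $\rcom_n\circ -$, not $\rcom_n\circ_{\ucom}-$; it uses $\uifrob_n\cong\rcom_n\circ\ucom$ as right $\ucom$-modules. This functor is exact in characteristic zero.
\item So $\iota_!$ of an injective quasi-isomorphism is again an injective quasi-isomorphism after restriction.
\item Consequently $\iota^*$ sends relative $\iota_!J$-cell complexes, and their retracts, to injective quasi-isomorphisms.
\item The standard transfer theorem then applies; the paper invokes Heuts--Moerdijk, Theorem 7.44 with Remark 7.45.
\end{enumerate}
Present this argument as the proof.
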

\begin{proof}
    We first note that all colimits in $\ucom\Mod$ and $\pois_n\Mod$ are created object-wise in $\dgVect$. 
    In particular, the forgetful functor $\iota^*$ preserves all colimits. 
    Furthermore, the acyclic cofibrations in $\ucom\Mod$ are the injective quasi-isomorphisms. But applying the left-adjoint $\iota_!=\rcom_n\circ-$ sends injective quasi-isomorphisms to injective quasi-isomorphisms.
    Hence by \cite[Theorem 7.44 and Remark 7.45]{HeutsMoerdijk} the transferred model structure is well-defined.
\end{proof}

The category $\uifrob_n\Mod$ is equipped with the symmetric monoidal product by Day convolution, using that $\uifrob_n$ is a symmetric monoidal category. Since the inclusion $\iota:\ucom\to \uifrob_n$ is in particular a strong monoidal functor, the functor $\iota_!$ is strong monoidal as well, and consequently its right adjoint $\iota^*$ is lax monoidal (see \cite[Proposition 1]{DayStreet}).

\begin{lemma}\label{lem:uifrob mon model}
    The category $\uifrob_n\Mod$ with the model structure of Lemma \ref{lem:uifrob model} and equipped with the symmetric monoidal product by Day convolution is a symmetric monoidal model category, and \eqref{equ:iotaiota} is a symmetric monoidal Quillen adjunction. 
\end{lemma}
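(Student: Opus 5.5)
The plan is to check the pushout-product axiom and the unit axiom on generating (acyclic) cofibrations, using that the model structure is transferred from $\ucom\Mod$ along $\iota_!\dashv\iota^*$, and that $\iota_!$ is strong symmetric monoidal. Once these hold, $\iota_!$ is a strong monoidal left Quillen functor, and \eqref{equ:iotaiota} is a symmetric monoidal Quillen adjunction.

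\emph{Step 1: pushout-product axiom.} By Lemma \ref{lem:uifrob model}, the generating cofibrations (resp. acyclic cofibrations) of $\uifrob_n\Mod$ are the maps $\iota_!(f)$ with $f$ a generating (acyclic) cofibration of $\ucom\Mod$. By the standard reduction (e.g.\ Hovey, Cor.\ 4.2.5), it suffices to check $\iota_!(f)\square\iota_!(g)$ for generators $f,g$. Since $\iota_!$ is a strong symmetric monoidal left adjoint, it preserves pushouts and tensor products. Hence
\[
\iota_!(f)\square \iota_!(g)\cong \iota_!(f\square g),
\]
where the pushout-product on the right is formed in $\ucom\Mod$ with its object-wise product. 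By Lemma \ref{lem:com day pointwise}, this object-wise product coincides with Day convolution, and it is this compatibility that makes $\iota_!$ strong monoidal with respect to the chosen structures. We showed that $\ucom\Mod$ is a symmetric monoidal model category, so $f\square g$ is a cofibration, acyclic if $f$ or $g$ is. Since $\iota_!$ is left Quillen, $\iota_!(f\square g)$ is then a cofibration, acyclic if $f$ or $g$ is.

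\emph{Step 2: unit axiom.} The unit of Day convolution on $\uifrob_n\Mod$ is the representable $\uifrob_n(-,0)$, which equals $\iota_!$ of the unit $\ucom(-,0)$ of $\ucom\Mod$. Every object of $\ucom\Mod$ is cofibrant in the injective structure, so its image under the left Quillen functor $\iota_!$ is cofibrant. A cofibrant unit makes the unit axiom automatic.

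\emph{Step 3: main obstacle.} The delicate point is verifying that $\iota_!$ is strong monoidal and commutes with pushout-products as claimed. The paper asserts strong monoidality from $\iota$ being strong monoidal, via Day–Street, but only for Day convolution on both sides, so Lemma \ref{lem:com day pointwise} is essential to match the monoidal structure actually used on $\ucom\Mod$. One also needs $\iota_!=\rcom_n\circ-$ to preserve colimits, which holds because it is a left adjoint. If the generator reduction needs a cofibrantly generated form of the axiom, I would use that $\square$ of relative cell complexes is a relative cell complex built from the pushout-products of the generators.
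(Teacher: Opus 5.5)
Your proposal is correct and follows essentially the same route as the paper: reduce the pushout-product axiom to generators $\iota_!(i),\iota_!(j)$, use $\iota_!(i)\square\iota_!(j)\cong\iota_!(i\square j)$ (strong monoidality plus colimit preservation of $\iota_!$), and get the unit axiom from cofibrancy of the unit as the image under $\iota_!$ of a cofibrant object. Your explicit appeal to Lemma~\ref{lem:com day pointwise} to match the object-wise and Day structures on $\ucom\Mod$ makes explicit a point the paper states just before the proposition, rather than a different argument.
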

\begin{proof}
We have to verify the pushout-product axiom and the unit axiom for $\uifrob_n\Mod$.
It suffices to check the pushout-product axiom on generating cofibrations $\iota_! i, \iota_! j$, for $i,j$ generating cofibrations for $\ucom\Mod$.
Then we have 
\[
(\iota_!i) \square (\iota_!i) = \iota_! (i\square j)
\]
since $\iota_!$ is strong monoidal and preserves colimits.
Hence the left-hand morphism is a cofibration, and furthermore acyclic if $i$ or $j$ is.

The unit axiom is automatic since the monoidal unit of $\ucom$ is cofibrant (as is any other object) and $\iota_!$ is strong monoidal, so that the monoidal unit of $\uifrob_n\Mod$ is also cofibrant.
\end{proof}

As a consequence, we may also endow the categories $\CAlg(\pois_n\Mod)$ and $\CAlg(\uifrob_n\Mod)$ of commutative algebra objects in $\pois_n\Mod$ and $\uifrob_n\Mod$ with model category structures.

\begin{prop}
$\CAlg(\pois_n\Mod)$ and $\CAlg(\uifrob_n\Mod)$ carry model structures, right-transferred from the underlying monoidal categories.
\end{prop}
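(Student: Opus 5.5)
The plan is to right-transfer along the free--forgetful adjunction
\[
\mathrm{Sym}\colon \cM \leftrightarrows \CAlg(\cM) \colon U, \qquad \mathrm{Sym}(X)=\bigoplus_{k\geq 0} (X^{\otimes k})_{S_k},
\]
for $\cM$ either $\pois_n\Mod$ (object-wise product) or $\uifrob_n\Mod$ (Day convolution). Both are symmetric monoidal model categories by the propositions and Lemma \ref{lem:uifrob mon model} above. Both are also cofibrantly generated: $\pois_n\Mod$ with the injective structure by Keller's result, and $\uifrob_n\Mod$ by construction as a transferred structure. We work over $\Q$.

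The standard route is via Schwede--Shipley, or more precisely White's theorem on commutative monoids, or equivalently \cite[Theorem 7.44]{HeutsMoerdijk} as in Lemma \ref{lem:uifrob model}. For this we need three things:
\begin{enumerate}[(i)]
\item $\CAlg(\cM)$ is locally presentable. This holds because $\cM$ is, and $\CAlg$ is the category of algebras over a finitary monad.
\item $U$ preserves filtered colimits. This holds since $\mathrm{Sym}$ is finitary and filtered colimits of commutative monoids are computed underlying.
\item Every relative $\mathrm{Sym}(J)$-cell complex is a weak equivalence, where $J$ denotes the generating acyclic cofibrations. This is the key condition.
\end{enumerate}

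For (iii), we filter a pushout $A\to A\amalg_{\mathrm{Sym}(K)}\mathrm{Sym}(L)$ along $j\colon K\to L$ in $J$ by powers of $L$, in the usual way. The associated graded pieces are $A\otimes (j^{\square k})/S_k$. By the pushout-product axiom, $j^{\square k}$ is an acyclic cofibration. In characteristic zero, taking $S_k$-coinvariants is a retract of the identity (average over the group), so $j^{\square k}/S_k$ is again an acyclic cofibration. It remains to see that $A\otimes(-)$ preserves acyclic cofibrations.

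On the $\pois_n\Mod$ side this is immediate. Cofibrations are objectwise injections, every object is cofibrant, and the product is objectwise, so tensoring with $A$ preserves injective quasi-isomorphisms (we are over a field). On the $\uifrob_n\Mod$ side, the generating acyclic cofibrations are $\iota_! j$ for $j$ an acyclic cofibration of $\ucom\Mod$. Since $\iota_!$ is strong monoidal, we get
\[
A\otimes_{Day}\iota_!(j) \cong \iota_!\bigl(\iota^*A\otimes j\bigr)
\]
(projection-formula style). By Lemma \ref{lem:com day pointwise}, $\iota^*A\otimes j$ is computed objectwise in $\ucom\Mod$, so it is an injective quasi-isomorphism. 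Then $\iota_!$ preserves injective quasi-isomorphisms, as noted in the proof of Lemma \ref{lem:uifrob model}. Transfinite compositions and pushouts of such maps remain weak equivalences because weak equivalences and colimits are detected objectwise in $\dgVect$.

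The main obstacle I expect is justifying the projection formula for Day convolution with $\iota_!$ for an arbitrary $A$, not just a free one. If it fails as stated, the fallback is this. Observe that $\iota^*$ preserves colimits and is lax monoidal, and that the underlying $\ucom$-module of $A\otimes_{Day}\iota_!K$ is $\rcom_n\circ(\ldots)$. From this, show directly that $\iota^*$ of $A\otimes_{Day}(-)$ preserves injective quasi-isomorphisms between objects in the image of $\iota_!$. This uses again that $\Q$-coefficients make all coinvariant constructions exact.
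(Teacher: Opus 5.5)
\textbf{The $\uifrob_n\Mod$ half of step (iii) has a genuine gap.}

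Your treatment of $\pois_n\Mod$ is fine. All objects are injectively cofibrant and the product is objectwise, so $A\otimes(-)$ preserves injective quasi-isomorphisms. Your averaging over $S_k$ is exactly the characteristic-zero reduction of symmetric h-monoidality to h-monoidality that the paper uses before invoking Pavlov--Scholbach.

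The problem is the projection formula $A\otimes_{\mathrm{Day}}\iota_!Y\cong\iota_!(\iota^*A\otimes Y)$, which is false in general. Take $Y=\ucom(0,-)$, the monoidal unit. Then $\iota_!Y$ is the unit of $\uifrob_n\Mod$, so the formula would make the counit $\iota_!\iota^*A\to A$ an isomorphism for every $A$. This already fails for the representable $A=\Hom_{\uifrob_n}(1,-)$ in arity $2$:
\begin{itemize}
\item $A(2)$ is $3$-dimensional: the coproduct, plus the two ways of inserting the unit.
\item $(\iota_!\iota^*A)(2)=\bigoplus_m\rcom_n(2,m)\otimes_{S_m}A(m)$ is $4$-dimensional: one dimension from $m=1$ and three from $m=2$.
\end{itemize}

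Consequently you have shown neither that $A\otimes_{\mathrm{Day}}\iota_!(j)$ is a quasi-isomorphism nor that it is objectwise injective. Injectivity is also needed. Your claim that pushouts and transfinite composites stay weak equivalences rests on the maps being injective, and pushouts of non-injective quasi-isomorphisms of complexes need not be quasi-isomorphisms.

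The fallback does not close the gap. It restates the required claim. Moreover, describing $\iota^*(A\otimes_{\mathrm{Day}}\iota_!K)$ as $\rcom_n\circ(\cdots)$ naturally in $K$ would itself amount to a projection formula.

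What is missing is h-monoidality of $(\uifrob_n\Mod,\otimes_{\mathrm{Day}})$ with the transferred structure. For every object $Z$ and every (acyclic) cofibration $f$, the map $f\otimes_{\mathrm{Day}}Z$ must be an (acyclic) h-cofibration, and this must hold for arbitrary, non-free $Z$. The paper never computes Day convolutions with non-free objects. Instead it obtains h-monoidality from the Batanin--Berger criterion, which works as follows:
\begin{itemize}
\item The forgetful functor from $\uifrob_n\Mod$ to symmetric sequences has a right adjoint.
\item Hence internal homs into the coinduced objects $R(i)$ detect weak equivalences.
\item The adjunction $\HHom(f\otimes Z,R)\cong\HHom(f,\HHom(Z,R))$ then moves $Z$ out of the way.
\end{itemize}
With h-monoidality and combinatoriality in hand, the paper concludes by Pavlov--Scholbach's theorem. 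Your explicit cell-attachment argument is essentially a hands-on version of that theorem, and it can only be completed once this h-monoidality input is supplied.
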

\begin{proof}
We check that in both cases the hypothesis of \cite[Theorem 5.11]{PavlovScholbach} are satisfied. That is
\begin{itemize}
    \item $\mathcal{C}$ is combinatorial and weak equivalences are closed under transfinite compositions.
    \item $\mathcal{C}$ symmetric is $h$-monoidal.
\end{itemize}
Since we are in characteristic zero, symmetric $h$-monoidality is implied by $h$-monoidality.
For $\pois_n\Mod$, we obtain $h$-monoidality because everything is computed/defined on the underlying chain complex.
It is combinatorial by \cite[Theorem 2.23]{Bayeh}, since everything in sight is presentable and co-anodyne maps are split-surjections with acyclic kernels, hence equivalences.

For $\uifrob_n\Mod$ we apply 
\cite[Proposition 1.15]{BataninBerger} by noting that the internal hom detects weak equivalences since the forgetful functor to symmetric sequences in $\dgVect$ has a right adjoint. In other words, there are coinduced $R(i) \in \uifrob_n\Mod$ such that $\HHom_{\uifrob_n}(X, R(i))(0) = X(i)$.
\end{proof}

\section{The proof of Theorem \ref{thm:main 1}}
\label{sec:proof main 1}

\subsection{Universal Lambrechts-Stanley construction and right adjoint $R$}
Given an involutive (non-counital) $n$-Frobenius algebra $A$, Lambrechts-Stanley \cite{LambrechtsStanleyModel} construct a $\pois_n^c$-right comodule (equivalently a $\pois_n$-left module) in dgcas given by
\[
LS_A(r) := A^{\otimes r} \otimes \Pois_n^c(k) / \langle \pi_i^*(a)\omega_{ij} = \pm \pi_j^*(a)\omega_{ij}, d\omega_{ij} = \pi_{ij}^*(\Delta(1)) \rangle
\]
We refer to \cite[Section 5.4]{WillwacherObstruction} and \cite[Section 1.8]{Idrissi} for the verification that this is indeed a Hopf $\pois_n$-module. Moreover, $LS_A$ is cofree as a $\lie_n^c$-comodule, that is, the natural map
\[
LS_A \overset{\cong}{\to} \rlie_n^c \circ A^{\otimes \bullet}
\]
is an isomorphism after forgetting the differential, where we denote by $A^{\otimes \bullet}$ the symmetric sequence whose $r$-ary piece is $A^{\otimes r}$. Under this isomorphism, the differential can be described as taking reduced cocomposition in $\lie_n^c$, projecting to the cogenerators (which are the generators of $\com_n$) and then  acting on $A$,
\[
\rlie_n^c \circ A^{\otimes \bullet}
\to
\rlie_n^c \circ \rlie_n^c \circ A^{\otimes \bullet}
\to
\lie_n^c \circ \rcom_n \circ A^{\otimes \bullet}
\to 
\lie_n^c \circ A^{\otimes \bullet}.
\]
In other word, we obtain an isomorphism
\[
LS_A \overset{\cong}{\to} (\rlie_n^c \circ_\tau \rcom_n) \circ_{\rcom_n} A^{\otimes \bullet}
\]
where
\[
(\rlie_n^c \circ_\tau \rcom_n) = (\rlie_n^c \circ \rcom_n , d_\tau)
\]
is the (opposite of the) Koszul complex of $\com_n$.
Note that the above construction goes through for an involutive (non-counital) $n$-Frobenius algebra in any symmetric monoidal dg-category that is tensored over chain complexes, we can apply it to the "universal example", namely to $A = \uifrob_n(1,-)$ in $\mathcal{M}od\text{-}\uifrob_n$ to obtain the following
\begin{prop}
The dg $\lie_n$-$\uifrob_n$ bimodule
\[
\RRR := (\rlie_n^c \circ_\tau \rcom_n) \circ_{\rcom_n} \uifrob_n
\]
extends to an algebra in $\pois_n$-$\uifrob_n$ bimodules. In other words, it induces a lax symmetric monoidal functor
\begin{align*}
    R \colon \uifrob_n\Mod &\to \pois_n\Mod \\
    X &\mapsto \RRR \circ_{\uifrob_n} X.
\end{align*}
\end{prop}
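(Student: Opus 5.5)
The approach is to apply the Lambrechts–Stanley construction to the universal involutive $n$-Frobenius algebra. For that we must first say what an involutive unital $n$-Frobenius algebra in a general symmetric monoidal dg category $\cC$ tensored over $\dgVect$ is. It consists of an object $A$, a unital commutative multiplication $A\otimes A\to A$, and a diagonal $\Delta\colon \mathbb 1[-n]\to A\otimes A$. These are required to satisfy $(-1)^n$-symmetry, the Frobenius relation $(a\otimes 1)\Delta=(1\otimes a)\Delta$ (phrased via the multiplication maps $A^{\otimes 3}\to A^{\otimes 2}$), and vanishing Euler class. Such data are the same as a strong symmetric monoidal functor $\uifrob_n\to\cC$.

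Take $\cC=\mathcal Mod\text{-}\uifrob_n$ (right modules, i.e.\ contravariant functors) with Day convolution. The Yoneda object $A=\uifrob_n(1,-)$ is then an involutive Frobenius algebra, with $A^{\otimes r}=\uifrob_n(r,-)$ by strong monoidality of the Yoneda embedding. Its structure maps are precomposition with the generating operations of $\uIFrob_n$. Now run the Lambrechts–Stanley formula verbatim in $\cC$:
\[
LS_A(r)=A^{\otimes r}\otimes\Pois_n^c(r)\big/\langle \pi_i^*(a)\omega_{ij}=\pm\pi_j^*(a)\omega_{ij},\ d\omega_{ij}=\pi_{ij}^*\Delta\rangle .
\]
This is a quotient of a tensor product of objects of $\cC$ by relations expressed through the algebra maps, so it makes sense in $\cC$. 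For the identification with $\RRR$ we reuse the argument already sketched before the statement: the relations allow every decoration to be moved onto the first vertex of each Lie word. This identifies the underlying graded object with $\rlie_n^c\circ A^{\otimes\bullet}$, and the differential with the Koszul twisting one. We obtain $LS_A\cong(\rlie_n^c\circ_\tau\rcom_n)\circ_{\rcom_n}\uifrob_n=\RRR$ as $\lie_n$-$\uifrob_n$ bimodules. The only point to check is that the cofreeness argument uses nothing beyond the monoidal structure of $\cC$, which holds because $\cC$ is additive and $\otimes$ preserves colimits in each variable.

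Next, the Hopf $\pois_n$-module structure. The references \cite[Section 5.4]{WillwacherObstruction} and \cite[Section 1.8]{Idrissi} verify for $A$ in $\dgVect$ that $LS_A$ is a right Hopf $\Pois_n^c$-comodule in dgcas. That is, the cocomposition maps are well-defined on the quotient, compatible with the differential, and multiplicative. I will check that these verifications are \emph{identities among structure maps}: each is a finite diagram built only from multiplication, unit, $\Delta$, symmetries and the Frobenius relation. Such identities hold in any symmetric monoidal dg category. Hence $LS_A$ is a commutative monoid in $\pois_n\text{-}\cC$-objects, which means $\RRR$ is a commutative algebra in $\pois_n$-$\uifrob_n$ bimodules, with the $\pois_n$-action on the left and the product taken objectwise in the $\pois_n$-variable and by Day convolution in the $\uifrob_n$-variable. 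The step I expect to be the main obstacle is the differential: $d^2=0$ and compatibility of $d\omega_{ij}=\pi_{ij}^*\Delta$ with the Arnold relations. This uses involutivity (vanishing Euler class, needed for $d\omega_{ij}$ versus $\omega_{ii}$-type terms) and the Frobenius relation. I would verify it once in the universal example, where it reduces to equalities of elements of $\uifrob_n(3,0)$ and $\uifrob_n(2,0)$.

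Finally, the functor. Set $R(X)=\RRR\circ_{\uifrob_n}X$, i.e.\ the coend $\int^{s}\RRR(-,s)\otimes X(s)$, which is a left $\pois_n$-module. The lax structure comes from two ingredients. The first is the algebra map $\RRR\otimes\RRR\to\RRR$, given objectwise on the $\pois_n$ side and as Day convolution on the $\uifrob_n$ side. The second is the canonical comparison $(\RRR\otimes_{Day}\RRR)\circ_{\uifrob_n}(X\otimes_{Day}Y)\leftarrow(\RRR\circ X)\otimes(\RRR\circ Y)$, which is a formal coend manipulation using that Day convolution is the left Kan extension along $\otimes$. Composing these gives $R(X)\otimes R(Y)\to R(X\otimes_{Day}Y)$. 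The unit comes from the unit of $LS_A$, and associativity and symmetry follow from those of the algebra $\RRR$.
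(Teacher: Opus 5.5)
Your proposal is correct and is essentially the paper's own argument. The paper simply notes that the Lambrechts--Stanley construction, together with the Hopf-comodule verifications of \cite{WillwacherObstruction} and \cite{Idrissi}, goes through for an involutive Frobenius algebra in any symmetric monoidal dg category tensored over chain complexes, and applies it to the universal example $A=\uifrob_n(1,-)$ in $\mathcal{M}od\text{-}\uifrob_n$; it leaves implicit the cofreeness identification and the coend bookkeeping for the lax structure, which you spell out.

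One small precision: for even $n$, vanishing of the Euler class is genuinely used in the compatibility of the $\Pois_n^c$-coaction with the differential, not in $d^2=0$ or the Arnold relations. Collapsing $i$ and $j$ sends $d\omega_{ij}=\pi_{ij}^*\Delta$ to the Euler class, while $\omega_{ij}$ goes to a closed element of $\Pois_n^c(2)$. In the universal example this is just the relation that the product composed with $\Delta$ vanishes in $\uifrob_n(1,0)$.
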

\hfill\qed

For later use let us also note the following:
\begin{lemma}\label{lem:R creates weq}
The functor $R$ creates weak equivalences, i.e., a morphism $f$ between $\uifrob_n$-modules is a quasi-isomorphism if and only if $R(f)$ is a quasi-isomorphism.
\end{lemma}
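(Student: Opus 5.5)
We want to show that $R(f)$ is a quasi-isomorphism if and only if $f$ is. Since $\RRR\circ_{\uifrob_n}(-)$ is a functor, the ``only if'' direction is the real content; the ``if'' direction will come out of the same filtration argument.

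\emph{Step 1: an explicit formula for $R(X)$.} Since $\RRR = (\rlie_n^c\circ_\tau\rcom_n)\circ_{\rcom_n}\uifrob_n$, we have
\[
R(X)\cong (\rlie_n^c\circ_\tau \rcom_n)\circ_{\rcom_n} \iota^*X .
\]
Here $\iota^*X$ is regarded as a left $\rcom_n$-module via the inclusion $\rcom_n\subset \ucom\subset\uifrob_n$. Forgetting differentials, and using that $\rcom_n$ is free on the relevant side, this is
\[
R(X)(r) \cong \bigoplus_k \rlie_n^c(r,k)\otimes_{S_k} X(k).
\]
The differential has two parts: the internal differential of $X$, and a part $d_\tau$ which decomposes one $\lie_n^c$-cogenerator and lets the resulting $\com_n$-generator act on $X$ through $\uifrob_n$. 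The term $k=r$ is $\rlie_n^c(r,r)\otimes_{S_r}X(r)\cong X(r)$, the identity part.

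\emph{Step 2: filtration.} Filter $R(X)(r)$ by the number $r-k$, which records how many brackets are present. The operator $d_\tau$ strictly changes $k$, lowering the number of brackets by one, while $d_X$ preserves it. So the associated graded is
\[
\bigoplus_k \rlie_n^c(r,k)\otimes_{S_k}(X(k),d_X).
\]
The filtration is finite in each arity $r$, because $k$ ranges over $1,\dots,r$ (or $0,\dots,r$), so the spectral sequence converges. This gives both directions:
\begin{itemize}
\item If $f$ is a quasi-isomorphism, then the map on the associated graded is a quasi-isomorphism. Taking $S_k$-coinvariants is exact in characteristic zero, so $R(f)$ is a quasi-isomorphism.
\item Conversely, argue by induction on $r$. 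Suppose $R(f)(r)$ is a quasi-isomorphism and $f(k)$ is a quasi-isomorphism for all $k<r$. Then the subcomplex of terms with $k<r$ is mapped quasi-isomorphically, by the filtration argument applied to that subcomplex. The quotient is the $k=r$ term $X(r)$, with only $d_X$. The five lemma then shows that $f(r)$ is a quasi-isomorphism.
\end{itemize}

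\emph{Main obstacle.} The delicate point is Step 2's claim that the $k<r$ part is a subcomplex. We need to check that $d_\tau$ preserves it, so that the top term $k=r$ is a quotient complex. This depends on which way $d_\tau$ shifts $k$, and hence on the opposite conventions ($\overleftarrow{\cdot}$). If the direction is reversed, then $X(r)$ is a subcomplex instead, and the same two-out-of-three argument applies with the roles of sub and quotient swapped. The induction base is $r=0$ or $r=1$, where $R(X)(r)=X(r)$ exactly.
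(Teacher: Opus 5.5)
Your argument is correct and is essentially the paper's: filter by the number of $\lie_n^c$-cogenerators so that the associated graded carries only $d_X$, and get the converse from the lowest arity (the paper passes to the cone of $f$ and looks at the smallest $r$ with $H(X)(r)\neq 0$, which amounts to your arity induction). Two small corrections. First, $d_\tau$ raises $k$, so $X(r)$ is the subcomplex and the $k<r$ part is the quotient; your ``reversed'' case is the actual one. Second, $X$ is a left $\rcom_n$-module through the Frobenius coproducts of $\uifrob_n$, not through $\ucom$ (which does not contain $\rcom_n$), so $\iota^*X$ is not the right object in Step 1.
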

\begin{proof}
It suffices to check that for any $\uifrob_n$-module $X$ we have $H(X)=0$ iff $H(R(X))=0$. (To see this take for $X$ the cone of $f$.)
Note that as a dg vector space 
\[
R(X) = (\rlie_n^c \circ X, d_\tau+d_X),
\]
where $d_X$ is the internal differential on $X$ and $d_\tau$ is induced from the Koszul differential as described above.
We endow $R(X)$ with an ascending exhaustive filtration by the number of $\lie_n^c$-cogenerators. Since $d_{\tau}$ reduces the number of these cogenerators by one, the associated graded is given by
\[
\gr R(X) = (\rlie_n^c \circ X, d_X).
\]
We hence obtain a spectral sequence 
\[
E^1 = R(H(X)) \Rightarrow H(R(X)).
\]
It is hence clear that if $H(X)=0$ then $H(R(X))=0$. Conversely, suppose $H(R(X))=0$ and assume by contradiction that $H(X)\neq 0$. Let $r$ be smallest such that $H(X)(r)\neq 0$. Then $H(X)(r)$ is a direct summand of $R(H(X))$, closed under the differential, and it cannot be hit by any higher differentials, since they would need to originate from lower arity pieces. Hence $H(R(X))\neq 0$, a contradiction.
\end{proof}

\subsection{The bimodule $\LLL$ and left-adjoint $L$}

From the description of the functor $R$ above we obtain that is has a left-adjoint given by
\begin{align*}
    L \colon \pois_n\Mod &\to \uifrob_n\Mod \\
    Y &\mapsto \HHom_{\uifrob_n}(\RRR, \uifrob_n) \circ_{\pois_n} X = \HHom_{\rcom_n}( (\rlie_n^c \circ_\tau \rcom_n), \uifrob_n) \circ_{\pois_n} X.
\end{align*}
We define the $\uifrob_n$-$\pois_n$ bimodule
\[
\LLL := (\uifrob_n \circ_\ucom \pois_n, d_\sigma) := \HHom_{\uifrob_n}(\RRR, \uifrob_n),
\]
so that 
\[
L(X) = \LLL\circ_{\pois_n} X.
\]
Before proceeding, let us make the combinatorial form of the differential $d_\sigma$ more explicit, using that
\[
\LLL=\uifrob_n \circ_\ucom \pois_n = \overleftarrow{\com_n}
\circ \ucom \circ \lie_n = \overleftarrow{\com_n}
\circ\pois_n.
\]

Elements can be thought of combinatorially as two-level forests
\[
\begin{tikzpicture} 
\draw[dashed] (0,-1)--(0,1);
\coordinate (v1) at (-.7,.7) {}; 
\coordinate (v2) at (-.7,-.7) {}; 
\node[int, label=90:{$\scriptstyle p$}] (w1) at (.7,.7) {}; 
\node[int, label=90:{$\scriptstyle q$}] (w2) at (.7,-.7) {}; 
\draw (v1) edge +(-.5,.5)  edge +(-.5,-.5);
\draw (v2) edge +(-.5,.5) edge +(-.5,0) edge +(-.5,-.5);
\draw (w1) edge +(.5,.5) edge +(.5,0) edge +(.5,-.5);
\draw (w2) edge +(.5,.5)  edge +(.5,-.5);
\draw (w2) edge (v2) (w1) edge (v1);
\end{tikzpicture}
\]
with the right-hand vertices decorated by $\Pois_n$-elements ($p$ and $q$), and the left-hand vertices decorated by $\Com_n$-elements (or, that is, being undecorated). Now the differential $d_\sigma$ sums over all ways of selecting two trees in the forest, fusing the vertices to the left and to the right, while applying the product to the decorations on the left and a bracket to the right:
\[
\begin{tikzpicture} 
\draw[dashed] (0,-.5)--(0,.5);
\coordinate (v1) at (-.7,0) {}; 
\node[int, label=93:{$\scriptstyle [p,q]$}] (w1) at (.7,0) {}; 
\draw (v1) edge +(-.5,.8)  edge +(-.5,.4);
\draw (v1) edge +(-.5,-.2) edge +(-.5,-.5) edge +(-.5,-.8);
\draw (w1) edge +(.5,-.8)  edge +(.5,-.4);
\draw (w1) edge +(.5,.2) edge +(.5,.5) edge +(.5,.8);
\draw (w1) edge (v1);
\end{tikzpicture}.
\]

As the left-adjoint of a lax monoidal functor, $L$ is automatically oplax monoidal. We shall show the following stronger statement.

\begin{lemma}\label{lem:L strong monoidal}
The functor $L$ is strong monoidal, that is, the oplax transformation
\[
L(Y \otimes Z) \to L(Y) \otimes L(Z)
\]
is an isomorphism.
\end{lemma}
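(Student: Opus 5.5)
The plan is to reduce the statement to free objects and then to an explicit identification of underlying graded objects; the differential is checked separately.

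\emph{Reduction to representables.} Both $Y\otimes Z$ (object-wise product on $\pois_n\Mod$) and $L$ commute with colimits in each variable. The Day convolution on $\uifrob_n\Mod$ does too. Every $\pois_n$-module is a colimit (coequalizer of a bar-type presentation) of free modules $\pois_n(-,k)\otimes V$. Since the oplax comparison map is natural, it therefore suffices to check that
\[
L(\pois_n(-,k)\otimes \pois_n(-,l)) \to L(\pois_n(-,k))\otimes_{Day} L(\pois_n(-,l))
\]
is an isomorphism for all $k,l$. A subtlety here is that the object-wise product does not preserve colimits in the naive ``free'' sense. However, it preserves colimits separately in each variable, and so does $L\otimes_{Day}L$, so the reduction is legitimate.

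\emph{Factoring $L$ through base change.} Next, write $L$ as a composite using $\LLL=\overleftarrow{\com_n}\circ\pois_n=\uifrob_n\circ_{\ucom}\pois_n$. Forgetting the differential $d_\sigma$, we have
\[
L(Y)=\uifrob_n\circ_{\ucom}\mathrm{Res}(Y)=\iota_!\,\mathrm{Res}(Y),
\]
where $\mathrm{Res}$ is restriction along $\ucom\to\pois_n$. Restriction is strong monoidal for the object-wise products, since the coalgebra structure on $\pois_n$ restricts to that of $\ucom$. By Lemma \ref{lem:com day pointwise}, the object-wise product on $\ucom\Mod$ is the Day convolution. Moreover $\iota_!$ is strong monoidal for Day convolution, as noted before Lemma \ref{lem:uifrob mon model}. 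Hence, on underlying graded modules, the composite $\iota_!\circ\mathrm{Res}$ is strong monoidal, and we get a candidate isomorphism
\[
L(Y\otimes Z)\cong L(Y)\otimes_{Day}L(Z).
\]

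\emph{Compatibility and differential.} One must then check two things.

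\begin{enumerate}
\item The isomorphism above coincides with the oplax comparison map coming from the adjunction. This amounts to unwinding the adjunction unit and the lax structure of $R$. Both are built from the same Hopf/Frobenius diagonal data, so this should be a direct diagram chase.
\item The isomorphism intertwines the twisted differentials $d_\sigma$.
\end{enumerate}

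Combinatorially, an element of $L(Y)\otimes_{Day}L(Z)$ is a pair of two-level forests placed side by side. The element of $L(Y\otimes Z)$ corresponding to it has $\pois_n$-decorations acting diagonally, via the coproduct on $\pois_n$, on $Y\otimes Z$.

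The differential $d_\sigma$ fuses two trees, applying the product on the left and the bracket $[p,q]$ on the right. The key point is the Leibniz/Hopf property of the bracket under the coproduct: $\Delta[\,,]$ is $[\,,]\otimes\mu+\mu\otimes[\,,]$, where the bracket acts on one factor and the commutative product on the other. Fusing two trees and applying the bracket on the $Y\otimes Z$ side therefore splits into two terms. In each term, the bracket acts on one factor, while the product acting on the other factor is absorbed via the $\ucom$-balancing, i.e.\ it is identified with a left $\com_n$-product in $\overleftarrow{\com_n}$. These two terms are exactly the terms of $d_\sigma\otimes1+1\otimes d_\sigma$ in the Day convolution.

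\emph{Main obstacle.} I expect this last step to be the main obstacle: matching the fusion terms with the Day convolution differential, including signs, under the identification of Day convolution with horizontal concatenation of forests. I would first verify it on representables with $k=l=1$ and then argue by $S$-equivariance and the compatibility of $d_\sigma$ with composition.
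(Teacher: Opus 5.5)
Your core argument is exactly the paper's proof: forget the differentials, write $L=\iota_!\circ\mathrm{Res}$, note that $\mathrm{Res}$ is strictly monoidal for the object-wise products and that $\iota_!$ is strong monoidal for Day convolution by Lemma \ref{lem:com day pointwise}; the paper settles your step (1) by observing that $R$ factors as restriction followed by coinduction as lax monoidal functors, so the mate oplax structure on $L$ is the composite of two invertible ones. The reduction to representables and your ``main obstacle'' (2) are unnecessary, because the oplax comparison map is already a morphism of dg $\uifrob_n$-modules and so is an isomorphism once it is bijective on underlying graded objects (your sketch of (2) also omits that the cross terms fusing a $Y$-tree with a $Z$-tree vanish, since brackets with the unit are zero).
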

\begin{proof}
It suffices to check the statement for the underlying graded vector spaces, that is, we may set all differentials to zero.
We note that after forgetting the differential, $LS_A$ only depends on the commutative algebra structure on $A$. We thus obtain that $R$ naturally factors as the composition of the lax symmetric monoidal functors of restriction and coinduction
\[
\uifrob_n\Mod \xrightarrow{\text{Res}} \ucom\Mod \xrightarrow{\text{Coind}} \pois_n\Mod.
\]
In particular, the left adjoint is the composition of corestriction and induction
\[
\pois_n\Mod \xrightarrow{\text{Cores}}  \ucom\Mod \xrightarrow{\text{Ind}}  \uifrob_n\Mod.
\]
The corestriction is the identity on objects and obviously strictly monoidal with respect to the object-wise monoidal structures used for $\pois_n\Mod$ and $\ucom\Mod$.
We furthermore use that by Lemma \ref{lem:com day pointwise} on $\ucom\Mod$ the pointwise monoidal structure is the same as Day convolution, so that the induction $\ucom\Mod \to \uifrob_n\Mod$ is strong monoidal as well, see Lemma \ref{lem:uifrob mon model}. 
\end{proof}


\subsection{Quillen equivalence}

\begin{prop}\label{prop:quillen equivalence}
    The functors $L$ and $R$ yield a Quillen equivalence
    \[
    L \colon \pois_n\Mod \rightleftarrows \uifrob_n\Mod : R,
    \]
    where we consider $\pois_n\Mod$ with the injective model structure and $\uifrob_n\Mod$ with the model structure of Lemma \ref{lem:uifrob model}.
    The left-adjoint $L$ is strong symmetric monoidal, and the right adjoint $R$ is lax symmetric monoidal.
    Furthermore, both functors create weak equivalences.
\end{prop}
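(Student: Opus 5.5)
We need to establish four things: $(L,R)$ is a Quillen adjunction, it is a Quillen equivalence, $L$ is strong and $R$ lax symmetric monoidal, and both functors create weak equivalences. The monoidality statements are already available. $L$ is strong symmetric monoidal by Lemma \ref{lem:L strong monoidal}, and $R$ is lax symmetric monoidal as the right adjoint of a strong monoidal functor (equivalently, because $\RRR$ is an algebra in bimodules). That $R$ creates weak equivalences is Lemma \ref{lem:R creates weq}. What remains is the Quillen property, the equivalence, and the creation of weak equivalences by $L$.

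\emph{Quillen adjunction.} I would show that $R$ preserves fibrations and acyclic fibrations. The model structure on $\uifrob_n\Mod$ is right-transferred along $\iota^*$, so a map $f$ is a (acyclic) fibration exactly when $\iota^*f$ is an injective (acyclic) fibration in $\ucom\Mod$. In the proof of Lemma \ref{lem:L strong monoidal} we saw that, on underlying graded objects, $R=\mathrm{Coind}\circ\iota^*$, with coinduction from $\ucom$ to $\pois_n$. Coinduction is right adjoint to the restriction $\pois_n\Mod\to\ucom\Mod$. Restriction preserves injective cofibrations, i.e.\ objectwise monomorphisms, and preserves quasi-isomorphisms, so $\mathrm{Coind}$ is right Quillen for the injective structures.

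The difficulty is the twisting differential $d_\tau$, which makes $R$ differ from $\mathrm{Coind}\circ\iota^*$. To handle this, I would argue on the left adjoint instead: it suffices that $L$ preserves cofibrations and acyclic cofibrations of $\pois_n\Mod$, i.e.\ objectwise monomorphisms and injective quasi-isomorphisms. Filter $L(Y)=(\overleftarrow{\com_n}\circ Y,d_\sigma+d_Y)$ by the number of $\com_n$-vertices; $d_\sigma$ lowers this number. The associated graded of $L$ is $\iota_!\circ\mathrm{Cores}$, which preserves cofibrations by definition of the transferred structure. A filtered complete induction would then lift this to $L$ itself, and the same spectral sequence argument shows $L$ preserves quasi-isomorphisms.

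\emph{$L$ creates weak equivalences.} Preservation of quasi-isomorphisms comes from the filtration above: $E^1=L(H(Y))$ up to $d_\sigma$-free terms. For reflection, I would copy the minimal-arity argument of Lemma \ref{lem:R creates weq}. If $H(Y)\neq0$ with minimal arity $r$, then $H(Y)(r)$ sits in the part of $L(H(Y))$ with all $\com_n$-vertices unary. This part cannot be hit by higher differentials, since those would have to come from pieces of lower arity in $Y$. Hence $H(L(Y))\neq 0$. I expect the bookkeeping for convergence and arity here to be the main obstacle. Because $\uifrob_n(r,s)$ is nonzero for all $s$, arities must be tracked via the $\pois_n$-input arity, where the filtration is bounded.

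\emph{Quillen equivalence.} Since all objects of $\pois_n\Mod$ are cofibrant and $R$ creates weak equivalences, it suffices to show that the unit $Y\to RL(Y)$ is a quasi-isomorphism for every $Y$. I would first check this for free objects $Y=\pois_n(-,k)$, where $RL(Y)=\RRR\circ_{\uifrob_n}\LLL\circ_{\pois_n}Y$. Here the composite bimodule $\RRR\circ_{\uifrob_n}\LLL$ is a twisted complex of the form $\rlie_n^c\circ_\tau\rcom_n\circ_{\rcom_n}\overleftarrow{\com_n}\circ\pois_n$, and Koszulness of $\com_n$ (acyclicity of the Koszul complex $\rlie_n^c\circ_\tau\rcom_n$ relative to the augmentation) should show that $\pois_n\to\RRR\circ_{\uifrob_n}\LLL$ is a quasi-isomorphism of bimodules. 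The general case then follows by resolving $Y$ by frees in a way compatible with colimits, since both $L$ and $R$ commute with the relevant colimits arity-wise, or directly via the same filtration spectral sequence, which reduces everything to this bimodule statement.
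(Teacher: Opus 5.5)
Your proposal follows the paper's proof essentially step for step. Monoidality and creation of weak equivalences by $R$ are quoted from the preceding lemmas. $L$ is shown to preserve quasi-isomorphisms and cofibrations using the filtration by the arity of $Y$; the paper makes your ``filtered induction'' explicit by writing $L(f)$ as a transfinite composite of pushouts of maps $\rcom_n\circ(Q[-1])\to\rcom_n\circ(Q\otimes U)$ with $U=\Q\oplus\Q[-1]$, which are cofibrations as $\iota_!$-images of injections. The Quillen equivalence is reduced, exactly as you do, to the unit $X\to R(L(X))\cong(\rlie_n^c\circ\rcom_n\circ X, d_\tau+d_\sigma+d_X)$ being a quasi-isomorphism, which follows from acyclicity of the Koszul complex by the same spectral sequence.

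The one deviation is how $L$ reflects weak equivalences. You use a direct minimal-arity argument, which works because in arity $r_0$ only the $Y$-arity-$r_0$ term survives on $E^1$. The paper instead gets this for free by two-out-of-three in the naturality square of the unit, using that $L$ preserves and $R$ creates weak equivalences.
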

\begin{proof}
We already showed the monoidality statement in Lemma \ref{lem:L strong monoidal} and the fact that $R$ creates all weak equivalences in Lemma \ref{lem:R creates weq}.
We next show that $L$ preserves arbitrary weak equivalences (quasi-isomorphisms) by a similar argument:
Let $f:X\to Y$ be a quasi-isomorphism in $\pois_n\Mod$.
We explicitly have that 
\[
L(X) = (\rcom_n \circ X, d_X+ d_\sigma),
\]
with the underlying symmetric sequence
\[
(\rcom_n \circ X)(r) = \bigoplus_{s\leq r} 
\rcom(r,s) \otimes_{S_s} X(s),
\]
and the part of the differential $d_X$ induced by the differential on $X$, while $d_\sigma$ has the form, for $C \otimes x\in \rcom(r,s) \otimes_{S_s} X(s)$  
\[
d_\sigma (C \otimes x) 
=
\sum_{1\leq i<j\leq s}
(C\cdot \bar c_{ij}) \otimes (b_{ij} \cdot x),
\]
where $\bar c_{ij}\in \rcom_n(2,1)$ is the cocommutative coproduct generator and $b_{ij}\in \pois_n(1,2)$ is the Poisson bracket generator.
Let us endow $L(X)$ with the ascending exhaustive filtration such that 
\[
\mF^p L(X) (r) = \bigoplus_{s\leq p } 
\rcom(r,s) \otimes_{S_s} X(s),
\]
and similarly for $y$. 
Then the associated graded $\gr L(f)$ is the morphism (in arity $r$)
\[
(\bigoplus_{s\leq r } 
\rcom(r,s) \otimes_{S_s} X(s), d_X)
\to 
(\bigoplus_{s\leq r } 
\rcom(r,s) \otimes_{S_s} Y(s), d_Y)
\]
induced by $f$. This is obviously a quasi-isomorphism by the Künneth formula, and hence so is $L(f)$.

Next, to see that the adjunction is Quillen it remains to verify that if $f:X\to Y$ is some (possibly non-acyclic) cofibration in $\pois_n\Mod$, i.e., an injective morphism, than $R(f):R(X)\to R(Y)$ is a cofibration in $\uifrob_n\Mod$.
To see this we write $R(f)$ as a transfinite composition
\[
R(X) := A_0 \to A_1\to A_2\to \cdots 
\]
with $A_k = R(X) \sqcup_{\mF^p R(X)} \mF^k R(Y)$.
Let $U=\Q\oplus \Q[-1]$ be the two-dimensional complex with trivial cohomology.
Then each morphism $A_{k}\to A_{k+1}$ fits into a pushout square 
\[
\begin{tikzcd}
  \rcom_n \circ (\mF^{k+1} Y[-1] ) \ar{r}\ar{d}  & A_k \ar{d} \\
  \rcom_n \circ (\mF^{k+1} Y \otimes U)\ar{r}  & A_{k+1}
\end{tikzcd}.
\]
The left-hand vertical arrow is a cofibration, as it is in the image of the Quillen left adjoint $\iota_!$ of \eqref{equ:iotaiota}.
We conclude that all arrows are cofibrations in $\uifrob_n\Mod$, and hence so is the transfinite composition.

Finally, we verify that $L \dashv R$ is a Quillen equivalence. Since $R$ creates weak equivalences by Lemma \ref{lem:R creates weq}, it suffices to verify that the (non-derived) unit 
\[
X \to R(L(X)) 
= \RRR \circ_{\uifrob_n} \LLL \circ_{\pois_n} X
\cong (\rlie_n^c \circ \rcom_n \circ X, d_\tau + d_\sigma + d_X)
\]
is a weak equivalence for all $X\in \pois_n\Mod$ (cf. \cite[Lemma 3.3]{ErdalIlhan}).
Note that the $(\rlie_n^c \circ \rcom_n , d_\tau)$ is the Koszul complex with trivial ("unit") cohomology. Hence by a spectral argument using the same filtration as above it is immediate that $X \to R(L(X))$ is a quasi-isomorphism.

Also note that since the adjunction is a Quillen equivalence and the right-adjoint creates weak equivalences, the left-adjoint also creates weak equivalences automatically.
\end{proof}

From Proposition \ref{prop:quillen equivalence} we immediately obtain:
\begin{cor}
The functors $(L,R)$ induce a Quillen equivalence on the categories of commutative algebra objects 
\[
L \colon \CAlg(\pois_n\Mod) \leftrightarrows \CAlg(\uifrob_n\Mod) \colon R.
\]    
\end{cor}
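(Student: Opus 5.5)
The plan is to lift the adjunction of Proposition \ref{prop:quillen equivalence} to commutative algebra objects using the monoidal properties already established, and then to check the Quillen-equivalence condition on underlying objects.

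\emph{Step 1: the adjunction lifts.} By Lemma \ref{lem:L strong monoidal}, $L$ is strong symmetric monoidal, so it sends commutative monoids to commutative monoids; write $L\colon \CAlg(\pois_n\Mod)\to\CAlg(\uifrob_n\Mod)$ for the result. The right adjoint $R$ is lax symmetric monoidal and is the doctrinal adjoint of $L$, so it also induces a functor on commutative algebras. For a strong monoidal left adjoint, the lifted functors are again adjoint (the standard doctrinal adjunction argument). Explicitly, a monoid map $L(A)\to B$ corresponds to an underlying map $A\to R(B)$, and this map is multiplicative because the lax structure of $R$ is the mate of the inverse of the oplax structure of $L$.

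\emph{Step 2: the lifted adjunction is Quillen.} In both $\CAlg$-categories the model structures are right-transferred from the underlying module categories. Hence weak equivalences and fibrations are created by the forgetful functors. Since $R$ commutes with forgetting, the lifted $R$ preserves fibrations and acyclic fibrations because the underlying $R$ does. This makes the lifted adjunction Quillen.

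\emph{Step 3: it is a Quillen equivalence.} By Proposition \ref{prop:quillen equivalence}, $R$ creates weak equivalences, so the lifted $R$ does too, since weak equivalences are detected on underlying objects. By the criterion already used there (\cite[Lemma 3.3]{ErdalIlhan}), it then suffices to show that the derived unit $A\to R(L(A)^{f})$ is a weak equivalence for cofibrant $A\in\CAlg(\pois_n\Mod)$, where $(-)^f$ denotes a fibrant replacement in $\CAlg$. Because $R$ preserves all weak equivalences, this reduces to the underived unit $A\to RL(A)$. On underlying objects this is the unit of the module-level adjunction, which Proposition \ref{prop:quillen equivalence} shows is a quasi-isomorphism for every object, so no cofibrancy is needed.

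The main obstacle is Step 1: one must check that the lifted right adjoint really agrees with the underlying $R$. This amounts to verifying that the lax structure on $R$ coming from the algebra structure on $\RRR$ is the mate of the strong structure on $L$ from Lemma \ref{lem:L strong monoidal}. I would first try to settle this by noting that the $\RRR$-algebra structure is the one induced by adjunction from the universal Lambrechts--Stanley construction; if necessary I would compare the two on the factorization $\mathrm{Coind}\circ\mathrm{Res}$ used in that lemma.
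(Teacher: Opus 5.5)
Your argument is correct and is the routine unpacking of what the paper means by obtaining the corollary ``immediately'' from Proposition~\ref{prop:quillen equivalence}: lift the adjunction using that $L$ is strong symmetric monoidal, then check the Quillen and Quillen-equivalence conditions on underlying objects, which is legitimate because the model structures on the $\CAlg$ categories are right-transferred. The compatibility you flag as the main obstacle is already built into the paper's setup: the oplax structure on $L$ is by definition the mate of the lax structure on $R$ coming from $\RRR$, and Lemma~\ref{lem:L strong monoidal} asserts exactly that this mate is invertible.
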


\subsection{Segal condition}
We next consider the Segal conditions of the introduction and show the remaining statements of Theorem \ref{thm:main 1}, thus finishing its proof.
Note that the Segal conditions for objects in $\CAlg(\uifrob_n\Mod)$ and for objects in $\CAlg(\pois_n\Mod)$ can both be stated in the category $\CAlg(\ucom\Mod)$. More precisely, we have a diagram of categories and functors
\begin{equation}\label{equ:Segaldiagram 1}
\begin{tikzcd}
\pois_n\Mod \ar[shift left, dashed]{rr}{L}
\ar[shift left]{dr}{j_*}
&
&
\uifrob_n\Mod \ar[shift left]{ll}{R}
\ar[shift left]{dl}{\iota^*}
\\
& \ucom\Mod 
\ar[shift left, dashed]{ul}{j^*} 
\ar[shift left, dashed]{ur}{\iota_!} &
\end{tikzcd}.
\end{equation}
Here $(L,R)$ are as in Proposition \ref{prop:quillen equivalence}, $(\iota_!,\iota^*)$ are induction and restriction along the inclusion $\iota:\ucom \to \uifrob_n$, and $(j^*,j_*)$ are (co)restriction and coinduction along the projection $j:\pois_n\to \ucom$. The left-adjoints are indicated by dashed arrows. 
\begin{lemma}
The diagrams of left-adjoints and right-adjoints in \eqref{equ:Segaldiagram 1} commute.
\end{lemma}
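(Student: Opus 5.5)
It suffices to prove that the right adjoints commute, i.e.\ that there is a natural isomorphism
\[
j_*\circ \iota^* \;\cong\; R \colon \uifrob_n\Mod \to \pois_n\Mod .
\]
Commutation of the left adjoints, $L\circ j^*\cong \iota_!$, then follows by uniqueness of adjoints. Indeed, both sides are left adjoint to $\iota^*$ on one hand and to $j_*\iota^*$ on the other. If the right adjoints agree, so do the left adjoints, via the mate of the isomorphism.

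\textbf{Step 1: identify $j_*$.} Coinduction along the projection $j:\pois_n\to\ucom$ is given by $j_*(Z)=\HHom_{\ucom}(\pois_n, Z)$. I will compute it with the Koszul resolution used in the definition of $\RRR$. Recall that $\pois_n\cong\ucom\circ\lie_n$, and that $\lie_n$ is Koszul dual to $\com_n$. This should give
\[
j_*Z\simeq (\rlie_n^c\circ_\tau\rcom_n)\circ_{\rcom_n} Z .
\]
This is the universal Lambrechts--Stanley construction $LS$ applied to the commutative algebra underlying $Z$.

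However, $j_*$ is a strict (non-derived) right adjoint, so the relevant statement is really the factorization already used in the proof of Lemma~\ref{lem:L strong monoidal}. There $R$ was shown to factor as $\text{Coind}\circ\text{Res}$ at the level of graded objects, and this gives the factorization on underlying graded objects.

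\textbf{Step 2: upgrade to differentials, the main obstacle.} The issue is that the differential on $R(X)$ contains the Koszul part $d_\tau$, which involves the product $\rcom_n$ acting on $X$. This part only uses the $\ucom$-module structure of $X$, i.e.\ it is determined by $\iota^*X$. I will check that the $\pois_n$-module structure on $LS_A$ likewise only uses the commutative product together with the diagonal $\Delta$. Since $\rcom_n\subset\uifrob_n$ encodes the coproduct, the diagonal is the $\Com_n$-generator $\bar c\in\rcom_n(2,1)$.

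So I must be careful: $\iota$ should be the inclusion of the sub-PROP generated by the product, the unit and $\bar c$. I will interpret the diagram \eqref{equ:Segaldiagram 1} with $\iota_! = \rcom_n\circ -$ as in \eqref{equ:iotaiota}. Under this reading, $\iota^*$ forgets the $\uifrob_n$-structure down to the structure acted on by $\rcom_n\circ\ucom$.

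Under this identification, I compute $\LLL\circ_{\pois_n} j^*Z$ directly. Since $j^*Z$ has a trivial bracket action, $b_{ij}\cdot x=0$. Hence $d_\sigma$ vanishes, and
\[
L(j^*Z)=(\rcom_n\circ Z,d_Z)=\iota_!Z .
\]
This is the easy direct check of the left adjoints. It is compatible with the $\uifrob_n$-module structures because $\LLL=\uifrob_n\circ_{\ucom}\pois_n$ becomes $\uifrob_n\circ_{\ucom}\ucom$ after killing the brackets.

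\textbf{Step 3: conclude.} The isomorphism $L\circ j^*\cong\iota_!$ is natural in $Z$. Passing to mates then gives $R\cong j_*\circ\iota^*$, so both triangles in \eqref{equ:Segaldiagram 1} commute. The most delicate point is verifying that the bimodule map $\LLL\circ_{\pois_n}\pois_n\otimes_{\pois_n}\ucom\to\uifrob_n\circ_{\ucom}\ucom$ intertwines the left $\uifrob_n$-actions, which I would check on generators.
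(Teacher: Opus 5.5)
Your Step~2 is exactly the paper's proof. On $j^*Z$ the bracket acts by zero, so $d_\sigma$ vanishes and $L(j^*Z)=(\rcom_n\circ Z,d_Z)=\iota_!Z$. The $\uifrob_n$-actions match because $\LLL\circ_{\pois_n}\ucom\cong(\uifrob_n\circ_{\ucom}\ucom,0)=\uifrob_n$ as bimodules, with $\ucom$ a $\pois_n$-module via $j$. The right-adjoint triangle then follows by taking mates. That part is correct and is all the lemma needs. What is wrong is your statement of the right-adjoint triangle, and your opening reduction, Step~1 and Step~3 all rest on it.

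In \eqref{equ:Segaldiagram 1}, $j_*$ is coinduction along the projection $j\colon\pois_n\to\ucom$, so it is a functor $\pois_n\Mod\to\ucom\Mod$. The mate of $L\circ j^*\cong\iota_!$ is therefore $j_*\circ R\cong\iota^*$, and this is also what the subsequent Segal-condition argument uses. Your formula $R\cong j_*\circ\iota^*$ does not even compose.

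Perhaps you meant $j_*$ to be the coinduction $\ucom\Mod\to\pois_n\Mod$ along $\ucom\hookrightarrow\pois_n$ from the proof of Lemma~\ref{lem:L strong monoidal}. In that case the claimed isomorphism is false. That factorization of $R$ holds only on underlying graded objects. The Koszul term $d_\tau$ in $R(X)=(\rlie_n^c\circ X,d_\tau+d_X)$ uses the $\rcom_n$-part (the diagonal) of the $\uifrob_n$-action, and restriction to $\ucom$ forgets exactly this. Concretely, for $X=A^{\otimes\bullet}$ one side is $LS_A$ with $d\omega_{ij}=\pi_{ij}^*\Delta$. The other is the same graded object with $\Delta$ replaced by $0$.

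Two related confusions should also go. The formula $(\rlie_n^c\circ_\tau\rcom_n)\circ_{\rcom_n}Z$ in Step~1 is meaningless for a mere $\ucom$-module $Z$. And $\iota$ is just $\ucom\hookrightarrow\uifrob_n$, with no $\bar c$ added; $\iota_!=\rcom_n\circ-$ precisely because $\uifrob_n\cong\rcom_n\circ\ucom$.

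So discard the opening reduction and Step~1, keep Step~2, and take the correct mate.
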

\begin{proof}
It suffices to show the statement for the left-adjoints. For $X\in \ucom\Mod$ we have
\[
L(j^* X) = (\rcom_n \circ X, d_\tau + d_X)
=
(\rcom_n \circ X, d_X),
\]
since $d_\tau=0$ by the triviality of the action of the Poisson bracket for objects of the form $j^* X$ obtained by restriction from $\ucom\Mod$. On the other hand, we have
\[
\iota^* (L(X)) = \iota^* (\rcom_n \circ X, d_X)
\]
as well, so that the left-adjoints commute.
\end{proof}
Furthermore, note that all left-adjoints in \eqref{equ:Segaldiagram 1} are strong symmetric monoidal functors, and the right-adjoints are (consequently) lax symmetric monoidal.
We hence get a diagram of categories of commutative algebra objects, such that the diagrams of left- and right-adjoints commute.
\begin{equation}\label{equ:Segaldiagram 2}
\begin{tikzcd}
\CAlg\left(\pois_n\Mod\right) \ar[shift left, dashed]{rr}{L}
\ar[shift left]{dr}{j_*}
&
&
\CAlg\left(\uifrob_n\Mod\right) \ar[shift left]{ll}{R}
\ar[shift left]{dl}{\iota^*}
\\
& \CAlg\left(\ucom\Mod \right)
\ar[shift left, dashed]{ul}{j^*} 
\ar[shift left, dashed]{ur}{\iota_!} &
\end{tikzcd}
\end{equation}

Now we say that an object $A\in \CAlg(\ucom\Mod)$ satisfies the Segal condition (or is of \emph{power type} in the language of \cite{WillwacherObstruction}) if the canonical morphisms 
\[
A(1)^{\otimes r} \to A(r)
\]
are quasi-isomorphisms for all $r$.
We say that an object $B\in \CAlg(\pois_n\Mod)$ satisfies the Segal condition if $j_* B$ satisfies the Segal condition in $\CAlg(\ucom\Mod)$. (In \cite{WillwacherObstruction} such $B$ are called of \emph{configuration space type}.)
Finally, we say that an object $C\in \CAlg(\uifrob_n\Mod)$ satisfies the Segal condition if the morphisms $C(1)^{\otimes r} \to C(r)$ are quasi-isomorphisms, which is equivalent to saying that $\iota^* C$ satisfies the Segal condition in $\CAlg(\ucom\Mod)$.

But in this formulation it is then clear from the commutativity of the diagram of right-adjoints in \eqref{equ:Segaldiagram 2} that $C\in \CAlg(\uifrob_n\Mod)$ satisfies the Segal condition if and only if $R(C)\in \CAlg(\pois_n\Mod)$ satisfies the Segal condition.
Applying this to $C=L(B)$ for some $B\in \CAlg(\pois_n\Mod)$ and noting that $R(L(B))\simeq B$ by Proposition \ref{prop:quillen equivalence} we conclude that also $B$ satisfies the Segal condition if and only if $L(B)$ satisfies the Segal condition.
This finishes the proof of Theorem \ref{thm:main 1}. \hfill\qed

\begin{rem}
Let us point out two further convenient facts:
Let $f:X\to Y$ be a morphism in $\CAlg(\uifrob_n\Mod)$ (or in fact in $\CAlg(\pop)$ for $\pop$ the PROP envelope for a properad $\POp$).
Then if $f$ is a quasi-isomorphism, $X$ satisfies the Segal condition if and only if $Y$ does. Conversely, assume that both $X$ and $Y$ satisfy the Segal condition. Then $f$ is a quasi-isomorphism iff the arity one part $X(1)\to Y(1)$ is a quasi-isomorphism. Both statements follow immediately from looking at the commutative squares 
\[
\begin{tikzcd}
    X(1)^{\otimes r} \ar{r}{f^{\otimes r}} \ar{d} & Y(1)^{\otimes r} \ar{d} \\ 
    X(r) \ar{r}{f} & Y(r)
\end{tikzcd}.
\]
\end{rem}

\section{Homotopy transfer/rigidification result for involutive Frobenius algebras}
\label{sec:htpy transfer}

\begin{thm}\label{thm:htpy transfer}
    Let $M\in \CAlg(\uifrob_n\Mod)$ satisfy the Segal condition.
    Let $\POp$ be a cofibrant replacement of the properad $\uIFrob_n$ and let $\pop$ be its PROP envelope. Define the graded vector space $H:=H(M(1))$.
    Then there is a $\POp$-algebra structure on $H$, i.e., a properad morphism $\POp\to \End_H$, such that the objects $M$ and $H^{\otimes \bullet}$ are quasi-isomorphic in the category $\CAlg(\pop\Mod)$
\end{thm}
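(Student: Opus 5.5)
The plan is to rigidify $M$ in two stages. First we replace the lax monoidal functor $M$ by a strict $\POp$-algebra on the complex $V:=M(1)$. Then we transfer that structure to $H=H(V)$ by homotopy transfer along a contraction $V\simeq H$.

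Throughout, write $\sigma_r\colon V^{\otimes r}\to M(r)$ for the Segal comparison maps. These are quasi-isomorphisms by hypothesis, and they are compatible with the symmetric lax monoidal structure of $M$.

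\emph{Step 1: an intermediate properad.} Consider the collection
\[
\mathsf W(r,s):=\{(f,g)\in \Hom(V^{\otimes s},V^{\otimes r})\times \Hom(V^{\otimes s},M(r)) \}
\]
with differential making it the mapping cone (homotopy pullback) of
\[
\Hom(V^{\otimes s},V^{\otimes r})\xrightarrow{\sigma_r\circ -}\Hom(V^{\otimes s},M(r))\xleftarrow{\ \mathrm{id}\ }\Hom(V^{\otimes s},M(r)).
\]
Equivalently, and more cleanly, take the strict fibre product of $\End_V(r,s)$ with the ``endomorphisms of the Segal object''. The key points to check are the following.
\begin{itemize}
\item $\mathsf W$ carries a properad structure. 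In the strict version, an element is a pair $(f,g)$ with $\sigma_r f=g$. Connected properadic composition of the $f$'s is the usual one, and it is compatible with $g$ because the $\sigma$'s intertwine the horizontal concatenation with the lax structure maps $M(a)\otimes M(b)\to M(a+b)$.
\item The projection $\mathsf W\to \End_V$ is a surjective quasi-isomorphism. This holds since each $\sigma_r$ is a quasi-isomorphism.
\item The $\uifrob_n$-action on $M$ induces a properad map $\uIFrob_n\to\mathsf W'$, where $\mathsf W'\to\mathsf W$ is the appropriate variant recording the action $x\mapsto M(x)\circ\sigma_s$ as the $g$-component.
\end{itemize}
Where the strict fibre product fails to receive a map from $\uIFrob_n$, because the $f$-component is only defined up to homotopy, we pass to the homotopy version. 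We then use cofibrancy of $\POp$ to lift: since $\POp\to\uIFrob_n$ is a resolution, we obtain a map $\POp\to \mathsf W$. Composing with $\mathsf W\to\End_V$ gives a $\POp$-algebra structure on $V$.

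\emph{Step 2: comparing $M$ with $V^{\otimes\bullet}$ in $\CAlg(\pop\Mod)$.} The $g$-component of the map $\POp\to\mathsf W$ is exactly what makes $\sigma=(\sigma_r)_r\colon V^{\otimes\bullet}\to M$ into a morphism of $\pop$-modules. Here $M$ is regarded as a $\pop$-module via restriction along $\pop\to\uifrob_n$, and $V^{\otimes\bullet}$ via the strong monoidal functor determined by the $\POp$-algebra $V$. The map $\sigma$ is compatible with the commutative monoid structures by construction, since it is the lax structure map. It is an objectwise quasi-isomorphism by the Segal condition. So $M\simeq V^{\otimes\bullet}$ in $\CAlg(\pop\Mod)$.

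\emph{Step 3: transfer to cohomology.} Choose a contraction $H\rightleftarrows V$ over $\Q$. By the homotopy transfer theorem for algebras over a cofibrant properad (equivalently, lifting $\POp\to\End_V$ along a suitable zigzag relating $\End_V$ and $\End_H$, using cofibrancy of $\POp$), $H$ inherits a $\POp$-algebra structure. This comes with an $\infty$-quasi-isomorphism to $V$. To stay inside $\CAlg(\pop\Mod)$, we realize it by a strict zigzag. The cleanest route is to build the properad $\mathsf W_i$ of pairs compatible with $i\colon H\to V$ (as in Step 1, now with $i^{\otimes r}$ in place of $\sigma_r$), and to lift $\POp\to\End_V$ through the acyclic fibration $\mathsf W_i\to\End_V$. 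This gives $\POp$-structures on $H$ and a strict $\pop$-morphism $H^{\otimes\bullet}\to V^{\otimes\bullet}$, which is a quasi-isomorphism by the Künneth formula. Concatenating with Step 2 gives $H^{\otimes\bullet}\simeq V^{\otimes\bullet}\simeq M$ in $\CAlg(\pop\Mod)$.

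The main obstacle is Step 1: producing a genuine properad $\mathsf W$ that receives a map from $\POp$ and encodes the compatibility with $\sigma$. The difficulty is that $M$ is only lax monoidal, so the inverse of $\sigma_r$ exists only up to homotopy, and connected compositions in $\uifrob_n$ are not directly expressible on $V$. My first attempt would be the two-sided homotopy fibre product construction described above, made functorial in the style of Markl's homotopy invariance argument. I would verify separately that the projection to $\End_V$ is an acyclic fibration of properads, so that the lifting property of the cofibrant $\POp$ applies. If the lax maps obstruct closure under connected composition, I would instead work with the PROP level, where horizontal concatenation is available, and then restrict to the connected part.
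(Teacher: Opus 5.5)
Your proposal has a genuine gap, at exactly the step you flag. As you define it, the strict $\mathsf W$ is isomorphic to $\End_V$, because the $g$-component is determined by $f$. So it does not see the $\uifrob_n$-action at all. The strict object that would see it is $\{(x,f)\in\POp(r,s)\times\End_V(r,s) : \sigma_r\circ f=M(x)\circ\sigma_s\}$. It is closed under connected composition, by naturality and associativity of the lax structure maps. But its projection to $\POp$ is not surjective, let alone an acyclic fibration: $M(x)\circ\sigma_s$ factors through $\sigma_r$ only up to homotopy, since $\sigma_r$ is a quasi-isomorphism and not an epimorphism.

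Adding a homotopy $\gamma$ from $\sigma_r f$ to $M(x)\sigma_s$ destroys the properad structure. The homotopy $\gamma$ lives in the bimodule $\Hom(V^{\otimes\bullet},M)$, and composing two such triples forces you to concatenate $\gamma\cdot f'$ with $x\cdot\gamma'$, which is not strictly associative. Appealing to Markl-style homotopy invariance does not build those coherences. Even granting such a $\mathsf W$, Step 2 breaks. The map $\sigma$ would then be a $\pop$-module map only up to coherent homotopy, not a morphism of $\CAlg(\pop\Mod)$. You would still need a zigzag of strict maps whose middle terms are not of the form $K^{\otimes\bullet}$.

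Step 3 has an independent error. $\mathsf W_i\to\End_V$ is the base change of the injective map $k\mapsto i^{\otimes r}\circ k$, so it is not a fibration: an $f$ lifts only if it preserves $i(H)^{\otimes\bullet}$. Step 3 is also unnecessary. If you choose a quasi-isomorphism $H\to M(1)$ at the outset, the Segal condition directly makes the maps $H^{\otimes r}\to M(r)$ quasi-isomorphisms.

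The missing idea is to stop trying to build a properad and work with the bimodule instead. The paper regards $\Hom(H,M)$, $(i,j)\mapsto\Hom(H^{\otimes i},M(j))$, as a pointed $\POp$--$\End_H$ bimodule for the connected composition product, pointed by $H\to M(1)$. The left action is where the lax monoidal structure of $M$ enters. The Segal condition is what makes $\End_H\to\Hom(H,M)$ a quasi-isomorphism, i.e., makes this a quasi right torsor.

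A separate rectification result, proved by a cotriple resolution in pointed bimodules, connects any quasi right torsor by a zigzag of weak equivalences to one coming from a properad map $\AOp\to\ROp$. Here that zigzag is $(\POp,\Hom(H,M),\End_H)\xleftarrow{\sim}(\AOp,\MOp,\BOp)\xrightarrow{\sim}(\AOp,\ROp,\ROp)$. Only then does cofibrancy of $\POp$ do its job. After making $\AOp\to\POp$ and $\BOp\to\ROp$ acyclic fibrations, one lifts to $\POp\to\AOp$ and $\POp\to\BOp$. This gives the $\POp$-structure on $H$ through $\BOp\to\End_H$.

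One then replaces the span $\BOp\to\ROp\xleftarrow{\sim}\MOp$ of bimodules by $\BOp\xleftarrow{\sim}\UOp\to\MOp$. Taking PROP envelopes and relative composition products yields the strict zigzag $H^{\otimes\bullet}\xleftarrow{\sim}\mathsf u\circ_{\mathsf b}H^{\otimes\bullet}\to\mathsf m\circ_{\mathsf b}H^{\otimes\bullet}\to\mathsf{hom}(H,M)\circ_{\mathsf{end}(H)}H\to M$ in $\CAlg(\pop\Mod)$. Its composite is a quasi-isomorphism by comparison with the Segal maps $H^{\otimes r}\to M(1)^{\otimes r}\to M(r)$. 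None of this bimodule-level machinery is present in your proposal, and it is what replaces your Steps 1–3.
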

Here the object $H^{\otimes \bullet}\in \CAlg(\pop\Mod)$ is defined such that $H^{\otimes \bullet}(r) = H^{\otimes r}$, with the natural commutative product and the $\pop$-action induced from the $\POp$-algebra structure on $H$.

Note that this is a type of homotopy transfer for $\POp$-algebra structures. We follow the strategy of \cite{CamposWillwacher}. Namely, we let $\pBimod$ be the category of pointed properadic bimodules. Objects are triples $(\POp, \MOp, \QOp)$ where $\POp$ and $\QOp$ are properads and $\MOp$ is a $\POp\dash\QOp$-bimodule with respect to the ``connected'' composition product $\boxtimes_c$ (following \cite{Vallette}) together with a map $\Q \to \MOp(1,1)$, or equivalently a distinguished element of $\MOp(1,1)$. In particular, there are induced maps $\POp \to \MOp \from \QOp$ by composition with (multiple copies of) the distinguished element. We call $(\POp,\MOp,\QOp)$ a quasi right torsor if the map $\QOp \to \MOp$ is an equivalence. Given a map of properads $\POp \to \QOp$ we obtain an associated quasi right torsor via the formula $(\POp, \QOp, \QOp)$ with marked element in $\QOp(1,1)$ being the identity.

It follows from \cite{Vallette} that from a properadic bimodule $(\POp, \MOp, \QOp)$ we obtain a lax symmetric monoidal functor
\[
\mop \circ_{q} (\dash) \colon \pop\Mod \to \qop\Mod,
\]
where $\pop$, $\mop$ and $\qop$ are the prop envelopes (that is the free symmetric algebra in bisymmetric sequences) of $\POp$, $\MOp$ and $\QOp$, respectively.

\begin{prop}\label{prop:quasi torsor}
    Let $(\POp, \MOp, \QOp)$ be a quasi right torsor. Then there is a zig zag of equivalences
    \[
    (\POp, \MOp, \QOp) \from (\AOp, \NOp, \BOp) \to (\AOp, \ROp, \ROp).
    \]
\end{prop}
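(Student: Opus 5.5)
Following the strategy of \cite{CamposWillwacher}, I will produce the zig-zag by a cofibrant replacement in a model structure on pointed bimodules, followed by a pushout along the bimodule structure map. First, replace $\POp$ by a cofibrant properad $\AOp\xrightarrow{\sim}\POp$ (a Koszul or bar--cobar resolution, arity-wise surjective). Then, in the category of pointed $\AOp$-$\QOp$-bimodules (with $\QOp$ acting through itself), replace $\MOp$ cofibrantly. I take $\BOp \xrightarrow{\sim} \QOp$ to be a cofibrant properad, and build $\NOp$ as a cofibrant pointed $\AOp$-$\BOp$-bimodule with a surjective quasi-isomorphism $\NOp\to\MOp$ compatible with the map $(\AOp,\NOp,\BOp)\to(\POp,\MOp,\QOp)$. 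Concretely, $\NOp$ is quasi-free, $\NOp=(\AOp\boxtimes_c(\Q\cdot 1\oplus E)\boxtimes_c \BOp, d)$, with generators $E$ added inductively (by arity/genus, or via a small object argument) to kill the kernel of the map to $\MOp$ and the cycles. Because $\QOp\to\MOp$ is a quasi-isomorphism, and $\BOp\to\QOp$ is too, the map $\BOp\to\NOp$ given by acting on the marked element is again a quasi-isomorphism. So $(\AOp,\NOp,\BOp)$ is still a quasi right torsor, and the left arrow of the zig-zag is an arity-wise quasi-isomorphism in all three slots.

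Next I construct $\ROp$. The idea is that in a quasi right torsor over cofibrant data, the left $\AOp$-action can be lifted through $\BOp$. Since $\AOp$ is cofibrant and $\BOp\to\NOp$ is a quasi-isomorphism, I lift $\AOp\to\NOp$ (the map obtained from the left action on the marked element) to a properad map $\AOp\to\BOp$ up to homotopy. This step is delicate, because $\NOp$ is only a bimodule and not a properad. The simplest route is to define $\ROp$ directly as a properad generated by $\AOp$ and $\BOp$ modulo the relations forced by $\NOp$. For a quasi-free $\NOp$ with generators $E$, I would let $\ROp$ be the pushout of properads $\AOp\sqcup\BOp$ with the generators $E$ adjoined as elements, with the differential read off from $\NOp$. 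There is then a canonical map of pointed bimodules $\NOp\to\ROp$ sending the marked element to $1$, compatible with $\AOp\to\ROp\leftarrow\BOp$. This gives the right arrow $(\AOp,\NOp,\BOp)\to(\AOp,\ROp,\ROp)$, where $\ROp$ acts on itself.

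The main obstacle is proving that $\BOp\to\ROp$, and hence $\NOp\to\ROp$ by two-out-of-three, is a quasi-isomorphism. I would handle this with a filtration and spectral sequence argument on the quasi-free presentation of $\ROp$. The extra generators coming from $\AOp$ and $E$ pair off acyclically against each other, because $\BOp\to\NOp$ is a quasi-isomorphism, so the associated graded of $\ROp$ relative to $\BOp$ is a free construction on an acyclic complex. This should be controlled by filtering by genus and total arity, using that connected compositions are bounded in each biarity; the characteristic-zero Künneth argument then gives acyclicity. If this direct approach gets messy, the fallback is a cleaner route: show that the forgetful functor from pointed quasi-torsors over a fixed cofibrant $\AOp$ to properads under $\AOp$ is a right Quillen equivalence, and obtain $\ROp$ as the derived image. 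In both approaches, establishing this equivalence is the technical heart.
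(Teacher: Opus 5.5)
\textbf{There is a genuine gap at the step you yourself call the technical heart.} Your set-up matches the paper's architecture. The properad $\ROp$ you build, $\AOp\sqcup\BOp\sqcup\FreeP(E)$ with differential read off from $\NOp$, is exactly the left adjoint $A^L$ of the functor $A\colon(\POp\to\QOp)\mapsto(\POp,\QOp,\QOp)$ applied to $(\AOp,\NOp,\BOp)$. The whole content of the proposition is that the unit $(\AOp,\NOp,\BOp)\to AA^L(\AOp,\NOp,\BOp)$ is an equivalence for quasi torsors. That is precisely the step you leave open, and your sketch for it does not go through as stated, for three reasons.

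\begin{enumerate}
\item \emph{A Koszul-type input is missing.} The hypothesis that $\BOp\to\NOp$ is a quasi-isomorphism concerns three-level composites in $\AOp\boxtimes_c(\mathbb 1\oplus E)\boxtimes_c\BOp$. By contrast, $\ROp$ consists of arbitrarily nested alternations of $\AOp$-, $E$- and $\BOp$-vertices. To see that the generators of $\AOp$ ``pair off'' against part of $E$, you must recognize the part of $E$ resolving the left $\AOp$-action on the marked element as a bar/Koszul dual of $\AOp$. The cancellation is then the contractibility of a properadic Koszul complex. Nothing in your sketch supplies this.
\item \emph{There is no evident filtration.} For general quasi-free $\AOp,\BOp$ the differentials are nonlinear; in the intended application $\AOp$ is not even augmented, since $\uIFrob_{n,\infty}$ has the term $d_1$ hitting the identity. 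So $de$ for $e\in E$ contains terms $1\cdot b$ that lower, and terms $a\cdot 1$ with $a$ a long composite that raise, any count of $\AOp$/$E$-generators. Hence nothing obvious has associated graded ``$\BOp$ free product a free properad on an acyclic complex''.
\item \emph{The convergence claim is false.} Connected compositions in a properad are not bounded in a fixed biarity, because arbitrary genus occurs there. Your fallback, a right Quillen equivalence onto properads under $\AOp$, merely restates what has to be proved.
\end{enumerate}

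The paper avoids all of this by never checking the unit on a general cell model. It uses the left adjoint $L$ from cospans of dg bisymmetric sequences to pointed bimodules, together with the explicit mapping-cylinder cofibrant replacement of cospans. It then checks $T\to AA^LT$ only for $T=L(x\to\mathrm{cyl}(x\oplus z\to y)\from z)$ with $z\xrightarrow{\sim}y$. There everything is free on complexes with linear differential. Filtering by the number of middle factors, and then by the number of $y$-factors, reduces the claim to the K\"unneth formula and the contractibility of the Koszul complex of the free properad $\FreeP(x)$. The general case follows by forming the cotriple resolution $T^{\bullet+1}(\POp,\MOp,\QOp)$, applying $AA^L$ levelwise, and totalizing; the totalization of $T^{\bullet+1}$ recovers $(\POp,\MOp,\QOp)$ and the other side lies in the image of $A$. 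If you want to keep a single cofibrant replacement, you need some such reduction to objects free on complexes, or an explicit Koszul-duality argument, to make the key quasi-isomorphism $\BOp\to\ROp$ actually computable.
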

\begin{proof}
    The statement is equivalent to saying that a quasi right torsor is weakly equivalent to an object in the image of the functor $A \colon \Ppd^{\rightarrow} \to \pBimod ; (\POp \to \QOp) \mapsto (\POp, \QOp, \QOp)$ from the arrow category of the category of properads.
    We first show an analogous statement for free objects. Namely, as described above, there is a forgetful functor $\pBimod \to S\dash\bimod^{\leftarrow\rightarrow}$ to the cospan category of bisymmetric sequences sending $(\POp, \MOp, \QOp) \to (\POp \to \MOp \from \QOp)$. Let $L$ denote its left adjoint. In particular, it sends $(a \to a \oplus b \oplus c \from c)$ to $(\FreeP(a), \FreeP(a) \boxtimes_c (b \oplus \mathbb{1}) \boxtimes_c \FreeP(c))$ where $\FreeP(a)$ and $\FreeP(c)$ are the free properads on generators $a$ and $c$, respectively. Note that $S\dash\bimod^{\leftarrow\rightarrow}$ has a particularly explicit cofibrant replacement comonad (in the sense of \cite{Riehl}) given by
    \[
    (x \to \mathrm{cyl}(x \oplus z \to y) \from z) \to (x \to y \from y).
    \]
    We claim that, $L$ sends that morphism to a weak equivalence. Indeed, since taking free properads respects weak equivalences we only need to verify the statement for the bimodule, that is
    \[
    \FreeP(x) \boxtimes_c ( x[1] \oplus y \oplus z[1] \oplus \mathbb{1} ) \boxtimes_c \FreeP(z) \to \FreeP(x) \boxtimes_c (x[1] \oplus \mathbb{1}) \boxtimes_c \FreeP(y) \to \FreeP(y)
    \]
    are both equivalences. Here we omitted some differentials from the notation, more precisely, there is an exhaustive filtration by the number of tensor factors appearing in the middle composition factor (equivalently, the polynomial degree of the functor $V \mapsto \FreeP(x) \boxtimes_c V \boxtimes_c \FreeP(z)$), whose associated graded is the above. From that description we conclude that the first map is an equivalence. For the second equivalence, we filter by the number of tensor factors of $y$, so that the associated graded is the properadic Koszul complex and hence contractible by \cite{Vallette}.

    To obtain the statement for general $(\POp, \MOp, \QOp)$ we take the cotriple resolution with respect to $T(\POp, \MOp, \QOp) = L(\POp \to \mathrm{cyl}(\POp \oplus \QOp \to \MOp) \from \QOp)$. Note that $T$ is a comonad as composition of an adjunction with the comonadic cofibrant replacement functor. The above shows that $T \to A \circ A^L \circ T$ is an equivalence on objects $(x \to y \xleftarrow{\simeq} z)$. We obtain an equivalence of simplicial objects
    \[
    \Omega^\bullet(\Delta^n) \otimes T^n(\POp, \MOp, \QOp) \to A\left( \Omega^\bullet(\Delta^n) \otimes A^L T^n(\POp, \MOp, \QOp) \right).
    \]
    The left hand side (sum-)totalizes to an object equivalent to $(\POp, \MOp, \QOp)$ and the right hand side to an object in the image of $A$.
\end{proof}

\begin{proof}[Proof of Theorem \ref{thm:htpy transfer}]
Choose a map $H \to M(1)$ inducing the identity on cohomology. We then consider the quasi right torsor $(\POp, \Hom(H, M), \End(H))$, where $\Hom(H,M)(i,j) = \Hom(H^{\otimes i}, M(j))$ and $\End(H)(i,j) = \Hom(H^{\otimes i}, H^{\otimes j})$. By Proposition \ref{prop:quasi torsor} we can find a zig zag of equivalences
\[
(\POp, \Hom(H, M), \End(H))
 \xleftarrow[f]{\simeq}
 (\AOp, \MOp, \BOp)
\xrightarrow[g]{\simeq} (\AOp, \ROp, \ROp)
\]
After possibly replacing $\AOp$ and $\BOp$ we can assume that $\AOp \to \POp$ and $\BOp \to \ROp$ are fibrations. Since $\POp$ is cofibrant we can find the following dashed morphisms
\[
\begin{tikzcd}
    & & \BOp \ar[d, twoheadrightarrow, "\simeq"]  \\
    & \AOp \ar[d, twoheadrightarrow, "\simeq"] \ar[r] & \ROp \\
    \POp \ar[r, "="] \ar[ur, dashed] \ar[uurr, bend left, dashed] & \POp.
\end{tikzcd}
\]
In particular, $H$ becomes a $\POp$-algebra. And we obtain the zig zag of $\POp$-$\BOp$-bimodules $\BOp \to \ROp \xleftarrow{\simeq} \MOp$. We can further assume that $\MOp$ is semi-free as pointed $\BOp$ right module and that $\MOp \to \ROp$ is a fibration, so that taking the pullback is the homotopy pullback and we obtain $\BOp \xleftarrow{\simeq} \UOp \to \MOp$ where we furthermore can assume that $\UOp$ is semi-free as $\BOp$ right module. Passing to envelopes we obtain a zig zag of lax $\POp$-algebras
\[
H^{\otimes \bullet} \xleftarrow{\simeq} \mathsf{u} \circ_{\mathsf{b}} H^{\otimes \bullet} \to \mathsf{m} \circ_{\mathsf{b}} H^{\otimes \bullet} \to \mathsf{hom}(H,M) \circ_{\mathsf{end}(H)} H \to M.
\]

It remains to show that the map $\UOp \circ_{\BOp} H \to M$ is a quasi-isomorphism.
Since $\MOp$ and $\Hom(H,M)$ are pointed so is $\UOp$. Hence, after forgetting the $\POp$-structure we obtain the commuting diagram
\[
\begin{tikzcd}
    & H^{\otimes \bullet} \ar[dl, equal] \ar[dr] \ar[d] & \\
    H^{\otimes \bullet} & \ar[l, "\simeq"] \mathsf{u} \circ_{\mathsf{b}} H^{\otimes \bullet} \ar[r] & M
\end{tikzcd}
\]
where the map $H^{\otimes \bullet} \to M$ is obtained as $H^{\otimes n} \to M(1)^{\otimes n} \to M(n)$ and thus an equivalence by the Segal condition.

\end{proof}

\section{Two models for the properad $\uIFrob_n$}
\label{sec:GX}
\subsection{Cofibrant resolutions for $\uIFrob_{n}$}
\label{sec:cofib uIFrob}
Canonical cofibrant resolutions of augmented properads can usually be obtained by bar-cobar construction. For non-augmented properads like $\uIFrob_n$ the bar-cobar resolution has to be slightly adapted, as is described in \cite{HirshMilles}, to which we refer for details.
We denote $\Bar$ and $\Bar^c$ the bar and cobar construction as described there and define our cofibrant resolution of $\uIFrob_{n}$ as
\[
\uIFrob_{n,\infty} := \Bar^c\Bar \uIFrob_{n} \xrightarrow{\sim} \uIFrob_{n}.
\]
Note that this resolution is not minimal. But for our purposes it will in fact be more economic to work with this canonical resolution.

Let us describe explicitly the properad $\uIFrob_{n,\infty}$. It is quasi-free, generated by the coaugmentation coideal of the bar construction $\overline{\Bar \uIFrob_{n}}$.
Elements of $\Bar \uIFrob_{n}(r,s)$ in turn can be understood as linear combinations of connected directed acyclic graphs with $s$ inputs and $r$ outputs.
\[
\begin{tikzpicture}
    \node[int] (v1) at (0,.5) {};
    \node[int] (v2) at (0,-.5) {};
    \node[int] (v3) at (1,.5) {};
    \node[int] (v4) at (1,-.5) {};
    \node[int] (v5) at (.5,1) {};
    \node[int] (v6) at (1,-1.2) {};
    \node (l1) at (-.7,.5) {$1$};
    \node (l2) at (-.7,-.3) {$2$};
    \node (l3) at (-.7,-.8) {$3$};
    \node (r1) at (1.7,.5) {$1$};
    \node (r2) at (1.7,-.5) {$2$};
    \draw[->] (r1) edge (v3) (r2) edge (v4) (v3) edge (v1) edge (v2) (v4) edge (v1) edge (v2) 
    (v1) edge (l1) (v2) edge (l2) edge (l3)
    (v5) edge (v1) (v6) edge (v2) edge (v4);
\end{tikzpicture}
\]
Every vertex must have at least one out-edge, but may have zero in-edges. Vertices may have valence 1 or 2. However, there must not be ``passing vertices'', i.e., vertices with one in- and one out-edge.
\[
\begin{tikzpicture}
    \node[int] (v) at (0,0) {};
    \draw[->] (v) edge +(-.7,0); 
    \draw[<-] (v) edge +(.7,0); 
\end{tikzpicture}
\quad\quad\text{(not allowed)}
\]
The differential $d$ on $\uIFrob_{n,\infty}$ is fully determined by its value on the generators $\overline{\Bar \uIFrob_{n}}$.
There the differential has three parts, cf. \cite[section 3.3.6]{HirshMilles}:
\[
d= d_c + d_1 + d_{prop}.
\]
Here $d_c$ acts by contracting one edge of the graph, summing over all choices of edges whose contraction does not produce directed cycles or passing vertices.
The term $d_1$ only acts nontrivally when $r=s=1$ and the graph has the following special form, in which case it is sent to the operadic unit in $\uIFrob_{n,\infty}$:
\[
\begin{tikzpicture}
    \node[int] (v) at (0,0) {};
    \node[int] (w) at (0.5,0.5) {};
    \node (r1) at (1,0) {$1$};
    \node (l1) at (-.7,0) {$1$};
    \draw[->] (r1) edge (v) (v) edge (l1) (w) edge (v);
\end{tikzpicture}
\ \xrightarrow{d_1} \ \mathbf 1.
\]
Note that this may be understood as a special form of contraction of the unique edge.
Finally, the piece $d_{prop}$ sends our element $x$ of $\overline{\Bar \uIFrob_{n}}$ to a sum of properadic compositions of two elements, obtained by applying the reduced cooperadic cocomposition to $x$, see again \cite[section 3.3.6]{HirshMilles}.

\subsection{A version with undirected graphs}
Our next goal is to define a curved coproperad $\GX_n$ weakly equivalent to $\Bar \uIFrob_n$ above, built from undirected graphs instead of directed as above. 

Elements of $\GX_n(r,s)$ (for $r\geq 1, s\geq 0$ are formal $\Q$-linear combinations of pairs $(\Gamma,o)$, where $\Gamma$ is an admissible (non-directed) connected graph with $s$ input- and $r$ output-legs. Admissible means that the graph does not have vertices of valence 2. It may however have vertices of valence 1, self-edges or multiple edges.
\begin{align*}
\begin{tikzpicture}
    \node[int] (v1) at (0,.5) {};
    \node[int] (v2) at (0,-.5) {};
    \node[int] (v3) at (1,.5) {};
    \node[int] (v4) at (1,-.5) {};
    \node[int] (v5) at (.5,1) {};
    \node[int] (v6) at (1,-1.2) {};
    \node (l1) at (-.7,.5) {$1$};
    \node (l2) at (-.7,-.3) {$2$};
    \node (l3) at (-.7,-.8) {$3$};
    \node (r1) at (1.7,.5) {$1$};
    \node (r2) at (1.7,-.5) {$2$};
    \draw (r1) edge (v3) (r2) edge (v4) (v3) edge (v1) edge (v2) (v4) edge (v1) edge (v2) 
    (v1) edge (l1) (v2) edge (l2) edge (l3)
    (v5) edge (v1) 
    (v6) edge (v2) edge (v4) edge[bend right] (v4) edge[loop below] (v6);
\end{tikzpicture}
\end{align*}
The part $o$ of our pair is an orientation datum, depending on the parity of $n$. For $n$ even the orientation is an ordering on the set of edges and out-legs. For $n$ odd the orientation is an ordering on the set of vertices and half-edges, with the output-legs counting as edges, but not the input legs.
We impose the following relations:
\begin{itemize}
    \item Isomorphism: If $\phi : \Gamma\to \Gamma'$ is a graph isomorphism, then $(\Gamma,o)=(\Gamma',\phi_*o)$, with $\phi_*o$ the natural orientation on $\Gamma'$ induced from $o$ via $\phi$.
    \item Sign: If $o$, $o'$ are orientations (orderings) that differ by some permutation $\sigma$, then we set $(\Gamma,o)=(-1)^\sigma (\Gamma,o')$.
\end{itemize}

We will sometimes be slightly imprecise and talk about elements of $\GX_n$ as linear combinations of graphs $\Gamma$, leaving the orientation data $o$ implicit. The cohomological degree of a pair $(\Gamma,o)$ is 
\[
(\#\text{edges})(n-1) - (\#\text{vertices})n,
\]
where the out-legs also count as edges.

There is a natural structure of a graded coproperad on $\GX_n$.
Since it is easier to work with properads than with coproperads, we will equivalently define the dual properadic composition on $\GX_n^*$. Note that elements of $\GX_n^*$ can naturally also be understood as (series of) oriented graphs, so it is sufficient to define the composition on pairs of graphs. For $(\Gamma,o)\in \GX^*(r,A\sqcup S)$ and $(\Gamma',o')\in \GX^*(A\sqcup S',t)$ we define the properadic composition merely by gluing the input-legs of $\Gamma$ to the like-labeled output legs of $\Gamma'$. There is a natural orientation $o\circ o'$ on the result, by letting the elements of the ordering $o$ stand to the left of those of the ordering $o'$.
\[
\begin{tikzpicture}
    \node[int] (v1) at (0,.5) {};
    \node[int] (v2) at (0,-.5) {};
    \node[int] (v3) at (1,.5) {};
    \node[int] (v4) at (1,-.5) {};
    \node[int] (v5) at (.5,1) {};
    \node (l1) at (-.7,.5) {$1$};
    \node (l2) at (-.7,-.3) {$2$};
    \node (l3) at (-.7,-.8) {$3$};
    \node (r1) at (1.7,.5) {$1$};
    \node (r2) at (1.7,-.5) {$2$};
    \draw (r1) edge (v3) (r2) edge (v4) (v3) edge (v1) edge (v2) (v4) edge (v1) edge (v2) 
    (v1) edge (l1) (v2) edge (l2) edge (l3)
    (v5) edge (v1);
    \node (l4) at (-.7,-1.5) {$4$};
    \node (l5) at (-.7,-2) {$5$};
    \node (l6) at (-.7,-2.5) {$6$};
    \node (r3) at (1.7,-2) {$3$};
    \node[int] (w1) at (0,-2) {};
    \draw (w1) edge (l4) edge (l5) edge (l6) edge (r3);
\end{tikzpicture}
\circ_{2a,3b}
\begin{tikzpicture}
    \node (la) at (-.7,.5) {$a$};
    \node (lb) at (-.7,0) {$b$};
    \node (lc) at (-.7,-.5) {$c$};
    \node (ra) at (.7,0) {$a$};
    \node[int] (w1) at (0,0) {};
    \draw (w1) edge (la) edge (lb) edge (lc) edge (ra);
\end{tikzpicture}
=
\begin{tikzpicture}
    \node[int] (v1) at (0,.5) {};
    \node[int] (v2) at (0,-.5) {};
    \node[int] (v3) at (1,.5) {};
    \node[int] (v4) at (1,-.5) {};
    \node[int] (v5) at (.5,1) {};
    \node (l1) at (-.7,.5) {$1$};
    \node (l2) at (-.7,-.3) {$2$};
    \node (l3) at (-.7,-.8) {$3$};
    \node (r1) at (1.7,.5) {$1$};
    \draw (r1) edge (v3) (v3) edge (v1) edge (v2) (v4) edge (v1) edge (v2) 
    (v1) edge (l1) (v2) edge (l2) edge (l3)
    (v5) edge (v1);
    \node (l4) at (-.7,-1.5) {$4$};
    \node (l5) at (-.7,-2) {$5$};
    \node (l6) at (-.7,-2.5) {$6$};
    \node[int] (w1) at (0,-2) {};
    \draw (w1) edge (l4) edge (l5) edge (l6);
    \node (lc) at (-.7,-3) {$c$};
    \node (ra) at (2.4,-1.3) {$a$};
    \node[int] (w2) at (1.7,-1.3) {};
    \draw (w2) edge (v4)  edge (w1) edge (ra) edge (lc);
\end{tikzpicture}
\]
The coproperadic cocomposition on $\GX$ is defined by the dual operation of cutting graphs into two pieces.

Finally we endow $\GX_n$ with the structure of a curved coproperad. To this end we have to define a derivation $d$ and curvature $\theta\in \GX_n^*$ such that $d^2$ is given by the action of $\theta$, see \cite{HirshMilles}.
In our case the action of $d$ is simply by summing over all ways of contracting one (non-leg) edge,
\[
d(\Gamma,o) = \sum_{e} \epsilon_e (\Gamma/e, o_e),
\]
with the orientation $o_e$ and sign $\epsilon_e$ depending on the parity of $n$ as follows:
\begin{itemize}
\item If $n$ is even then $o$ is an ordering of the set of edges, with $e$ appearing in position $j_e\in\{1,2,\dots\}$. Then $o_e$ is the ordering obtained by removing $e$ from $o$ and $\epsilon_e = (-1)^{j_e-1}$.
\item If $n$ is odd then $o$ is an ordering of the set of vertices and half-edges. Equivalently, we can order the vertices and orient the edges. If our edge $e$ points from the first to the second vertex, then $\epsilon_e=+1$ and $o_e$ is obtained by omitting $e$ and merging the adjacent vertices in $o$. For other edges the sign is determined by equivariance.
\end{itemize} 

The curvature is given by the graph 
\[
\theta = \begin{tikzpicture}
    \node[int] (v) at (0,0) {};
    \node[int] (w) at (0.5,0.5) {};
    \node (r1) at (1,0) {$1$};
    \node (l1) at (-.7,0) {$1$};
    \draw (r1) edge (v) (v) edge (l1) (w) edge (v);
\end{tikzpicture}.
\]

\begin{lemma}
    The above data equip $\GX_n$ with a well-defined structure of a curved coproperad.
\end{lemma}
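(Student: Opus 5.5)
We work on the dual side, with the graded properad $\GX_n^*$ built from graphs with the gluing composition, and dualize at the end; finiteness in each arity and degree makes this harmless. We use the conventions for curved coproperads of \cite{HirshMilles}. Dually, we must check three things: (i) the gluing composition on $\GX_n^*$ is associative and equivariant, so $\GX_n$ is a graded coproperad; (ii) the dual $d^*$ of the contraction differential, which sums over all ways of splitting a vertex into two vertices joined by a new edge, is a derivation of the composition; (iii) $d^2$ equals the action of the curvature $\theta$, i.e. $(d^*)^2 = [\theta,-]$ in the sense of \cite{HirshMilles}, and $d^*\theta=0$.

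\textbf{Coproperad structure and derivation.} Associativity of gluing is immediate: gluing graphs along legs is associative up to graph isomorphism, and concatenating orderings is associative. Equivariance holds because relabelling legs commutes with gluing. For the sign of $o\circ o'$ we must check that degrees match the Koszul rule. A glued leg pair becomes one internal edge, and since an out-leg counts as an edge but an in-leg does not, the edge count is additive. Hence degree and orientation are additive.

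For the derivation property, observe that contracting an internal edge of a glued graph either contracts an edge of one of the pieces, or contracts an edge formed by gluing. Dually, $d$ on $\GX_n$ is compatible with cutting. Cutting a graph into two pieces and then contracting an edge inside one piece equals contracting first and then cutting. The sign $\epsilon_e$ is designed exactly so that removing $e$ from the ordering produces the Koszul sign when passing across the other factor. Both parities of $n$ need this sign check, which is routine given the stated conventions.

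\textbf{Curvature identity.} This is the main step. We compute $d^2(\Gamma)$ as a sum over ordered pairs of distinct edges $e,f$, contracting both. Terms where $e,f$ are both non-loop edges in either order cancel by the sign $\epsilon$: the ordering swap for even $n$, and the standard graph-complex vertex-merging argument for odd $n$. The exceptions come from admissibility, since contracting must not create bivalent vertices, and from edges becoming loops or legs. Contracting an edge that creates a bivalent vertex is allowed only if the result is admissible. The uncancelled terms are exactly those where one of the two orders is forbidden or produces a non-admissible intermediate graph.

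We expect these leftover terms to be precisely the configurations where a univalent vertex hangs on a bivalent vertex along a leg. This is the pattern of $\theta$, a vertex on a passing leg with an extra univalent antenna. Their contraction reproduces the graph with $\theta$ cut out, which is exactly the formula for the curvature action, namely cocomposition with $\theta$ inserted on either side of a leg. We would first verify this on the generators with at most two internal edges, where $d^2$ of $\theta$-shaped subgraphs produces the unit term, and then argue locally: every uncancelled term is supported on such a local configuration. Finally, $d\theta$ involves contracting the antenna edge of $\theta$, which gives a bivalent passing vertex. That result is non-admissible, so $d\theta=0$, completing the curved coproperad axioms. The delicate part will be the signs in the curvature term, matched against the convention $d^2 = \theta\star - - \star\theta$ of \cite{HirshMilles}.
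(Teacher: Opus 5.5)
The setup matches the paper's: you work with $\GX_n^*$, and associativity and the derivation property are immediate because gluing changes neither the vertices nor their half-edges. But the curvature step, which is the heart of the proof, is left as an expectation, and that expectation is wrong as stated.

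\textbf{The missing case in the curvature step.} The terms of $d^2\Gamma$ not killed by your order-swap cancellation come from an antenna edge $a$ at a trivalent vertex $w$. Contracting $a$ first would make $w$ bivalent, so only the order ``other edge at $w$ first, then $a$'' survives. There are two such configurations, not one.
\begin{itemize}
\item If the other two half-edges of $w$ are a leg and an edge to a vertex $x$, you get the curvature term, as you say.
\item If $w$ sits on an internal edge, joined to vertices $x$ and $y$ (possibly $x=y$), then both pairs $\{a,wx\}$ and $\{a,wy\}$ survive your swap argument. Both produce the same graph: $w$ and its antenna erased, $x$ joined directly to $y$.
\end{itemize}
The second kind of term is not a curvature term. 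So your claim that every uncancelled term is supported on a $\theta$-on-a-leg configuration is false, and without a further argument $d^2$ would contain extra terms.

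The missing idea is the second case of the paper's proof. The two terms cancel each other, because contracting either of the two edges at a bivalent vertex gives the same graph with opposite signs. Dually, the configuration is produced once by splitting $x$ and once by splitting $y$, with opposite signs.

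The paper organizes this cleanly by comparing with the graph complex allowing all valences, where $d^2=0$ is standard. The defect of $d^2$ on admissible graphs then consists exactly of the double operations whose intermediate graph has a bivalent vertex. One only has to sort these into the leg case (which gives the action of $\theta$) and the internal case (which cancels in pairs).

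\textbf{The argument for $d^*\theta=0$ uses the wrong operation.} You contract the antenna of $\theta$. That computes the coproperad differential of the graph $\theta$ viewed as an element of $\GX_n$, which is not the axiom. The axiom is that the curvature is closed, i.e. $\theta\circ d=0$, which means the splitting differential applied to $\theta\in\GX_n^*$ must vanish.

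The correct check runs as follows. Any splitting of $\theta$ other than sprouting a new univalent vertex from $w$ creates a bivalent vertex. The one admissible splitting gives $w$ a second antenna. The resulting graph vanishes, because the automorphism exchanging the two antennas acts by $-1$ for both parities of $n$. The paper's proof does not spell this check out either.
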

\begin{proof}
We check the equivalent dual statement that $\GX_n^*$ is a well-defined curved properad. It is obvious that the operadic composition by gluing graphs as above is associative. It is also obviously compatible with the differential (vertex splitting) since the composition does not alter the number of vertices, nor the set of half-edges attached to a vertex.

Next consider the double application $\delta^2\Gamma$ of the differential to a graph $\Gamma$.
If we allowed vertices of all valence, i.e., if we had not excluded vertices of valence 2, then it is a well-known verification that $\delta^2\Gamma=0$. 
Hence the terms of $\delta^2\Gamma$ are those formally obtained by double splits, with the first split creating a bivalent vertex and the second split removing the bivalent vertex by creating a univalent vertex. Let us distinguish two cases, according to whether the new bivalent vertex connects to one vertex and a leg, or to two vertices:
\begin{align*}
\begin{tikzpicture}
    \node[int] (v) at (0,0) {};
    \node (l1) at (-1,0) {$j$};
    \draw (l1) edge (v) (v) edge +(.5,0) edge +(.5,-.5) edge +(.5,.5);
\end{tikzpicture}
&\xrightarrow{\text{first split}}
\begin{tikzpicture}
    \node[int] (v) at (0,0) {};
    \node[int] (w) at (-.5,0) {};
    \node (l1) at (-1.3,0) {$j$};
    \draw (l1) edge (w) (v) edge +(.5,0) edge +(.5,-.5) edge +(.5,.5) edge (w);
\end{tikzpicture}
\xrightarrow{\text{second split}}
\begin{tikzpicture}
    \node[int] (v) at (0,0) {};
    \node[int] (w) at (-.5,0) {};
    \node[int] (u) at (-.5,.5) {};
    \node (l1) at (-1.3,0) {$j$};
    \draw (l1) edge (w) (v) edge +(.5,0) edge +(.5,-.5) edge +(.5,.5) edge (w) (w) edge (u);
\end{tikzpicture}
\\
\begin{tikzpicture}
    \node[int] (v) at (0,0) {};
    \node[int] (l1) at (-1,0) {};
    \draw (l1) edge (v) (v) edge +(.5,0) edge +(.5,-.5) edge +(.5,.5);
    \draw (l1) edge +(-.5,0) edge +(-.5,.5) edge +(-.5,-.5);
\end{tikzpicture}
&\xrightarrow{\text{first split}}
\begin{tikzpicture}
    \node[int] (v) at (0,0) {};
    \node[int] (w) at (-.5,0) {};
    \node[int] (l1) at (-1.3,0) {};
    \draw (l1) edge (w) (v) edge +(.5,0) edge +(.5,-.5) edge +(.5,.5) edge (w);
    \draw (l1) edge +(-.5,0) edge +(-.5,.5) edge +(-.5,-.5);
\end{tikzpicture}
\xrightarrow{\text{second split}}
\begin{tikzpicture}
    \node[int] (v) at (0,0) {};
    \node[int] (w) at (-.5,0) {};
    \node[int] (u) at (-.5,.5) {};
    \node[int] (l1) at (-1.3,0) {};
    \draw (l1) edge (w) (v) edge +(.5,0) edge +(.5,-.5) edge +(.5,.5) edge (w) (w) edge (u);
    \draw (l1) edge +(-.5,0) edge +(-.5,.5) edge +(-.5,-.5);
\end{tikzpicture}
\end{align*}
In the second case the shown terms are in fact produced twice for each edge, once from splitting the left-hand vertex and once from the right-hand vertex. Both contributions come with opposite sign and hence cancel.
There remain only the first contribution, and they are precisely the action of the graph $\theta$ above on $\Gamma$, so that 
\[
\delta^2\Gamma=[\theta,\Gamma]
\]
as desired.
\end{proof}

\subsection{Weak equivalence}
Our next goal is to define a weak equivalence (in a precise sense) between the curved coproperads $\GX_n$ and $\Bar \uIFrob_{n}$.
More precisely, a morphism of curved coproperads 
\[
\Theta: \GX_n \to \Bar \uIFrob_{n}
\]
is a morphism of graded coproperads such that $\Theta$ intertwines the differentials and the curvatures, $d\Theta=\Theta d$, $\theta_{\uIFrob_{n}} \Theta=\theta_{\GX_n}$.
Note that it does not make sense to ask that $\Theta$ is a quasi-isomorphism since the differentials on either side do not square to zero. 
However, we note that both $\GX_n$ and $\Bar \uIFrob_{n}$ carry a filtration by the number of univalent vertices, and the associated graded coproperads $\gr\GX_n$ and $\gr\Bar \uIFrob_{n}$ are dg coproperads.\footnote{In fact, $\gr\Bar \uIFrob_{n}=\Bar q\uIFrob_{n}$ is the bar construction of the associated quadratic properad $q\uIFrob_{n}$, see \cite{HirshMilles}.} Our morphism $\Theta$ will respect the number of univalent vertices on either side, and induce a quasi-isomorphism on the associated graded dg coproperads. 

The idea of the definition of $\Theta$ and the verification of its properties follows closely the discussion of a similar map $\Phi$ between the hairy graph complex and its directed acyclic variant in \cite{AssarMarko}. For some verifications that are essentially identical as in loc. cit. we will refer to the arguments given there.

To define $\Theta$, we in fact switch to the dual properads again and define instead the dual morphism 
\[
\Theta^* : (\Bar \uIFrob_{n})^* \to \GX_n^*.
\]
Let $(\Gamma,o)\in (\Bar \uIFrob_{n})^*(r,s)$ be a directed acyclic graph. Then we say that $\Gamma$ is \emph{good} if the following holds:
\begin{itemize}
    \item Every vertex of $\Gamma$ has either exactly one out-edge or it has two out-edges and zero in-edges. 
\end{itemize}
We call vertices of the second type (i.e., $
\begin{tikzpicture}
    \node[int](v) at(0,0) {};
    \draw[->] (v) edge +(-.5,0) edge +(.5,0);
\end{tikzpicture}$) \emph{inessential} and all other vertices \emph{essential}.
Now we define:
\begin{itemize}
\item If $\Gamma$ is not good, then we set $\Theta^*(\Gamma)=0$.
\item If $\Gamma$ is good, then we form a graph $\Gamma'$ from $\Gamma$ by replacing each inessential vertex with its two adjacent edges by one edge, and all remaining directed edges by undirected edges, for example:
\[
\begin{tikzpicture}
    \node[int] (v1) at (0,.5) {};
    \node[int] (v2) at (0,-.5) {};
    \node[int] (v3) at (1,.5) {};
    \node[int] (v4) at (1,-.5) {};
    \node[int] (v5) at (.5,1) {};
    \node[int] (v6) at (1,-1.2) {};
    \node (l1) at (-.7,.5) {$1$};
    \node (l2) at (-.7,-.5) {$2$};
    \node (r2) at (1.7,-.5) {$1$};
    \draw[->]  (r2) edge (v4) (v3) edge (v1) edge (v2) (v4) edge (v2) 
    (v1) edge (l1) (v2) edge (l2)
    (v5) edge (v1) (v6) edge (v2) edge (v4);
\end{tikzpicture}
\to 
\begin{tikzpicture}
    \node[int] (v1) at (0,.5) {};
    \node[int] (v2) at (0,-.5) {};
    \node (v3) at (1,.5) {};
    \node[int] (v4) at (1,-.5) {};
    \node[int] (v5) at (.5,1) {};
    \node (v6) at (1,-1.2) {};
    \node (l1) at (-.7,.5) {$1$};
    \node (l2) at (-.7,-.5) {$2$};
    \node (r2) at (1.7,-.5) {$1$};
    \draw  (r2) edge (v4) (v4) edge (v2) 
    (v1) edge (l1) (v2) edge (l2)
    (v5) edge (v1);
    \draw[rounded corners=15pt](v2) -- (v6.center) -- (v4);
    \draw[rounded corners=15pt](v1) -- (v3.center) -- (v2);
\end{tikzpicture}.
\]

\item We define an orientation $o'$ on $\Gamma'$ as follows, depending on the parity of $n$:
\begin{itemize}
    \item If $n$ is even then the orientation $o$ is an ordering of the set of vertices and half-edges of $\Gamma$, whereas the orientation $o'$ (to be defined) is an ordering of the set of edges. Now each essential vertex has a unique outgoing half-edge, and we may assume that both occur adjacent to each other in the ordering, the vertex before the half-edge. (The pair together is then even for sign purposes.) Each inessential vertex $v$ comes with 4 half-edges $h_1,h_1',h_2,h_2'$ of the two adjacent edges $(h_j,h_j')$ and we assume that they are adjacent in $o$ and of fixed order $vh_1h_1'h_2h_2'$. The five items together can be thought of as one odd item in $o$. These items together with the remaining half-edges are now in 1-1 correspondence to the edges of $\Gamma'$, and we may just retain their relative order to make $o'$. 
    \item If $n$ is odd then conversely $o$ is an order on the edges of $\Gamma$ and $o'$ (to be defined) an order on the set of vertices and half-edges of $\Gamma'$. We build $o'$ from $o$ as follows: Let $e_1,e_2$ be the two edges adjacent to an inessential vertex that become the edge $(h,h')$ of $\Gamma'$.
    Then we just replace $e_1,e_2$ by $h,h'$ in the ordering.
    Let $e$ be another edge of $\Gamma$, and $v$ be its source vertex. Then we replace $e$ in the ordering by $vhh'$, with $h,h'$ the half-edges of the corresponding edge in $\Gamma'$.
\end{itemize}
\item Finally, we set 
\[
\Theta^*(\Gamma,o):=(\Gamma',o').
\]
\end{itemize}

\begin{lemma}
    The map $\Theta^* : (\Bar \uIFrob_{n})^* \to \GX_n^*$ is a morphism of curved properads. It respects the gradings by loop order and the filtrations by the number of univalent vertices on both sides.
\end{lemma}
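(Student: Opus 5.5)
We have to check three things: that $\Theta^*$ is compatible with properadic composition, that it intertwines the differentials, and that it sends the curvature to the curvature. The grading and filtration statements are then bookkeeping.

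\emph{Compositions.} In $(\Bar \uIFrob_n)^*$ the properadic composition grafts the output legs of one directed acyclic graph onto the input legs of another, which is the same operation as in $\GX_n^*$. Goodness is a local condition on each vertex, namely its numbers of in- and out-edges, and grafting does not change any vertex's half-edges. So a composite is good exactly when both factors are good. The replacement $\Gamma\mapsto\Gamma'$ is also local: it removes inessential vertices and forgets directions. Hence it commutes with grafting, $\Theta^*(\Gamma_1\circ\Gamma_2)=\Theta^*(\Gamma_1)\circ\Theta^*(\Gamma_2)$. For the orientations, the conventions in both properads place the items of the first graph before those of the second. The rule $o\mapsto o'$ replaces blocks of items by blocks, and each block has a fixed parity (even for a vertex with its out-half-edge, odd for the five-item block of an inessential vertex when $n$ is even, and the analogous blocks for $n$ odd). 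So the rule is compatible with concatenation up to a sign, which one checks is trivial.

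\emph{Differentials.} Dually, the differential on $\GX_n^*$ splits a vertex by inserting an edge in all admissible ways. On $(\Bar\uIFrob_n)^*$ it splits vertices into directed pieces, including the $d_1$-type term, while $d_{prop}$ is dual to the composition. We must show $\Theta^* d = d\,\Theta^*$. I would follow the strategy of the map $\Phi$ in \cite{AssarMarko}.

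1. Compute $d\Gamma$ for a good graph $\Gamma$. Splitting an essential vertex $v$ with exactly one out-edge yields a good graph only if the new edge points toward the part containing the original out-edge. The other part then has exactly one out-edge, namely the new edge. So good splittings of $v$ correspond bijectively to undirected splittings of the corresponding vertex of $\Gamma'$, and each undirected splitting is obtained exactly once.

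2. Account for the remaining splittings. Splitting an inessential vertex is impossible, since a vertex with no in-edges cannot be split into valid pieces. Splittings that create a vertex with two out-edges but some in-edges give non-good graphs, which map to $0$. Splittings inserting an inessential vertex in the middle of an edge of $\Gamma'$ would give bivalent vertices in $\GX_n$, which are excluded.

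3. Treat non-good $\Gamma$, with $\Theta^*(\Gamma)=0$. We need $\Theta^*(d\Gamma)=0$. Terms of $d\Gamma$ that become good can arise only if $\Gamma$ has exactly one bad vertex, and that vertex has two out-edges and at least one in-edge. Its good splittings come in pairs, differing by which out-edge stays at the split-off vertex, and these should cancel by the sign rules.

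4. Curvature. The curvature graph $\theta$ for $\Bar\uIFrob_n$ is the directed version of $\theta_{\GX_n}$, and it is good. So $\Theta^*$ maps one curvature to the other, up to the sign convention. This is the same computation as in the $d_1$ case.

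The main obstacle I expect is step 3, together with the signs throughout, especially the cancellation of the paired terms for non-good graphs in both parities of $n$. I would first check the even case in detail, using the block description of $o'$, and treat the odd case by the same argument with the roles of edges and vertices/half-edges exchanged, as in \cite{AssarMarko}.

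\emph{Grading and filtration.} Loop order is the first Betti number. It is unchanged when an inessential vertex and its two edges are replaced by a single edge: this changes $(V,E)$ by $(-1,-1)$, so $E-V$ is preserved. Univalent vertices of $\Gamma'$ are exactly the essential vertices of $\Gamma$ with no in-edges, which are univalent in $\Gamma$. Inessential vertices are bivalent and are not counted, and every other vertex keeps its valence. Hence $\Theta^*$ preserves the number of univalent vertices, and so respects the filtration.
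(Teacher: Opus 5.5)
Your proposal is correct and follows the paper's route. The paper likewise treats compatibility with composition, the curvature, loop order and the univalent-vertex filtration as evident, and defers $\delta\Theta^*=\Theta^*\delta$ to the argument of \cite[proof of Proposition 13]{AssarMarko}, which is the vertex-splitting bijection plus pairwise cancellation for non-good graphs that you sketch. One small correction: an inessential vertex can in fact be split, into a univalent vertex feeding a vertex with one in-edge and two out-edges, but that graph is non-good and so is killed by $\Theta^*$, which your next sentence about vertices with two out-edges already covers.
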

\begin{proof}
    It is clear that $\Theta^*$ respects the properadic compositions, the loop order, the number of univalent vertices, and that the curvatures are sent to each other.
    The verification that it intertwines the differentials, i.e., $\delta\Theta^*=\Theta^*\delta$, can be shown in the same way as in \cite[proof of Proposition 13]{AssarMarko}.
\end{proof}

\begin{prop}\label{prop:Theta qiso}
    The map $\Theta^* :\gr(\Bar \uIFrob_{n})^* \to \gr\GX_n^*$ induced by $\Theta^*$ on the associated graded objects with respect to the filtration by the number of univalent vertices is a quasi-isomorphism of dg properads.
\end{prop}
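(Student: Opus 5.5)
Since both sides are (duals of) bar constructions and $\Theta^*$ preserves the properadic composition, I would reduce the claim to a statement about graph complexes in each fixed biarity $(r,s)$. Both $\gr(\Bar\uIFrob_n)^*$ and $\gr\GX_n^*$ split as direct products over loop order and over the number of univalent vertices, and these gradings are preserved by $\Theta^*$ by the preceding lemma. So it suffices to show that in each fixed $(r,s)$, loop order $g$ and number $u$ of univalent vertices, $\Theta^*$ is a quasi-isomorphism of complexes. The associated graded differential on both sides is vertex splitting that does not create univalent vertices. By the remark in the paper, on the left this is the bar construction of the quadratic properad $q\uIFrob_n$.

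First I would filter both sides by a secondary, bounded complete grading, namely the number of non-univalent vertices, or equivalently excess (edges minus vertices). I would then compare the two sides piece by piece, following the argument of \cite{AssarMarko} for the map $\Phi$ from hairy graphs to directed acyclic hairy graphs. The key step is to show that on the directed side the subcomplex spanned by non-good graphs is acyclic, or rather that the complex splits as (good graphs) $\oplus$ (acyclic complement) up to a spectral sequence.

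The strategy would be this. Put a filtration on directed graphs by the number of vertices with at least two out-edges, or by the total surplus $\sum_v(\#\text{out}(v)-1)$. On the associated graded, the only surviving differential splits a vertex with several out-edges into an inessential vertex-like piece. I would identify the local piece at each vertex with a Koszul-type complex for $\Com$ acting on the outputs. Its cohomology is concentrated on configurations where each vertex has one out-edge, apart from the inessential two-output sources, which model an undirected edge. This yields the good graphs. Contracting the inessential vertices then gives exactly the undirected admissible graphs, with orientation signs matched by the definition of $o'$.

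After this, the associated graded map becomes an isomorphism (on $E_1$) between good directed graphs and undirected graphs. To see this, note that each undirected graph with hairs and outputs carries a canonical orientation towards the output legs only if it is a tree. In loops, the choice of directions is compensated by the inessential vertices, and the remaining direction choices form an acyclic complex apart from the sum corresponding to $\Theta^*$. This is the analogue of \cite[Proposition 13 and Theorem 1]{AssarMarko}.

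The main obstacle is this middle step: proving acyclicity of the directed graphs that are not good (or of the mapping cone of $\Theta^*$). The difficulty lies in handling vertices with zero in-edges and several out-edges, in the presence of the input and output legs and univalent vertices. I would try first to reproduce the Assar–Živković argument. That means a further filtration by the number of essential vertices, then a contracting homotopy that merges an out-edge pair into an inessential vertex, checked separately for odd and even $n$ to fix signs. Convergence of all spectral sequences holds because each fixed $(r,s,g,u)$ piece is finite-dimensional in each degree and the filtrations are bounded.
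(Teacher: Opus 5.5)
You have the right overall shape: reduce to fixed biarity, loop order and number of univalent vertices, then filter by essential vertices, as you mention at the very end. But the proposal never carries out the one non-formal step, and the mechanisms you offer for it fail.

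\textbf{The earlier filtrations.} The filtrations you propose first are unusable. $\Theta^*$ deletes inessential vertices, so it does not respect the number of non-univalent vertices. The excess and the ``total surplus'' $\sum_v(\#\mathrm{out}(v)-1)=\#\{\text{internal edges}\}+r-\#\{\text{vertices}\}=g-1+r$ are constant at fixed loop order, so they filter nothing.

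\textbf{What actually has to be shown.} Filter by essential vertices on the directed side and by all vertices on the undirected side. Then $\gr\gr\GX_n^*$ has zero differential. On the directed side only $d'$ survives: it retypes a directed edge as marked, i.e.\ inserts an inessential vertex, and gives zero if a vertex loses its last out-edge. So one must show that for each core $\gamma$ the complex $B'_\gamma$ has one-dimensional cohomology.

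Your description of this step is incorrect. Good graphs do not split off with an acyclic complement, non-good graphs are not acyclic, and $\Theta^*$ is not an isomorphism from good graphs onto undirected graphs at $E_1$. Already for $\gamma$ a single vertex with one input and two output legs, there are two good graphs $G_1,G_2$ (either output leg may be the marked one). There is one non-good graph $N$, with $d'N=\pm G_1\pm G_2$. In general there is one good graph for each spanning tree of $\gamma$ with its output legs joined to an extra root vertex. All of these are cocycles, and all are cohomologous up to sign.

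\textbf{Why a local argument cannot work.} A vertex-local Koszul computation cannot detect this. $B'_\gamma$ is only a subcomplex of the tensor product over vertices of the local complexes ``choose a nonempty set of out-edges''. It is cut out by global conditions: an edge is an out-edge of at most one endpoint, and the directed edges form no cycle. Since the cohomology is nonzero, there is also no global contracting homotopy. You need relative homotopies organised along a spanning forest.

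\textbf{The missing ingredient.} This is the paper's tower argument. Fix a good graph $\Gamma_0$ with core $\gamma$. Order its edges so that each new edge is either directed in $\Gamma_0$ from an unvisited vertex $x$ into a visited vertex $y$, or marked between two visited vertices; this grows the rooted forest from the output legs. Let $B'_{\gamma,j}$ be spanned by graphs whose first $j$ edges have their $\Gamma_0$-type and whose visited vertices have exactly one out-edge.

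If edge $j$ is marked, then $B'_{\gamma,j}=B'_{\gamma,j-1}$, since both endpoints already have their unique out-edge among earlier edges. If edge $j$ is directed $x\to y$, then $B'_{\gamma,j-1}=B'_{\gamma,j}\oplus A\oplus B$. Here $A$ consists of graphs in which edge $j$ is directed $x\to y$ but $x$ has further out-edges, and $B$ of graphs in which edge $j$ is marked. The component $A\to B$ of $d'$ is an isomorphism, so the inclusion is a quasi-isomorphism by \cite[Lemma 2.1]{PayneWillwacher21}. This gives $H(B'_\gamma)=\Q\Gamma_0$, and one finishes by taking $\mathrm{Aut}(\gamma)$-coinvariants.

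Without such a global argument, ``reproduce \cite{AssarMarko}'' is a pointer, not a proof of the key step.
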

The proof is similar to that of \cite[Proposition 19]{AssarMarko}.
\begin{proof}
    Note that the morphism $\Theta^*$ sends a good graph with $N$ essential vertices to a graph with $N$ vertices. Hence $\Theta^*$ is compatible with the filtration on $\gr(\Bar \uIFrob_{n})^*$ by the number of essential vertices and the filtration on $\gr\GX_n^*$ by the number of vertices.
    We will in fact show that the associated graded map 
    \[
    \Theta^* :\gr\gr(\Bar \uIFrob_{n})^* \to \gr\gr\GX_n^*
    \]
    is a quasi-isomorphism. In fact, we have that $\gr\gr\GX_n^*\cong (\GX_n^*,0)$ is equipped with the zero differential, since the differential always creates one vertex.
    Similarly, $\gr\gr(\Bar \uIFrob_{n})^*\cong ((\Bar \uIFrob_{n})^*, d')$ with $d'$ creating an inessential vertex from an edge:
    \[
    d' : 
    \begin{tikzpicture}
    \node[int] (v) at (0,0) {};
    \node[int] (l1) at (-1,0) {};
    \draw (l1) edge[<-] (v) 
    (v) edge +(.5,0) edge +(.5,-.5) edge +(.5,.5);
    \draw (l1) edge +(-.5,0) edge +(-.5,.5) edge +(-.5,-.5);
\end{tikzpicture}
\, \mapsto \, 
\begin{tikzpicture}
    \node[int] (v) at (0,0) {};
    \node[int] (w) at (-.7,0) {};
    \node[int] (l1) at (-1.4,0) {};
    \draw (l1) edge[<-] (w) (v) edge +(.5,0) edge +(.5,-.5) edge +(.5,.5) edge[<-] (w);
    \draw (l1) edge +(-.5,0) edge +(-.5,.5) edge +(-.5,-.5);
\end{tikzpicture}
    \]
    Following \cite{AssarMarko} we will furthermore draw directed graphs in $(\Bar \uIFrob_{n})^*$ differently, by replacing an inessential vertex with its two adjacent edges by a marked edge:
    \[
    \begin{tikzpicture}
    \node[int] (v) at (0,0) {};
    \node[int] (w) at (-.7,0) {};
    \node[int] (l1) at (-1.4,0) {};
    \draw (l1) edge[<-] (w) (v) edge +(.5,0) edge +(.5,-.5) edge +(.5,.5) edge[<-] (w);
    \draw (l1) edge +(-.5,0) edge +(-.5,.5) edge +(-.5,-.5);
\end{tikzpicture}
=
\begin{tikzpicture}
    \node[int] (v) at (0,0) {};
    \node[int] (l1) at (-1.4,0) {};
    \draw (l1) edge[dashed] (v) (v) edge +(.5,0) edge +(.5,-.5) edge +(.5,.5);
    \draw (l1) edge +(-.5,0) edge +(-.5,.5) edge +(-.5,-.5);
\end{tikzpicture}
    \]
    Then we will say that the core of a graph $\Gamma$ is the graph obtained by replacing all directed and marked edges by undirected edges. In this language the map $\Phi^*$ is just the projection of graphs to their core, setting graphs that are not good to zero. 
    Furthermore, we have the direct sum decomposition of dg vector spaces  
    \begin{align*}
    \gr\gr(\Bar \uIFrob_{n})^*(r,s) &\cong \bigoplus_{\gamma} B_\gamma,
    \end{align*}
    where the sum is over all possible core graphs $\gamma$, and $B_\gamma$ is the subcomplex spanned by all graphs with core $\gamma$. Recall that the differential on $B_\gamma$ just replaces a directed edge by a marked edge
    \[
    \begin{tikzpicture}
    \node[int] (v) at (0,0) {};
    \node[int] (l1) at (-1,0) {};
    \draw (l1) edge[<-] (v) 
    (v) edge +(.5,0) edge +(.5,-.5) edge +(.5,.5);
    \draw (l1) edge +(-.5,0) edge +(-.5,.5) edge +(-.5,-.5);
\end{tikzpicture}
\, \mapsto \,
\begin{tikzpicture}
    \node[int] (v) at (0,0) {};
    \node[int] (l1) at (-1.4,0) {};
    \draw (l1) edge[dashed] (v) (v) edge +(.5,0) edge +(.5,-.5) edge +(.5,.5);
    \draw (l1) edge +(-.5,0) edge +(-.5,.5) edge +(-.5,-.5);
\end{tikzpicture}
,
    \]
    sending the graph to zero if this produces a vertex without output edges.
    In particular, this operation does not alter the number of edges in the graph. Hence, we may temporarily fix an ordering of the edges of the core $\gamma$, and assume the edges of our graphs are numbered. More precisely, we do this by temporarily passing to a complex $B_\gamma'$ whose elements are linear combinations of graphs as before but equipped with an isomorphism of their core to $\gamma$, so that $B_\gamma = B_\gamma'/\mathrm{Aut}(\gamma)$.
    
    It then suffices to check that the morphism
    \[
    H(B_\gamma',d')\to \Q\gamma,
    \]
    sending a good graph to its core and other graphs to zero is an isomorphism.

    We next fix a good graph $\Gamma_0$ whose core is $\gamma$.
    We show that the cohomology $H(B_\gamma',d')\cong \Q\Gamma_0$ is one-dimensional, represented by $\Gamma_0$.
    Let $V_j$ be the set of vertices of $\Gamma_0$ that are adjacent to edges $1,\dots,j$ of $\Gamma_0$.
    We may assume w.l.o.g. that our chosen order on the edges of $\Gamma_0$ has the following additional property:
    \begin{itemize}
        \item If edge $j+1$ is between vertices $x$ and $y$, then either the edge is directed from $x$ to $y$ in $\Gamma_0$ and $y\in V_j$, or both $x$ and $y$ are in $V_j$ and the edge is marked. 
    \end{itemize}
    In particular, edge 1 must be an out-leg, edge 2 either another out-leg or an edge adjacent to edge 1 etc.
    We then define a tower of subcomplexes
    \begin{equation}\label{equ:B tower}
    \Q\Gamma_0 = B_{\gamma,N}'\hookrightarrow \cdots \hookrightarrow  B_{\gamma,j}'\hookrightarrow  B_{\gamma,j-1}'\hookrightarrow \cdots \hookrightarrow B_{\gamma,0}' = B_{\gamma}'.
    \end{equation}
    such that $B_{\gamma,j}'\subset B_{\gamma}'$ is spanned by graphs $\Gamma\in B_{\gamma}'$ with the following properties:
    \begin{enumerate}[(i)]
        \item Each edge $1,\dots,j$ of $\Gamma$ has the same type as the corresponding edge of $\Gamma_0$.
        \item Each vertex of $V_j$ has exactly one out-edge in $\Gamma$. 
    \end{enumerate}
    Note that the unique out-edge of any vertex in $V_j$ must be in the set of edges $1,\dots,j$.
    It is clear that in order for the composition \eqref{equ:B tower} to be a quasi-isomorphism, it suffices to check that each map 
    \begin{equation}\label{equ:B incl}
    B_{\gamma,j}'\hookrightarrow  B_{\gamma,j-1}'
    \end{equation}
    is a quasi-isomorphism. 
    Suppose first that the edge $j$ is marked in $\Gamma_0$, connecting vertices $x$ and $y$. 
    Then by the assumption on our edge ordering we have that $V_{j}=V_{j-1}$. Hence any graph $\Gamma\in B_{\gamma,j-1}'$ must be such that edge $j$ in $\Gamma$ is also marked, since vertices $x$ and $y$ must each have a unique out-edge by the condition (ii) above, and these must be of index $\leq j-1$ as we observed above.
    Hence we conclude that $B_{\gamma,j}'=B_{\gamma,j-1}'$, so that the inclusion is trivially a quasi-isomorphism. 
    
    So we are left with the case that edge $j$ is directed in $\Gamma_0$ from vertex $x$ (not in $V_{j-1}$) to vertex $y$ (in $V_{j-1}$).
    We then decompose the complex into the following subspaces
    \[
    B_{\gamma,j-1}'
    \cong
    \begin{tikzcd}[baseline=.65em, column sep = 2mm]
        B_{\gamma,j}' \ar[loop above]{} & \oplus &  A \ar[bend right]{ll}\ar[loop above]{}\ar[bend left]{rr}{f} &\oplus & B \ar[loop above]{}\\
    \end{tikzcd}
    \]
    with the arrows indicating the pieces of the differential. Here $B$ is spanned by graphs for which edge $j$ is marked, and $A$ is spanned by all remaining graphs. (I.e., these are graphs such that edge $j$ is directed from $x$ to $y$, but there are other edges outgoing from $x$.) 
    The arrow $f$ acts by changing the type of edge $j$ from directed to marked.
    Now $f:A\to B$ is an isomorphism, with the inverse map changing the type of edge $j$ from marked to directed from $x$ to $y$. Hence from \cite[Lemma 2.1]{PayneWillwacher21} we readily conclude that the inclusion \eqref{equ:B incl} is a quasi-isomorphism, and hence the proposition follows.
\end{proof}

\begin{cor}\label{cor:Theta qiso properad}
The morphism of properads $\Bar^c\Theta:\Bar^c\GX_n \to \Bar^c\Bar \uIFrob_{n}=\uIFrob_{n,\infty}$ induced by $\Theta$ is a quasi-isomorphism.
\end{cor}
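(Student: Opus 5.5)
The idea is to deduce Corollary \ref{cor:Theta qiso properad} from Proposition \ref{prop:Theta qiso} by a filtration and spectral sequence argument on the cobar constructions. Both $\Bar^c\GX_n$ and $\Bar^c\Bar\uIFrob_n$ are quasi-free properads, generated by the (desuspended) coaugmentation coideals of $\GX_n$ and $\Bar\uIFrob_n$. Their differentials, following \cite{HirshMilles}, have three parts: the internal part from $d$, the part $d_1$ coming from the curvature $\theta$ (which maps the curvature graph to the unit), and the cocomposition part $d_{prop}$. Since $\Theta$ respects cocompositions, differentials and curvatures, $\Bar^c\Theta$ is a well-defined morphism of dg properads. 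It remains to show that it is a quasi-isomorphism.

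\textbf{Step 1: a complete filtration.} On both sides I would filter by the total number of univalent vertices, summed over all generators in a cobar word. The internal differential $d$ never increases this number, and the first term of $d$ that changes it is exactly the curvature term. The cocomposition $d_{prop}$ cuts a graph into two pieces without creating or destroying vertices, so it preserves the number. The $d_1$ term removes a univalent vertex. Hence the filtration is compatible with the differential. On the associated graded I expect exactly $\Bar^c$ of the associated graded dg coproperads, that is, $\Bar^c \gr\GX_n \to \Bar^c\gr\Bar\uIFrob_n$. Convergence is not an issue here: in each fixed arity $(r,s)$ and degree, the number of univalent vertices is bounded. This follows because the degree $(n-1)\#E - n\#V$ together with the loop order and arity constrains the count, since all non-univalent vertices have valence at least $3$ and $\Theta$ preserves loop order. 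Where this bound fails, I would additionally grade by loop order, which is preserved by everything and so splits the complexes.

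\textbf{Step 2: cobar preserves quasi-isomorphisms.} By Proposition \ref{prop:Theta qiso}, the dual map $\gr\Theta^*$ is a quasi-isomorphism. Dualizing arity-wise (everything is finite-dimensional in each loop order, arity and degree) shows that $\gr\Theta$ is a quasi-isomorphism of dg coproperads. Both are conilpotent and weight-graded, so I would apply the standard fact that the cobar construction sends quasi-isomorphisms of such coproperads to quasi-isomorphisms. To prove this, filter $\Bar^c$ further by the number of generators, which is the number of vertices in the underlying composition graph of the free properad. On the associated graded only the internal differential survives, and the Künneth formula, together with exactness of coinvariants in characteristic zero, gives a quasi-isomorphism. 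A comparison of spectral sequences then yields the claim for $\gr$ and hence for the original map.

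\textbf{Main obstacle.} The delicate point is Step 1, specifically checking that the curvature and $d_1$ contributions are genuinely of strictly positive filtration shift, so that the associated graded is honestly the uncurved cobar construction. A second concern is that the filtrations on both sides are exhaustive and bounded in the appropriate sense for the spectral sequence comparison to apply. If boundedness by univalent vertices alone fails, I would first split by loop order and total arity, and then verify finiteness within each summand.
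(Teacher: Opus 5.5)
Your proposal is essentially the paper's proof: filter by the total number of univalent vertices to pass to $\Bar^c$ of the uncurved associated graded coproperads, then filter by the number of cobar generators and conclude from Proposition~\ref{prop:Theta qiso} and the K\"unneth theorem. One correction: the boundedness you claim in Step~1 is false, and it is also not needed. Repeatedly composing the trivalent generator of $\GX_n(1,2)$ with the univalent unit generator of $\GX_n(1,0)$ gives cobar elements of arity $(1,1)$, loop order $0$ and one fixed degree with arbitrarily many univalent vertices. Convergence holds anyway because that filtration is increasing, exhaustive and bounded below. The bound that actually matters is on the number of generators in Step~2, and it does hold once loop order, arity and the number of univalent vertices are all fixed, which is exactly the situation on the associated graded of Step~1.
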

\begin{proof}
We may equip both sides of the morphism $\Bar^c\Theta:\Bar^c\GX_n \to \Bar^c\Bar \uIFrob_{n}=\uIFrob_{n,\infty}$ with the bounded below exhaustive increasing filtrations by the total number of univalent vertices. 
It then suffices to check that the induced morphism on the associated graded properads is a quasi-isomorphism.
To do that note that $\Bar^c(-)$ is always a free properad and has a descending filtration by the number of generators.
In our case this filtration is bounded in each loop order separately.
Hence it suffices to check that the morphism on the associated graded properads is a quasi-isomorphism, which is guaranteed by Proposition \ref{prop:Theta qiso} and the Künneth theorem. 
\end{proof}

\section{The Hoffbeck-Leray-Vallette category}
\label{sec:HLV}
In this section we recall the construction of \cite{HoffbeckLerayVallette} of a simplicial category encoding the homotopy theory of properad algebras, when the properad is the cobar construction of a coproperad. In fact, we shall need a slight variation, allowing the coproperad to be curved, reflecting the fact that $\uIFrob_n$ is not augmented.
This is however not a major change, given that the Koszul duality theory for non-augmented properads and curved coproperads is well-developed \cite{HirshMilles}.
We focus solely on the two cases of the properad being either $\uIFrob_{n,\infty}=\Bar^c \Bar \uIFrob_n$ or $\Bar^c\GX_n$, both being quasi-isomorphic to $\uIFrob_n$.

\subsection{Algebras over $\uIFrob_{n,\infty}$ and $\Bar^c\GX_n$}
Let $V$ be a dg vector space. Then a $\uIFrob_{n,\infty}$-structure on $V$ is the same data as a properad map into the endomorphism properad $\End(V)$
\[
\uIFrob_{n,\infty}\to \End(V).
\]
Note that this morphism does not encode the differential on $V$, which we assume is given from the start. If we want to encode the differential as well into the structure, then we may use the properadic "+"-construction, see \cite[Section 3.1]{MWDef}. To any properad $\POp$ there is a properad $\POp^+$ such that a $\POp^+$-algebra structure on a dg vector space $(V,d)$ is the same data as a pair consisting of (i) a deformation of the differential $d$ on $V$ to a new differential $d+D$ and (ii) a $\POp$-algebra structure on $(V, d+D)$. Concretely, $\POp^+$ is obtained from $\POp$ by freely adjoining a single generator corresponding to $D$, and modifying the differential accordingly.
A $\uIFrob_{n,\infty}$-structure on $V$, together with a deformation of the differential is then encoded by a properad morphism 
\[
\uIFrob_{n,\infty}^+\to \End(V).
\]
If one encodes the differential as part of this properad morphism, then one typically assumes that $V$ is just a graded vector space.

It is well known that properad morphisms $\uIFrob_{n,\infty}^+\to \End(V)$ are the same data as a Maurer-Cartan element in the deformation complex (or convolution algebra)
\begin{equation}\label{equ:Def plus def}
\Def^+(\uIFrob_{n,\infty}, \End(V))
:= \Conv(\Bar \uIFrob_{n}, \End(V))
\cong 
\prod_{r}
\Hom_{S_r}((\Bar \uIFrob_{n}\circ V)(r), V^{\otimes r}).
\end{equation}
Similarly, properad morphisms $\uIFrob_{n,\infty}\to \End(V)$ are the same data as Maurer-Cartan elements in the subcomplex
\[
\Def(\uIFrob_{n,\infty}, \End(V))
\cong 
\prod_{r}
\Hom_{S_r}((\overline{\Bar \uIFrob}_{n}\circ V)(r), V^{\otimes r}).
\]
We refer to \cite{MV-properad} for details. Just note that here the object above is a priori a \emph{curved} dg Lie algebra because $\Bar \uIFrob_{n}$ is a curved coproperad. Having non-trivial curvature reflects the fact that 0 is not a Maurer-Cartan element. The underlying reason is clear here: A non-zero unital Frobenius algebra must have a non-zero multiplication to satisfy the axioms.

Let us make the combinatorial form of the curved dg Lie algebra $\Def^+(\uIFrob_{n,\infty}, \End(V))$ explicit for finite-dimensional $V$.
Elements can be seen as series of directed acyclic graphs whose input legs are decorated by elements of $V^*$ and whose output legs are decorated by elements of $V$.

\[
\begin{tikzpicture}
    \node[int] (v1) at (0,.5) {};
    \node[int] (v2) at (0,-.5) {};
    \node[int] (v3) at (1,.5) {};
    \node[int] (v4) at (1,-.5) {};
    \node[int] (v5) at (.5,1) {};
    \node[int] (v6) at (1,-1.2) {};
    \node (l1) at (-.7,.5) {$v_1$};
    \node (l2) at (-.7,-.3) {$v_2$};
    \node (l3) at (-.7,-.8) {$v_3$};
    \node (r1) at (1.7,.5) {$f_1$};
    \node (r2) at (1.7,-.5) {$f_2$};
    \draw[->] (r1) edge (v3) (r2) edge (v4) (v3) edge (v1) edge (v2) (v4) edge (v1) edge (v2) 
    (v1) edge (l1) (v2) edge (l2) edge (l3)
    (v5) edge (v1) (v6) edge (v2) edge (v4);
\end{tikzpicture}
\quad\quad 
\text{with $v_1,v_2,v_3\in V$, $f_1,f_2\in V^*$.}
\]
Still each vertex must have at least one out-edge, but may have 0 input edges, and passing vertices are not allowed. The graph with no vertices and just one edge is allowed, reflecting our passage to $\uIFrob_{n,\infty}^+$ above. 
\[
\begin{tikzpicture}
    \node (l1) at (-.7,.5) {$v_1$};
    \node (r1) at (1.2,.5) {$f_1$};
    \draw (l1) edge (r1);
\end{tikzpicture}
\]
It encodes the deformation of the differential and is not present in $\Def(\uIFrob_{n,\infty}, \End(V))$.
The differential $\delta$ is obtained by splitting vertices, dual to the edge contraction differential above. 
The Lie bracket has the form 
\begin{equation}\label{equ:conv lie def}
[\Gamma_1,\Gamma_2] = \Gamma_1 \bullet \Gamma_2 - (-1)^{|\Gamma_1||\Gamma_2|} \Gamma_2\bullet \Gamma_1,
\end{equation}
with the operation $\Gamma_1\bullet \Gamma_2$ concatenating both graphs, summing over all partial matchings of the inputs of $\Gamma_1$ to the outputs of $\Gamma_2$.
\begin{equation}\label{equ:conv lie def 2}
\Gamma_1\bullet \Gamma_2
=
\sum
\,
\begin{tikzpicture}
    \node[ext, inner sep=1mm] (v) at (0,0) {$\Gamma_1$};
    \node[ext, inner sep=1mm] (w) at (1.5,0) {$\Gamma_2$};
    \draw[<-] (v) edge (w) edge[bend left] (w) edge[bend right] (w) edge +(.5, .7) edge +(.5, -.7)
    (w) edge +(.7,.5) edge +(.7,0) edge +(.7,-.5) ;
    \draw[->] (v) edge +(-.7,.5) edge +(-.7,0) edge +(-.7,-.5)
    (w) edge +(-.5, .7) edge +(-.5, -.7);
\end{tikzpicture}.
\end{equation}
The curvature is the element 
\[
\mu = 
\sum_i
\begin{tikzpicture}
    \node[int] (v) at (0,0) {};
    \node[int] (w) at (0.5,0.5) {};
    \node (r1) at (1,0) {$e_i^*$};
    \node (l1) at (-.7,0) {$e_i$};
    \draw[->] (r1) edge (v) (v) edge (l1) (w) edge (v);
\end{tikzpicture}
\]
where $e_i$ runs over a basis of $V$ and $e_i^*$ is the dual basis. 

For the graphical description above we used the finite-dimensionality of $V$ in so far that we took the decoration of the input legs in $(V^*)^{\otimes k}$, so each of the $k$ input legs carries one decoration in $V^*$. For infinite-dimensional $V$ we need to resort to formula \eqref{equ:Def plus def} instead, i.e., the previous decoration by $(V^*)^{\otimes k}$ should be replaced by $\Hom(V^{\otimes k},-)$. Nevertheless, our formula for the differential and Lie bracket still works, with minor adaptations, see e.g. \cite[Section 2.3]{HLVold}.

The above construction is obviously functorial in the curved coproperad used. In particular, we may replace the role of $\Bar\uIFrob_n$ by the curved coproperad $\GX_n$. Thus we obtain an explicit description of algebras over the properad $\Bar^c \GX_n$. Concretely, the $(\Bar^c \GX_n)^+$-algebra structures on the dg vector space $V$ are in one-to-one correspondence to Maurer-Cartan elements in the curved dg Lie algebra 
\[
\Def^+(\Bar^c \GX_n, \End(V))
:= \Conv(\GX_{n}, \End(V))
\cong 
\prod_{r}
\Hom_{S_r}((\GX_n\circ V)(r), V^{\otimes r}).
\]
For finite dimensional $V$, elements of the latter curved dg Lie algebra can be understood as (possibly infinite) linear combinations of undirected graphs with inputs decorated by elements of $V^*$ and outputs decorated by elements of $V$:
\[
\begin{tikzpicture}
    \node[int] (v1) at (0,.5) {};
    \node[int] (v2) at (0,-.5) {};
    \node[int] (v3) at (1,.5) {};
    \node[int] (v4) at (1,-.5) {};
    \node[int] (v5) at (.5,1) {};
    \node (l1) at (-.7,.5) {$v_1$};
    \node (l2) at (-.7,-.3) {$v_2$};
    \node (l3) at (-.7,-.8) {$v_3$};
    \node (r1) at (1.7,.5) {$f_1$};
    \node (r2) at (1.7,-.5) {$f_2$};
    \draw (r1) edge (v3) (r2) edge (v4) (v3) edge (v1) edge (v2) (v4) edge (v1) edge (v2) 
    (v1) edge (l1) (v2) edge (l2) edge (l3)
    (v5) edge (v1);
\end{tikzpicture}
\quad\quad 
\text{with $v_1,v_2,v_3\in V$, $f_1,f_2\in V^*$.}
\]
The graphs may have univalent vertices, but no bivalent vertices, in accordance to the definition of $\GX_n$. The differential is given by vertex splitting. The Lie bracket is defined analogously to 
\eqref{equ:conv lie def}, \eqref{equ:conv lie def 2} above, just using undirected graphs.
The curvature is the element
\[
\mu = 
\sum_i
\begin{tikzpicture}
    \node[int] (v) at (0,0) {};
    \node[int] (w) at (0.5,0.5) {};
    \node (r1) at (1,0) {$e_i^*$};
    \node (l1) at (-.7,0) {$e_i$};
    \draw (r1) edge (v) (v) edge (l1) (w) edge (v);
\end{tikzpicture},
\]
where the $e_i$ form a a basis of $V$ and $e_i^*$ are the dual basis elements of $V^*$.

By functoriality of the construction we obtain from the morphism of curved coproperads $\GX_n\to \Bar\uIFrob_n$ a morphism of curved dg Lie algebras 
\begin{equation}\label{equ:def map}
\Def^+(\uIFrob_{n,\infty}, \End(V))\to \Def^+(\Bar^c \GX_n, \End(V)).
\end{equation}

\subsubsection{Alternative definition of $\uIFrob_{n,\infty}$-structures}
Several different equivalent encodings of the data of a $\uIFrob_{n,\infty}$-structure on a vector space $V$ habe been studied in \cite{HLVold}. We recall here one further such encoding, that will be helpful later.
To this end, let
\[
\Bar \uifrob_{n} := \Env(\Bar \uIFrob_n)
\]
be the (co)PROP envelope of the properadic bar construction $\Bar \uIFrob_n$. Note that this notation is slightly abusive, since the object is not equal to the bar construction of the PROP $\uifrob_{n}$. Then for $V$ a dg vector space consider the object 
\[
\Bar \uifrob_{n} \circ V.
\]
This comes with a natural "horizontal" commutative product and a compatible left $\Bar \uifrob_{n}$-coaction. This is called a monoid $\Bar \uifrob_{n}$-comodule in \cite{HLVold}.

Now, as shown in loc. cit., the datum of a $\uIFrob_{n,\infty}^+$-structure on $V$ is the same as a bidifferential on $\Bar \uifrob_{n} \circ V$, i.e., a degree 1 square zero linear map that is a derivation with respect to the monoid structure and a coderivation with respect to the $\Bar \uifrob_{n}$-coaction.
Indeed, since $\Bar \uifrob_{n} \circ V$ is cofree as a $\Bar \uifrob_{n}$-comodule cogenerated by $V^{\otimes \bullet}$ and free as a monoid generated by $\Bar \uIFrob_{n} \circ V$, any bidifferential $D$ is determined fully by its restriction to generators and projection to cogenerators 
\[
\alpha = \pi D\iota \in \Hom_S(\Bar \uifrob_{n} \circ V, V^{\otimes \bullet})\cong \Def^+(\uIFrob_{n\infty}, \End_V),
\]
and one can verify (see \cite{HLVold}) that the equation $D^2=0$ is equivalent to the Maurer-Cartan equation in the deformation complex.

\subsection{Removing the curvature}
\label{sec:removing curvature}
It is possible to remove the curvature in the deformation complexes above by twisting with a Maurer-Cartan element.
To this end, let us suppose that our vector space $V$ has the form $V=\Q 1 \oplus W$, i.e., we assume there is a distinguished element 1 and an associated augmentation.\footnote{In particular, note that the augmentation is canonical if $V^0$ is one-dimensional, which is notably the case if $V=H(M)$ is the cohomology of a connected space $M$.}
This data determines a canonical $\uIFrob_n$-structure on $V$ by declaring that $1$ is the unit element and that for all $x\in V$, $w,w'\in W$
\begin{align*}
1\cdot x &= x & w\cdot w' &=0 & \Delta x&= 0.
\end{align*}
This $\uIFrob_n$-structure determines a morphism of properads as the composition
\[
\phi\colon \uIFrob_{n,\infty}^+
\to \uIFrob_{n,\infty} \to \uIFrob_n\to \End(V),
\]
and hence a Maurer-Cartan element denoted by the same letter
\[
\phi \in \Def^+(\uIFrob_{n,\infty}, \End(V)).
\]
Twisting with this Maurer-Cartan element we obtain an honest (non-curved) dg Lie algebra 
\[
\Def^+(\uIFrob_{n,\infty}, \End(V))^\phi.
\]
Applying \eqref{equ:def map} to $\phi$ we then obtain a Maurer-Cartan element $\psi\in \Def(\Bar^c\GX_n, \End(V))$. By twisting \eqref{equ:def map} we then obtain a morphism of dg Lie algebras 
\begin{equation}\label{equ:def map 2}
\Def^+(\uIFrob_{n,\infty}, \End(V))^\phi\to \Def^+(\Bar^c\GX_n, \End(V))^\psi.
\end{equation}
\begin{prop}
    The morphism of dg Lie algebras \eqref{equ:def map 2} is a quasi-isomorphism.
\end{prop}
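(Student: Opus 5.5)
We will deduce the statement from Proposition \ref{prop:Theta qiso} by a filtration and spectral sequence argument. Both sides are convolution dg Lie algebras: as graded vector spaces,
\[
\Def^+(\uIFrob_{n,\infty}, \End(V))^\phi\cong \prod_r \Hom_{S_r}((\Bar \uIFrob_{n}\circ V)(r), V^{\otimes r})
\]
and similarly with $\GX_n$ in place of $\Bar\uIFrob_n$. The map \eqref{equ:def map 2} is given by precomposition with $\Theta$. The twisted differential is $\delta+[\phi,-]$, respectively $\delta+[\psi,-]$, where $\phi$ is the explicit element encoding the trivial structure. Its components are the unit, the binary product involving $1$, and (if we keep the $+$ part) the differential $d_V$. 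The diagonal $\Delta$ is zero.

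The first step is to split the twisted differential into pieces according to how they change natural gradings. The relevant gradings are the number of univalent vertices in the graph, the loop order, and the number of non-unit decorations $W$ versus $\Q1$ on the legs. Composing with the unit part of $\phi$ can create or absorb univalent vertices. For example, bracketing with the graph where a univalent vertex feeds into a product, decorated by $1$, cancels the curvature $\mu$; this is the point of twisting. The remaining pieces of $[\phi,-]$ compose with the product $w\cdot x$ with $1$. I will choose a complete descending filtration, by loop order together with the number of univalent vertices shifted by the number of legs decorated by $1$, such that the associated graded differential on each side is the pure graph differential of $\gr\Bar\uIFrob_n$, respectively $\gr\GX_n$, together with $d_V$. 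I would first try the filtration ``number of univalent vertices minus number of unit-decorated outputs'' and check that every term of $[\phi,-]$ is non-decreasing for it.

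On the associated graded, the map is $\Hom_S(\gr\Theta\circ V,V^{\otimes\bullet})$. Here $\gr\Theta$ is a quasi-isomorphism by Proposition \ref{prop:Theta qiso}. Since we are in characteristic zero, taking $S$-invariants and Künneth preserve quasi-isomorphisms, and $\Hom(-,V^{\otimes r})$ is exact over a field, so the associated graded map is a quasi-isomorphism. The filtration is complete and exhaustive; loop order together with arity bounds each graded piece. By the Eilenberg--Moore comparison theorem for complete filtrations, \eqref{equ:def map 2} is then a quasi-isomorphism.

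The main obstacle is the choice of filtration. The twisting terms coming from $1\cdot x=x$ mix the univalent-vertex count with the leg decorations. We need the leading part to be exactly the differential of the uncurved associated graded $\gr(\Bar\uIFrob_n)$ and not something coarser, and we need convergence given the infinite products. If no single filtration works directly, I would split $V=\Q1\oplus W$ on each leg and filter first by the number of $W$-decorated legs, reducing to the unit-leg combinatorics, which are handled by the same contractible-complex argument as in the proof of Proposition \ref{prop:Theta qiso}.
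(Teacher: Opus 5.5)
Your route is the one the paper intends; its proof is the single sentence that the claim follows from Proposition~\ref{prop:Theta qiso}. However, the one step that needs an argument, the choice of filtration, is left open in your proposal, and the concrete choices you do make do not work.

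\textbf{Where the proposal fails.} First, $\phi$ is not just the unit and the binary product. It is supported on all one-vertex graphs with one output, i.e.\ the unit and the $s$-ary products $\mu_s$ for every $s\geq 2$. The terms of $[\mu_s,-]$ are of two kinds:
\begin{itemize}
\item grafting the corolla onto an input leg of $\Gamma$, so that a leg decorated by $f$ becomes $s$ legs decorated by $f\circ\mu_s$;
\item gluing $k\geq 1$ outputs of $\Gamma$ into inputs of the corolla.
\end{itemize}
Let $u$ be the number of univalent vertices and $e$ the number of $1$-decorated outputs. Your candidate grading $u-e$ is unchanged by the grafting terms and by the gluing terms with $k=1$. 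So these survive in the associated graded, whose differential is then not that of $\Hom_S(\gr\Bar\uIFrob_n\circ V,V^{\otimes\bullet})$, and Proposition~\ref{prop:Theta qiso} cannot be quoted.

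Second, your convergence claim fails as stated. Gluing $k\geq 2$ outputs raises the loop order $g$ by $k-1$ and lowers the number $r$ of outputs by $k-1$. So the twisted differential preserves neither loop order nor arity, and the complex does not split into pieces where these are fixed.

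Your fallback of filtering by the number of $W$-decorated legs does not help either. That number changes only for gluing terms in which the $W^*$-decoration of the corolla sits on a free input.

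\textbf{How to repair it.} Also count the number $b$ of $1^*$-decorated inputs.
\begin{itemize}
\item Using $1^*\circ\mu_s=(1^*)^{\otimes s}$, and that $\mu_s$ vanishes once two arguments lie in $W$, one checks case by case that every term of $[\mu_s,-]$ raises $b-e$ by exactly $s-1\geq 1$.
\item The unit term turns a $1^*$-leg into a univalent vertex, so $u\mapsto u+1$ and $b\mapsto b-1$.
\item Vertex splitting either splits off a univalent vertex, so $u\mapsto u+1$, or is the part $\delta_0$ preserving $u,b,e$.
\end{itemize}
Hence the grading $2u+b-e$ is non-decreasing. Its associated graded differential is exactly $\delta_0$ (plus $d_V$ if present), i.e.\ the differential of $\gr\Bar\uIFrob_n$, resp.\ $\gr\GX_n$, and $\Theta$ preserves $u$, $b$ and $e$.

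For convergence, use that $g+r$ is preserved by every term. So both complexes are products over $L=g+r$ of subcomplexes. On each of these the filtration by $2u+b-e\geq -r\geq -L$ is bounded below and complete.

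With this in place, your Künneth and invariants step together with Proposition~\ref{prop:Theta qiso} completes the proof.
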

\begin{proof}
    This is a direct consequence of Proposition \ref{prop:Theta qiso}.
\end{proof}

\subsection{$\infty$-morphisms of $\uIFrob_{n,\infty}$-algebras and $\Bar^c\GX_n$-algebras}

Let $V$ and $W$ be graded vector spaces and let $\alpha$ (resp. $\beta$) be a $\uIFrob_{n,\infty}^+$-algebra structure on $V$ (resp. on $W$). We will encode these structures either as a Maurer-Cartan element in the deformation complex ($\alpha \in \Def^+(\uIFrob_{n,\infty}, \End_V)$) or equivalently as the corresponding bidifferential $D_\alpha$ on the monoid $\Bar \uifrob_{n}$-comodule $\Bar \uifrob_{n} \circ V$ (resp. $D_\beta$ on $\Bar \uifrob_{n} \circ W$).

Then an $\infty$-morphism $\Phi: V\to W$ is a morphism of monoid $\Bar \uifrob_{n}$-comodules
\[
\Phi \colon \Bar \uifrob_{n} \circ V \to \Bar \uifrob_{n} \circ W
\]
compatible with the differentials, i.e., 
\begin{equation}\label{equ:infty morphism}
\Phi D_\alpha= D_\beta \Phi.
\end{equation}
By freeness (as monoids) and cofreeness (as $\Bar \uifrob_{n}$-comodules) the data of an $\infty$-morphism $\Phi$ is fully encoded by its restriction to generators and projection to cogenerators 
\[
\phi :=\pi\Phi\iota \in 
\Hom_S(\Bar\uIFrob_n\circ V, W^{\otimes \bullet}) =: \fh_{(V,\alpha), (W,\beta)}.
\]
Furthermore, as has been shown in \cite{HLVold}, the right-hand side $\fh_{(V,\alpha), (W,\beta)}$ has a natural curved $L_\infty$-structure depending on $\alpha$ and $\beta$ such that the compatibility equation \eqref{equ:infty morphism} for $\Phi$ is equivalent to the Maurer-Cartan equation for $\phi$.
Hence the set of $\infty$-morphisms $V\to W$ is identified with the Maurer-Cartan set $\MC(\fh_{(V,\alpha), (W,\beta)})$.

Also note that from the description above $\infty$-morphisms can be naturally composed -- one just defines the composition as the composition of monoid $\Bar \uifrob_{n}$-comodule morphisms.

Finally, the same discussion applies mutatis mutandis to $\GX_n$ instead of $B\uIFrob_n$. 
We will denote the resulting curved $L_\infty$-algebra by $\tilde \fh_{(V,\alpha), (W,\beta)} =\Hom_S(\GX_n\circ V, W^{\otimes \bullet})$.

\subsection{The Hoffbeck-Leray-Vallette simplicial category $\HLVCat$}\label{sec:HLV simpl cat}
We next recall (and slightly rephrase) the construction of a simplicial category of $\uIFrob_{n,\infty}$-algebras due to Hoffbeck-Leray-Vallette \cite{HoffbeckLerayVallette}, which we call $\HLVCat$.
The objects of $\HLVCat$ are $\uIFrob_{n,\infty}$-algebras, that is, pairs $(V,\alpha)$ consisting of a graded vector space $V$ and a $\uIFrob^+_{n,\infty}$-structure $\alpha$ on $V$, for example encoded as a bidifferential $D_\alpha$ on $\Bar\uIFrob_n\circ V$.

For $(V,\alpha)$ and $(W,\beta)$ two objects of $\HLVCat$ the simplicial mapping space is defined as
\[
\Map_\bullet((V,\alpha),(W,\beta))
:=
\Mor_{\text{mon }\Bar\uIFrob_n\text{-comod}}((\Bar\uIFrob_n\circ V,D_\alpha),( (\Bar\uIFrob_n\circ V )\otimes \Omega(\Delta^\bullet),D_\beta) ),
\]
where the morphisms on the right are the monoid $\Bar\uIFrob_n$-comodule morphisms. Using the encoding of $\infty$-morphisms as MC elements of the previous subsection we can write the mapping spaces equivalently as Maurer-Cartan spaces 
\begin{align*}
\Map_\bullet((V,\alpha),(W,\beta))
&=
\MC( \Hom_S(\Bar\uIFrob_n\circ V, W^{\otimes *} \otimes \Omega(\Delta^\bullet) ) )
\\&\cong 
\MC( \Hom_S(\Bar\uIFrob_n\circ V, W^{\otimes *} )\hotimes \Omega(\Delta^\bullet) )
\\&= \MC_\bullet(\fh_{(V,\alpha), (W,\beta)}).
\end{align*}
To define the composition of the above simplicial mapping spaces it is most convenient to realize that $\Map_\bullet((V,\alpha),(W,\beta))$ is the same as the set of $\infty$-morphisms $V\otimes \Omega(\Delta^\bullet)\to W\otimes \Omega(\Delta^\bullet)$, after extending the ground ring $\Q$ to the differential graded ring $\Omega(\Delta^\bullet)$. Then the composition of the simplicial mapping spaces is just given by the obvious composition of $\infty$-morphisms, just defined over the ground ring $\Omega(\Delta^\bullet)$.

Again, we may replace $\Bar\uIFrob_n$ by $\GX_n$ in the above construction to obtain a simplicial category of $\Bar^c\GX_n$-algebras, i.e., pairs $(V,\alpha)$ consisting of a graded vector space $V$ and a Maurer-Cartan element $\alpha\in \Def^+(\Bar^c\GX_n, \End(V))$. The morphism spaces are defined as 
\[
\Map((V,\alpha), (W,\beta)) = \MC_\bullet(\tilde \fh_{ (V,\alpha), (W,\beta) }),
\]
with the composition defined analogously to above.
The morphism $\GX_n \to \Bar\uIFrob_n$ then induces a morphism of simplicial categories $\HLVCat \to \GXCat$, and from Proposition \ref{prop:Theta qiso} above we obtain:
\begin{cor}
The simplicial functor $\HLVCat \to \GXCat$ is an equivalence of $\infty$.categories.
\end{cor}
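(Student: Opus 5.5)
We will show that the functor $\HLVCat \to \GXCat$, induced by $\Theta:\GX_n\to\Bar\uIFrob_n$, is an equivalence of $\infty$-categories by checking that it is a Dwyer--Kan equivalence. That is, it should be homotopically fully faithful, meaning each map of mapping spaces is a weak equivalence of Kan complexes, and essentially surjective on homotopy categories. Both categories have the same kind of objects, namely graded vector spaces with a Maurer--Cartan element. On objects the functor is $(V,\alpha)\mapsto (V,\Theta^*\alpha)$, using the morphism \eqref{equ:def map} of curved Lie algebras.

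\emph{Fully faithful.} For objects $(V,\alpha),(W,\beta)$ the functor on mapping spaces is the map of simplicial Maurer--Cartan sets
\[
\MC_\bullet(\fh_{(V,\alpha),(W,\beta)})\to \MC_\bullet(\tilde\fh_{(V,\Theta^*\alpha),(W,\Theta^*\beta)}),
\]
induced by precomposition with $\Theta$ on $\Hom_S(\Bar\uIFrob_n\circ V,W^{\otimes\bullet})\to\Hom_S(\GX_n\circ V,W^{\otimes\bullet})$. Both curved $L_\infty$-algebras are complete for the filtration by the number of univalent vertices combined with the number of vertices, which is the filtration used in Proposition \ref{prop:Theta qiso}. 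I would apply the Goldman--Millson/Dolgushev--Rogers theorem: a filtered $L_\infty$-morphism between complete filtered algebras that is a quasi-isomorphism on the associated graded induces a weak equivalence of $\MC_\bullet$.

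The main obstacle is the curvature. To handle it I would twist at each Maurer--Cartan point $\phi$ and compare the twisted tangent complexes $\fh^\phi\to\tilde\fh^{\Theta\phi}$. After twisting, the associated graded differential is determined by the linear part only. On this graded piece the map is $\Hom_S(\gr\gr\Bar\uIFrob_n\circ V,W^{\otimes\bullet})\to\Hom_S(\gr\gr\GX_n\circ V,W^{\otimes\bullet})$. This is a quasi-isomorphism by Proposition \ref{prop:Theta qiso}: its proof shows that the doubly graded map is a quasi-isomorphism, and it remains one after applying $\Hom_S(-\circ V,W^{\otimes\bullet})$, since we are in characteristic zero and the complexes $B_\gamma'$ split as finite sums. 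The twisting terms raise the filtration degree, so a spectral sequence argument, using completeness and the Mittag--Leffler property for the towers of $\MC_\bullet$, upgrades this to a weak equivalence. One then has to check that this holds on every component and on all homotopy groups, which is what Dolgushev--Rogers provides.

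\emph{Essential surjectivity.} Let $(W,\beta)$ be a $\Bar^c\GX_n$-algebra. Corollary \ref{cor:Theta qiso properad} gives a quasi-isomorphism $\Bar^c\GX_n\to\uIFrob_{n,\infty}$ between cofibrant properads. Hence I would use the homotopy-transfer property of $\infty$-morphisms, or equivalently the fact that $\Theta^*$ induces a bijection on gauge classes of Maurer--Cartan elements. Concretely, the surjectivity of $\pi_0$ of the $\MC$ map for $\Def^+(\uIFrob_{n,\infty},\End(W))\to\Def^+(\Bar^c\GX_n,\End(W))$ follows by the same filtered Goldman--Millson argument applied to \eqref{equ:def map}. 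This produces $\alpha$ with $\Theta^*\alpha$ gauge equivalent to $\beta$. A gauge equivalence is an invertible $\infty$-morphism, so $(W,\Theta^*\alpha)\simeq(W,\beta)$ in $\GXCat$, which finishes the proof.
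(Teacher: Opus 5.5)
Your proposal is essentially the paper's argument. The paper deduces the corollary directly from Proposition~\ref{prop:Theta qiso}, and your Dwyer--Kan unpacking (Goldman--Millson for $\MC_\bullet(\fh)\to\MC_\bullet(\tilde\fh)$, essential surjectivity via Corollary~\ref{cor:Theta qiso properad} and gauge equivalences) is exactly what that deduction amounts to.

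One caveat on the filtration applies equally to the $\Def^+$ comparison. On the $\Bar\uIFrob_n$ side you must count \emph{essential} vertices, since precomposition with $\Theta$ does not respect the total vertex count. With that filtration, the linear part and the diagonal (which is vertex-free on the $\GX_n$ side) sit in filtration degree $0$. Hence $\fh\neq\mF^1\fh$, and the twisting terms that glue $\Delta_V$ to the linear part of the base point survive in the associated graded, contrary to your claim that twisting terms raise the filtration. So Dolgushev--Rogers does not apply verbatim. A workable fix is to fibre over the linear part and additionally count the non-input edges of the core graph. This puts everything in positive degree, and only $d'+d_V+d_W$ (resp.\ $d_V+d_W$ on the $\GX_n$ side) survives on the associated graded, so Proposition~\ref{prop:Theta qiso} applies.
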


\section{Comparison of $\HLVCat$ and $\CAlg(\uifrob_n\Mod)$ and proof of Proposition \ref{prop:HLV equiv intro}}
\label{sec:HLV comparison}
\subsection{More on the model category $\uifrob_{n}\Mod$}
Let us call the model structure on $\uifrob_{n}\Mod$ of Lemma \ref{lem:uifrob model} the \emph{mixed} model structure. 
In this section we will also consider the projective model structure on the category $\uifrob_n\Mod$, which again exists by \cite[Theorem 3.2]{Keller}.
It is automatically a symmetric monoidal model structure.
Furthermore, the identity functors constitute a symmetric monoidal Quillen equivalence 
\[
\mathit{id}\colon \uifrob_n\Mod^{\text{mixed}} \leftrightarrows \uifrob_n\Mod^{\text{proj}}
\colon \mathit{id}
\]
between the two model structure, so we may work with either for the purposes of this section.
We also obtain a Quillen equivalence on the categories of commutative algebras
\[
\mathit{id}\colon \CAlg(\uifrob_n\Mod^{\text{mixed}}) \leftrightarrows \CAlg(\uifrob_n\Mod^{\text{proj}})
\colon \mathit{id}
\]


In the remainder of this section we shall use the projective structure and drop the superscript "proj".
The category $\uifrob_n\Mod$ is naturally a dg enriched model category, and by applying the Dold-Kan correspondence we obtain a simplicially enriched model category.
However, since the Dold-Kan correspondence is not symmetric monoidal (respectively, only weakly so), we prefer to work with a slightly different construction of the simplicial enrichment, following for example \cite{FWAut}. 
The price to pay is that we will not recover the full datum of a simplicial model category, but only an "almost" simplicial model category.

We endow all of the the categories above with a simplicial enrichment such that the mapping spaces satisfy
\[
\Map(X,Y) = \Mor_{\Omega(\Delta^\bullet)}
(X,Y),
\]
where the morphism sets on the right-hand side are computed in the category of (commutative algebra objects in) $\uifrob_n$-modules after extending the ground ring to $\Omega(\Delta^\bullet)$. We next note that $\uifrob_n\Mod$ is naturally cotensored over $\sSet$, with cotensoring defined as
\[
X^K = X\otimes \Omega(K),
\]
for $X\in \uifrob_n\Mod$ and $K\in \sSet$, with $\Omega(K)=\Mor_{\sSet}(K, \Omega(\Delta^\bullet))$ the differential forms on $K$.
We then have the adjunction formula 
\[
\Mor_{\sSet}
(K, \Map_{\uifrob_n\Mod}(X,Y)) = \Mor_{\uifrob_n\Mod}(X, Y^K).
\]
In particular, we have that 
\[
\Map_{\uifrob_n\Mod}(X,Y) = \Mor_{\uifrob_n\Mod}(X, Y^{\Delta^\bullet}).
\]
The functor $(-)^K$ preserves finite limits, but not arbitrary limits, and can thus not be a right adjoint. 
However, it is obviously lax symmetric monoidal and descends to the category $\CAlg(\uifrob_n\Mod)$, where it satisfies the same adjunction properties.

\begin{prop}
The bifunctor $(K,X)\mapsto X^K$ above satisfies the pullback-corner axiom. That is, given a cofibration $i\colon K \to L$ of simplicial sets and a fibration $p\colon X \to Y$ in $\uifrob_n\Mod$ or $\CAlg(\uifrob_n\Mod)$ the induced map
\[
X^L \to X^K \times_{Y^K} Y^L
\]
is a fibration, and a weak equivalence if either $i$ or $p$ is.
\end{prop}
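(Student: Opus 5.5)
Since $X^K = X\otimes \Omega(K)$ is formed objectwise, the pullback-corner map is computed objectwise in $\dgVect$, and the task reduces to known facts about the Sullivan forms functor $\Omega$.

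\textbf{The case of $\uifrob_n\Mod$.} We work here with the projective model structure, where fibrations are objectwise surjections and weak equivalences are objectwise quasi-isomorphisms. Both limits and the cotensoring are computed objectwise, so for each arity $r$ the map in question is
\[
X(r)\otimes \Omega(L) \to X(r)\otimes\Omega(K) \times_{Y(r)\otimes \Omega(K)} Y(r)\otimes\Omega(L).
\]
It therefore suffices to prove the pullback-corner axiom for dg vector spaces with $X^K=X\otimes\Omega(K)$.

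We use the following standard facts about $\Omega$ over $\Q$:
\begin{itemize}
\item for a cofibration $i\colon K\to L$ the restriction $\Omega(L)\to\Omega(K)$ is surjective, since $\Omega(\Delta^\bullet)$ is extendable;
\item it is a quasi-isomorphism if $i$ is acyclic, by the de Rham theorem for polynomial forms.
\end{itemize}

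Write $p\colon X\to Y$ with $p$ surjective, and set $F=\ker p$ and $J=\ker(\Omega(L)\to\Omega(K))$.

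\emph{Surjectivity.} Since tensoring over a field is exact, an element of the fiber product lifts: lift its $Y\otimes\Omega(L)$-component to $X\otimes\Omega(L)$, then correct by an element of $X\otimes J$, using surjectivity of $X\otimes \Omega(L)\to X\otimes\Omega(K)$.

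\emph{Kernel.} The kernel of the corner map is $F\otimes J$. A surjection of complexes with acyclic kernel is a quasi-isomorphism, so acyclicity follows by Künneth:
\begin{itemize}
\item if $i$ is acyclic, then $J$ is acyclic;
\item if $p$ is acyclic, then $F$ is acyclic.
\end{itemize}
In either case $F\otimes J$ is acyclic. This settles the module case.

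\textbf{The case of $\CAlg(\uifrob_n\Mod)$.} The model structure is right-transferred, so fibrations and weak equivalences are detected by the forgetful functor $U$ to $\uifrob_n\Mod$. The cotensoring $X\otimes\Omega(K)$ of a commutative algebra is again a commutative algebra, because $\Omega(K)$ is a commutative dga and the cotensoring is lax monoidal. Moreover $U$ preserves pullbacks and commutes with $(-)^K$. Hence the corner map in $\CAlg$ is sent by $U$ to the corner map in modules, and the previous case applies.

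\textbf{Main obstacle.} The only subtle point is the compatibility between the chosen model structure and the objectwise computation. The argument above uses the projective structure, which is the one in force in this section. If instead one wanted the mixed structure of Lemma \ref{lem:uifrob model}, fibrations are the $\iota^*$-fibrations of the injective structure on $\ucom\Mod$, and these are not simply objectwise surjections. In that case I would reduce to the injective pullback-corner property on $\ucom\Mod$. Using the adjunction, this amounts to checking that $(-)\otimes\Omega(K)$ interacts correctly with injective cofibrations, i.e.\ a pushout-product statement with $\Omega(L)\to\Omega(K)$, which again follows from exactness of $\otimes$ over $\Q$.
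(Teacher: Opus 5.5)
Your argument is correct and is essentially the paper's: in the projective structure used in that section, limits, fibrations and weak equivalences in $\uifrob_n\Mod$ and $\CAlg(\uifrob_n\Mod)$ are detected objectwise, so everything reduces to the pullback-corner property of $V\otimes\Omega(K)$ on $\dgVect$. The paper simply calls that property well known, while you verify it directly via the kernel $F\otimes J$ and Künneth.

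There is one small fix in your surjectivity step. After lifting the $Y\otimes\Omega(L)$-component, you must correct by an element of $F\otimes\Omega(L)$, using surjectivity of $F\otimes\Omega(L)\to F\otimes\Omega(K)$. Elements of $X\otimes J$ cannot help here, because they leave the $X\otimes\Omega(K)$-component unchanged. Alternatively, lift the $X\otimes\Omega(K)$-component first and then correct by an element of $X\otimes J$.
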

\begin{proof}
    First note that weak equivalences, fibrations and all limits in $\CAlg(\uifrob_n\Mod)$ and $\uifrob_n\Mod$ are created object-wise in $\dgVect$. Thus it is sufficient to check the pullback-corner axiom for the bifunctor $(K,V)\mapsto V\otimes \Omega(K)$ on $\sSet\times\dgVect$. But there the axiom is well-known to hold.
\end{proof}

As a consequence, the object $X^{\Delta^\bullet}$ is a simplicial frame for $X\in \CAlg(\uifrob_n\Mod)$. Hence our ad hoc definition of the simplicial enrichment is compatible with the enrichment defined by the model structure in the sense that both mapping spaces are weakly equivalent.

\subsection{The functor from the Hoffbeck-Leray-Vallette category}
We next define a simplicial functor between simplicial categories
\[
F : \HLVCat \to \CAlg(\uifrob_n\Mod).
\]
Note that here we (ab)use the notation $\CAlg(\uifrob_n\Mod)$ for the simplicial category as in the preceding subsection.
To an object $(V,\alpha)$ of $\HLVCat$ we assign the object (bar resolution)
$$
F(V,\alpha) 
= ( \uifrob_n \circ (\Bar \uifrob_n) \circ V, d_{Bar} + d_\alpha),
$$
where the differential $d_{Bar}$ is the one from the bar construction and the Koszul part between $\uifrob_n$ and $\Bar \uifrob_n$ and $d_\alpha$ is defined by twisting with the Maurer-Cartan element $\alpha$.
Similarly, the functor $F$ sends a morphism $f\in \MC(\fh_{(V,\alpha),(W,\beta)}\hotimes \Omega(\Delta^\bullet))$ to the corresponding morphism $F(V,\alpha)\otimes \Omega(\Delta^\bullet)\to F(W,\beta)\otimes \Omega(\Delta^\bullet)$.

\subsection{Homotopical fully faithfulness of $F$}
\begin{lemma}
The objects in the image of $F$ are fibrant and cofibrant in the projective model structure.
\end{lemma}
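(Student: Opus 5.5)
Fibrancy is the easy half: we work over $\Q$ in the projective model structure on $\uifrob_n\Mod$ (Keller), and there fibrations are the object-wise surjections. Every object is therefore fibrant, and in particular so is $F(V,\alpha)$. The same holds in $\CAlg(\uifrob_n\Mod)$, since its model structure is right-transferred and fibrations are created in the underlying category.

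For cofibrancy we exploit the shape of the bar resolution. Forgetting the differential,
\[
F(V,\alpha) = \uifrob_n \circ (\Bar\uifrob_n \circ V)
\]
is a free left $\uifrob_n$-module on the bisymmetric sequence $C:=\Bar\uifrob_n\circ V$. Its differential $d_{Bar}+d_\alpha$ consists of two parts. The first is an internal part mapping generators to generators. The second is a twisting part, coming from the Koszul differential between $\uifrob_n$ and $\Bar\uifrob_n$ and from the Maurer--Cartan element $\alpha$; it sends a generator to $\uifrob_n$ applied to generators of strictly smaller bar-weight.

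We filter $C$ by a weight $w$ that makes this triangularity precise. A natural choice is the total number of vertices in the $\Bar\uIFrob_n$-graphs, possibly combined with the arity. The twisting terms remove at least one vertex from the bar part, either by cutting it off to act through $\uifrob_n$ or by feeding it to $\alpha$. We then have to check that every component of $d$ on a generator of weight $w$ lands in $\uifrob_n\circ C_{<w}$, plus an internal term within $C_{w}$.

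Once this is established, $F(V,\alpha)$ is a colimit of the tower $\uifrob_n\circ C_{\le w}$ (with the restricted differential). Each step $\uifrob_n\circ C_{\le w-1}\to\uifrob_n\circ C_{\le w}$ is a pushout of $\uifrob_n\circ(\mathrm{cone}(\mathrm{gr}_w C)[-1]\to \mathrm{cone}(\mathrm{gr}_wC))$. This is the same pushout pattern as in the proof of Proposition~\ref{prop:quillen equivalence}. The generating map is the image under the free functor $\uifrob_n\circ-$ of an injection of complexes of symmetric sequences. Since in characteristic zero every object of $\Q[S_r\times S_s]$-modules is projective, that injection is a projective cofibration of bisymmetric sequences, so its image is a cofibration. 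Transfinite composition then gives cofibrancy of $F(V,\alpha)$ in $\uifrob_n\Mod$.

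The main obstacle is the statement for $\CAlg(\uifrob_n\Mod)$, where the relevant notion is cofibrancy as a commutative algebra. There we would instead write
\[
F(V,\alpha)=\mathrm{Sym}_{\mathrm{Day}}\big(\uifrob_n\circ(\Bar\uIFrob_n\circ V)\big),
\]
using that the PROP envelope is the free symmetric monoidal object on the connected part and that $\uifrob_n\circ-$ is strong monoidal. We would then run the same weight filtration on the connected generators $\Bar\uIFrob_n\circ V$, which exhibits $F(V,\alpha)$ as a cell commutative algebra, a transfinite composite of pushouts of free maps $\mathrm{Sym}(\uifrob_n\circ i)$ for generating cofibrations $i$. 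The delicate check is that the twisted differential really is triangular with respect to the vertex count in the presence of the curvature term. That term can create the special graph $\theta$, so we will have to verify that it does not increase weight, using that the unit/curvature contribution lowers the number of bar vertices as well.
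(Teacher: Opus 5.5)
Your proposal is correct and follows the paper's proof. In both, every object is fibrant, and $F(V,\alpha)$ is quasi-free in $\CAlg(\uifrob_n\Mod)$ on the connected generators $\Bar\uIFrob_n\circ V$; filtering these by the number of vertices exhibits $F(V,\alpha)$ as a transfinite composite of pushouts of $\Free(Y_p[-1])\hookrightarrow\Free(Y_p\oplus Y_p[-1])$. Your separate module-level pass is not needed, and the curvature causes no trouble, since it never appears as a term of the differential and every component of $d_{Bar}+d_\alpha$ preserves or lowers the vertex count.
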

\begin{proof}

They are fibrant because any object is fibrant. To check cofibrancy, consider the free/forgetful Quillen adjunction
\[
\Free \colon  \SSeq \leftrightarrows \CAlg(\uifrob_n\Mod) \colon U,
\]
where $\SSeq$ is the category of symmetric sequences.
Note that each object $F(V,\alpha)$ in the image of our functor $F$ is quasi-free, i.e.,
\[
F(V,\alpha) = ( Free( (\Bar \uifrob_n) \circ V), d ),
\]
and the generators are precisely $W:=\Bar \uIFrob_n \circ V$.
We endow the generators with the exhaustive ascending filtration such that $\mF^p W$ consists of elements with at most $p$ factors of $\uIFrob_n$ ("number of vertices" in the graphical picture) in the factor $\Bar \uifrob_n$.
The differential $d$ on $F(V,\alpha)$ then satisfies 
\[
d\mF^pW \subset \Free(\mF^{p-1} W) + \mF^pW,
\]
and from this cofibrancy of $F(V,\alpha)$ follows by a standard argument. To spell this out, write $F(V,\alpha) = \colim X_p$ with $X_p = ( Free(\mF^p W), d )$ and denote $Y_p = \mF^pW/\mF^{p-1}W$. Then $X_p$ fits into a pushout diagram of the form
\[
\begin{tikzcd}
( Free(Y_p[-1]), d ) \ar{r} \ar[hookrightarrow]{d} & X_{p-1} \ar{d} \\
( Free(Y_p\oplus Y_p[-1]), d ) \ar{r} & X_p
\end{tikzcd}.
\]
Since the left-hand vertical arrow is a cofibration so is the map $X_{p-1}\to X_p$ and hence $*\to \colim X_p=F(V,\alpha)$ is a transfinite composition of cofibrations and hence a cofibration.
\end{proof}

\begin{prop}
On simplicial mapping spaces the functor $F$ induces weak equivalences of simplicial sets.
\end{prop}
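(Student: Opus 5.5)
The plan is to compare the two mapping spaces through the free/cofree structure of the bar resolution. Since $F(V,\alpha)$ is cofibrant and every object is fibrant (previous lemma), and $Y^{\Delta^\bullet}$ is a simplicial frame, the space $\Map_{\CAlg(\uifrob_n\Mod)}(F(V,\alpha),F(W,\beta))$ has the correct homotopy type. It is $\Mor(F(V,\alpha), F(W,\beta)\otimes\Omega(\Delta^\bullet))$.

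First I would use quasi-freeness of $F(V,\alpha)$ on the generators $\Bar\uIFrob_n\circ V$. A morphism out of it in $\CAlg(\uifrob_n\Mod)$ over $\Omega(\Delta^\bullet)$ is then a map of graded symmetric sequences
\[
\Bar\uIFrob_n\circ V\to F(W,\beta)\otimes\Omega(\Delta^\bullet),
\]
subject to one compatibility-with-differentials equation. Equivalently, it is a Maurer--Cartan element in the convolution-type curved $L_\infty$-algebra
\[
\mathfrak g:=\Hom_S(\Bar\uIFrob_n\circ V, F(W,\beta))\hotimes\Omega(\Delta^\bullet),
\]
with brackets built from the cocomposition of $\Bar\uIFrob_n$ and the algebra structure of $F(W,\beta)$. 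I would check that $F$ on mapping spaces is $\MC_\bullet$ of the $L_\infty$-map
\[
\fh_{(V,\alpha),(W,\beta)}=\Hom_S(\Bar\uIFrob_n\circ V,W^{\otimes\bullet})\to\mathfrak g,
\]
which comes from postcomposing with the unit $W^{\otimes\bullet}\to F(W,\beta)$. To justify this, I would compare the two universal properties, namely morphisms of monoid $\Bar\uifrob_n$-comodules versus morphisms of commutative $\uifrob_n$-modules out of a bar resolution; this is the classical bar/cobar adjunction. A cleaner route is to note that $F(W,\beta)\to W^{\otimes\bullet}$ (the augmentation of the bar resolution, via the Segal-type identification) is a quasi-isomorphism, so that $\mathfrak g$ and $\fh$ are related by a filtered quasi-isomorphism.

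The core step is then a Goldman--Millson/Dolgushev--Rogers type theorem. A filtered quasi-isomorphism of complete (curved) $L_\infty$-algebras induces a weak equivalence of simplicial Maurer--Cartan sets. Both $\fh$ and $\mathfrak g$ carry the complete descending filtration by the number of vertices in $\Bar\uIFrob_n$ (equivalently, arity of the input). I would show that the map is a quasi-isomorphism on the associated graded pieces. On $\gr$ the relevant differential is only the internal one, so this reduces to $F(W,\beta)\simeq W^{\otimes\bullet}$ arity-wise, which holds because the bar resolution $\uifrob_n\circ\Bar\uifrob_n\circ W\to W^{\otimes\bullet}$ is a quasi-isomorphism by the acyclicity of the Koszul complex $\uifrob_n\circ\Bar\uifrob_n$ after filtering by the number of bar factors.

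The main obstacle I expect is curvature. $\Bar\uIFrob_n$ is curved, so the Goldman--Millson theorem must be applied in the curved setting, or after twisting. I would first twist by a Maurer--Cartan element: either a given morphism as base point, or the augmentation-induced $\phi$ from Section~\ref{sec:removing curvature}. This should reduce to honest complete $L_\infty$-algebras, and I would apply the theorem componentwise over each base point of $\pi_0$. Bijectivity on $\pi_0$ would also be handled by the filtered Goldman--Millson argument. Care is needed so that the filtration is preserved and complete in the presence of the arity-$0$ inputs (vertices without inputs). I would handle this using the loop-order grading together with the vertex count, as in the proof of Corollary~\ref{cor:Theta qiso properad}.
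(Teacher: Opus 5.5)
Your plan is essentially the paper's proof: identify $\Map(F(V,\alpha),F(W,\beta))$ with $\MC_\bullet$ of the convolution algebra $\Hom(\Bar\uIFrob_n\circ V, F(W,\beta))$, compare it with $\fh_{(V,\alpha),(W,\beta)}$ through a map that is a quasi-isomorphism on the associated graded of a complete filtration, and conclude by Goldman--Millson. Two small fixes are needed. First, filter by the number of edges plus vertices, as the paper does, rather than by vertices alone: vertex count is not the input arity, and the vertex filtration would put the linear component (the vertex-free edge graph) in filtration degree $0$, violating the Goldman--Millson hypothesis. Second, on the associated graded the relevant chain map is the inclusion $W^{\otimes\bullet}\hookrightarrow F(W,\beta)$, not the projection $F(W,\beta)\to W^{\otimes\bullet}$, which is not a chain map unless $\beta$ is strict.
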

\begin{proof}
We compute the simplicial mapping space $\Map(F(V,\alpha),F(W,\beta))$ in $\CAlg(\uifrob_n\Mod)$.
Any morphism from a free object is determined by its restriction to generators, which are $\Bar \uIFrob_n \circ V$ by the proof of the previous lemma. 
Furthermore, one obtains by an essentially standard argument that 
\[
\Map(F(V,\alpha),F(W,\beta)) \simeq \MC_\bullet(\Hom(\Bar \uifrob_n \circ V, \uifrob_n\circ \Bar \uifrob_n \circ W)).
\]  
Here the dg Lie algebra structure is defined just as for the HLV dg Lie algebra $\fh_{(V,\alpha),(W,\beta)}$, but with $\uifrob_n\circ \Bar \uifrob_n \circ W$ on the output side instead of $W^{\otimes \bullet}$ as before. 
But both dg Lie algebras are obviously quasi-isomorphic, and the quasi-isomorphism is realized on the associated graded of the complete filtration by the total number of edges and vertices, and hence the Maurer-Cartan spaces are weakly equivalent by the Goldmann-Millson Theorem.
\end{proof}

\subsection{Homotopy essential image of $F$}
\begin{prop}\label{prop:ess surjectivity}
Let $X\in \CAlg(\uifrob_n\Mod)$ satisfy the Segal condition, and denote $H:=H(X(1))$. Then there exists a $\uIFrob_{n,\infty}$-algebra structure $\alpha$ on $H$ such that $X$ is weakly equivalent to $F(H,\alpha)$.
\end{prop}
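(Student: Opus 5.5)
The plan is to reduce the statement to the rigidification result Theorem \ref{thm:htpy transfer}, applied with $\POp=\uIFrob_{n,\infty}=\Bar^c\Bar\uIFrob_n$. This properad is a cofibrant replacement of $\uIFrob_n$ by Section \ref{sec:cofib uIFrob}. Let $\pop$ denote its PROP envelope. Since $X$ satisfies the Segal condition, Theorem \ref{thm:htpy transfer} gives a $\uIFrob_{n,\infty}$-algebra structure $\alpha$ on $H=H(X(1))$ with the following property. There is a zig-zag of quasi-isomorphisms in $\CAlg(\pop\Mod)$ between the pullback $\epsilon^*X$ of $X$ along the resolution $\epsilon\colon\pop\to\uifrob_n$ and the strict object $H^{\otimes\bullet}$ with its $\pop$-action induced by $\alpha$.

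It then remains to transport this zig-zag back to $\CAlg(\uifrob_n\Mod)$ and to identify the result with $F(H,\alpha)$. For this I would use the base change adjunction $\epsilon_!\dashv\epsilon^*$ on commutative algebra objects, where $\epsilon_!=\uifrob_n\circ_{\pop}(-)$. This adjunction is strong monoidal on the left, as in Lemma \ref{lem:uifrob mon model}. Because $\epsilon$ is a quasi-isomorphism of PROPs in characteristic zero, it should be a Quillen equivalence for the projective structures. Here $\epsilon^*$ creates weak equivalences, since weak equivalences are computed objectwise. Consequently $X\simeq \mathbb{L}\epsilon_!(\epsilon^*X)\simeq \mathbb{L}\epsilon_!(H^{\otimes\bullet})$.

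The final step is to compute $\mathbb{L}\epsilon_!(H^{\otimes\bullet})$ and compare it with the bar resolution
\[
F(H,\alpha)=(\uifrob_n\circ\Bar\uifrob_n\circ H,\ d_{Bar}+d_\alpha).
\]
I would construct a natural map $F(H,\alpha)\to \epsilon_!(H^{\otimes\bullet})$, or directly a map to a cofibrant model of it. One way is to observe that $\pop\circ\Bar\uifrob_n\circ H$ with the twisted differential is a cofibrant (quasi-free, filtered as in the lemma above) resolution of $H^{\otimes\bullet}$ in $\CAlg(\pop\Mod)$. The quasi-isomorphism onto $H^{\otimes\bullet}$ follows from the standard bar--cobar acyclicity: filter by the number of vertices and use the Koszul complex $\Bar^c\Bar\circ\Bar$. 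Applying $\epsilon_!$ to this resolution yields exactly $F(H,\alpha)$, because $\uifrob_n\circ_{\pop}\pop=\uifrob_n$ and the Maurer--Cartan twist is carried along.

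The main obstacle is this middle step. One has to verify that $\epsilon_!\dashv\epsilon^*$ is a Quillen equivalence on $\CAlg$ and not only on modules; the module case is a standard Keller-type statement. For the $\CAlg$ level I would argue via the model structures of Pavlov--Scholbach used earlier, where $\epsilon_!$ commutes with free commutative algebras. A further point is checking that the twisted differential on $\pop\circ\Bar\uifrob_n\circ H$ indeed squares to zero and is compatible with the commutative product. This follows from the monoid-comodule description of $\alpha$ recalled from \cite{HLVold}.
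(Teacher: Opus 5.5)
Your proposal is correct and follows the paper's proof step by step: you apply Theorem \ref{thm:htpy transfer} with $\POp=\uIFrob_{n,\infty}$, use that base change along $\uifrob_{n,\infty}\to\uifrob_n$ is a Quillen equivalence on $\CAlg$ (the paper cites To\"en for modules and White for commutative algebra objects, where you suggest Pavlov--Scholbach), and identify the derived base change of $H^{\otimes\bullet}$ through the cofibrant resolution $\tilde F(H,\alpha)=(\uifrob_{n,\infty}\circ\Bar\uifrob_n\circ H,\, d)\to H^{\otimes\bullet}$, whose strict base change is exactly $F(H,\alpha)$. Your additional remarks, namely cofibrancy of that resolution via the vertex filtration and its acyclicity via the bar--cobar Koszul complex, only spell out points the paper asserts without detail.
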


Before giving the proof, it will be helpful to extend our construction of the model category $\CAlg(\uifrob_n\Mod)$ to $\uIFrob_{n,\infty}$ instead of $\uIFrob_n$.
To this end, consider the induction/restriction adjunction along the canonical quasi-isomorphism $f:\uifrob_{n,\infty}\to \uifrob_n$,
    \[
    f_* \colon \uifrob_{n,\infty}\Mod \leftrightarrows \uifrob_{n,}\Mod \colon f^*.
    \]
    We endow both categories with the projective model structures, which exist by \cite{Keller}. It is clear that $f^*$ creates weak equivalences (quasi-isomorphisms) and fibrations, so the adjunction is Quillen. By \cite[Proposition 3.2]{Toen} it is also a Quillen equivalence. Finally, induction is strong monoidal with respect to the Day monoidal structure and hence the right-adjoint is lax monoidal.
    We hence get an adjunction between the commutative algebra objects.
    \[
    f_* \colon \CAlg(\uifrob_{n,\infty}\Mod) \leftrightarrows \CAlg(\uifrob_{n,}\Mod) \colon f^*.
    \]
    We endow both sides with the model structure right transfered along the forgetful functors. Again, the right adjoint creates weak equivalences and fibrations and the adjunction is hence Quillen. By \cite[Theorem 4.19 and Corollary 3.6]{White} the adjunction is also a Quillen equivalence.

Furthermore, our functor $F:\HLVCat\to \CAlg(\uifrob_{n}\Mod)$ has an obvious extension 
\begin{gather*}
\tilde F\colon \to \CAlg(\uifrob_{n,\infty}\Mod)
\\
\tilde F(H,\alpha)  
= ( \uifrob_{n,\infty} \circ (\Bar \uifrob_n) \circ V, d_{\uifrob_{n,\infty}}+d_{Bar} + d_\alpha).
\end{gather*}
It satisfies $f_* \tilde F = F$, and furthermore, the map $\uifrob_{n,\infty}\to \uifrob_{n}$ induces a weak equivalence $\tilde F \Rightarrow f^* F$.

\begin{proof}[Proof of Proposition \ref{prop:ess surjectivity}]
    Let $X\in \CAlg(\uifrob_n\Mod)$ satisfy the Segal condition and let $H=H(X(1))$. Then by Theorem \ref{thm:htpy transfer} there is a $\uIFrob_{n,\infty}$-structure $\alpha$ on $H$ such that $H^{\otimes \bullet}$
    is weakly equivalent to $f^*M=M$ in $\CAlg(\uifrob_{n,\infty}\Mod)$.
    Next, we have a weak equivalence 
    \[
    \tilde F(H,\alpha) \to H^{\otimes \bullet}
    \]
    which is given on generators by the natural projection $\Bar \uIFrob_n\circ H\to H^{\otimes \bullet}$.
    By the Quillen equivalence above we hence conclude that (after applying the derived functor $f_*^L$ of $f_*$)
    \[
    f_*^L \tilde F(H,\alpha) \simeq f_* \tilde F(H,\alpha) = F(H,\alpha)
    \]
    is weakly equivalent to 
    \[
    f_*^L f^* M \simeq M
    \]
    in $\CAlg(\uifrob_n\Mod)$, thus showing the proposition.
\end{proof}

\section{Strongly unital homotopy Frobenius algebras, comparison to Abramyan's category, and proof of Theorem \ref{thm:main 2}}
\label{sec:strongly unital}

\subsection{Strongly unital $\uIFrob_{n,\infty}$-algebras and $\Bar^c\GX_n$-algebras}
The unit of a $\uIFrob_{n,\infty}$-algebra is a weak unit in general, in that the equation $1\cdot x = x$ only holds up to homotopy. It is however possible to restrict to a stronger notion of unit, without ``losing information'' in a homotopical sense.
We again assume that $V=\Q 1\oplus W$, where the element $1$ will be the unit of our $\uIFrob_{n,\infty}$-algebra. The splitting of $V$ also fixes an augmentation that makes it easier to define the relevant objects. However, at the end the choice of augmentation will be irrelevant,

Let us next consider the dg Lie algebra $\Def^+(\uIFrob_{n,\infty}, \End(V))^\phi$ as above, see Section \ref{sec:removing curvature}. This dg Lie algebra contains a dg Lie subalgebra 
\[
\fg_W\xhookrightarrow{\iota} \Def^+(\uIFrob_{n,\infty}, \End(V))^\phi
\]
defined as follows. Elements of $\fg_W$ are linear combinations (series) of graphs $\Gamma$ that satisfy:
\begin{enumerate}
    \item There are no univalent vertices in $\Gamma$.
    \item The input legs are all decorated by $W^*\subset V^*$.
\end{enumerate}
Such a graph $\Gamma\in \fg_W$ is considered an element $\iota(\Gamma)\in \Def^+(\uIFrob_{n,\infty}, \End(V))^\phi$ such that
\begin{equation}\label{equ:iota def}
\iota(\Gamma) = 
\sum_r
\frac{1}{r!}
\sum
\,
\begin{tikzpicture}
    \node[ext, inner sep=1mm] (v) at (0,0) {$\Gamma$};
    \node (r1) at (1,1) {$1^*$};
    \node (r2) at (1,0) {$\vdots$};
    \node (r3) at (1,-1) {$1^*$};
    \draw [->] (r1) edge (v) (r2) edge (v) (r3) edge (v);
    \draw [decorate,decoration={brace,amplitude=5pt,mirror,raise=2ex}]
  (1,-1.1) -- (1,1.1) node[midway,xshift=8mm]{$r \times$};
\end{tikzpicture}
\end{equation}
is obtained by attaching an arbitrary amount of input legs decorated by $1^*\in V^*$ to vertices of $\Gamma$.
Note that by the assumption that $\Gamma$ does not have univalent vertices, this attachment procedure does not create passing vertices.
One checks that $\fg_V\subset\Def^+(\uIFrob_{n,\infty}, \End(V))^\phi$ is a dg Lie subalgebra.
The differential on $\fg_V$ is combinatorially the same as that on $\Def^+(\uIFrob_{n,\infty}, \End(V))^\phi$ except that one drops all terms that produce univalent vertices or $1^*$-decorated input legs.

Note that any MC element in $\fg_V$ gives rise to an MC element in $\Def^+(\uIFrob_{n,\infty}, \End(V))$ via the inclusion $\iota$, and hence to a $\uIFrob_{n,\infty}^+$-structure on $V$. 
Note also that the definitions of $\phi$, of $\fg_W$ and of the inclusion $\iota$ use the choice of augmentation on $V$ that we made. However, the subspace $\iota(\fg_W)\subset \Def^+(\uIFrob_{n,\infty}, \End(V))^\phi$ is independent of this choice, as well as the subset of Maurer-Cartan elements that are in the image of $\iota$. In this way the following definition makes sense:

\begin{defi}\label{def:su}
    Let $V$ be a graded vector space and let $\alpha\in \MC(\Def^+(\uIFrob_{n,\infty}, \End(V)) )$ be a $\uIFrob_{n,\infty}$-algebra structure on $V$. Then we say that $\alpha$ is \emph{strongly unital} if it is in the image of $\iota$.
\end{defi}

\begin{prop}\label{prop:su equivalence}
Any $\uIFrob_{n,\infty}^+$-algebra $(V,\alpha)$ is $\infty$-quasi-isomorphic to a strongly unital one.
\end{prop}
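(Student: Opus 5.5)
We plan to reduce the statement to a comparison of deformation complexes and then invoke homotopy invariance of Maurer--Cartan spaces. First we pass to cohomology: by homotopy transfer (available in the Hoffbeck--Leray--Vallette framework, and essentially Theorem \ref{thm:htpy transfer}), $(V,\alpha)$ is $\infty$-quasi-isomorphic to a $\uIFrob_{n,\infty}^+$-algebra on $H:=H(V)$ with vanishing linear differential. The unit class gives a distinguished element $1\in H$. We choose a splitting $H=\Q 1\oplus W$ and obtain the canonical structure $\phi$ of Section \ref{sec:removing curvature}. The transferred structure is then $\phi+m$ for a Maurer--Cartan element $m$ of the twisted dg Lie algebra $\Def^+(\uIFrob_{n,\infty},\End(H))^\phi$. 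It therefore suffices to show that every such $m$ is gauge equivalent to one in $\iota(\fg_W)$.

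The key step is to prove that the inclusion
\[
\iota\colon \fg_W\hookrightarrow \Def^+(\uIFrob_{n,\infty},\End(H))^\phi
\]
is a quasi-isomorphism of dg Lie algebras, and a filtered one for the complete filtrations by the number of vertices and edges (or loop order plus arity). Granting this, the Goldman--Millson theorem, in its complete filtered form, makes $\MC_\bullet(\fg_W)\to\MC_\bullet(\Def^+(\dots)^\phi)$ a weak equivalence of simplicial sets. In particular every Maurer--Cartan element is gauge equivalent to one in the image of $\iota$. A gauge equivalence, i.e.\ a path in $\MC_\bullet$, gives an $\infty$-isotopy whose linear part is the identity, as in the description of $\Map_\bullet$ in Section \ref{sec:HLV simpl cat}. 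Composing this with the transfer $\infty$-quasi-isomorphism proves the proposition.

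To prove that $\iota$ is a quasi-isomorphism, we first pass to the associated graded of the filtration by the number of essential vertices. The differential parts that survive are those which attach or remove $1^*$-legs and univalent vertices: splitting off a univalent vertex, and the $\phi$-twist terms that insert a unit input. We then decompose the complex according to the ``core'' graph $\gamma$ obtained by deleting all univalent vertices and all $1^*$-decorated input legs, and smoothing the resulting bivalent configurations. This mirrors the core decomposition used in the proof of Proposition \ref{prop:Theta qiso}.

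For a fixed core, the extra decorations should assemble into a tensor product of local complexes, one per vertex or edge. Each local complex is a Koszul-type complex of the form ``adding a unit leg versus adding a univalent leaf''. We expect each of these to be acyclic apart from the term with no decorations, and that surviving term is exactly the contribution of $\iota(\fg_W)$. The factor $\sum_r\frac1{r!}$ in \eqref{equ:iota def} accounts for the symmetric sum over attachments, which is the representative of that cohomology.

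The main obstacle is this local acyclicity computation. The difficulty lies with vertices that become passing or bivalent after leaves are removed, and with the signs coming from the parity of $n$. I would handle it as in Proposition \ref{prop:Theta qiso}: order the attachable slots, build a tower of subcomplexes, and apply \cite[Lemma 2.1]{PayneWillwacher21} at each step, where an isomorphism $f$ exchanges a univalent-vertex decoration with a $1^*$-leg. The argument also needs completeness so that the spectral sequences converge; this holds because the relevant filtrations are bounded in each loop order and arity.
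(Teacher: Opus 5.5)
Your strategy is the paper's. You twist by the trivial unital structure $\phi$. You show that $\iota\colon \fg_W\to\Def^+(\uIFrob_{n,\infty},\End(H))^\phi$ is a quasi-isomorphism by filtering by the number of non-univalent vertices and cancelling $1^*$-legs against univalent vertices. You then conclude with Goldman--Millson and the fact that gauge equivalent structures are $\infty$-quasi-isomorphic.

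The step that fails as written is the Goldman--Millson step. The filtered theorem needs $\iota$ to be a quasi-isomorphism on the associated graded of the \emph{same} complete filtration that makes both sides pronilpotent. A quasi-isomorphism of total complexes, which is all your core computation gives, is not enough. Neither filtration you name has this property:

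\begin{itemize}
\item Every summand of $\delta+[\phi,-]$ creates a new vertex and does not lower the number of edges. This covers vertex splitting, splitting off a univalent vertex, bracketing with the one-vertex graphs of $\phi$, and the unit insertion $d'$. Since $H$ has zero differential, the associated graded differential for ``vertices plus edges'' vanishes, so $\gr\iota$ is an injection that is far from surjective.
\item ``Loop order plus arity'' is not even compatible with the differential: $d'$ removes an input leg, while $\Gamma\bullet\phi_s$ adds $s-1\geq 1$ of them.
\end{itemize}

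The repair is to run Goldman--Millson with the filtration you actually compute with, the number of vertices of valence $>1$. For this filtration $\Def^+(\dots)^\phi$ is not pronilpotent, because the unit graph with arbitrary output decoration and the vertex-free graphs lie in filtration degree $0$. You must therefore restrict to $\iota\colon \mF^1\fg_W\to \mF^1\Def^+(\dots)^\phi$ and check that $m\in\mF^1$. This holds because $\alpha_H$ and $\phi$ have the same unit and $H$ has no linear differential. This is what the paper does, using the curved Goldman--Millson theorem of Roca i Lucio.

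Two further comparisons. First, the paper never passes to $H(V)$. It writes $\alpha=\phi+d_V+\alpha'$ and twists by $d_V$, accepting curvature since $d_V$ need not preserve $W$. It then removes $d_V$ with an auxiliary spectral sequence in the total degree of the decorations. Your detour instead needs a homotopy transfer theorem producing an $\infty$-quasi-isomorphism of $\uIFrob_{n,\infty}$-algebras in this curved, non-augmented setting. Theorem \ref{thm:htpy transfer} does not supply this; it only yields a zigzag of strict quasi-isomorphisms in $\CAlg(\pop\Mod)$.

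Second, the local computation you flag as the main obstacle is elementary. You are right to keep the term that splits off a univalent vertex. The paper's proof mentions only $d'$, but without this term $\iota$ would not be a chain map on the associated graded. The computation goes as follows:

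\begin{itemize}
\item A univalent vertex together with its edge has odd degree, so at most one sits at a given vertex, whereas $1^*$-legs are even.
\item After rescaling the basis, the associated graded complex at a vertex that is admissible without decorations is $\Q[[x]]\oplus\Q[[x]]\epsilon$ with differential $p\mapsto (p'-p)\epsilon$. Its cohomology is spanned by $e^x=\sum_r x^r/r!$, which corresponds to the attachment sum defining $\iota$.
\item At vertices that become univalent or passing once decorations are stripped, admissibility shrinks the source to $x^2\Q[[x]]$, respectively $x\Q[[x]]$. The map is then a bijection onto the admissible part of the target, so these factors are acyclic.
\end{itemize}

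So no tower argument is needed.
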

\begin{proof}
    Let $(V,\alpha)$ be some $\uIFrob_{n,\infty}$-algebra, with $V$ a graded vector space and $\alpha\in \MC(\Def^+(\uIFrob_{n,\infty}, \End(V)) )$. We denote by $1\in V$ the unit in this structure and by 
    $d_V$ the differential, which we interpret as an element of the deformation complex.
    We fix a complement $W$ of $\Q1$ in $V$ so that $V=\Q1 \oplus W$. Note that the differential might possibly not preserve $W$.
    We then have $\alpha = \phi + d_V + \alpha'$, where $\alpha'$ is a linear combination of graphs with at least one vertex of valence $>1$.
    Note that $d_V1=0$ and hence $d_V$ is in $\fg_W$.
    We may then twist by $d_V$ and obtain the inclusion of curved dg Lie algebras 
    \[
    \iota \colon \fg_W^{d_V} \to \Def^+(\uIFrob_{n,\infty}, \End(V)) )^{\phi+d_V}.
    \] 
    Let us equip the curved dg Lie algebras on both sides with the descending complete filtrations $\mF^\bullet$ by the number of vertices of valence $>1$. By restriction we obtain the inclusion of curved dg Lie algebras
    \[
    \iota \colon \mF^1 \fg_W^{d_V} \to \mF^1\Def^+(\uIFrob_{n,\infty}, \End(V)) )^{\phi+d_V},
    \] 
    and $\alpha'$ is a Maurer-Cartan element in the right-hand curved dg Lie algebra.
    We claim that the associated graded of $\iota$ with respect to our filtration is a quasi-isomorphism. Let us suppose this is true for the moment. Then we may use the Goldmann-Millson Theorem for curved $L_\infty$-algebras \cite[Theorem 2.12]{RocaiLucio}, which in particular implies that inclusion $\iota$ induces a surjective map on the gauge equivalence classes of Maurer-Cartan elements. Hence there is a Maurer-Cartan element $\beta\in \mF^1 \fg_W^{d_V}$ such that $\iota(\beta)$ is gauge equivalent to $\alpha'$. Hence the Maurer-Cartan element $\iota(\beta+d_V)+\phi$, which by definition is a strongly unital $\uIFrob_{n,\infty}$-structure, is gauge equivalent to our given $\alpha$ in $\mF^1\Def^+(\uIFrob_{n,\infty}, \End(V)) )$.
    But gauge-equivalent MC elements encode $\infty$-quasi-isomorphic $\uIFrob_{n,\infty}$-algebra structures, so we are done.
    
    It remains to show the claim that the associated graded of $\iota$ with respect to our filtration is a quasi-isomorphism.
    Note that the associated graded differential on the left-hand side $\fg_W$ is just the part $d_V$, i.e., the differential on
    $V$. The differential on the associated graded of the deformation complex is $d_V$ plus a contribution $d'$ from the Lie bracket with the graph encoding the unit, which turns one $1^*$-decorated input leg into a univalent vertex.
    \begin{equation}\label{equ:d prime pic}
    d' \colon  
    \begin{tikzpicture}
    \node[int] (v) at (0,0) {};
    \node (r1) at (1,1) {$1^*$};
    \node (r2) at (1,0) {$\cdots$};
    \node (l1) at (-1,0) {$\cdots$};
    \draw[<-] (v) edge (r1) edge (r2);
    \draw[->](v) edge (l1); 
    \end{tikzpicture}
    \mapsto
    \begin{tikzpicture}
    \node[int] (v) at (0,0) {};
    \node (r2) at (1,0) {$\cdots$};
    \node (l1) at (-1,0) {$\cdots$};
    \node[int] (u) at (.7,.7) {};
    \draw[<-] (v) edge (r2);
    \draw[->](v) edge (l1) (u) edge (v); 
    \end{tikzpicture}
\end{equation}
    Taking a further spectral sequence by the total degree of all decorations\footnote{Our degrees may be unbounded, but since the complex splits into a direct product of finite dimensional complexes the spectral sequence still converges to the cohomology.}, the associated graded differential is just $d'$.
    Now $d'$ has an obvious homotopy that makes a univalent vertex into a $1^*$-decorated input leg. Hence the cohomology is concentrated in the subspace where neither a univalent vertex nor a $1^*$-decorated input leg is present, which is precisely $\fg_W$. Hence $\iota$ induces a quasi-isomorphism on the associated graded, as claimed.
\end{proof}

\begin{rem}
Note that the same proof also shows that for any MC element $\beta\in \fg_W$ the inclusion $\iota\colon \fg_W^\beta \to \Def^+(\uIFrob_{n,\infty}, \End(V))^{\phi+\iota(\beta)}$ is a quasi-isomorphism.
In particular, $\iota$ induces a weak homotopy equivalence on each connected component of the Maurer-Cartan spaces. (This fact will not be used in this paper.)
\end{rem}




The above construction applies mutatis mutandis to $\Bar^c \GX_n$ instead of $\uIFrob_{n,\infty}$. Concretely, we define a dg Lie subalgebra 
\[
\tilde \fg_W\xhookrightarrow{\iota} \Def(\Bar^c \GX_n, \End(V))^\psi,
\]
whose elements are linear combinations of graphs without univalent vertices, and with all inpute legs decorated by $W^*\subset V^*$.
The inclusion $\iota$ is again  defined by \eqref{equ:iota def}, just with undirected edges.
Our morphism of dg Lie algebra \eqref{equ:def map 2} then restricts to a morphism of dg Lie algebras
\begin{equation}\label{equ:def map 3}
    \fg_W\to \tilde\fg_W.
\end{equation}
Proposition \ref{prop:Theta qiso} then readily implies
\begin{cor}
    The morphism of dg Lie algebras \eqref{equ:def map 3} is a quasi-isomorphism.
\end{cor}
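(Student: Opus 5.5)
The plan is to deduce the statement from Proposition \ref{prop:Theta qiso} by a filtration argument, in the same way the quasi-isomorphism \eqref{equ:def map 2} was obtained. The morphism \eqref{equ:def map 3} is the restriction of \eqref{equ:def map 2}, built from $\Theta^*$ and dualized against the decorations. Concretely, it sends a directed acyclic graph without univalent vertices, with input legs decorated by $W^*$ and output legs by $W$ or $V$, to zero if it is not good. Otherwise it sends it to its core, an undirected graph without univalent vertices. The first point to record is that \eqref{equ:def map 3} is well defined and compatible with the twisted differentials. This holds because $\Theta^*$ preserves the number of univalent vertices and commutes with the insertion \eqref{equ:iota def} of $1^*$-legs, so $\iota(\fg_W)$ is mapped into $\iota(\tilde\fg_W)$.

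Next I will filter both sides by a complete descending filtration in which the associated graded differential is only the internal graph differential coming from $\gr(\Bar\uIFrob_n)^*$ and $\gr\GX_n^*$, with all terms involving the decorations or $\phi$, $\psi$ pushed to higher filtration. The natural choice is the filtration by total degree of the decorations combined with the loop order, as in the proof of Proposition \ref{prop:su equivalence}. The complexes split as products over loop order and over the numbers of inputs and outputs, so the spectral sequence converges even though the degrees are unbounded. On the associated graded, both sides become products of the form
\[
\prod_{r,s} \bigl( \gr(\Bar\uIFrob_n)^*_{\text{no univ.}}(r,s)\otimes (W^*)^{\otimes s}\otimes V^{\otimes r}\bigr)_{S_r\times S_s}
\]
and the analogous expression with $\GX_n^*$. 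The map between them is induced by $\Theta^*$.

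It then suffices to see that $\Theta^*$ restricted to graphs without univalent vertices is a quasi-isomorphism. For this I will reuse the proof of Proposition \ref{prop:Theta qiso}: the decomposition by core graphs $\gamma$ and the tower \eqref{equ:B tower} of subcomplexes $B'_{\gamma,j}$. The key observation is that the differential $d'$ there, which turns a directed edge into a marked edge, creates inessential vertices but never univalent ones. Likewise, goodness and the core are insensitive to univalent vertices. Hence the subspace spanned by graphs without univalent vertices is a direct summand $\bigoplus_\gamma B_\gamma$, taken over cores $\gamma$ without univalent vertices, and the argument applies verbatim. Taking coinvariants and tensoring with decorations preserves quasi-isomorphisms in characteristic zero, by the Künneth formula.

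The main obstacle I expect is the choice of filtration. It must ensure that the bracket with $\phi$ (respectively $\psi$), together with the internal differential $d_V$, strictly raises filtration degree, while remaining complete and exhaustive. The bracket with $\phi$ inside $\fg_W$ only involves the product and unit pieces, since terms producing univalent vertices or $1^*$-legs are dropped. I will try filtering by the number of vertices of valence $>1$ minus the number of edges, or alternatively by loop order plus decoration degree, and check which one makes these terms strictly increasing.
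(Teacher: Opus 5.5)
Your strategy is the right one, and it is what the paper's ``readily implies'' leaves implicit: filter so that only the internal vertex-splitting differential survives on the associated graded, then apply Proposition~\ref{prop:Theta qiso} to the summand without univalent vertices. That summand is a direct summand, because $\gr$ is graded by the number of univalent vertices and $\Theta^*$ preserves it.

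The gap is at the step you flagged: both of your candidate filtrations fail. After the cancellations that make $\iota(\fg_W)$ closed, the surviving part of $[\phi,-]$ on $\fg_W$ comes from the product components of $\phi$. It grafts a new vertex $p$ onto the output side of $\Gamma$. The incoming edges of $p$ are $k\ge 1$ output legs of $\Gamma$ together with $\ell\in\{0,1\}$ fresh input legs decorated by $w_i^*$, with $k+\ell\ge 2$. All but at most one of the $k$ output legs are contracted against $1^*$, and all of them are if $\ell=1$.

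The case $k=\ell=1$ is the graph version of the Hochschild term $a_1\cdot f(a_2,\dots)$ for $V=\Q1\oplus W$ with $W\cdot W=0$. It preserves the loop order $b_1$. It also preserves the total decoration degree, because the contracted decoration $1$ has degree $0$ and $w_i^*\otimes w_i$ has total degree $0$. In fact no term of the differential changes decoration degree here. That filtration only served in Proposition~\ref{prop:su equivalence} to mask $d_V$, and $d_V$ is absent in \eqref{equ:def map 3} since $V$ is merely graded.

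Your other candidate, vertices minus edges, does no better. With internal edges only it is $1-b_1$. Counting out-legs as edges it is $1-b_1-r$, which every term of the differential preserves, since each term adds one vertex and one edge. So with either filtration the $k=\ell=1$ terms stay in the associated graded, which is therefore not $\gr(\Bar\uIFrob_n)^*_{\text{no univ.}}\otimes(\text{decorations})$.

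The fix is to let the arity enter the filtration. Take $\mF^p=\prod_{b_1+s\ge p}$, with $s$ the number of $W^*$-decorated input legs. Internal splitting preserves $b_1$ and $s$. The grafting terms raise $b_1$ by $k-1$ and $s$ by $\ell$, hence raise $b_1+s$ by $k+\ell-1\ge 1$. The same holds for the $\psi$-terms on the $\GX_n$ side, and $\Theta^*$ preserves both $b_1$ and $s$. Since $b_1+s\ge 0$, the filtration is complete with $\mF^0$ the whole complex, so the comparison theorem applies with no convergence caveat. The associated graded is then exactly the product you wrote down, split further by $r$, $s$ and $b_1$, and the rest of your argument goes through.
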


\subsection{$\infty$-morphisms of strongly unital $\uIFrob_{n,\infty}$-algebras and $\Bar^c\GX_n$-algebras}
Let $(V,\alpha)$ and $(W,\beta)$ be two strongly unital $\uIFrob_{n,\infty}$-algebras. We want to define the notion of strongly unital $\infty$-morphisms between these objects.
To this end, we again fix a splitting $V=\Q 1 \oplus V'$, $W=\Q 1 \oplus W'$, with the implicit understanding that our eventual definition will be independent of the choice of splittings.

Now consider the subspace 
\[
\fh_{(V,\alpha), (W,\beta)}^{su}\subset \fh_{(V,\alpha), (W,\beta)}
\]
consisting of series of graphs $\Gamma$ such that 
\begin{enumerate}
    \item There are no univalent vertices in $\Gamma$.
    \item The (say $k$) input legs are decorated by $((V')^{\otimes k})^*\subset (V^{\otimes k})^*$.
\end{enumerate}
The output legs may carry arbitrary decorations in $W$.
Again we define a non-trivial inclusion 
\[
\iota:\fh_{(V,\alpha), (W,\beta)}^{su}\hookrightarrow \fh_{(V,\alpha), (W,\beta)}
\]
by summing over all ways of attaching $1^*$-decorated input legs.
One checks that the image 
\begin{equation}\label{equ:iota morphism}
\iota(\fh_{(V,\alpha), (W,\beta)}^{su})
\subset 
\fh_{(V,\alpha), (W,\beta)}
\end{equation}
is well-defined (i.e., independent of the choice of splitting) and a curved $L_\infty$-subalgebra.

\begin{defi}
    Let $(V,\alpha)$ and $(W,\beta)$ be two strongly unital 
    $\uIFrob_{n,\infty}$-algebras, 
    and let $\phi\in \MC(\fh_{(V,\alpha), (W,\beta)})$ be an $\infty$-morphism.
    Then we say that $\phi$ is \emph{strongly unital} if $\phi$ is in the image of $\iota$.
\end{defi}

\begin{prop}\label{prop:fh inclusion}
    The inclusion \eqref{equ:iota morphism} induces a weak equivalence on the associated graded complexes with respect to the complete descending filtrations by the number of edges plus non-univalent vertices. This remains true if we twist $\fh_{(V,\alpha), (W,\beta)}^{su}$ by a Maurer-Cartan element, and we twist $\fh_{(V,\alpha), (W,\beta)}$ by the image of that Maurer-Cartan element under the inclusion.
\end{prop}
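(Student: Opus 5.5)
The plan is to repeat the argument from the proof of Proposition \ref{prop:su equivalence}, now for the mapping complexes $\fh$. First I fix the splittings $V=\Q1\oplus V'$ and $W=\Q1\oplus W'$. I then filter both $\fh^{su}_{(V,\alpha),(W,\beta)}$ and $\fh_{(V,\alpha),(W,\beta)}$ by the number of edges plus non-univalent vertices, and check that $\iota$ respects this filtration. This holds because attaching $1^*$-decorated input legs adds no internal edges. Since the legs are not counted, the filtration level is unchanged, with input legs counted neither as edges nor as vertices.

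Next I identify the associated graded differentials. The parts of the curved $L_\infty$-structure coming from $\alpha$ and $\beta$ change the number of edges plus non-univalent vertices, so they vanish on the associated graded. The one exception is the bracket with the unit component of $\alpha$ (the graph with a single univalent vertex attached to a vertex, i.e.\ the component of $\phi$ encoding the unit). This bracket turns a $1^*$-decorated input leg into a univalent vertex: it adds one edge and one univalent vertex, so the filtration degree is preserved, exactly as for $d'$ in \eqref{equ:d prime pic}. I also keep the linear terms $d_V$ and $d_W$ acting on decorations. Since the input legs carry $1^*$ and strong unitality gives $d_V1=0$, these terms are compatible with the splitting up to terms that preserve $\iota$'s image.

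On $\gr\fh$ I then take a further spectral sequence by the total degree of the decorations. As in the footnote to Proposition \ref{prop:su equivalence}, it converges because the complex is a product of finite-dimensional pieces. On this page only $d'$ remains. The homotopy $h$ converting a univalent vertex adjacent to a non-univalent vertex back into a $1^*$-decorated input leg satisfies $d'h+hd'=(\#\text{univalent vertices}+\#1^*\text{-legs})\cdot\mathrm{id}$ on each graph. Hence the cohomology is spanned by graphs with neither univalent vertices nor $1^*$-legs, which is exactly $\gr\fh^{su}$. The inclusion $\iota$ hits this cohomology because of the leading term of $\iota(\Gamma)$, which adds no $1^*$-legs. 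Any additional legs it attaches are $d'$-exact corrections, compatible through the combinatorial factor $1/r!$.

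For the twisted statement, twisting by a Maurer--Cartan element $m\in\fh^{su}$ only adds terms of the form $[m,-]$ and its higher analogues on both sides. I must check that these strictly raise the filtration. In $\fh$ the brackets glue graphs of $m$, which carry edges and vertices, so the filtration degree increases unless $m$ has a component in filtration degree zero. The filtration-zero components are morphisms consisting only of legs, i.e.\ the linear part $V\to W$ of the $\infty$-morphism. The main obstacle is this linear component, which survives on the associated graded and acts by pre- and post-composition on decorations. I would handle it by noting that strong unitality of the morphism forces $m_1(1)=1$. With this, conjugating with $m_1$ acts on decorations compatibly with the splitting, modulo filtration-increasing terms, and it commutes with $d'$ and $h$, so the same contracting homotopy argument goes through.
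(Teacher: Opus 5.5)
You follow the paper's strategy: filter, observe that on the associated graded only $d_V$, $d_W$ and the unit term $d'$ of \eqref{equ:d prime pic} survive, remove $d_V,d_W$ by the spectral sequence in decoration degree, and contract $d'$. Two steps, however, do not work as written.

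\textbf{The filtration convention.} Your convention is inconsistent with what you need from it. You stipulate that legs are counted neither as edges nor as vertices, so that $\iota$ preserves filtration degree on the nose. But then $d'$, which replaces an uncounted $1^*$-leg by an internal edge ending in a univalent vertex, \emph{raises} the degree by one. It would therefore drop out of the associated graded differential. Univalent vertices and $1^*$-legs would survive unpaired, and $\gr\iota$ would not be a quasi-isomorphism. Your convention also puts the bare identity edge (the linear part of a morphism) in filtration degree $0$, as you note yourself; that is incompatible with the Goldman--Millson application this proposition feeds.

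The filtration has to count legs as edges. Then $d'$ preserves the degree, and each attached $1^*$-leg raises it by one. So $\iota$ is filtered and $\gr\iota$ is simply $\Gamma\mapsto\Gamma$; your remarks about $d'$-exact corrections and the factor $1/r!$ become unnecessary.

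There is also a small gap in your homotopy. $h$ must also act on the bare unit graph, a univalent vertex attached directly to an output leg, because $d'$ sends the identity edge decorated by $1^*$ to it. As you define $h$, the identity $d'h+hd'=N\cdot\mathrm{id}$ fails on this pair.

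\textbf{The twisted case.} Your argument here rests on a false premise: the linear component $m_1$ does not survive on the associated graded. Elements of $\fh$ are never composed directly with one another; the brackets come only from $\alpha$ and $\beta$. Each term of $\ell_{k+1}(m,\dots,m,x)$ with $k\geq1$ glues $x$ and the copies of $m$ (including identity edges carrying $m_1$) onto a single connected graph $G$. Here $G$ is either an $\alpha$-graph with at least two outputs or a $\beta$-graph with at least two inputs, and such a graph contains a non-univalent vertex.

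With legs counted, the degree of a glued graph is the sum of the degrees of the pieces minus the number of gluings. Each copy of $m$ has at least as many legs as are glued to it. Besides the legs glued to $x$, $G$ carries at least one further glued leg and one non-univalent vertex. Hence every $m$-dependent term of the twisted differential strictly raises the filtration (in fact by at least two).

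So the twisted associated graded complexes coincide with the untwisted ones, and the second assertion is immediate. This is how the paper disposes of the twisted case, in a single parenthetical. Your proposed repair (conjugating by $m_1$ and asserting that this commutes with $d'$ and $h$) is unjustified, but it is also not needed.
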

\begin{proof}
    Note that any term of $\alpha$ and $\beta$ that has at least one non-univalent vertex contributes a term to the (twisted) differential on the associated graded complexes that increases the number of non-univalent vertices. There remain only the terms of $\alpha$ and $\beta$ that encode the differentials on $V$ and $W$, which we denote by $d_V$ and $d_W$, and those that encode the units. 
    The contribution of the units in turn is a piece of the differential $d'$ that replaces one $1^*$-decorated input leg by a univalent vertex as in \eqref{equ:d prime pic}.
    The remainder of the proof is identical to that of Proposition \ref{prop:su equivalence}: Taking a further spectral sequence on the totatl decoration degrees we mask out the contribution of $d_V$ and $d_W$, and since $d'$ has a homotopy the cohomology is concentrated in the subspace where neither a univalent vertex nor a $1^*$-decorated input leg is present, which is precisely $\fh_{(V,\alpha), (W,\beta)}^{su}$.
\end{proof}

From Proposition \ref{prop:fh inclusion} and the Goldman-Millson Theorem we then immediately obtain:
\begin{cor}\label{cor:fh inclusion}
The inclusion \eqref{equ:iota morphism} induces a weak equivalence 
\[
\MC_\bullet(\fh_{(V,\alpha), (W,\beta)}^{su}) \simeq 
\MC_\bullet(\fh_{(V,\alpha), (W,\beta)}).
\]
\end{cor}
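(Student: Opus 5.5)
The plan is to deduce Corollary \ref{cor:fh inclusion} from Proposition \ref{prop:fh inclusion} via the Goldman--Millson theorem in its filtered, curved $L_\infty$ form (\cite[Theorem 2.12]{RocaiLucio}), applied levelwise in the simplicial direction. Both $\fh^{su}_{(V,\alpha),(W,\beta)}$ and $\fh_{(V,\alpha),(W,\beta)}$ carry the complete descending filtration by the number of edges plus non-univalent vertices. The inclusion $\iota$ of \eqref{equ:iota morphism} preserves this filtration. The reason is that attaching $1^*$-decorated input legs adds neither internal edges nor vertices, provided input legs are not counted as edges. I will fix this convention, or equivalently work with a filtration in which attached input legs are not counted.

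First, I tensor with $\Omega(\Delta^m)$ for each $m$, using completed tensor products, so that the filtrations remain complete. Since $\Omega(\Delta^m)$ is a dg commutative algebra concentrated in the filtration-zero part, the associated graded of $\iota\hotimes \mathit{id}$ is $\gr\iota \otimes \mathit{id}_{\Omega(\Delta^m)}$. By Künneth, this is a quasi-isomorphism. More precisely, the quasi-isomorphism statement is needed after twisting by an arbitrary Maurer--Cartan element of $\fh^{su}\hotimes\Omega(\Delta^m)$, and this twisted version is exactly the second assertion of Proposition \ref{prop:fh inclusion}, now over the ground ring $\Omega(\Delta^m)$. The proof of that proposition goes through verbatim over this ground ring, because the homotopy for $d'$ is $\Omega(\Delta^m)$-linear.

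Second, I invoke the Goldman--Millson statement. A filtered morphism of complete curved $L_\infty$-algebras that is a quasi-isomorphism on associated graded pieces, and remains so after twisting by every Maurer--Cartan element, induces a weak equivalence of simplicial Maurer--Cartan sets $\MC_\bullet$. Concretely, the first twist-free hypothesis yields a bijection on $\pi_0$. The hypotheses after twisting by an MC element yield isomorphisms on all higher homotopy groups based at that point, since these are computed from the cohomology of the twisted algebras in negative degrees. Combined, these give a weak equivalence $\MC_\bullet(\fh^{su})\to\MC_\bullet(\fh)$.

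The main obstacle is the curved setting: the curvature of $\fh$ must lie in positive filtration degree, or at least be compatible with $\iota$, for the theorem to apply. I would check that the curvature of $\fh$ coincides with the image under $\iota$ of the curvature of $\fh^{su}$, with $\iota$ a strict morphism of curved $L_\infty$-algebras. I would also check that, after twisting by a strongly unital MC element (for instance the one giving a strongly unital $\infty$-morphism), both sides become honestly uncurved. The latter makes it possible to fall back on the classical filtered Goldman--Millson theorem for each connected component.
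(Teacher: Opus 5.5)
Your overall route is the paper's: Proposition \ref{prop:fh inclusion} combined with the curved Goldman--Millson theorem, which the paper applies in one line. However, the filtration convention you fix breaks the key input.

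\textbf{The filtration convention.} Suppose input legs, or only the attached $1^*$-legs, are not counted as edges. Then the unit term $d'$ of \eqref{equ:d prime pic} raises the filtration degree by one. This term turns a $1^*$-decorated input leg into an internal edge ending at a new univalent vertex, so it drops out of the associated graded. Every other contribution of $\alpha$, $\beta$ and of vertex splitting also raises the degree. Hence $\gr \fh_{(V,\alpha),(W,\beta)}$ carries only $d_V+d_W$, and $\gr \iota=\iota$ is not a quasi-isomorphism. Already for $d_V=d_W=0$, the graph consisting of one univalent vertex and an output leg decorated by $w\in W$ is a nonzero class outside the image. So Proposition \ref{prop:fh inclusion} fails for your filtration.

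The worry that motivated your convention is unfounded. The filtration is descending, and attaching legs only raises the count, so $\iota(\mF^p)\subset\mF^p$ holds anyway when all legs count as edges. This is the reading under which the paper's argument works, and it also puts the identity graph carrying $\phi_1$ in filtration degree $1$. With it, $d'$ preserves the count, and $\gr \iota$ is the plain inclusion of graphs having neither univalent vertices nor $1^*$-legs. The contracting homotopy for $d'$, as in the proof of Proposition \ref{prop:su equivalence}, then shows $\gr \iota$ is a quasi-isomorphism. Twisting by Maurer--Cartan elements, which all lie in positive filtration, does not change the associated graded.

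\textbf{The curvature check.} The check you propose would fail as stated, and this needs a real adjustment that the paper's definition of $\iota$ glosses over. Evaluate the Maurer--Cartan equation of $\fh_{(V,\alpha),(W,\beta)}$ on the graph $G$ with one univalent vertex and one output leg. It reads $\phi_1(1_V)=1_W+d_W\phi(G)$, so the curvature has the component $\mp 1_W$ on $G$. But every element of $\iota(\fh^{su}_{(V,\alpha),(W,\beta)})$ vanishes on $G$, and its linear part kills $1_V$. In particular $\iota(\fh^{su}_{(V,\alpha),(W,\beta)})$ contains no Maurer--Cartan elements and cannot be a curved $L_\infty$-subalgebra.

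Compare the object case, where a strongly unital structure is the canonical unital structure plus an element of $\iota(\fg_W)$. Here too the inclusion has to be made affine, by adding $u=1^*\otimes 1_W$ on the identity graph. Equivalently, compare $\fh^{su}_{(V,\alpha),(W,\beta)}$ with $\fh_{(V,\alpha),(W,\beta)}$ twisted by $u$. The curvature component on $G$ then cancels against $d'u$, and the additional twisting terms strictly raise the filtration degree. The associated graded argument and Goldman--Millson then go through as above.
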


\begin{rem}
The choice of the filtration by the number of edges plus the number of non-univalent vertices is such that the whole curved $L_\infty$-algebra is concentrated in filtration degrees $\geq 1$, as is required to apply the Goldman-Millson Theorem.  Apart from this issue, filtering only by the number of non-univalent vertices would also work for the arguments above.
\end{rem}

\subsection{Definition of the categories $\HLVCat^{su}$ and $\GXCat^{su}$}
We may define the simplicial subcategory 
\[
\HLVCat^{su} \subset \HLVCat
\]
of strongly unital $\uIFrob_{n,\infty}$-algebras as follows:
The objects of $\HLVCat^{su}$ are those $\uIFrob_{n,\infty}$-algebras $(V,\alpha)$ that are strongly unital according to Definition \ref{def:su}. For $(V,\alpha)$ and $(W,\beta)$ strongly unital $\uIFrob_{n,\infty}$-algebras, we define the morphism simplicial sets in $\HLVCat^{su}$ to be 
\[
\Map_{su}((V,\alpha), (W,\beta)) := \MC_\bullet(\fh_{(V,\alpha),(W,\beta)}^{su}).
\]
We then immediately see that we have a simplicial inclusion $\HLVCat^{su} \subset \HLVCat$. By Proposition \ref{prop:su equivalence} the inclusion functor is essentially surjective, and by Corollary \ref{cor:fh inclusion} it is also homotopically fully faithful.
We hence immediately obtain:
\begin{cor}
    The inclusion $\HLVCat^{su} \subset \HLVCat$ is an equivalence of $\infty$-categories.
\end{cor}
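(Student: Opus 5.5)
The plan is to use the standard criterion for a simplicial functor to be an equivalence of $\infty$-categories (Dwyer--Kan equivalence): it suffices to show that the inclusion $\HLVCat^{su} \subset \HLVCat$ is essentially surjective on homotopy categories and induces weak equivalences of simplicial mapping spaces. The first step is to confirm that the inclusion really is a simplicial functor. Composition of strongly unital $\infty$-morphisms must again be strongly unital. In terms of monoid $\Bar\uifrob_n$-comodule maps, an $\infty$-morphism lies in the image of $\iota$ exactly when its components are ``independent of $1^*$-inputs up to the canonical unit attachments'', and this condition is preserved under composition, also after base change to $\Omega(\Delta^\bullet)$. Identities are strongly unital because the identity $\infty$-morphism is the single edge graph, which has no univalent vertices. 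The same holds for the simplicial degeneracies and faces, since $\Omega(\Delta^\bullet)$ only enters as coefficients.

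For homotopical full faithfulness, I take $(V,\alpha)$ and $(W,\beta)$ in $\HLVCat^{su}$. The inclusion on mapping spaces is the map
\[
\MC_\bullet(\fh^{su}_{(V,\alpha),(W,\beta)}) \to \MC_\bullet(\fh_{(V,\alpha),(W,\beta)})
\]
induced by $\iota$. This is a weak equivalence by Corollary \ref{cor:fh inclusion}. That corollary rests on Proposition \ref{prop:fh inclusion}, which gives quasi-isomorphisms on associated gradeds, applied in every twist, together with the Goldman--Millson theorem for complete filtered curved $L_\infty$-algebras. Because Proposition \ref{prop:fh inclusion} holds after twisting by any MC element, the map is a bijection on $\pi_0$ and an isomorphism on all homotopy groups at every basepoint. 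I expect this step to be the main subtle point: one has to check that the filtration by edges plus non-univalent vertices is complete and lies in degrees $\geq 1$ on both sides compatibly with $\iota$, so that Goldman--Millson applies.

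For essential surjectivity, I take an arbitrary object $(V,\alpha)$ of $\HLVCat$. By Proposition \ref{prop:su equivalence}, there is a strongly unital $(V,\alpha_{su})$ and an $\infty$-quasi-isomorphism between $(V,\alpha)$ and $(V,\alpha_{su})$, coming from a gauge equivalence. It remains to note that $\infty$-quasi-isomorphisms are homotopy equivalences in $\HLVCat$, that is, isomorphisms in the homotopy category. This follows from the Hoffbeck--Leray--Vallette theory: $\infty$-quasi-isomorphisms admit homotopy inverses. Alternatively, the gauge equivalence itself gives a $1$-simplex in $\MC_\bullet(\fh)$ joining the identity to an invertible $\infty$-isomorphism. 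Combining the two steps, the inclusion is a Dwyer--Kan equivalence, which proves the corollary.
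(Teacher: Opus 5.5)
Your proposal is correct and follows the paper's own argument: the paper gets essential surjectivity from Proposition \ref{prop:su equivalence} and homotopical full faithfulness from Corollary \ref{cor:fh inclusion}, and it simply asserts that $\HLVCat^{su}\subset\HLVCat$ is a simplicial subcategory. Your extra checks go a little beyond what the paper verifies, with two small caveats that do not affect the main argument. First, the identity's bare-edge term $1^*\mapsto 1$ is not literally in the image of $\iota$, so your claim about identities relies on the intended convention for strongly unital morphisms. Second, your ``alternatively'' remark is misphrased: the gauge equivalence is a path in $\MC_\bullet$ of the deformation complex, which integrates to an $\infty$-isotopy, not a $1$-simplex in $\MC_\bullet(\fh)$.
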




Likewise, replacing $\Bar\uIFrob_n$ by $\GX_n$ we obtain a simplicial category $\GXCat^{su}$. It fits into a commutative square of simplicial categories 
\[
\begin{tikzcd}
\HLVCat^{su} \ar{r}\ar{d} & \GXCat^{su} \ar{d} \\
\HLVCat \ar{r} & \GXCat
\end{tikzcd}
\]
in which all functors are essentially surjective and homotopically fully faithful, i.e., equivalences of $\infty$-categories.



\subsection{Comparison to Abramyan's category $\GCA$}
With the above prerequisites we can now explain the relation of our categories to a simplicial category defined by Abramyan \cite{Abramyan}, which we denote $\GCA$.
Objects of $\GCA$ are pairs $(V,\alpha)$ consisting of a finite dimensional, non-negatively graded vector spaces $V$ and a Maurer-Cartan element $\alpha\in \tilde \fg_V$.
The simplicial mapping spaces are defined as 
\[
\Map_{\GCA}((V,\alpha), (W,\beta))
=
\MC_\bullet(\tilde \fh_{(V_1,\alpha), (W_1,\beta)}^{su}).
\]
Hence we see that there is fully faithful simplicial functor 
\[
\GCA \to \GXCat
\]
that realizes an equivalence of $\GCA$ with the subcategory of objects for which the underlying graded vector space is finite dimensional and non-negatively graded.



\section{Counital Frobenius algebras}
\label{sec:counital}
\subsection{Definitions}

We denote by $\ucIFrob_n$ the (unital and counital) involutive Frobenius properad, defined such that 
\[
\ucIFrob_n(r,s) = \Q[n(r-1)]\otimes \sgn_r^{\otimes n}
\]
for all $r,s\geq 0$. There is a natural properad map $\uIFrob_n\hookrightarrow \ucIFrob_n$, so that any $\ucIFrob_n$-algebra is also naturally a $\uIFrob_n$-algebra.
Conversely, one may ask which $\uIFrob_n$-algebra structures can be extended to $\ucIFrob_n$-algebra structures. The following result describes a list of equivalent criteria.
\begin{lemma}\label{lem:non-degenerate}
   Let $H$ be a $\uIFrob_n$-algebra with zero differential.
   Let $\Delta\in H^{\otimes 2}$ and $u\in H$ be the diagonal and unit of $H$. Then the following are equivalent:
   \begin{enumerate}
       \item There is a $\ucIFrob_n$-algebra structure on $H$ that restricts to the given $\uIFrob_n$-algebra structure.
       \item The diagonal is non-degenerate, i.e., it induces a bijection $H^*\to H$.
       \item There is a linear map $c:H\to \Q$ such that 
        \begin{equation}
        \label{equ:cohom counit}
        (\mathit{id}\otimes c)\Delta = u \in H.
        \end{equation}
   \end{enumerate}
\end{lemma}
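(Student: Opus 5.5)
We prove the cycle of implications (1)$\Rightarrow$(3)$\Rightarrow$(2)$\Rightarrow$(1). Throughout, $H$ is a graded commutative unital algebra with unit $u$ and diagonal $\Delta=\sum \Delta'\otimes\Delta''$ of degree $n$, which is $(-1)^n$-symmetric and satisfies $(a\otimes 1)\Delta=(1\otimes a)\Delta$. The properad $\ucIFrob_n$ differs from $\uIFrob_n$ by the operations with zero outputs. Among these, the generator in arity $(0,1)$ is a counit $c\colon H\to\Q$ of degree $-n$. The relations of $\ucIFrob_n$ force every operation in $\ucIFrob_n(r,s)$ to be determined by a single value: composing a generator of $\ucIFrob_n(1,2)$ with the counit and the diagonal yields a multiple of the identity, and that multiple must be exactly the identity.

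For (1)$\Rightarrow$(3), take $c$ to be the image of the generator of $\ucIFrob_n(0,1)$. The relation above, read in the algebra, is precisely $(\mathit{id}\otimes c)\Delta=u$. This is the counit axiom of the Frobenius structure.

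For (3)$\Rightarrow$(2), we show that the map $\Delta^\flat\colon H^*\to H$, $f\mapsto (\mathit{id}\otimes f)\Delta$, is surjective; since $\dim H^*=\dim H$ (in each degree, up to the shift), this gives bijectivity. Given $a\in H$, set $f_a:=c(a\cdot -)$. Then
\[
(\mathit{id}\otimes f_a)\Delta=(\mathit{id}\otimes c)\bigl((1\otimes a)\Delta\bigr)=(\mathit{id}\otimes c)\bigl((a\otimes 1)\Delta\bigr)=\pm\, a\,(\mathit{id}\otimes c)\Delta=\pm a u=\pm a,
\]
so $a$ lies in the image. The sign is harmless, so the main care needed here is the Koszul signs. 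Finite dimensionality is implicit in the setting, since $H$ carries a diagonal $\Delta\in H\otimes H$. If a finite-dimensionality hypothesis is needed to identify $\dim H^*$ with $\dim H$, we note it explicitly.

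For (2)$\Rightarrow$(1), the inverse $(\Delta^\flat)^{-1}$ defines a non-degenerate pairing $\langle a,b\rangle$ on $H$, and we define $c:=\langle u,-\rangle$, the element dual to $u$, as in the introduction. Invariance $\langle ab,e\rangle=\langle a,be\rangle$ follows from the identity $(a\otimes1)\Delta=(1\otimes a)\Delta$ by dualizing. Hence $\langle a,b\rangle=c(ab)$, and $(\mathit{id}\otimes c)\Delta=u$ holds. The operations with zero outputs are then defined as $H^{\otimes s}\to\Q$, $a_1\otimes\cdots\otimes a_s\mapsto c(a_1\cdots a_s)$, and these extend the existing operations to all of $\ucIFrob_n$.

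The main obstacle is verifying that these assignments satisfy all properadic relations of $\ucIFrob_n$. We plan to reduce this to the standard fact that a commutative Frobenius algebra, meaning one with a non-degenerate invariant pairing, is the same thing as a $\ucIFrob_n$-algebra. Concretely, $\ucIFrob_n$ is generated by the product, the unit, the diagonal and the counit, subject to the Frobenius relations, associativity and commutativity, the unit and counit axioms, and $\Delta$ being the copairing. All of these hold for our data. Involutivity, meaning the vanishing Euler class, is already part of the hypothesis that $H$ is a $\uIFrob_n$-algebra, and no new relation involving the counit creates an additional genus-type constraint, because every $\ucIFrob_n(r,s)$ is one-dimensional.
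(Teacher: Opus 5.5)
Your proof is correct. It uses the same two ingredients as the paper. The first is the identity $\Delta^\flat\bigl(c(a\cdot -)\bigr)=\pm a$, which follows from $(a\otimes 1)\Delta=(1\otimes a)\Delta$ and the counit relation. The second is the classical fact that a non-degenerate commutative Frobenius algebra with vanishing Euler class is a $\ucIFrob_n$-algebra. The difference is that you run the cycle the other way.

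The paper argues as follows:
\begin{itemize}
\item (1)$\Rightarrow$(2) via the cap operation in $\ucIFrob_n(0,2)$.
\item (2)$\Rightarrow$(3) is trivial, since (3) only asks that $u\in\mathrm{im}\,\Delta^\flat$.
\item (3)$\Rightarrow$(1) carries the work. Given $c$, the pairing $c(hx)$ is invariant for free, and one only checks that $h\mapsto c(h\cdot -)$ inverts $\Delta^\flat$.
\end{itemize}
You put that same check (one composite only) into (3)$\Rightarrow$(2). In exchange, your (2)$\Rightarrow$(1) must reconstruct $c$ as the dual of $u$ and derive invariance of the inverse pairing by dualizing $\Delta^\flat(f(a\cdot -))=\pm a\,\Delta^\flat(f)$. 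That is correct, but it is a step the paper's ordering avoids. Your (1)$\Rightarrow$(3) is more direct than the paper's (1)$\Rightarrow$(2). However, the relation to cite there is that the tree composite of the generators of $\ucIFrob_n(2,0)$ and $\ucIFrob_n(0,1)$ is the generator of $\ucIFrob_n(1,0)$. It is not the zigzag identity in arity $(1,1)$ that you describe.

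Two small repairs are needed.

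First, ``finite dimensionality is implicit since $H$ carries a diagonal'' is false as stated: take $\Delta=0$ on an infinite-dimensional $H$. What is true is that $\mathrm{im}\,\Delta^\flat$ lies in the span of the finitely many left tensor factors of $\Delta$. So the surjectivity you establish under (3) forces $\dim H<\infty$. Alternatively, check the other composite, $c\bigl(\Delta^\flat(f)\cdot x\bigr)=\pm f(x)$, using $(c\otimes\mathit{id})\Delta=\pm u$, which follows from the $(-1)^n$-symmetry of $\Delta$. Then no dimension count is needed, and this is implicitly what the paper does.

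Second, the reason no genus-type obstruction appears in (2)$\Rightarrow$(1) is not the one-dimensionality of $\ucIFrob_n(r,s)$. It is that every composite containing a loop factors through the Euler class $E=\mu\circ\Delta$, which vanishes in $H$ by hypothesis. The corresponding composites in $\ucIFrob_n$ also vanish, for degree reasons when $n\neq 0$.
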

Also note that a $\ucIFrob_n$-algebra must necessarily be finite dimensional.
\begin{proof}
    (1)$\Rightarrow$ (2): This is clear since the inverse map in the Frobenius algebra is given by the generating ("cap") operation in $\ucIFrob_n(0,2)$.

    (2)$\Rightarrow$ (3) is obvious, since (3) just asks for $u$ to be in the image of the morphism $H^*\to H$.

    (3) $\Rightarrow$ (1): We have to check that the linear map 
    \begin{gather*}
        H \to H^*
        \\
        h\mapsto (x\mapsto c(hx))
    \end{gather*}
    is a bijection. But one easily checks that the inverse map is the map $H^*\to H$ given by $\Delta$, using that \eqref{equ:cohom counit} holds.
\end{proof}

We will also need a version for morphisms.
\begin{lemma}\label{lem:ifrob morph}
Let $H$ and $H'$ be $\ucIFrob_n$-algebras with diagonals $\Delta, \Delta'$, pairings $b,b'$, units $u,u'$ and counits $c,c'$. Let $\phi:H\to H'$ be a linear map that is a morphism of $\ucIFrob_n$-algebras. Then the following are equivalent:
\begin{enumerate}
    \item $f$ is a morphism of $\ucIFrob_n$-algebras, i.e., $c'\circ f = c$.
    \item $f$ is an isomorphism.
    \item There is a linear map $\lambda:W\to \Q$ such that $(\mathit{id}\otimes \lambda f)\Delta = u$.
\end{enumerate}
\end{lemma}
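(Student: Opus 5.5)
Throughout I read the statement with its evident typos corrected. $\phi=f\colon H\to H'$ is assumed to be a morphism of the underlying $\uIFrob_n$-algebras. That means $f$ is multiplicative, $f(u)=u'$, and $(f\otimes f)\Delta=\Delta'$. The map $\lambda$ in (3) is a linear map $H'\to\Q$. The plan is to prove (1)$\Rightarrow$(3)$\Rightarrow$(2)$\Rightarrow$(1), using Lemma \ref{lem:non-degenerate} and the Frobenius relation $(a\otimes 1)\Delta=(1\otimes a)\Delta$.

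(1)$\Rightarrow$(3) is immediate. Take $\lambda=c'$. Then $(\mathit{id}\otimes \lambda f)\Delta=(\mathit{id}\otimes c)\Delta=u$, because $c$ is the counit of $H$.

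(3)$\Rightarrow$(2). Write $\Delta=\sum a'\otimes a''$.
\begin{itemize}
\item \emph{Injectivity.} For $h\in H$, the Frobenius relation together with (3) gives
\[
h = h\cdot u = (\mathit{id}\otimes\lambda f)\big((h\otimes 1)\Delta\big) = (\mathit{id}\otimes\lambda f)\big((1\otimes h)\Delta\big) = \pm\sum a'\,\lambda\big(f(h)f(a'')\big),
\]
where the last step uses multiplicativity of $f$. Hence $f(h)=0$ forces $h=0$.
\item \emph{Surjectivity.} Since $H'$ is a $\ucIFrob_n$-algebra, $\Delta'$ is non-degenerate by Lemma \ref{lem:non-degenerate}, so the induced map $H'^*\to H'$ is a bijection. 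Because $\Delta'=(f\otimes f)\Delta$, this map factors as $H'^*\xrightarrow{f^*}H^*\to H\xrightarrow{f}H'$. Hence $f$ is surjective.
\end{itemize}
Together these show that $f$ is a linear isomorphism.

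(2)$\Rightarrow$(1). Since $f$ is invertible and $(f\otimes f)\Delta=\Delta'$,
\[
(\mathit{id}\otimes c'f)\Delta = f^{-1}\big((f\otimes c'f)\Delta\big) = f^{-1}\big((\mathit{id}\otimes c')\Delta'\big)=f^{-1}(u')=u .
\]
So $c'f$ satisfies \eqref{equ:cohom counit} for $H$. Because $\Delta$ is non-degenerate, the equation $(\mathit{id}\otimes c)\Delta=u$ determines $c$ uniquely: it says that $c\in H^*$ is the preimage of $u$ under the bijection $H^*\to H$. Therefore $c'\circ f=c$.

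I do not expect any step to be a real obstacle. The only care needed is with the Koszul signs from the $(-1)^n$-symmetry of $\Delta$ in the injectivity computation, and these do not affect the conclusion.
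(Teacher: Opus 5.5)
Your proof is correct, but it runs the cycle of implications in the opposite direction from the paper, which proves (1)$\Rightarrow$(2)$\Rightarrow$(3)$\Rightarrow$(1).

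In the paper, (1)$\Rightarrow$(2) is a rank argument in two halves:
\begin{itemize}
\item Surjectivity: $(f\otimes f)\Delta=\Delta'$ with $\Delta'$ non-degenerate forces $f$ to be surjective. This is the same as your surjectivity step.
\item Injectivity: dually, compatibility with the pairings, $b'(f\otimes f)=b$, forces $f$ to be injective. This is where the hypothesis $c'\circ f=c$ enters.
\end{itemize}
Then (2)$\Rightarrow$(3) is immediate. The step (3)$\Rightarrow$(1) is two applications of the uniqueness of counits. Non-degeneracy of $\Delta$ gives $\lambda f=c$. Pushing the equation forward along $f$ gives $(\mathit{id}\otimes\lambda)\Delta'=f(u)=u'$, hence $\lambda=c'$ by non-degeneracy of $\Delta'$.

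You instead obtain injectivity directly from (3), via the Frobenius relation and multiplicativity of $f$, and you never use the pairings $b,b'$. Your (2)$\Rightarrow$(1) is the same uniqueness-of-counit idea, but run backwards. You pull $u'$ back along $f^{-1}$ to see that $c'f$ is a counit for $H$, where the paper pushes forward along $f$ to identify $\lambda$ with $c'$.

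The paper's ordering is slightly more economical, since its (3)$\Rightarrow$(1) needs no bijectivity of $f$ at all. Yours shows that any functional $\lambda f$ satisfying the counit identity of $H$ already forces $\ker f=0$, regardless of the counit of $H'$. It also works only with $\Delta$, the unit and the product.
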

\begin{proof}
    (1) $\Rightarrow$ (2): We have $(f\otimes f)\Delta=\Delta'$, hence by non-degeneracy of $\Delta'$ the rank of $f$ must be equal to the dimension of $W$. Dually $b' (f\otimes f)=b$, hence the rank of $f$ must be equal to the dimension of $V$ and $f$ is hence an isomorphism.
    
    (2) $\Rightarrow$ (3): This follows directly from the non-degeneracy of $\Delta$. 

    (3)$\Rightarrow$ (1): Applying $b$ to the equation we see that $\lambda f = c$. Hence we are done if we can show that $\lambda=c'$.
    On the other hand, applying $f$ to the equation we obtain $(\mathit{id} \otimes \lambda)\Delta' = (f \otimes \lambda f)\Delta = fu = u'$ and conclude that $\lambda = c'$ using non-degeneracy of $\Delta'$.
\end{proof}

Next, we consider homotopy $\ucIFrob_n$-algebras. The discussion of Section \ref{sec:cofib uIFrob} extends naturally to the case of $\ucIFrob_n$, and we may define a cofibrant resolution in the category of properads
\[
\ucIFrob_{n,\infty} := \Bar^c \Bar \ucIFrob_{n} 
\]
by the (curved) properadic bar-cobar construction. Graphically, elements of $\Bar \ucIFrob_{n}(r,s)$ may be considered as linear combinations of diagrams as in Section \ref{sec:GX}, with the only difference being that now these diagrams may have $r=0$ output legs and may contain vertices (including univalent) vertices with no output edges.

\begin{defi}\label{def:non-degenerate}
Let $(V,\alpha)$ a $\uIFrob_{n,\infty}$-algebra, and let $H:=H(V)$ be the underlying graded $\uIFrob_{n}$-algebra.
Then we call $(V,\alpha)$ \emph{non-degenerate} if $H$ satisfies either of the equivalent conditions of Lemma \ref{lem:non-degenerate}. Otherwise we call $(V,\alpha)$ \emph{degenerate}.
\end{defi}



\subsection{Another cofibrant replacement of $\ucIFrob_n$}
\label{sec:Xn}
Let $\FreeP(\Delta, u, c, h)$ be the free properad in the following four generators.
\begin{itemize}
    \item The diagonal $\Delta$ is an arity $(2,0)$-generator (i.e., no inputs, two outputs) of degree $n$ that is $(-1)^n$-symmetric.
    \item The unit $u$ is a degree zero generator of arity $(1,0)$.
    \item The counit $c$ is a degree $-n$-generator of arity $(0,1)$.
    \item The element $h$ is of degree $-1$ and arity $(1,0)$.
\end{itemize}
We define a differential $d$ on $\FreeP(\Delta, u, c, h)$ such that on generators 
\begin{align*}
    d \Delta &= du =dc =0 \\
    dh &= (\mathit{id}\otimes c) \circ \Delta - u.
\end{align*}
We consider the following diagram of properad morphisms. with $X_n$ being defined as the pushout of the upper-left corner:
\[
\begin{tikzcd}
\FreeP(\Delta, u) \ar{r} \ar[hookrightarrow]{d} & \uIFrob_n \ar{d} \ar{ddr} & \\ 
(\FreeP(\Delta, u, c, h) , d) \ar{r} \ar{drr} & X_n \ar[dashed]{dr}{f}& \\
& & \ucIFrob_n
\end{tikzcd}.
\]
The upper horizontal arrow sends $u$ to the unit and $\Delta$ to the generator of $\uIFrob_n$ (the "diagonal").
The left vertical arrow is the obvious inclusion. The lower diagonal arrow sends $h$ to 0, $u$ to the unit, $c$ to the counit and $\Delta$ to the $(2,0)$-generator in $\ucIFrob_n$. The dashed arrow $f$ arises from the universal property of the pushout.

\begin{lemma}\label{lem:Xn qiso}
    The arrow $f:X_n\to \ucIFrob_n$ is a quasi-isomorphism.
\end{lemma}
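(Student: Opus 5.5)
The plan is to describe $X_n$ explicitly as a quasi-free extension of $\uIFrob_n$ and to compute its homology by a filtration argument. Since the pushout only adjoins the generators $c$ (degree $-n$, arity $(0,1)$) and $h$ (degree $-1$, arity $(1,0)$), we get an identification of underlying graded properads
\[
X_n \cong \uIFrob_n \vee \FreeP(c,h),
\]
the coproduct of $\uIFrob_n$ with the free properad on $c,h$. Elements are connected graphs: vertices are decorated by $\uIFrob_n$-operations, and some vertices are $c$- or $h$-decorated leaves. The differential is $dh = (\mathit{id}\otimes c)\Delta - u$, extended as a derivation.

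The first step is to reduce composites. A $c$ composed on an output of a $\uIFrob_n$-operation can often be pushed through the Frobenius relations. Any connected composite of $\uIFrob_n$-operations and $c$'s without $h$ is determined by its underlying genus and in/out data. By contracting $\uIFrob_n$-vertices, a graph reduces to a single $\uIFrob_n$-vertex with some outputs capped by $c$'s and some inputs fed by $h$'s. Feeding $h$ into an input of a $\uIFrob_n$-operation can be rewritten as the multiplication with $h$. One has to be careful here: $h$ is not central, but it is an element of arity $(1,0)$, so it is a single leaf. Hence, as a graded properad, $X_n(r,s)$ is spanned by the following elements. For $r\geq 1$, they are operations of $\uIFrob_n$ of genus $g$ with $k$ extra outputs capped by $c$ and $m$ extra inputs fed by $h$. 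For $r=0$, they are the same but with all outputs capped and at least one $c$. This description should be justified using that $\uIFrob_n(r,s)$ is one-dimensional in each arity and genus (after taking envelope/genus into account).

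The second step is to filter by the number of $h$'s, or rather to use the following contraction. On the associated graded the differential $d_0$ maps $h\mapsto (\mathit{id}\otimes c)\Delta$, i.e. it replaces an $h$-leaf by a $c$-capped output attached through the diagonal. In the one-vertex normal form above, this turns an $h$-input into one extra genus/one extra capped output (the $-u$ term lowers filtration and is dropped). Define a homotopy $s$ sending one $c$-capped output (paired via $\Delta$ to an input) back to an $h$-input. Then $d_0s+sd_0$ is a counting operator. Its kernel consists of the graphs with no $h$ and no "removable" $c$, i.e. with no $c$ attached in a way coming from $\Delta$.

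I expect this to be the main obstacle. In $\ucIFrob_n$, genus and caps interact ($c\circ$ multiplication times $\Delta$ gives genus), so the precise bookkeeping of which $c$-caps are "removable" must be arranged so that the surviving complex is exactly one class in each $\ucIFrob_n(r,s)$. A cleaner alternative I would try first: both $X_n$ and $\ucIFrob_n$ admit a map from $\uIFrob_n$. Consider the filtration by the number of $c$'s plus $h$'s. The associated graded is then a Koszul-type complex $\uIFrob_n\otimes \Q[c,h]$, with $d h = c\cdot(\text{genus-raising})$. Its homology should be computed arity by arity, using the one-dimensionality of each piece.

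Finally one checks that $f$ is surjective on homology: every element of $\ucIFrob_n(r,s)$ is hit by a suitable $c$-capped $\uIFrob_n$ operation. One also checks that the surviving classes map to the nonzero generators. The term $-u$ in $dh$ only enters in the final spectral sequence page and is what identifies $(\mathit{id}\otimes c)\Delta$ with $u$, matching the relation in $\ucIFrob_n$. Convergence holds since the filtrations are bounded in each arity and genus.
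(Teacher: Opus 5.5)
Your skeleton matches the paper's: a one-$\uIFrob_n$-vertex normal form, a filtration by the number of $c$'s plus $h$'s, an associated graded differential trading an $h$ for a capped output, and then a contraction. As written, however, the proposal has a genuine gap. The decisive computation is exactly the step you leave open as ``the main obstacle'', and your description of $X_n$ is wrong in ways that make that step look harder than it is.

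\textbf{Genus.} Neither $\uIFrob_n$ nor $\ucIFrob_n$ has positive-genus operations. Involutivity means the genus-one composite $\mu\circ\Delta$ (the Euler class) vanishes, and this kills every positive-genus composite. That is exactly why $\uIFrob_n(r,s)$ and $\ucIFrob_n(r,s)$ are one-dimensional. Feeding $(\mathit{id}\otimes c)\Delta$ into an input is a composition along a single edge, so it adds one capped output and does not raise genus.

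\textbf{Caps are free.} In $X_n$ the generator $c$ is freely adjoined, so it is not ``pushed through Frobenius relations''. Composites are \emph{not} determined by genus and in/out data: composites with different numbers of caps are linearly independent, and only the differential identifies them in cohomology.

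\textbf{At most one $h$.} Since $h$ is odd while the inputs of $\uIFrob_n$ are symmetric, the $S_j$-coinvariants vanish for $j\geq 2$.

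With these facts the argument closes at once. Because $c$ and $h$ are univalent, a reduced connected composite has exactly one $\uIFrob_n$-vertex. So $X_n(k,l)$ has basis, for $i\geq 0$ and $k+i\geq 1$:
\begin{itemize}
\item $a_{k,l,i}$: the generator of $\uIFrob_n(k+i,l)$ with $i$ outputs capped by $c$;
\item $b_{k,l,i}$: the generator of $\uIFrob_n(k+i,l+1)$ with $i$ caps and one $h$.
\end{itemize}
The differential is $d a_{k,l,i}=0$ and $d b_{k,l,i}=a_{k,l,i+1}-a_{k,l,i}$.

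Put $a_{k,l,i}$ in filtration degree $i$ and $b_{k,l,i}$ in degree $i+1$. The associated graded differential is then $b_{k,l,i}\mapsto a_{k,l,i+1}$. Hence the cohomology is $\Q a_{k,l,0}$ for $k>0$, and $\Q a_{0,l,1}$ for $k=0$, where $a_{0,l,0}$ and $b_{0,l,0}$ do not exist. This is all the ``removable cap'' bookkeeping there is: every cap can be traded for an $h$ except the last one when $k=0$. Since $f$ sends every $a_{k,l,i}$ to the generator of $\ucIFrob_n(k,l)$, it is a quasi-isomorphism.

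Two smaller corrections:
\begin{itemize}
\item The filtration is not bounded in a fixed arity, since the number of caps is unbounded. It is exhaustive and bounded below, which suffices for convergence.
\item The $-u$ term plays no role in the dimension count: the cohomology of the associated graded is already one-dimensional in each arity, so nothing further happens. What the $-u$ term does is make $f$ a chain map and make all the $a_{k,l,i}$ cohomologous.
\end{itemize}
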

\begin{proof}
    Let us first describe the dg vector space $X_n(k,l)$. Elements are linear combinations of formal compositions of elements of $\uIFrob_n$ with the operations $c$ and $h$. 
    \[
    \begin{tikzpicture}
        ...
    \end{tikzpicture}
    \]
    Hence we have 
    \begin{align*}
    X_n(k,l) &= 
    \bigoplus_{i,j\geq 0 \atop k+i\geq 1} \uIFrob_n(k+i,l+j)[in+j]\otimes_{S_i\times S_j} \sgn_i^{\otimes n} \otimes \sgn_j
    \\&=
    \bigoplus_{i\geq 0 \atop k+i\geq 1} \left( \underbrace{
    \uIFrob_n(k+i,l)[in]\otimes_{S_i} \sgn_i^{\otimes n}
    }_{=\Q a_{k,l,i}}
    \oplus 
    \underbrace{
    \uIFrob_n(k+i,l+1)[in+1]\otimes_{S_i} \sgn_i^{\otimes n}
    }_{=\Q b_{k,l,i}}
    \right)
    .
    \end{align*}
    Note that each summand in the last line is one-dimensional, and we denote the generators by $a_{k,l,i}$, $b_{k,l,i}$ as indicated. Concretely, $a_{k,l,i}$ is the composition of the generator of $\uIFrob_n(k+i,l)$ with $i$ copies of $u$ and has degree $(k-1)n$. Similarly $b_{k,l,i}$ is obtained by  composing the generator of $\uIFrob_n(k+i,l+1)$ with $i$ copies of $u$ and one copy of $h$ resulting in an element of degree $(k-1)n-1$.
    One easily checks that the differential takes on the form 
    \[
    d b_{k,l,i} = a_{k,l,i+1} - a_{k,l,i}.
    \]
    To compute the cohomology of this complex we endow $X_n(k,l)$ with the exhaustive ascending filtration obtained by declaring $a_{k,l,i}$ to be of filtration degree $i$ and $b_{k,l,i}$ of filtration degree $i+1$. Considering the associated spectral sequence, the differential on the associated graded is then 
    \[
    d b_{k,l,i} = a_{k,l,i+1}.
    \]
    Since this differential has an obvious homotopy, the cohomology of $X_n(k,l)$ is easily evaluated to the following:
    \begin{itemize}
        \item If $k>0$ then $\gr_m X_n(k,l)$ is acyclic for each $m>0$, and thus $H(X_n(k,l))\cong \Q a_{k,l,0}$ is one-dimensional represented by $a_{k,l,0}$, i.e., the generator of $\uIFrob_n(k,l)$.
        \item If $k=0$ then note that the generator $a_{0,l,0}$ and $b_{0,l,0}$ are not present. Hence $\gr_0X_n(0,l)=0$, while still $H(\gr_m X_n(k,l))=0$ for $m\geq 2$ by the same argument as above. But $\gr_1X_n(0,l)=\Q a_{0,l,1}$ is 1-dimensional, and hence $H(X_n(0,l))\cong \Q a_{0,l,1}$.
    \end{itemize}
    It is clear that the morphism $f$ sends each $a_{k,l,i}$ to the generator of $\ucIFrob_n(k,l)$, and hence $f$ is a quasi-isomorphism.
\end{proof}

Next we similarly define a second dg properad $X_{n,\infty}$ as the pushout in the following diagram
\begin{equation}\label{equ:Xni}
\begin{tikzcd}
\FreeP(\Delta, u) \ar{r} \ar[hookrightarrow]{d} & \uIFrob_{n,\infty} \ar{d} \ar{ddr} & \\ 
(\FreeP(\Delta, u, c, h) , d) \ar{r} \ar{drr} & X_{n,\infty} \ar[dashed]{dr}{f_\infty}& \\
& & \ucIFrob_{n,\infty}
\end{tikzcd}.
\end{equation}
The maps are defined analogously to before, in particular, the unit $u$ counit $c$ and diagonal $\Delta$ are sent to the respective generators of $\uIFrob_{n,\infty}$ or respectively $\ucIFrob_{n,\infty}$.
Just mind that the lower diagonal arrow maps the element $h$ as follows 
\[
h\mapsto \,
\begin{tikzpicture}
    \node[int] (v) at (-.5,.5) {};
    \node[int] (w) at (0,0) {};
    \node (o) at (-1.2,0) {$\scriptstyle 1$};
    \draw[->] (w) edge (v) edge (o);
\end{tikzpicture}
\]
so as to make the lower diagonal map compatible with the differentials. 

Note the following facts:
\begin{itemize}
    \item Since the left-hand vertical arrow in \eqref{equ:Xni} is a cofibration, so is the right-hand vertical arrow. Hence $X_{n,\infty}$ is cofibrant since $\uIFrob_{n,\infty}$ is.
    \item By functoriality of the pushout we have the following commutative square 
    \begin{equation}\label{equ:X square}
    \begin{tikzcd}
    X_{n,\infty} \ar{r} \ar{d} & X_n \ar{d}{\sim}  \\
    \ucIFrob_{n,\infty} \ar{r}{\sim} & \ucIFrob_n
    \end{tikzcd},
    \end{equation}
    with the right-hand vertical arrow a quasi-isomorphism by Lemma \ref{lem:Xn qiso} above.
\end{itemize}

\begin{lemma}\label{lem:Xni qiso}
All arrows of \eqref{equ:X square} are quasi-isomorphisms.
\end{lemma}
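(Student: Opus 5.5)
We already know two of the four arrows in \eqref{equ:X square} are quasi-isomorphisms. The right-hand vertical arrow is one by Lemma \ref{lem:Xn qiso}, and the bottom arrow $\ucIFrob_{n,\infty}=\Bar^c\Bar\ucIFrob_n\to\ucIFrob_n$ is the bar-cobar resolution. By two-out-of-three it therefore suffices to show that the top arrow $X_{n,\infty}\to X_n$ is a quasi-isomorphism. This arrow is induced by the quasi-isomorphism $g\colon\uIFrob_{n,\infty}\to\uIFrob_n$. I do not want to rely on left properness of the model category of properads, since $\uIFrob_n$ is not cofibrant. Instead I would compute both pushouts explicitly and functorially in the properad being pushed out along, exactly as in the proof of Lemma \ref{lem:Xn qiso}.

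\emph{Step 1: explicit description of the pushout.} For a dg properad $\POp$ under $\FreeP(\Delta,u)$, the pushout $X(\POp)$ is obtained by freely adjoining the generators $c$ and $h$. The generator $c$ has one input and no outputs, and $h$ has no inputs and one output. Hence in any connected composite each of them can only be attached to an operation of $\POp$. Compositions of elements of $\POp$ with each other are already elements of $\POp$, and $c$ cannot be composed with itself or with $h$ except in arity $(0,0)$. So I expect an isomorphism of graded vector spaces
\[
X(\POp)(k,l)\cong \bigoplus_{i,j\geq 0} \POp(k+i,l+j)[in+j]\otimes_{S_i\times S_j}\sgn_i^{\otimes n}\otimes \sgn_j ,
\]
natural in $\POp$. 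Here $i$ outputs are capped by $c$ and $j$ inputs are fed by $h$, with a separate check of the degenerate arity-$(0,0)$ pieces according to the properadic conventions used. The differential is $d_\POp$ plus the term coming from $dh=(\mathit{id}\otimes c)\circ\Delta-u$. That term replaces one $h$ either by a $\Delta$ with one output capped by $c$, or by a $u$; in both cases the result is composed into $\POp$ via the structure map from $\FreeP(\Delta,u)$.

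\emph{Step 2: spectral sequence.} I would filter $X(\POp)(k,l)$ by the number $j$ of $h$'s, using an ascending filtration. This filtration is exhaustive and bounded below, so its spectral sequence converges. Both parts of the $dh$-term strictly decrease $j$, so the associated graded differential is just $d_\POp$ acting on each summand $\POp(k+i,l+j)\otimes_{S_i\times S_j}(\dots)$. The map $X(g)$ respects the filtration. On the associated graded it is induced by $g$ summand-wise, and since we are in characteristic zero, taking $S_i\times S_j$-coinvariants preserves quasi-isomorphisms. Hence $\gr X(g)$ is a quasi-isomorphism, and so is $X(g)\colon X_{n,\infty}\to X_n$. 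Two-out-of-three then gives the left vertical arrow.

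\emph{Main obstacle.} I expect the main work to be in Step 1. One has to justify that the pushout really has the above "one $\POp$-vertex plus caps and sources" normal form. In particular, the structure map $\FreeP(\Delta,u)\to\POp$ introduces no further identifications beyond absorbing $u$ and $\Delta$ into $\POp$. One also has to treat the boundary cases correctly: $k+i=0$, and the excluded vanishing components of $\uIFrob_n(r,s)$ for $r=0$, where the corresponding pieces of $\uIFrob_{n,\infty}$ must be checked to be acyclic or absent. If the filtration by $j$ alone turns out to be problematic, for instance through convergence issues in the infinite sum over $i$, I would instead filter arity-wise by $i+j$ together with a degree bound, since each summand is finite dimensional in each arity of $\POp$.
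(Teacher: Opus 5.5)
Your proposal is correct and follows essentially the same argument as the paper: reduce by two-out-of-three to $X_{n,\infty}\to X_n$, write both pushouts as $\bigoplus_{i,j}\POp(k+i,l+j)[in+j]\otimes_{S_i\times S_j}\sgn_i^{\otimes n}\otimes\sgn_j$, and filter by the number of $h$-generators, so that the associated graded map is $(X_{n,\infty},d_\infty)\to(X_n,0)$, which is a quasi-isomorphism by K\"unneth. The boundary cases you worry about are harmless: $\uIFrob_{n,\infty}(0,s)=0$, since every vertex has an out-edge and the graphs are acyclic, so the constraint $k+i\geq 1$ enters both sides in the same way.
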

\begin{proof}
    Note that it suffices to check that the map $X_{n,\infty} \to X_n$ is a quasi-isomorphism.
    For the explicit description of $X_n$ we refer back to the proof of Lemma \ref{lem:Xn qiso}. Similarly, we see that 
\begin{align*}
    X_{n,\infty}(k,l) &= 
    \bigoplus_{i,j\geq 0 \atop k+i\geq 1} \uIFrob_{n,\infty}(k+i,l+j)[in+j]\otimes_{S_i\times S_j} \sgn_i^{\otimes n} \otimes \sgn_j
    \\&=
    \bigoplus_{i\geq 0 \atop k+i\geq 1} \left( 
    \uIFrob_n(k+i,l)[in]\otimes_{S_i} \sgn_i^{\otimes n}
    \oplus 
    \uIFrob_n(k+i,l+1)[in+1]\otimes_{S_i} \sgn_i^{\otimes n}
    \right)
    .
    \end{align*}
    The differential has the form $d_{\infty} + d$, where $d_{\infty}$ is the differential on $\uIFrob_{n,\infty}$ and $d$ is the part that removes an $h$-generator as in the proof of Lemma \ref{lem:Xn qiso}. Now consider the morphism 
    \[
    X_{n,\infty} \to X_{n}
    \]
    and equip both sides with the ascending complete filtration by the number of $h$-generators. The associated graded morphism is idnetified with
    \[
    (X_{n,\infty} , d_\infty) \to (X_n,0)
    \]
    and clearly a quasi-isomorphism by the Künneth formula. It hence follows that $X_{n,\infty}\to X_n$ is a quasi-isomorphism.
\end{proof}

\begin{cor}
    The diagram 
    \[
\begin{tikzcd}
    \FreeP(\Delta,u) \ar[d] \ar[r] &  \uIFrob_n \ar[d] \\
    (\FreeP(\Delta,u,c,h),d) \ar[r] & \ucIFrob_n  
\end{tikzcd}
\]
is a homotopy pushout square in the category of dg properads.
\end{cor}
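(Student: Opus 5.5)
The plan is to identify the strict pushout $X_n$ with a homotopy pushout via a cofibrant model, and then compare it with $\ucIFrob_n$. I would use Lemma \ref{lem:Xni qiso} and the square \eqref{equ:X square} together with the diagram \eqref{equ:Xni}.

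First, I replace the upper horizontal arrow $\FreeP(\Delta,u)\to \uIFrob_n$ by the cofibration-friendly arrow $\FreeP(\Delta,u)\to \uIFrob_{n,\infty}$. Note that $\FreeP(\Delta,u)$ is quasi-free with zero differential, hence cofibrant. The map $\FreeP(\Delta,u)\to\uIFrob_{n,\infty}$ sends $\Delta,u$ to the corresponding generators, and its composite with $\uIFrob_{n,\infty}\xrightarrow{\sim}\uIFrob_n$ is the original arrow. The left vertical arrow $\FreeP(\Delta,u)\hookrightarrow(\FreeP(\Delta,u,c,h),d)$ is a cofibration: it is a cell attachment of $c$ (with $dc=0$), followed by $h$ with $dh$ lying in the previous stage. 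Since dg properads form a model category (Hirsh–Milles/Vallette, with the standard transferred structure), a strict pushout along a cofibration between cofibrant objects is a homotopy pushout. So $X_{n,\infty}$, defined by \eqref{equ:Xni}, computes the homotopy pushout of the span $\uIFrob_n \leftarrow \FreeP(\Delta,u)\to (\FreeP(\Delta,u,c,h),d)$. Here I use that the span is weakly equivalent, via $\uIFrob_{n,\infty}\xrightarrow{\sim}\uIFrob_n$, to the span with $\uIFrob_{n,\infty}$, and that homotopy pushouts are invariant under weak equivalences of spans.

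Next, the canonical map from the homotopy pushout to the corner $\ucIFrob_n$ of the given square is the composite $X_{n,\infty}\to\ucIFrob_{n,\infty}\to\ucIFrob_n$, equivalently $X_{n,\infty}\to X_n\to \ucIFrob_n$ by commutativity of \eqref{equ:X square}. I need to check compatibility with the maps from the span. The lower diagonal arrows in \eqref{equ:Xni} and in the definition of $X_n$ are compatible: $h$ is sent to the displayed two-vertex graph in $\ucIFrob_{n,\infty}$, and this graph maps to $0$ in $\ucIFrob_n$, matching $h\mapsto 0$. The upper maps are compatible by construction. By Lemma \ref{lem:Xni qiso}, every arrow in \eqref{equ:X square} is a quasi-isomorphism, so this comparison map is a weak equivalence. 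Hence the square is a homotopy pushout.

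The only real obstacle is the model-categorical input. First, I need that the left vertical arrow is a cofibration in the curved/non-augmented setting of dg properads. Second, I need that $\FreeP(\Delta,u)$ and $\uIFrob_{n,\infty}$ are cofibrant, which holds because both are quasi-free with a filtration-lowering differential. Once these are granted via the standard cell-attachment argument, the rest is formal from Lemmas \ref{lem:Xn qiso} and \ref{lem:Xni qiso}.
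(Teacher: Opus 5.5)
Your proposal is correct and is essentially the paper's own argument. In both, $X_{n,\infty}$ is a strict pushout along a cofibration between cofibrant objects, so it models the homotopy pushout of the span (with $\uIFrob_{n,\infty}$ in place of $\uIFrob_n$). Lemma \ref{lem:Xni qiso} then identifies it with $\ucIFrob_n$, compatibly with the maps of the square via \eqref{equ:X square}.
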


Now note that the proofs of Lemmas \ref{lem:Xn qiso} and \ref{lem:Xni qiso} also go through if we had replaced the role of $\uIFrob_n$ by $\ucIFrob_n$ from the start. In this case the case distinction $k=0$ versus $k>0$ appearing in the proof of Lemma \ref{lem:Xn qiso} would not even have been necessary.
Hence we obtain:

\begin{cor}\label{cor:htpy epi}
    All three squares in the diagram 
    \[
\begin{tikzcd}
    \FreeP(\Delta,u) \ar[d] \ar[r] &  \uIFrob_n \ar[r] \ar[d] & \ucIFrob_n \ar[d] \\
    (\FreeP(\Delta,u,c,h),d) \ar[r] & \ucIFrob_n \ar[r] & \ucIFrob_n
\end{tikzcd}
\]
are homotopy pushout squares in the category of dg properads.
In particular, the inclusion $\uIFrob_n\to \ucIFrob_n$ is a homotopy epimorphism.\footnote{This just means that the right-hand square is a homotopy pushout square.}
\end{cor}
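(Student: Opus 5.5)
The plan is to verify each of the three squares separately and then use the pasting law for homotopy pushouts. The left square is the homotopy pushout established in the preceding corollary, so nothing new is needed there. For the middle square, whose top arrow is $\uIFrob_n\to\ucIFrob_n$ and whose bottom arrow is the identity of $\ucIFrob_n$, I would rerun the arguments of Lemmas \ref{lem:Xn qiso} and \ref{lem:Xni qiso} with $\ucIFrob_n$ replacing $\uIFrob_n$. So define $Y_n$ as the strict pushout of
\[
(\FreeP(\Delta,u,c,h),d)\leftarrow \FreeP(\Delta,u)\to \ucIFrob_n .
\]
The left leg is a cofibration between cofibrant objects. Hence, after replacing $\ucIFrob_n$ by the cofibrant resolution $\ucIFrob_{n,\infty}$ (the analogue $Y_{n,\infty}$ of $X_{n,\infty}$), this strict pushout computes the homotopy pushout, up to the comparison $Y_{n,\infty}\to Y_n$.

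For the quasi-isomorphism $Y_n\to\ucIFrob_n$ I would use the explicit description, exactly as in Lemma \ref{lem:Xn qiso}:
\[
Y_n(k,l)=\bigoplus_{i\ge0}\bigl(\Q a_{k,l,i}\oplus \Q b_{k,l,i}\bigr),\qquad d b_{k,l,i}=a_{k,l,i+1}-a_{k,l,i}.
\]
There is no longer any constraint $k+i\geq1$, since $\ucIFrob_n(r,s)$ is nonzero for all $r,s$. I would filter so that $a_{k,l,i}$ has degree $i$ and $b_{k,l,i}$ has degree $i+1$. The $E_0$ differential is then $b_{k,l,i}\mapsto a_{k,l,i+1}$, which leaves only $a_{k,l,0}$, and that element maps to the generator of $\ucIFrob_n(k,l)$. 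Here the case split $k=0$ versus $k>0$ from before disappears. For $Y_{n,\infty}\to Y_n$ I would copy Lemma \ref{lem:Xni qiso}: filter by the number of $h$-generators and apply Künneth together with $\ucIFrob_{n,\infty}\simeq\ucIFrob_n$. This shows the middle square is a homotopy pushout.

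Next I would paste the left and middle squares. The homotopy pushout along the outer composite $\FreeP(\Delta,u)\to\ucIFrob_n$ is therefore $\ucIFrob_n$ as well, and by the middle square it equals $(\FreeP(\Delta,u,c,h),d)\sqcup^h_{\FreeP(\Delta,u)}\ucIFrob_n$. For the right square, with top arrow $\ucIFrob_n\xrightarrow{=}\ucIFrob_n$ and both verticals the identity, there is nothing to prove as written. Presumably the intended right square is
\[
\begin{tikzcd}
\uIFrob_n \ar{r}\ar{d} & \ucIFrob_n\ar{d}{=}\\
\ucIFrob_n\ar{r}{=} & \ucIFrob_n
\end{tikzcd},
\]
and this follows from pasting. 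The left square is a homotopy pushout, and so is the composite of the left and middle squares. The pasting law (the converse direction) then gives that the right square is a homotopy pushout, namely $\ucIFrob_n\sqcup^h_{\uIFrob_n}\ucIFrob_n\simeq\ucIFrob_n$. That is exactly the statement that $\uIFrob_n\to\ucIFrob_n$ is a homotopy epimorphism.

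I expect the main obstacle to be the bookkeeping in the explicit description of $Y_n$. One must check that the composites of $\ucIFrob_n$-operations with copies of $u$, $c$ and $h$ collapse to the claimed one-dimensional pieces, using the relation that caps against a counit. One must also check that the $h$-factors occur at most once, because $h$ has arity $(1,0)$ and can only be plugged into $c$ or into inputs. I would handle this the way the earlier lemma did: note that in $Y_n$ the composite $c\circ(\text{anything})$ is already determined by $\ucIFrob_n$, so each element is an operation of $\ucIFrob_n$ with some inputs filled by $u$ or by $h$. Then I would verify the sign conventions in $db=a_{i+1}-a_i$ only up to the nonzero scalar that the spectral sequence needs.
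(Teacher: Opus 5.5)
Your proposal is correct and follows the paper's own argument: the left square is the preceding corollary, and rerunning Lemmas \ref{lem:Xn qiso} and \ref{lem:Xni qiso} with $\ucIFrob_n$ in place of $\uIFrob_n$ (the $k=0$ case split disappears) shows that the outer rectangle is a homotopy pushout. The converse direction of the pasting law then gives the right square. Two clarifications: the ``three squares'' are the left square, the right square and the outer rectangle, so your $Y_n$ computation handles the outer rectangle, and the diagram's right square is already the one you presumed was intended. Also, your last paragraph is off in one respect: in $Y_n$ the new $c$ is a free generator capping outputs and is not identified with the counit of $\ucIFrob_n$, which is exactly why the $a_{k,l,i}$ with $i\geq 1$ are distinct basis elements.
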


\subsection{Comparing $\uIFrob_{n,\infty}$- and $\ucIFrob_{n,\infty}$-algebras }
Let $V$ be a dg vector space, $\POp$ a properad and let $\alpha,\beta:\POp\to \End_V$ be properad morphisms, thus defining two $\POp$-algebra structures on $V$. We say that the $\POp$-algebras $(V,\alpha)$ and $(V,\beta)$ are isotopic if $\alpha$ and $\beta$ are in the same path component of the properadic mapping space 
\[
\Map(\POp,\End_V).
\]

Next, let $\POp=\uIFrob_{n,\infty}$ and $\alpha:\uIFrob_{n,\infty}\to \End_V$.
Then we would like to determine whether $(V,\alpha)$ can be extended to a $\ucIFrob_{n,\infty}$-algebra. More precisely, we have a morphism of properadic mapping spaces 
\[
\Map(\ucIFrob_{n,\infty},\End_V) \to \Map(\uIFrob_{n,\infty},\End_V),
\]
and an associated map on the connected components 
\begin{equation}\label{equ:pi0 map}
\pi_0\Map(\ucIFrob_{n,\infty},\End_V) \to \pi_0\Map(\uIFrob_{n,\infty},\End_V).
\end{equation}
and we would like to determine whether the component of $\alpha$ is in the image of the above morphism. In this case we say that $(V,\alpha)$ can be extended to a $\ucIFrob_{n,\infty}$-structure up to isotopy.
If so, then it is clear that $(V,\alpha)$ must be non-degenerate in the sense of Definition \ref{def:non-degenerate}. The converse also holds, as the following result shows.

\begin{prop}\label{prop:ucIF uIF fiber}
    Let $(V,\alpha)$ be a $\uIFrob_{n,\infty}$-algebra.
    Then the morphism of mapping spaces of properads 
    \begin{equation}\label{equ:map map}
    \Map(\ucIFrob_{n,\infty},\End_V) \to \Map(\uIFrob_{n,\infty},\End_V)
    \end{equation}
    is a fibration. The fiber over $\alpha$ is either empty or contractible. It is contractible iff $(V,\alpha)$ is non-degenerate in the sense of Definition \ref{def:non-degenerate}.
    In particular, $(V,\alpha)$ can be extended to a $\ucIFrob_{n,\infty}$-structure up to isotopy iff $(V,\alpha)$ is non-degenerate.
\end{prop}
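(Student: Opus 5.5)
The plan is to replace $\ucIFrob_{n,\infty}$ by the cofibrant model $X_{n,\infty}$ of \eqref{equ:Xni}, which is allowed by Lemma \ref{lem:Xni qiso}. Since $X_{n,\infty}$ and $\ucIFrob_{n,\infty}$ are both cofibrant and quasi-isomorphic compatibly with the maps from $\uIFrob_{n,\infty}$, the map \eqref{equ:map map} is weakly equivalent, over $\Map(\uIFrob_{n,\infty},\End_V)$, to the restriction
\[
\Map(X_{n,\infty},\End_V)\to \Map(\uIFrob_{n,\infty},\End_V).
\]
Mapping spaces of cofibrant properads into $\End_V$ (every object being fibrant) turn the pushout \eqref{equ:Xni} into a pullback. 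The left vertical arrow there is a cofibration, so restriction along it is a fibration. Hence the displayed map is the base change of
\[
\Map((\FreeP(\Delta,u,c,h),d),\End_V)\to \Map(\FreeP(\Delta,u),\End_V),
\]
and in particular a fibration. For $\ucIFrob_{n,\infty}$ itself, fibrancy of the restriction along the cofibration $\uIFrob_{n,\infty}\to\ucIFrob_{n,\infty}$ is likewise standard.

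By this pullback, the fiber over $\alpha$ equals the fiber of the free map over the pair $(\Delta,u)$ obtained by restricting $\alpha$. Concretely, using the simplicial frame $\End_V\otimes\Omega(\Delta^\bullet)$, a point of the fiber consists of
\begin{itemize}
\item a degree $-n$ cycle $c\in \Hom(V,\Q)\otimes\Omega(\Delta^\bullet)$, and
\item a degree $-1$ element $h\in V\otimes\Omega(\Delta^\bullet)$ with $dh=(\mathit{id}\otimes c)\Delta-u$.
\end{itemize}
This is the simplicial set of solutions to an affine linear equation. It is therefore the fiber over $u$ of the map of simplicial vector spaces (via Dold--Kan type identification) associated with the chain map
\[
\Phi\colon C:=\mathrm{cone}\Big(Z\Hom(V,\Q)[-n]\text{-type complex}\Big),\qquad (c,h)\mapsto (\mathit{id}\otimes c)\Delta - dh.
\]
More cleanly, set $\Psi\colon \Hom(V,\Q)[-n]\to V$, $c\mapsto(\mathit{id}\otimes c)\Delta$, which is a chain map since $d\Delta=0$. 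The fiber is then the homotopy fiber of $\Psi$ over the cycle $u$, i.e.\ the space of pairs ($c$, homotopy from $\Psi(c)$ to $u$).

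A homotopy fiber of a chain map between complexes over a field is empty unless $[u]\in\operatorname{im}H(\Psi)$. When it is nonempty, it is a torsor for the simplicial vector space of $\mathrm{cone}(\Psi)[-1]$, whose homotopy groups are $\ker H(\Psi)$ in degree $0$ and the cokernel and kernel of $H(\Psi)$ in the remaining degrees. Since $H(\Psi)$ is exactly the map $H^*\to H$ induced by the cohomological diagonal of $H=H(V)$, there are two cases.
\begin{itemize}
\item If $(V,\alpha)$ is non-degenerate, Lemma \ref{lem:non-degenerate} makes $H(\Psi)$ bijective. Then $\mathrm{cone}(\Psi)$ is acyclic and the fiber is nonempty and contractible.
\item If $(V,\alpha)$ is degenerate, condition (3) of Lemma \ref{lem:non-degenerate} fails, so $[u]\notin\operatorname{im}H(\Psi)$ and the fiber is empty.
\end{itemize}
Note that for $H(\Psi)$ the subtlety "nonempty but not contractible" cannot occur: nonemptiness is condition (3), which by the lemma is equivalent to (2), i.e.\ bijectivity. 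The final clause of the statement follows because a fibration with nonempty fiber over $\alpha$ hits the component of $\alpha$.

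The step I expect to be the main obstacle is making the identification of the homotopy fiber rigorous with infinite-dimensional $V$ and $\Omega(\Delta^\bullet)$-coefficients. This needs the pullback/pushout compatibility of properadic mapping spaces for the specific cofibration $\FreeP(\Delta,u)\to(\FreeP(\Delta,u,c,h),d)$, and control of $\Hom(V,\Q)$ dualities. I would first treat $V$ with $H(V)$ finite dimensional, which non-degeneracy forces anyway. In the degenerate case only the emptiness of $\pi_0$ is needed, and it is checked on cohomology.
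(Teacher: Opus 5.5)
Your proposal is correct. For the ``empty or contractible'' part it takes a genuinely different route from the paper. The following steps coincide with the paper:
\begin{itemize}
\item the fibration property;
\item the replacement of $\ucIFrob_{n,\infty}$ by $X_{n,\infty}$ via Lemma~\ref{lem:Xni qiso};
\item the pullback square obtained from \eqref{equ:Xni};
\item the nonemptiness criterion, namely a cocycle $c$ and an $h$ with $dh=(\mathit{id}\otimes c)\Delta_V-u_V$, i.e.\ condition (3) of Lemma~\ref{lem:non-degenerate}.
\end{itemize}

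The difference is how contractibility is obtained. The paper never computes the homotopy type of the fiber. It argues abstractly that $\uIFrob_n\to\ucIFrob_n$ is a homotopy epimorphism (Corollary~\ref{cor:htpy epi}, which needs the $\ucIFrob_n$-variant of Lemma~\ref{lem:Xn qiso}). Hence $\Map^h(-,\End_V)$ turns it into a homotopy monomorphism, whose homotopy fibers are automatically empty or contractible. The model $X_{n,\infty}$ is used there only to decide emptiness.

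You use $X_{n,\infty}$ for both questions. The fiber is the affine simplicial set of solutions $(c,h)$, so it is either empty or a torsor for $Z^0(K\otimes\Omega(\Delta^\bullet))$, with $K$ the cocone of $\Psi$. The implication (3)$\Rightarrow$(2) of Lemma~\ref{lem:non-degenerate} then shows that nonemptiness forces $K$ to be acyclic; the Frobenius identity makes $h\mapsto c(h\,\cdot\,-)$ a right inverse of $H(\Psi)$.

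What each route buys:
\begin{itemize}
\item Yours is more elementary and self-contained, and it describes the homotopy groups of the fiber in general.
\item The paper's is more conceptual: it gives a homotopy monomorphism for any target properad, not just $\End_V$. It is also the mechanism reused verbatim for morphisms in Proposition~\ref{prop:cmap}, via Corollary~\ref{cor:htpy epi 2}.
\end{itemize}

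Three minor points.
\begin{itemize}
\item $H^0(K)$ is in general an extension of a kernel of $H(\Psi)$ by a cokernel, not just a kernel. This is harmless, since only the acyclic case is used.
\item You leave implicit the converse in the last clause (extendable up to isotopy implies non-degenerate). It follows from path lifting along the fibration, or simply from isotopy invariance of $H(V)$.
\item The $\Omega(\Delta^\bullet)$-coefficient worry is not a real obstacle. Over $\Q$ an acyclic $K$ splits as a direct sum of two-term complexes $\Q\xrightarrow{\ \sim\ }\Q$. For each such piece, the degree-$0$ cycles of $(-)\otimes\Omega(\Delta^\bullet)$ form either zero or a copy of $\Omega^{q}(\Delta^\bullet)$ for some $q$, which is a contractible simplicial vector space. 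So no finiteness assumption on $V$ is needed.
\end{itemize}
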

\begin{proof}
    The map \eqref{equ:map map} is a fibration because it is induced by the cofibration $\uIFrob_{n,\infty}\to \ucIFrob_{n,\infty}$ and $\End_V$ is fibrant, as is any properad in the projective model structure on properads.
    Also note that for the same reason and since  $\ucIFrob_{n,\infty}$ is cofibrant the spaces $\Map(\ucIFrob_{n,\infty},\End_V)$ and $\Map(\uIFrob_{n,\infty},\End_V)$ are models for the derived mapping spaces $\Map^h(\ucIFrob_{n},\End_V)$ and $\Map^h(\uIFrob_{n},\End_V)$ in the category of dg properads.

    We next show that the fiber is either contractible or empty.
    In short, this follows from Corollary \ref{cor:htpy epi} and the fact that applying $\Map^h(-,\End_V)$ to a homotpy epimorphism produces a homotopy monomorphism.
    Spelled out, applying $\Map^h(-,\End_V)$ to the objects of the right-hand homotopy pushout square of Corollary \ref{cor:htpy epi} we obtain the homotopy pullback square
    \[
    \begin{tikzcd}
    \Map^h(\ucIFrob_n,\End_V) \ar{r} \ar{d} 
    & \Map^h(\ucIFrob_n,\End_V) \ar{d} \\
    \Map^h(\ucIFrob_n,\End_V)\ar{r} 
    &
    \Map^h(\uIFrob_n,\End_V)
    \end{tikzcd}.
    \]
    Hence the desired result follows.
    
    Finally, we have to check that the fiber is non-empty iff $(V,\alpha)$ is non-degenerate. To this end, consider first the diagram of properad morphisms
    \[
    \begin{tikzcd}
    X_{n,\infty} \ar{d}{\sim}  & \uIFrob_{n,\infty} \ar[hookrightarrow]{l} \ar[hookrightarrow]{ld}\\
    \ucIFrob_{n,\infty} & 
    \end{tikzcd}.
    \]
    The left-hand vertical arrow is a quasi-isomorphism by Lemma \ref{lem:Xni qiso}, all objects are cofibrant, and the two right-hand arrows are cofibrations. Hence applying $\Map(-,\End_V)$ to the above diagram we get the diagram of simplicial sets
    \[
    \begin{tikzcd}
    \Map(X_{n,\infty},\End_V) \ar[twoheadrightarrow]{r} & \Map(\uIFrob_{n,\infty},\End_V)  \\
    \Map(\ucIFrob_{n,\infty},\End_V) \ar{u}{\sim} \ar[twoheadrightarrow]{ur} & 
    \end{tikzcd}
    \]
    in which the left-hand arrow is still a weak equivalence of simplicial sets and the right-hand arrows are fibrations. 
    The fibers of both fibrations are hence weakly equivalent, and we my compute the fiber of the upper arrow instead.
    Now by applying $\Map(-,\End_V)$ to the pushout diagram \eqref{equ:Xni} defining $X_{n,\infty}$ we obtain a pullback diagram 
    \begin{equation}\label{equ:Xni 2}
    \begin{tikzcd}
    \Map(X_{n,\infty},\End_V) \ar[twoheadrightarrow]{r}\ar{d} 
    &
    \Map(\uIFrob_{n,\infty},\End_V)\ar{d}{\pi} \\
    \Map((\FreeP(\Delta, u, c, h) , d), \End_V) 
    \ar[twoheadrightarrow]{r}
    &
    \Map(\FreeP(\Delta, u),\End_V)
    \end{tikzcd}.
\end{equation}
In particular, given any point $\alpha\in \Map(\uIFrob_{n,\infty},\End_V)$, the fiber of the upper horizontal arrow over $\alpha$ is the same as the fiber over $\pi\alpha$ of the lower horizontal arrow.
We need to show that this fiber is non-empty iff $(V,\alpha)$ is non-degenerate.
To this end let $\Delta_V\in V\otimes V$ and $u_V\in V$ the diagonal and counit as given by $\pi\alpha$.
Then the fiber is nonempty if there is a closed element $c\in V^*$ of degree $-n$ and some $h_V\in V$ of degree -1 such that 
\[
dh_V = (\mathit{id}\otimes c)\Delta_V - u_V.
\]
But this is equivalent to the condition \eqref{equ:cohom counit} to hold on cohomology. Hence the fiber is indeed nonempty iff $(V,\alpha)$ is non-degenerate.

\end{proof}



\subsection{Mapping spaces}
We denote by $\cHLVCat$ the Hoffbeck-Leray-Vallette simplicial category of $\ucIFrob_{n,\infty}$-algebras. Concretely, the objects of $\cHLVCat$ are $\ucIFrob_{n,\infty}$-algebras, and the morphism simplicial sets $\Map((V,\alpha),(W,\beta))$ may be defined as the $\infty$-morphisms $V\otimes \Omega(\Delta^\bullet)\to W\otimes \Omega(\Delta^\bullet)$ after extending the ground ring to $\Omega(\Delta^\bullet)$, cf. the discussion in Section \ref{sec:HLV simpl cat}.
We have a natural forgetful (simplicial) functor. 
\begin{equation}\label{equ:cHLV functor}
\cHLVCat \to \HLVCat.
\end{equation}

Now consider two $\ucIFrob_{n,\infty}$-algebras $(V,\alpha)$ and $(W,\beta)$. We would like to compare the mapping space as $\ucIFrob_{n,\infty}$-algebras to the mapping space of the underlying $\uIFrob_{n,\infty}$-algebras. Before doing that, a few remarks are in order.
First, any $\infty$-morphism of $\ucIFrob_{n,\infty}$-algebras induces on cohomology a plain morphism of $\ucIFrob_n$-algebras. It is elementary that any such morphism is necessarily an isomorphism, see also Lemma \ref{lem:ifrob morph}.
Hence all morphisms in $\cHLVCat$ are in fact quasi-isomorphisms. This is not true in $\HLVCat$, even between objects that are $\ucIFrob_{n,\infty}$-algebras.

\begin{ex}
To give a counterexample, consider the "trivial" $\ucIFrob_0$-algebra $A=\Q$. Let $B\cong A\oplus A\cong \Q\oplus \Q$ the product of two copies of $A$ in the category of $\uIFrob_0$-algebras. The object $B$ is in fact a $\ucIFrob_0$-algebra. (Take as the counit the sum of the two counits of the constituents.)
We also have the two natural maps $B\to A$ of $\uIFrob_0$-algebras. However, these maps cannot be lifted to $\ucIFrob_0$-algebra maps, not even up to homotopy, because the underlying objects $A$ and $B$ are obviously not isomorphic.
\end{ex}

Thus the functor \eqref{equ:cHLV functor} cannot be expected to be full. However, the following result shows that it is at least fully faithful onto the invertible morphisms.

\begin{prop}\label{prop:cmap}
    Let $(V,\alpha)$ and $(W,\beta)$ be two $\ucIFrob_{n,\infty}$-algebras and let $\phi:V\to W$ be an $\infty$-morphism of $\uIFrob_{n,\infty}$-algebras. Then the fiber of 
    \[
    \Map_{\cHLVCat}((V,\alpha), (W,\beta))
    \to 
    \Map_{\HLVCat}((V,\alpha), (W,\beta))
    \]
    over $\phi$
    is either empty or contractible. It is contractible iff the linear part $\phi_1$ of $\phi$ induces an isomorphism on cohomology.
\end{prop}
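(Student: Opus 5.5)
The plan is to reduce the statement about mapping spaces to the statement about properadic mapping spaces, Proposition \ref{prop:ucIF uIF fiber}, applied to a properad governing \emph{pairs of algebras together with a morphism}. Concretely, for a properad $\POp$ let $\POp^{\to}$ denote the two-coloured properad whose algebras are two $\POp$-algebras and a $\POp$-algebra morphism between them. For $\POp$ cofibrant of the form $\Bar^c C$, a cofibrant model of $\POp^{\to}$ can be chosen whose algebras are triples consisting of two $\Bar^c C$-algebra structures and an $\infty$-morphism in the sense of the Hoffbeck--Leray--Vallette construction (cf.\ \cite{HoffbeckLerayVallette}). Then $\Map_{\HLVCat}((V,\alpha),(W,\beta))$ is the fiber over $(\alpha,\beta)$ of
\[
\Map(\uIFrob_{n,\infty}^{\to},\End_{V,W}) \to \Map(\uIFrob_{n,\infty},\End_V)\times\Map(\uIFrob_{n,\infty},\End_W),
\]
and similarly for $\ucIFrob_{n,\infty}$. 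Taking fibers over $(\alpha,\beta)$ in the square comparing the counital and unital versions, the fiber in question is identified with the fiber of
\[
\Map(\ucIFrob^{\to}_{n,\infty},\End_{V,W}) \to \Map(\uIFrob^{\to}_{n,\infty},\End_{V,W})\times_{\Map(\uIFrob_{n,\infty},\End_V\times \End_W)}\Map(\ucIFrob_{n,\infty},\End_V\times\End_W).
\]

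The key homotopical input is then a two-coloured analogue of Corollary \ref{cor:htpy epi}. First, the map $\uIFrob_n^{\to}\to\ucIFrob_n^{\to}$ should be a homotopy epimorphism. Second, the square relating $\uIFrob_n^{\to}$ to the pushout $\uIFrob_n^{\to}\sqcup_{\uIFrob_n\sqcup\uIFrob_n}(\ucIFrob_n\sqcup\ucIFrob_n)$ should be understood via the presentation with the extra generators $c,h$. I would mimic Section \ref{sec:Xn}. Adjoining counits $c_V,c_W$ with homotopies $h_V,h_W$ to the source and target colours yields, by Lemmas \ref{lem:Xn qiso} and \ref{lem:Xni qiso}, counital algebras at each end. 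The remaining datum is the extra relation $c_W\circ f\simeq c_V$, encoded by a further degree $-n-1$ generator $k$ with $dk=c_W\circ f-c_V$ (plus higher components along the $\infty$-morphism). The claim to prove is that the resulting properad $X_{n,\infty}^{\to}$ is quasi-isomorphic to $\ucIFrob^{\to}_{n,\infty}$. The proof would follow the filtration argument of Lemma \ref{lem:Xn qiso}: compositions are indexed by the number of $u$'s, $h$'s and $k$'s, and the associated graded differential has an obvious contracting homotopy.

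Granting this, the fiber over $\phi$ is the space of pairs $(k,\text{higher data})$ solving $dk=c'\circ\phi-c$ up to coherent homotopy. Because the adjoined generators are free with a linear differential, this space is either empty or contractible, as in the end of the proof of Proposition \ref{prop:ucIF uIF fiber}. It is nonempty iff $c'\circ\phi_1=c$ on cohomology, i.e.\ iff $H(\phi_1)$ is a morphism of $\ucIFrob_n$-algebras $H(V)\to H(W)$. By Lemma \ref{lem:ifrob morph}, applied to the strict $\uIFrob_n$-morphism $H(\phi_1)$ between the non-degenerate algebras $H(V),H(W)$, this holds iff $H(\phi_1)$ is an isomorphism. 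In particular, if $H(\phi_1)$ is an isomorphism, condition (3) of that lemma is satisfied with $\lambda=c\circ H(\phi_1)^{-1}$. Here uniqueness of the counit compatible with the diagonal, as in Lemma \ref{lem:non-degenerate}, ensures this $\lambda$ agrees with $c'$.

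The main obstacle is the second step: identifying the mapping spaces in $\HLVCat$ with derived mapping spaces of a coloured properad, and proving the homotopy pushout statement there. This step is delicate because $\infty$-morphisms have components of all arities that interact with the counit. If this route becomes unwieldy, I would work directly with the curved $L_\infty$-algebras $\fh$. The idea is to filter $\fh^{\mathrm{counital}}_{(V,\alpha),(W,\beta)}\to\fh_{(V,\alpha),(W,\beta)}$ by the number of non-univalent vertices, and to show that the kernel (graphs involving counit vertices) has associated graded cohomology computed by the pairing $c'\phi_1-c$. That cohomology is acyclic once this pairing class vanishes, after which Goldman--Millson gives contractibility of the fiber.
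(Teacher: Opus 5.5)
You have the right architecture: two-coloured properads for a pair of algebras with an $\infty$-morphism, a homotopy-epimorphism statement, an explicit cell model, and Lemma \ref{lem:ifrob morph} for non-emptiness. The gap is in the explicit model you propose, and in how you derive ``empty or contractible'' from it. Adjoining $c_V,h_V,c_W,h_W$ and a single $k$ with $dk=c_Wf-c_V$ over-parametrizes. You have two independent witnesses $h_V,h_W$ but no higher cell relating them.

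Concretely, work already over the strict $\uIFrob_n^{\to}$; the $\infty$-version is quasi-isomorphic to it by the argument of Lemma \ref{lem:Xni qiso}. Make the triangular change of generators
\[
e:=c_Wf-c_V,\qquad h:=h_V\pm(\mathit{id}\otimes k)\Delta_V,\qquad z:=h_W-f\circ h,
\]
with the sign chosen so that $dh=(\mathit{id}\otimes c_Wf)\Delta_V-u_V$. Since $fu_V=u_W$ and $(f\otimes f)\Delta_V=\Delta_W$, you get $dk=e$ and $dz=0$. So your properad is the coproduct of three pieces: $\uIFrob_n^{\to}$ with $(c_W,h)$ adjoined; an acyclic free piece on $(e,k)$; and a free properad on a \emph{closed} generator $z$ of degree $-1$ with one output in the target colour. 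That last piece carries spurious homology, so $X^{\to}_{n,\infty}\not\simeq\ucIFrob^{\to}_{n,\infty}$.

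You can see the damage directly in your fiber. It is the space of $k$ with $dk=c_W\phi_1-c_V$, a torsor under the simplicial vector space of degree $-n-1$ cocycles of $V^*$, with $\pi_i=H^{-n-1-i}(V^*)$. Take the strict $\ucIFrob_n$-algebra $V=\Q1\oplus\Q a\oplus\Q b\oplus\Q t$ with $|a|=-1$, $|b|=n+1$, $|t|=n$, $ab=t$, zero differential, and $\phi=\mathit{id}$. Then the fiber is nonempty with $\pi_0\cong\Q$, contradicting the statement. So ``free generators with linear differential, hence empty or contractible'' is not a valid principle: freeness only gives a torsor. In the proof of Proposition \ref{prop:ucIF uIF fiber}, contractibility comes from the homotopy-epimorphism argument, and the explicit generators are used only to decide non-emptiness.

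The fix is to adjoin less: only a counit $c'$ in the target colour and one homotopy $h$ in the source colour, with $dh=(\mathit{id}\otimes c'f)\Delta_V-u_V$. This is exactly the first factor above, and it does what you want:
\begin{enumerate}
\item $c'f$ serves as the source counit.
\item $d(f\circ h)=(\mathit{id}\otimes c')\Delta_W-u_W$ shows $c'$ is a counit of $W$ up to homotopy.
\item Compatibility of $f$ with the counits holds by construction.
\end{enumerate}
The filtration argument of Lemma \ref{lem:Xn qiso} then shows this model is quasi-isomorphic to $\ucIFrob_n^{\to}$. Running the same construction over $\ucIFrob_n^{\to}$ instead of $\uIFrob_n^{\to}$ gives that $\uIFrob_n^{\to}\to\ucIFrob_n^{\to}$ is a homotopy epimorphism, hence ``empty or contractible''.

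The fibers over the end-point structures are contractible by Proposition \ref{prop:ucIF uIF fiber}, so you may let the counits vary rather than fix them as in your first reduction. The fiber of the model over $\phi$ is then nonempty iff there is a cocycle $c'$ with $(\mathit{id}\otimes c'\phi_1)\Delta_V$ cohomologous to $u_V$. From there your appeal to Lemma \ref{lem:ifrob morph} correctly finishes the argument.
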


We may hence summarize the findings of this section as follows.

\begin{cor}\label{cor:uc equiv}
The forgetful functor $\cHLVCat \to \HLVCat$ identifies $\cHLVCat$ with the groupoid core of the full subcategory of non-degenerate unital involutive Frobenius algebras, that is, we obtain an equivalence of $\infty$-categories ($\infty$-groupoids)
\[
\cHLVCat \to \HLVCat^{\text{non-deg}, \simeq}.
\]
\end{cor}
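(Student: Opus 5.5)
We deduce Corollary \ref{cor:uc equiv} from Proposition \ref{prop:ucIF uIF fiber} (for objects) and Proposition \ref{prop:cmap} (for mapping spaces), using the standard criterion that a simplicial functor is an equivalence of $\infty$-categories iff it is essentially surjective up to equivalence and induces weak equivalences on all mapping spaces.

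\emph{The functor lands in the right place.} An object of $\cHLVCat$ has a $\ucIFrob_n$-structure on its cohomology. By Lemma \ref{lem:non-degenerate}, (1)$\Rightarrow$(2), its underlying $\uIFrob_{n,\infty}$-algebra is non-degenerate. By the remark before the example (via Lemma \ref{lem:ifrob morph}), every $\infty$-morphism in $\cHLVCat$ is a quasi-isomorphism. So its image in $\HLVCat$ is invertible up to homotopy. Hence \eqref{equ:cHLV functor} factors through the core $\HLVCat^{\text{non-deg},\simeq}$. Here the core is the simplicial subcategory of non-degenerate objects, with mapping spaces the union of those path components whose linear part $\phi_1$ is a quasi-isomorphism.

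\emph{Essential surjectivity.} Let $(V,\alpha)$ be non-degenerate. By Proposition \ref{prop:ucIF uIF fiber}, the fiber of $\Map(\ucIFrob_{n,\infty},\End_V)\to\Map(\uIFrob_{n,\infty},\End_V)$ over $\alpha$ is non-empty. So there is a $\ucIFrob_{n,\infty}$-structure $\tilde\alpha$ on $V$ whose restriction is $\alpha$. Strictly, the fibration only gives a point in the fiber, so the restriction equals $\alpha$ on the nose. If one only argues up to isotopy instead, one still needs that isotopic structures are $\infty$-isomorphic in $\HLVCat$. This holds because a path in the properadic mapping space gives a Maurer--Cartan element over $\Omega(\Delta^1)$, and evaluating it yields an $\infty$-isotopy.

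\emph{Mapping spaces.} Fix $(V,\alpha),(W,\beta)$ in $\cHLVCat$. Proposition \ref{prop:cmap} says that the fibers of
\[
\Map_{\cHLVCat}\to\Map_{\HLVCat}
\]
over points $\phi$ are contractible when $\phi_1$ is a quasi-isomorphism and empty otherwise. We also need this map to be a Kan fibration. It is induced by a surjection of complete curved $L_\infty$-algebras $\fh^{uc}\to\fh$, given by restriction along $\Bar\uIFrob_n\subset\Bar\ucIFrob_n$, and $\MC_\bullet$ sends filtered surjections to fibrations. Given that, a Kan fibration whose fibers are contractible over a union of path components and empty elsewhere is a weak equivalence onto that union of components. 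Therefore the induced map to the core mapping spaces is a weak equivalence. Both sides are $\infty$-groupoids, because all morphisms in $\cHLVCat$ are quasi-isomorphisms, which completes the argument.

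The main obstacle is Proposition \ref{prop:cmap} itself, if it is not yet established. We would prove it by the same homotopy-epimorphism strategy as Proposition \ref{prop:ucIF uIF fiber}, applied to the properad governing pairs of algebras with an $\infty$-morphism between them, that is, the two-coloured version of $\uIFrob_{n,\infty}$ with a morphism generator. Pushing out along the counit generators $c,h$ (now one pair for each colour) gives a homotopy epimorphism. The compatibility condition $c'\phi_1\simeq c$ is then solvable, and uniquely so up to contractible choice, exactly when $\phi_1$ is invertible on cohomology, by Lemma \ref{lem:ifrob morph}, (2)$\Leftrightarrow$(1).
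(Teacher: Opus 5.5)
Your argument is correct and is how the paper obtains the corollary. The paper states it as a summary of Proposition \ref{prop:ucIF uIF fiber} (essential surjectivity onto the non-degenerate objects) and Proposition \ref{prop:cmap} (fibers over $\phi$ are contractible iff $\phi_1$ is a quasi-isomorphism, and empty otherwise), and your points on the Kan fibration property and on identifying the core with those components only make explicit what the paper leaves implicit. In your aside sketching Proposition \ref{prop:cmap}, note that adjoining independent counit pairs on both colours would not by itself model counital $\infty$-morphisms; the paper instead adjoins only a target counit $c'$ and a source homotopy $h$ with $dh=(\mathit{id}\otimes c'f)\Delta-u$, which builds the compatibility $c'f\simeq c$ directly into the model.
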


\newcommand{\bba}{{\bullet\rightsquigarrow\bullet}}
\newcommand{\bbu}{{\bullet\to\bullet}}
\newcommand{\bbe}{{\bullet\,\bullet}}

\subsection{Proof of Proposition \ref{prop:cmap}}
For the proof we will use a different characterisation of the mapping spaces in $\HLVCat$ and $\cHLVCat$ due to \cite{HoffbeckLerayVallette}.
Let $\uIFrob_{n,\infty}^\bbe$ be the two-colored properad governing a pair of $\uIFrob_{n,\infty}$-algebras.
Let $\uIFrob_{n,\infty}^\bba$ be the two-colored properad governing a pair of $\uIFrob_{n,\infty}$-algebras and an $\infty$-morphism between them.
Similarly, define $\ucIFrob_{n,\infty}^\bbe$ and $\ucIFrob_{n,\infty}^\bba$.
Let $\End_{V,W}$ be the two colored endomorphism properad. (See \cite[Section 1]{HoffbeckLerayVallette} for more details.)
Then the mapping spaces we consider fit into the (horizontal) fiber sequences:
\[
\begin{tikzcd}
    F \ar{r}{\simeq} \ar{d}& F'\ar{r}\ar{d} & * \ar{d} \\
    \Map_{\cHLVCat}((V,\alpha), (W,\beta))
    \ar{r}\ar{d}
    & 
    \Map_{P}(\ucIFrob_{n,\infty}^\bba,\End_{V,W})
    \ar{d}\ar{r}
    & 
    \Map_{P}(\ucIFrob_{n,\infty}^\bbe,\End_{V,W})
    \ar{d}
    \\
    \Map_{\HLVCat}((V,\alpha), (W,\beta))
    \ar{r}
    &
    \Map_{P}(\uIFrob_{n,\infty}^\bba,\End_{V,W})
    \ar{r}
    & 
    \Map_{P}(\uIFrob_{n,\infty}^\bbe,\End_{V,W})
\end{tikzcd}.
\]
Our goal is to compute the vertical fiber $F$ and show that it is either empty or a contractible. The right-most vertical fiber is contractible by Proposition \ref{prop:ucIF uIF fiber}. Hence we may equivalently compute the middle vertical fiber $F'$, which is weakly equivalent to our target $F$.

To compute this middle fiber in turn we will use a specific model of the cofibration $\uIFrob_{n,\infty}^\bba\to \ucIFrob_{n,\infty}^\bba$, constructed akin to the morphism $\uIFrob_{n,\infty}\to X_{n,\infty}$ of Section \ref{sec:Xn} above.
To this end, consider the following diagram:
\begin{equation}\label{equ:bigsquig}
\begin{tikzcd}
    \FreeP(\Delta,u,f)\ar[hookrightarrow]{d} \ar{r}
    &
    \uIFrob_{n,\infty}^\bba 
    \ar{r}{\sim}
    \ar[hookrightarrow]{d}
    \ar[bend left]{dd}
    &
    \uIFrob_{n}^\bbu
    \ar{d}
    \ar[bend left]{dd}
    \\
    (\FreeP(\Delta,u,f,c',h), d) \ar{r}
    \ar{dr}
    &
    X_{n,\infty}^\bba 
    \ar{r}{\blacklozenge}
    \ar[dashed]{d}
    &
    X_{n}^\bbu
        \ar[dashed]{d}
    \\
    & \ucIFrob_{n,\infty}^\bba
    \ar{r}{\sim}
    &
    \ucIFrob_{n}^\bbu
\end{tikzcd}.
\end{equation}
Here the free (quasi-)objects on the left are 2-colored properads. The generators are as follows:
\begin{itemize}
    \item The generator $\Delta$ (the diagonal) is $(-1)^n$-symmetric of degree $n$ in arity $(2,0;0,0)$, i.e., it has two outputs of color 1.
    \item The generator $u$ (the unit) is of arity $(1,0;0,0)$ and degree 0.
    \item The degree zero generator $f$ (the morphism) is of arity $(0,1;1,0)$, i.e., it has one input of color 1 and an output of color 2.
    \item The element $c'$ is the counit in the second color of arity $(0,0;0,1)$ and degree $-n$.
    \item The homotopy $h$ has arity $(1,0;0,0)$ and degree -1. 
\end{itemize}
The differential is defined as 
\begin{align*}
    d\Delta &= du = df = dc' =0 \\
    dh &= (\mathit{id}\otimes c'f) \Delta - u.
\end{align*}
Now return to the diagram \eqref{equ:bigsquig}.
The object $\uIFrob_{n}^\bbu$ is the 2-colored properad governing a pair of $\uIFrob_{n}$-algebras and a strict morphism between them, and likewise for $\ucIFrob_{n}^\bbu$.
The objects $X_{n,\infty}^\bba$ and $X_{n}^\bbu$ are defined as pushouts in their respective squares. The dashed arrows are given by the universal property of the pushout.
The arrows from the quasi-freee objects are by sending the generators to the like-named generators of $\uIFrob_{n,\infty}^\bba$ and $\uIFrob_{n,\infty}^\bba$ respectively.

\begin{lemma}\label{lem:xni s qiso}
    The dashed arrows in \eqref{equ:bigsquig} are quasi-isomorphisms.
\end{lemma}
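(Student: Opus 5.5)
The plan mirrors Lemmas \ref{lem:Xn qiso} and \ref{lem:Xni qiso}. I first treat the right-hand dashed arrow $X_n^\bbu\to\ucIFrob_n^\bbu$ by an explicit computation, and then deduce the left-hand one from the two-out-of-three property. For the left-hand one I use the commutative square formed by $X_{n,\infty}^\bba\xrightarrow{\blacklozenge}X_n^\bbu$, the two dashed arrows, and the bottom quasi-isomorphism $\ucIFrob_{n,\infty}^\bba\to\ucIFrob_n^\bbu$. So it suffices to show that the right dashed arrow and $\blacklozenge$ are quasi-isomorphisms.

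\textbf{Right dashed arrow.} I describe $X_n^\bbu$ colorwise. Operations with only color-1 inputs and outputs are exactly as in $\uIFrob_n$, since $c'$ and $h$ cannot be used without passing through $f$ to color 2. Operations with outputs in color 2 (and inputs in both colors) are spanned, as in $X_n$, by compositions of a single connected $\uIFrob_n$-generator, pushed through $f$ at each output and at each color-2 input, with $i$ extra outputs capped by $c'f$ and at most one $h$ inserted. Here I use that in $\uIFrob_n^\bbu$ a strict morphism lets all $f$'s be moved to the outputs, so the underlying graded space is again of the form $\bigoplus_i(\Q a_{i}\oplus\Q b_{i})$ in each multi-arity. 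The differential is $db_i=a_{i+1}-a_i$, since capping with $c'f$ after the diagonal and subtracting the unit is exactly $dh$. I then filter as in Lemma \ref{lem:Xn qiso}. The associated graded is contractible except in the lowest filtration degree. If there is at least one output, the surviving class is $a_0$. If there are no outputs, the surviving class is $a_1$, which is the counit composed with $f$ and the product in color 1; this is precisely the operation required in $\ucIFrob_n^\bbu(0,\ldots)$. The map sends each $a_i$ to the generator, so it is a quasi-isomorphism.

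\textbf{The map $\blacklozenge$.} As in Lemma \ref{lem:Xni qiso}, both sides are pushouts along the same cofibration out of $\FreeP(\Delta,u,f)$, and the top map $\uIFrob_{n,\infty}^\bba\to\uIFrob_n^\bbu$ is a quasi-isomorphism (this uses \cite{HoffbeckLerayVallette}). I filter both sides by the number of $h$-generators, and for good measure also by the number of $c'$-generators. On the associated graded, the pieces are free compositions of the top objects with copies of $c'$, $h$, and $f$, which form quasi-free attachments. The map is therefore a quasi-isomorphism by the Künneth formula, and completeness and exhaustiveness of the filtration give the result.

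\textbf{Main obstacle.} I expect the main difficulty to be the explicit description of $X_n^\bbu$. One has to check that the relation $f(xy)=f(x)f(y)$, together with the diagonal compatibility in $\uIFrob_n^\bbu$, really reduces every composite to the normal form above, with one-dimensional pieces. One also has to check that $h$, which lives in color 1, interacts correctly with the morphism $f$, so that two different placements of $c'f$ do not give independent classes. If this normal form is awkward to establish, my fallback is to argue abstractly. I would view $X^\bbu_n$ as a pushout along a cofibration of the homotopy pushout square of Corollary \ref{cor:htpy epi}, applied to the second color, and then invoke the left properness of dg properads in characteristic zero.
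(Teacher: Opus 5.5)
Your strategy is the paper's. You get $\blacklozenge$ by copying the $h$-filtration argument of Lemma \ref{lem:Xni qiso}. You get the right-hand dashed arrow by the explicit computation of Lemma \ref{lem:Xn qiso}, with one-dimensional pieces $\Q a_i\oplus\Q b_i$ and $db_i=a_{i+1}-a_i$. You get the left-hand dashed arrow by two-out-of-three in the lower right square of \eqref{equ:bigsquig}.

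The normal form you list as the main obstacle needs no manipulation of relations. Since $c'$ and $h$ each have a single leg, every element of the pushout is one element of $\uIFrob_n^\bbu(k,l+j;p+i,q)$, with $i$ color-2 outputs closed by $c'$ and $j$ color-1 inputs filled by $h$. That component is at most one-dimensional, and it is zero if $k>0$ and $q>0$. The coinvariants then kill $j\geq 2$ and make the $i$ caps one-dimensional. This is exactly how the paper writes $X_n^\bbu$.

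One sentence of your description is false and must be fixed: the part of $X_n^\bbu$ with only color-1 legs is not $\uIFrob_n$.
\begin{itemize}
\item The generator $h$ has a color-1 output, so it plugs directly into color-1 inputs.
\item The composite $c'f$ has arity $(0,1;0,0)$; its passage through color 2 is internal.
\end{itemize}
So this part is a copy of $X_n$ with $c'f$ playing the role of $c$, namely the case $p=q=0$ of the general decomposition.

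This is not cosmetic. In arity $(0,l;0,0)$ the target $\ucIFrob_n(0,l)$ is one-dimensional while $\uIFrob_n(0,l)=0$, so with your stated description the lemma would fail there. Your later identification of the surviving class $a_1$ as ``the counit composed with $f$ and the product in color 1'' is precisely this arity, and it already uses the correct description. So treat all arities uniformly.

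Two smaller corrections:
\begin{itemize}
\item For $q>0$ with no outputs, the surviving class is $c'$ applied to the color-2 product of the color-2 inputs with $f$ of the color-1 inputs, not a color-1 product.
\item The abstract fallback via left properness is unnecessary, and it would itself need justification for two-colored dg properads.
\end{itemize}
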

\begin{proof}
    By copying the proof of Lemma \ref{lem:Xni qiso} one readily sees that the arrow labeled "$\blacklozenge$" in \eqref{equ:bigsquig} is a quasi-isomorphism.
    Hence to show the lemma it suffices to show that the right-hand dashed arrow $X_n^{\bbu}\to \ucIFrob_n^{\bba}$ is a quasi-isomorphism.
    To this extend we will closely follow the proof of the analogous Lemma \ref{lem:Xn qiso} and compute the cohomology of $X_n^{\bbu}$.
    We have, assuming $k=0$ if $q>0$,
    \begin{align*}
    X_n^{\bbu}(k,l;p,q)
    &=
    \bigoplus_{i,j\geq 0 \atop {k+p+i\geq 1 \atop q>0\Rightarrow k=0} }
    X_n^{\bbu}(k,l+j;p+i,q)
    \otimes_{S_i\times S_j} \Q[n]^{\otimes i}\otimes \Q[1]^{\otimes j}
    \\
    &=
    \bigoplus_{i\geq 0 \atop k+p+i\geq 1 }
    \left(
    \underbrace{X_n^{\bbu}(k,l;p+i,q)
    \otimes_{S_i}\Q[n]^{\otimes i}
    }_{=\Q a_{k,l,p,q,i}}
    \oplus 
    \underbrace{X_n^{\bbu}(k,l+1;p+i,q)
    \otimes_{S_i} 
    \Q[n]^{\otimes i}\otimes \Q[1]
    }_{=\Q b_{k,l,p,q,i}}
    \right).
    \end{align*}
    If $k>0$ and $q>0$ then the complex is identically zero.
    The differential is, on the indicated basis elements 
    \begin{align*}
    d a_{k,l,p,q,i} &= 0 
    \\
    d b_{k,l,p,q,i} &= a_{k,l,p,q,i+1}-a_{k,l,p,q,i}. 
    \end{align*}
    We may compute the cohomology as in the proof of Lemma \ref{lem:Xn qiso} and find that $H(X_n^{\bbu}) \cong \ucIFrob_n^{\bba}$, with the cohomology represented by the elements $a_{k,l,p,q,0}$ (for $k+p\geq 1$) and $a_{0,l,0,q,1}$ (for $k=p=0$).
    The morphism $X_n^{\bbu}\to \ucIFrob_n^{\bbu}$ maps these generators to the generators of $\ucIFrob_n^{\bbu}$, so that the morphism is a quasi-isomorphism and the lemma is shown.
\end{proof}

The argument of the preceding proof also works if we replace the role of $\uIFrob_{n}^\bbu$ by $\ucIFrob_{n}^\bbu$ and analogously to Corollary \ref{cor:htpy epi} we obtain:

\begin{cor}\label{cor:htpy epi 2}
    All three squares in the diagram 
    \[
\begin{tikzcd}
    \FreeP(\Delta,u,f) \ar[d] \ar[r] &  \uIFrob_n^\bbu \ar[r] \ar[d] & \ucIFrob_n^\bbu \ar[d] \\
    (\FreeP(\Delta,u,f,c',h),d) \ar[r] & \ucIFrob_n^\bbu \ar[r] & \ucIFrob_n^\bbu
\end{tikzcd}
\]
are homotopy pushout squares in the category of two-colored dg properads.
In particular, the inclusion $\uIFrob_n^\bbu\to \ucIFrob_n^\bbu$ is a homotopy epimorphism.
\end{cor}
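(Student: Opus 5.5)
The plan is to copy the proof of Corollary \ref{cor:htpy epi} word for word in the two-colored setting, with Lemma \ref{lem:xni s qiso} and its variant playing the roles of Lemmas \ref{lem:Xn qiso} and \ref{lem:Xni qiso}.

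\textbf{Left square.} The left vertical arrow $\FreeP(\Delta,u,f)\to(\FreeP(\Delta,u,f,c',h),d)$ is a cofibration of two-colored dg properads. It is the inclusion of a free properad into a quasi-free one, obtained by adjoining $c'$ with $dc'=0$ and then $h$ with $dh$ lying in the subproperad already generated. Hence the honest pushout of this square computes the homotopy pushout. By definition that pushout is $X_n^\bbu$, so it suffices to show that the induced map $X_n^\bbu\to\ucIFrob_n^\bbu$ is a quasi-isomorphism. This is exactly the right-hand dashed arrow of \eqref{equ:bigsquig}, which is a quasi-isomorphism by Lemma \ref{lem:xni s qiso}.

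\textbf{Outer rectangle.} Next I treat the rectangle obtained by composing the two squares, i.e.\ the pushout of $(\FreeP(\Delta,u,f,c',h),d)\leftarrow\FreeP(\Delta,u,f)\to\ucIFrob_n^\bbu$. I would redo the computation in the proof of Lemma \ref{lem:xni s qiso} with $\ucIFrob_n^\bbu$ in place of $\uIFrob_n^\bbu$. As a graded object the pushout again splits as
\[
\bigoplus_i \bigl(\Q a_{k,l,p,q,i}\oplus \Q b_{k,l,p,q,i}\bigr),
\]
now for all $i\geq 0$, with no constraint $k+p+i\geq 1$ since $\ucIFrob_n^\bbu$ has operations in all arities. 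The differential is again $db_{\dots,i}=a_{\dots,i+1}-a_{\dots,i}$. The same filtration argument as in Lemma \ref{lem:Xn qiso} shows the cohomology is spanned by the classes $a_{\dots,0}$, with no case distinction needed. These map to the generators, so the comparison map to $\ucIFrob_n^\bbu$ is a quasi-isomorphism. Thus the outer rectangle is a homotopy pushout.

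\textbf{Right square.} The right square now follows from the pasting lemma for homotopy pushouts: if the left square and the composite rectangle are homotopy pushouts, so is the right square. That the right square is a homotopy pushout is precisely the definition of $\uIFrob_n^\bbu\to\ucIFrob_n^\bbu$ being a homotopy epimorphism.

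\textbf{Main obstacle.} The main point needing care is the arity bookkeeping in the outer-rectangle computation. One has to check that in the two-colored properad $\ucIFrob_n^\bbu$, operations with outputs in color $1$ and inputs in color $2$ vanish, matching the constraint $q>0\Rightarrow k=0$. One also has to check that gluing the counit $c'$ through $f$ produces exactly the one-dimensional pieces $a,b$ with the stated differential. I would verify this directly on the basis, as in Lemma \ref{lem:xni s qiso}, and check that signs do not obstruct the telescoping differential.
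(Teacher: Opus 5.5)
Your three-step structure is what the paper intends: the left square from Lemma \ref{lem:xni s qiso}, the outer rectangle by redoing that computation with $\ucIFrob_n^\bbu$ in place of $\uIFrob_n^\bbu$, and then the pasting lemma. There is, however, a gap in how you pass from strict pushouts to homotopy pushouts.

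\textbf{The gap.} You argue that since $\FreeP(\Delta,u,f)\to(\FreeP(\Delta,u,f,c',h),d)$ is a cofibration, the strict pushout computes the homotopy pushout. That principle holds only in a left proper model category, or when the other corners are cofibrant. Neither is available here. The corners $\uIFrob_n^\bbu$ and $\ucIFrob_n^\bbu$ are not cofibrant, and left properness of two-colored dg properads is neither proved nor cited in the paper.

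This is exactly why the paper works with the cofibrant $\uIFrob_{n,\infty}^\bba$. It forms the pushout $X_{n,\infty}^\bba$, which is a genuine homotopy pushout because the corners are cofibrant and one leg is a cofibration. It then proves that the arrow $\blacklozenge\colon X_{n,\infty}^\bba\to X_n^\bbu$ is a quasi-isomorphism, by filtering by the number of $h$'s and applying K\"unneth as in Lemma \ref{lem:Xni qiso}.

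\textbf{How to repair it.} For the left square, cite the \emph{left} dashed arrow of Lemma \ref{lem:xni s qiso}, which gives $X_{n,\infty}^\bba\simeq\ucIFrob_{n,\infty}^\bba\simeq\ucIFrob_n^\bbu$. For the outer rectangle you must supply the analogous comparison yourself. Form the pushout with $\ucIFrob_{n,\infty}^\bba$ in place of $\ucIFrob_n^\bbu$. Then show that its map to your strict pushout is a quasi-isomorphism, using the filtration by the number of $h$'s. Only after that does your telescoping computation identify the homotopy pushout.

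\textbf{A separate error in your ``main obstacle'' paragraph.} The vanishing you propose to check is false for $\ucIFrob_n^\bbu$. The composite $(\mathit{id}\otimes c_W m_W(f\otimes\mathit{id}))\Delta_V$ is a nonzero operation with one color-$2$ input and one color-$1$ output. Using the zig-zag identity and $c_W f=c_V$, it is in fact $f^{-1}$. Consequently every arity $(k,l;p,q)$ of $\ucIFrob_n^\bbu$ is one-dimensional, identified with $\ucIFrob_n(k+p,l+q)$. This is consistent with your other (correct) remark that there is no arity constraint in the counital case. The telescoping $db_i=a_{i+1}-a_i$ does not use the vanishing, so the computation survives, but the verification as you stated it would fail.
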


We continue with our proof of Proposition \ref{prop:cmap}. We first show that the fiber $F'$ is either contractible or empty. 
To this end we use that our mapping spaces are homotopy mapping spaces, so that the fiber is the same as the homotopy fiber of the map of derived mapping spaces $\Map^h(\ucIFrob_n^\bbu,\End_{V,W})\to \Map^h(\ucIFrob_n^\bbu,\End_{V,W})$, where the derived mapping spaces are taken in the category of 2-colored dg properads. But the map $\uIFrob_n^\bbu\to \ucIFrob_n^\bbu$ is a homotopy epimorphism by Corollary \ref{cor:htpy epi 2}, and applying $\Map^h(-,\End_{V,W})$ to a homotopy epimorphism produces a homotopy monomorphism (cf. the proof of Proposition \ref{prop:ucIF uIF fiber} above), hence we conclude that the fiber $F'$ is indeed either contractible or empty.

It remains to check the condition for emptyness of the fiber in Proposition \ref{prop:cmap}. From our left-hand pushout square in \eqref{equ:bigsquig} we obtain the pullback square
\begin{equation}\label{equ:bigsquig 2}
\begin{tikzcd}
    F'' \ar{r}
    \ar{d}{=} 
    & 
    \Map_P(X_{n,\infty}^\bba,\End_{V,W}) 
    \ar[twoheadrightarrow]{r}
    \ar{d}
    & 
    \Map_P(\uIFrob_{n,\infty}^\bba,\End_{V,W})\ar{d}
    \\
    F'' \ar{r} & 
    \Map_P((\FreeP(\Delta,u,f,c',h), d),\End_{V,W})
    \ar[twoheadrightarrow]{r}
    &
    \Map_P(\FreeP(\Delta,u,f),\End_{V,W})
\end{tikzcd}
\end{equation}
We have also indicated the fiber $F''$ on the left-hand side. It is weakly equivalent to the simplicial set $F'$ we desire to compute because of Lemma \ref{lem:xni s qiso}.

To compute the fiber $F''$ we use the bottom row in the diagram above.
The fiber is taken over the point determined by our given $\infty$-morphism of $\uIFrob_{n,\infty}$-algebras $\phi:V\to W$. Let $\phi_1$ be its linear part. 
Then in order for the fiber to be nonempty, we must be able to find an element $h_V\in V$ and a cocycle $c'_W\in W^*$ so that 
\[
d h_V = (\mathit{id}\otimes c_W'
\phi_1)(\Delta_V) - u_V,
\]
where $u_V\in V$ is the unit and $\Delta_V\in V\otimes V$ is the diagonal of $V$.
This is possible iff there is a cocycle $c_W'\in W^*$ such that in the cohomology $H(V)$ we have $[(\mathit{id}\otimes c_W'\phi_1)(\Delta_V)]=[u_V]$.
By Lemma \ref{lem:ifrob morph} this in turn is equivalent to the cohomology morphism $[\phi_1]$ being an isomorphism, i.e., to $\phi_1$ being a quasi-isomorphism.
 
\hfill\qed

\appendix

\section{More general picture}
The adjunction of Theorem \ref{thm:main 1} can also be understood as an instance of a symmetric monoidal Morita equivalence. We sketch a less explicit proof of Theorem  \ref{thm:main 1} in this appendix.
Given a symmetric monoidal dg-category $\cC$  and an object $x \in \cC$  we can consider the endomorphism prop $\POp(j,i) = \Hom(x^{\otimes i}, x^{\otimes j})$. It comes together with a symmetric monoidal adjunction
\begin{align*}
\cC \leftrightarrows& P\Mod \\
y \mapsto& \left( n \mapsto \Hom(x^{\otimes n}, y) \right).
\end{align*}
For example, the situation considered in this paper corresponds to the choices $\cC = \pois_d\Mod$ and $x \colon n \mapsto \ucom_d(n,1)\cong \Q^{\oplus n}$. Then 
$x^{\otimes i}= \ucom(i)$ (with $\otimes$ the point-wise monoidal product) and $\POp= \Hom_{\pois_d\Mod}( \ucom, \ucom )$.

However, the above construction of $\POp$ is not homotopically correct and needs to be altered, as we do not take the derived homomorphisms.
Using Proposition \ref{prop:derivedendoprop} below there is an appropriate homotopical way to perform this construction. One obtains a different and a priori unwieldy dg-prop $\POp$ such that
\begin{align*}
\POp \simeq& R\HHom_{\pois_d}( \ucom, \ucom ) = R\HHom_{\pois_d}( \pois_d \circ_{\lie_n} 1, \ucom) \\
&= R\HHom_{S}(1 \circ_{\lie_n} 1, \ucom) = \rcom_n \circ \ucom
\end{align*}
where we used Koszul duality for the $\Lie$ operad. A more careful analysis of this calculation also shows that $\cC \to \POp\Mod$ is an equivalence. Indeed, the Koszul duality for the $\Lie$ operad implies that the Koszul complex satisfies $\ucom \simeq \pois_d \circ_\tau \com^*$, from which it follows that $\ucom(i,-)$ are projective generators of $\pois_d\Mod$. Finally, one can show that $\POp$ is formal for $d \geq 2$ using a similar approach to \cite{HoreldeBritoCirici}. Namely, $\pois_d$ carries an extra grading (given by rescaling the Lie bracket)
which induces a grading on $P$. But now the above calculation shows that on the cohomology of $\POp(i,j)$ the extra grading coincides with the cohomological grading and thus $\POp$ is formal.

Note that by fibrantly/cofibrantly replacing $x^{\otimes n}$ we can define the dg category $\POp$ with hom spaces $\POp(i,j) = R\Hom(x^{\otimes n}, x^{\otimes m})$ and obtain an adjunction as above. This is generally not symmetric monoidal.
\begin{prop}\label{prop:derivedendoprop}
    Let $x \in \cC = A\Mod$, where $A$ is a (small) Hopf dg-category. Then the adjunction
    \[
    \cC \leftrightarrows \POp\Mod
    \]
    can be enhanced to a symmetric monoidal Quillen adjunction. In particular, $\POp$ is (equivalent as dg-category) to a dg-prop.
\end{prop}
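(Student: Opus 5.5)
We prove Proposition \ref{prop:derivedendoprop} by deriving the endomorphism prop through a symmetric monoidal replacement of $x$, so that $\POp$ is a genuine dg-prop from the start and the adjunction is visibly monoidal. The first step is to fix a symmetric monoidal model structure on $\cC = A\Mod$. Since $A$ is a Hopf dg-category, the object-wise tensor product is available, and we use the injective model structure as in the proof that $\pois_n\Mod$ is monoidal: all objects are cofibrant, and the pushout-product axiom holds object-wise. With every object cofibrant, all tensor powers $x^{\otimes i}$ are automatically cofibrant and need not be replaced. What must be replaced is the target side, and in a way compatible with tensor products.

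Next we choose a fibrant replacement functor that is lax symmetric monoidal. Concretely, take a cosimplicial or cobar-type resolution $x \to \hat x$, for instance $\hat x = \RRR$-style coinduction $\mathrm{Coind}\,\mathrm{Res}$ iterated, that is, the cobar resolution along the forgetful functor from $A\Mod$ to collections of dg vector spaces. This resolution is lax monoidal because the forgetful functor is strong monoidal, so its right adjoint, coinduction, is lax monoidal. We then define
\[
\POp(j,i) := \Hom_{\cC}(x^{\otimes i}, \widehat{x^{\otimes j}}).
\]
This computes $R\Hom(x^{\otimes i}, x^{\otimes j})$, since sources are cofibrant and targets are fibrant. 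We obtain the composition via $\Hom(x^{\otimes j},\widehat{x^{\otimes k}}) \to \Hom(\widehat{x^{\otimes j}},\widehat{x^{\otimes k}})$, using that the resolution is a monad (the cobar resolution is one). We obtain the horizontal product from the lax structure map $\widehat{y}\otimes\widehat{z}\to\widehat{y\otimes z}$. Associativity, unitality and symmetry follow from the monad being a lax symmetric monoidal monad. This makes $\POp$ a dg-prop, and shows it is equivalent as a dg category to the naive derived endomorphism category.

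The functor is then $R(y) := (n \mapsto \Hom(x^{\otimes n}, \hat y))$. We check the following properties in order:
\begin{enumerate}[(i)]
\item $R$ is lax symmetric monoidal for the Day convolution on $\POp\Mod$, using $\hat y\otimes \hat z\to \widehat{y\otimes z}$ and the tensor of morphisms.
\item $R$ is a right adjoint, with left adjoint given by the coend $M\mapsto \int^n M(n)\otimes x^{\otimes n}$ twisted appropriately. We check this by representability, since $R$ preserves limits.
\item $R$ preserves fibrations and acyclic fibrations for the projective structure on $\POp\Mod$. This holds because $\Hom(x^{\otimes n},-)$ applied to $\hat{(-)}$ sends fibrations to surjections, as $x^{\otimes n}$ is cofibrant and $\hat{(-)}$ preserves fibrations between fibrant objects.
\end{enumerate}
Finally, we check that the left adjoint is strong monoidal on representables, i.e.\ that it sends $\POp(-,i)\otimes_{\mathrm{Day}}\POp(-,j)=\POp(-,i+j)$ to $x^{\otimes i+j}$, and extend to all objects by colimits.

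The main obstacle is the monad and lax-monoidal coherence of the replacement: a naive functorial fibrant replacement is neither a monad nor monoidal. The first approach to try is the comonadic/monadic cobar construction $\mathrm{Tot}(\mathrm{Coind}\,\mathrm{Res})^{\bullet}$ with the Alexander–Whitney or Eilenberg–Zilber maps made symmetric via $\Omega(\Delta^\bullet)$-totalization (as in the simplicial enrichment used above). That construction is strictly lax symmetric monoidal. If this fails, the fallback is to realize the left adjoint via a cofibrant replacement of $\POp$ as a prop and transfer the Quillen adjunction, accepting only homotopy-coherent monoidality.
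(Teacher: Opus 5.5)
Your approach differs from the paper's. $\POp$ is the full subcategory of the Kleisli category of a fibrant replacement $y\mapsto\hat y$, restricted to the objects $x^{\otimes n}$. If $\hat{(-)}$ were strictly a lax symmetric monoidal monad, this would be a dg-prop computing the derived homs. But the existence of such a monad is the whole difficulty, and the proposal does not supply it. Two problems stand in the way:
\begin{itemize}
\item The cobar resolution $\mathrm{Tot}\,(\mathrm{Coind}\,\mathrm{Res})^{\bullet+1}$ is not a strict monad in general. The analogous Bousfield--Kan completion $\mathrm{Tot}\,R^{\bullet+1}$ is only a monad up to homotopy, and you give no construction of a multiplication $\hat{\hat y}\to\hat y$.
\item The two totalizations pull in opposite directions. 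The cochain totalization $\int_k \Hom(N_*\Delta^k,-)$ commutes with the enriched right adjoint $\mathrm{Coind}$ and preserves limits. However, its monoidal structure comes from the Alexander--Whitney diagonal, which is associative but not symmetric. The $\Omega(\Delta^\bullet)$ (Thom--Sullivan) totalization is symmetric, but $-\otimes\Omega(\Delta^k)$ does not preserve infinite products. So your step (ii) fails, exactly as the paper remarks for $(-)^K$. It also lacks the costrength $\mathrm{Coind}(V\otimes\Omega)\to\mathrm{Coind}(V)\otimes\Omega$ needed even to write down a monad multiplication.
\end{itemize}
The fallback you mention, homotopy-coherent monoidality, does not prove the statement. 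The statement asks for a strict dg-prop and a symmetric monoidal Quillen adjunction.

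Even granting the monad, your last step is wrong. With $R(y)(n)=\Hom(x^{\otimes n},\hat y)$, Yoneda forces $L(\POp(-,n))$ to corepresent $y\mapsto \Hom(x^{\otimes n},\hat y)$. So it is, when it exists, the left adjoint of $\hat{(-)}$ applied to $x^{\otimes n}$, a bar-type resolution, and not $x^{\otimes n}$ itself. That left adjoint is only oplax monoidal, so $L$ is not strong monoidal. If you drop the hat from $R$ to force $L(\POp(-,n))=x^{\otimes n}$, then $\POp$ must act through a strong monoidal dg functor $n\mapsto x^{\otimes n}$. That means acting through the underived homs $\Hom(x^{\otimes i},x^{\otimes j})$, which is the situation you set out to fix.

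The paper avoids any monoidal replacement functor:
\begin{itemize}
\item It takes individual fibrant replacements $\nu_n\colon x^{\otimes n}\to x_n$ and the $\N_0$-colored endomorphism \emph{operad} $\cO$. Its multi-input operations $\Hom(x_{i_1}\otimes\cdots\otimes x_{i_k},x_n)$ are already derived and compose strictly.
\item It cofibrantly factors $\cF(\mu)\to\Env(\cO)$ and forces $c_n\cong c_1^{\otimes n}$ by pushing out along $\cF(\mu)\to\cM(\mathrm{Fin}^{\simeq})$. Chuang--Lazarev homotopy flatness then shows $\widetilde{P}\to\QOp$ is an equivalence.
\item The monoidal adjunction appears as the zig-zag $\cC\leftarrow\widetilde{\Env(\cO)}\Mod\to\POp\Mod$. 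The second arrow is a symmetric monoidal left Bousfield localization, and the first factors through it.
\end{itemize}
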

\newcommand{\Bij}{\mathrm{Fin}^{\simeq}}
\begin{proof}[Proof (sketch)]
    We can assume that $x$ is fibrant and cofibrant and choose fibrant replacements $\nu_n \colon x^{\otimes n} \to x_n$. Let $\cO$ be the ($\N_0$-colored where we denote the colors by $c_n$) endomorphism operad and let $\alpha \colon \cF(\mu) \to \cO$ be the natural map from the free operad $\cF(\mu)$ with generators $\mu_n \colon c_1^{\otimes n} \to c_n$, $n=0,1,2,\dots$. Let $\cM(\Bij)$ be the localization of $\cF(\mu)$ at $\mu_n^{-1}$, $n=0,1,2,\dots$. This can be seen to be the $\N$-colored prop obtained from $\Bij$ by pulling back along the map on objects $\N[\N] \to \N$. Factor $\alpha \colon \cF(\mu) \to \widetilde{\Env \cO} \xrightarrow{\simeq} \Env\cO$ into a cofibration and an equivalence. We now define $\QOp$ to be the pushout
    \begin{equation}
        \label{diag: endo prop pushout}
    \begin{tikzcd}
        \cF(\mu) \ar[r, "\alpha"] \ar[d] & \widetilde{\Env(\cO)} \ar[d] \\
        \cM(\Bij) \ar[r]& \QOp.
    \end{tikzcd}
    \end{equation}.
    Let $\POp$ be the restriction of $\QOp$ to the color $c_1$, i.e., we keep only the operations with all inputs and outputs in color $c_1$. Note that $\POp$ and $\QOp$ are equivalent as categories (as opposed to merely weakly equivalent) since the map $\cM(\Bij) \to \QOp$ encodes isomorphisms $c_n \cong c_1^{\otimes n}$. We thus obtain symmetric monoidal left Quillen functors
    \[
    \cC \longleftarrow \widetilde{\Env(\cO)}\Mod \longrightarrow \POp\Mod.
    \]
    Define $\widetilde{P}\subset \widetilde{\Env\cO}$ be the full dg subcategory on the objects $c_i$, $i=0,1,2,\dots$, so that $\widetilde{P}(c_i,c_j) = \widetilde{\Env\cO}(c_i,c_j)$.
    
    {\bf Claim:} The induced map $\widetilde{P} \to \QOp$ is a weak-equivalence of dg-categories. 
    
    To see that we first note that the pushout \eqref{diag: endo prop pushout} is a localization and thus by Day's Theorem the underlying ($\N[\N]$-colored) category is again a pushout. Moreover, both $\cF(\mu) \to \cM(\Bij)$ and $\cF(\mu) \to \widetilde{\Env(\cO)}$ are homotopically flat in the sense of \cite{ChuangLazarev} and thus loc. cit. gives a filtration on $\cQ$ with associated graded
    \[
    \widetilde{\Env(\cO)}, \ \widetilde{\Env(\cO)} \circ_{\cF(\mu)} \overline{\cM{\Bij}}, \ \widetilde{\Env(\cO)} \circ_{\cF(\mu)} \overline{\cM{\Bij}} \circ_{\cF(\mu)} \overline{\widetilde{\Env(\cO)}}, \ \dots
    \]
    One readily verifies that for any $\cF(\mu)$ right module $X$ the map $X \to X \circ_{\cF(\mu)} \cM{\Bij}$ is given by the right action of $\mu_{i_1}\otimes \cdots \otimes \mu_{i_k}$
    \[
     X(c_{i_1} \otimes \dots \otimes c_{i_k}) \to X(c_1^{\otimes \sum i_j}).
    \]
    In particular, the map $\widetilde{\Env(\cO)}(c_{i_1} \otimes \dots \otimes c_{i_k},c_n) \to \left(\widetilde{\Env(\cO)} \circ_{\cF(\mu)} \cM{\Bij}\right)(c_{i_1} \otimes \dots \otimes c_{i_k},c_n)$ is homotopic to
    \[
    \Hom_{\cC}(x_{i_1} \otimes \dots \otimes x_{i_k},x_n) \to \Hom_{\cC}( \left(x_1\right)^{\otimes \sum i_j}, x_n)
    \]
    and thus an equivalence. We thus conclude that $\left(\widetilde{\Env(\cO)} \circ_{\cF(\mu)} \overline{\cM{\Bij}}\right)(-, c_n)$ is contractible and the claim follows.
    
    We have thus obtained the following diagram of (not necessarily symmetric monoidal) left Quillen functors,
    \[
    \begin{tikzcd}
        \POp\Mod \ar[d] & \ar[l, "\simeq"] \widetilde{P}\Mod \ar[d] \ar[rd, "\simeq"] & \\
        \cC & \ar[l] \widetilde{\Env(\cO)}\Mod \ar[r] & \POp\Mod.
    \end{tikzcd}
    \]
    From the right commuting triangle it follows that $\widetilde{\Env\cO}\Mod \to \POp\Mod$ exhibits $\POp\Mod$ as a symmetric monoidal left Bousfield localization of $\widetilde{\Env\cO}\Mod$. To conclude the proof, it remains to show that $\widetilde{\Env(\cO)}\Mod \to \cC$ factors through that localization. However, this follows from the same argument as above. Namely, let $\NOp$ be the $A\dash\widetilde{\Env(\cO)}$ bimodule inducing the functor $\widetilde{\Env(\cO)}\Mod \to \cC$, then we obtain that $\NOp \to \NOp \circ_{\widetilde{\Env(\cO)}} \POp$ is an equivalence using that (composition with) $\mu_n$ is an equivalence in $\cC$.





\end{proof}

\end{document}